\numberwithin{equation}{section}
\DeclareSymbolFontAlphabet{\mathbb}{AMSb}
\DeclareSymbolFontAlphabet{\mathbbl}{bbold}
\newtheorem{thm}{Theorem}[section]
\newtheorem{lem}[thm]{Lemma}
\newtheorem{cor}[thm]{Corollary}
\theoremstyle{definition}
\newtheorem{defn}[thm]{Definition}
\newtheorem{nota}[thm]{Notation}
\newtheorem{eg}[thm]{Example}
\newtheorem*{rem*}{Remarks}
\newtheoremstyle{case}{}{}{}{}{}{:}{ }{}
\theoremstyle{case}
\newcommand{\F}{\mathbb{F}}
\title[On Terwilliger $\F$-algebras of factorial association schemes]{On Terwilliger $\F$-algebras of factorial association schemes}
\begin{document}
\author{Yu Jiang}
\address[Y. Jiang]{School of Mathematical Sciences, Anhui University (Qingyuan Campus), No. 111, Jiulong Road, Hefei, 230601, China}
\email[Y. Jiang]{jiangyu@ahu.edu.cn}
%\author{Kay Jin Lim}
%\address[K. J. Lim]{Division of Mathematical Sciences, School of Physical and Mathematical Sciences, Nanyang Technological University, 21 Nanyang Link, Singapore 637371.}
%\email[K. J. Lim]{limkj@ntu.edu.sg}
\begin{abstract}
The Terwilliger algebras of association schemes over an arbitrary field $\F$ were called the Terwilliger $\F$-algebras of association schemes in \cite{J2}. In this paper, we study the Terwilliger $\F$-algebras of factorial association schemes. We determine the $\F$-dimensions, the centers, the semisimplicity, the Jacobson radicals, and the algebraic structures of the Terwilliger $\F$-algebras of factorial association schemes.
\end{abstract}
\maketitle
\noindent
\textbf{Keywords.} {Association scheme; Terwilliger $\F$-algebra; Factorial association scheme}\\
\textbf{Mathematics Subject Classification 2020.} 05E30 (primary), 05E16 (secondary)
\vspace{-1.5em}
\section{Introduction}
Association schemes on nonempty finite sets, briefly called schemes, have already been intensively studied as important research objects in algebraic combinatorics. In particular, many different tools have been introduced to study the scheme theory.

The subconstituent algebras of commutative schemes, introduced by Terwilliger in \cite{T1}, are new algebraic tools of investigating schemes. They are finite-dimensional semisimple associative $\mathbb{C}$-algebras and are widely known as the Terwilliger algebras of commutative schemes. In general, the Terwilliger algebras can also be defined for an arbitrary scheme and an arbitrary commutative unital ring (see \cite{Han}). Following \cite{J2}, the Terwilliger algebras of schemes over an arbitrary field $\F$ shall be called the Terwilliger $\F$-algebras of schemes. Therefore the Terwilliger algebras of commutative schemes are precisely the Terwillliger $\mathbb{C}$-algebras of these commutative schemes.

The Terwilliger $\mathbb{C}$-algebras of many commutative schemes have been extensively studied (for example, see \cite{BST}, \cite{CD}, \cite{LMP}, \cite{LM}, \cite{LMW}, \cite{M}, \cite{T1}, \cite{T2}, \cite{T3}, \cite{TY}). However, the investigation of the Terwilliger $\F$-algebras of schemes is almost completely open (see \cite{Her}). In this paper, we investigate the Terwilliger $\F$-algebras of factorial schemes. In particular, we determine the $\F$-dimensions, the centers, the semisimplicity, the Jacobson radicals, the algebraic structures of the Terwilliger $\F$-algebras of factorial schemes (see Theorems \ref{T;Dimension}, \ref{T;Center}, \ref{T;Semisimplicity}, \ref{T;Jacobson}, \ref{T;Structure}, respectively). Since the factorial schemes are precisely the direct products of one-class schemes (see \cite[Page 344]{B}), these main results contribute to studying the question in \cite[Conclude remarks (2)]{BST}.

The organization of this paper is as follows: In Section 2, we introduce the basic notation and the required preliminaries. In Section 3, we determine all closed subsets and strongly normal closed subsets of the factorial schemes. In Section 4, we prove Theorem \ref{T;Dimension} and give two $\F$-bases to the Terwilliger $\F$-algebras of factorial schemes. In Sections 5, 6, 7, we finish the proofs of Theorems \ref{T;Center}, \ref{T;Semisimplicity}, \ref{T;Jacobson}, respectively. In Sections 8 and 9, we deduce some equalities and use them to prove Theorem \ref{T;Structure}.
\section{Basic notation and preliminaries}
For a general theory on association schemes, the reader may refer to \cite{B} or \cite{Z}.
\subsection{Conventions}
Throughout this paper, fix a field $\F$ of characteristic $p$. Let $\mathbb{N}_{0}$ be the set of all nonnegative integers. If $g, h\in \mathbb{N}_{0}$, set $[g,h]=\{a: g\leq a\leq h\}\subseteq \mathbb{N}_0$. Fix a nonempty finite set $\mathbb{X}$. Every association scheme on $\mathbb{X}$ mentioned in this paper is briefly called a scheme. The addition, multiplication, and scalar multiplication of matrices displayed in this paper are the usual matrix operations. For a subset $\mathbb{U}$ of an $\F$-linear space $\mathbb{V}$, let $\langle \mathbb{U}\rangle_\F$ denote the $\F$-linear subspace of $\mathbb{V}$ spanned by $\mathbb{U}$. Every algebra mentioned in this paper is a finite-dimensional associative unital $\F$-algebra. Every module mentioned in this paper is a finitely generated left module.
\subsection{Schemes}
Let $\mathbb{S}=\{R_0, R_1, \ldots, R_d\}$ denote a partition of the cartesian product $\mathbb{X}\times \mathbb{X}$. Then $\mathbb{S}$ is called a $d$-class scheme if the following  conditions hold together:
\begin{enumerate}[(i)]
\item $R_0=\{(a,a): a\in \mathbb{X}\}$;
\item For any $g\in [0,d]$, there is $g'\in [0,d]$ such that $R_{g'}=\{(a,b): (b,a)\in R_g\}\in \mathbb{S}$;
\item For any $g,h,i\in [0,d]$ and $(x,y), (\widetilde{x}, \widetilde{y})\in R_i$, there exists $p_{gh}^i\in \mathbb{N}_0$ such that
$$p_{gh}^i=|\{a: (x,a)\in R_g,\ (a,y)\in R_h\}|=|\{a: (\widetilde{x}, a)\in R_g,\ (a, \widetilde{y})\in R_h\}|.$$
\end{enumerate}

Throughout this paper, $\mathbb{S}=\{R_0, R_1,\dots, R_d\}$ denotes a fixed $d$-class scheme. The scheme $\mathbb{S}$ is symmetric if $g=g'$ for any $g\in [0,d]$. The scheme $\mathbb{S}$ is commutative if $p_{gh}^i=p_{hg}^i$ for any $g, h, i\in [0,d]$. Every symmetric scheme is commutative. For any $x\in \mathbb{X}$ and $g\in [0,d]$, set $k_g=p_{gg'}^0$ and $xR_g\!=\!\{a: (x, a)\in R_g\}$. Call $k_g$ the valency of $R_g$ and notice that $|xR_g|\!=\!k_g$. As $x$ is chosen from $\mathbb{X}$ arbitrarily and $R_g\!\neq\! \varnothing$, $k_g>0$. For any $g, h, i\in [0,d]$ and $y\in xR_i$, $|xR_g\cap yR_h|\!=\!p_{gh'}^i$. The scheme $\mathbb{S}$ is triply regular if, for any $g, h, i, j, k, \ell\in [0,d]$, $y\in xR_j$, and $z\in xR_k\cap yR_\ell$, $|xR_g\cap yR_h\cap zR_i|$ only depends on $g, h, i, j, k, \ell$ and is independent of the choices of elements in $R_j$, $R_k$, $R_\ell$.

For any nonempty subsets $\mathbb{U}, \mathbb{V}$ of $\mathbb{S}$, set $\mathbb{UV}\!=\!\{R_a: \exists\ R_b\in \mathbb{U}, \exists\ R_c\in \mathbb{V}, p_{bc}^a\!>\!0\}$. Notice that  $\mathbb{UV}\!=\!\mathbb{VU}$ if $\mathbb{S}$ is commutative. The operation between $\mathbb{U}$ and $\mathbb{V}$ is called complex multiplication. According to \cite[Lemma 1.3.1]{Z}, the complex multiplication is an associative operation on the set of all nonempty subsets of $\mathbb{S}$. For any nonempty subset $\mathbb{U}$ of $\mathbb{S}$, define $\mathbb{U}'=\{R_{a'}: R_a\in \mathbb{U}\}$. Set $R_g\mathbb{U}=\{R_g\}\mathbb{U}$, $\mathbb{U}R_g=\mathbb{U}\{R_g\}$, and $R_gR_h=\{R_g\}\{R_h\}$ for any $g, h\in [0,d]$. The following lemmas are necessary for us.
\begin{lem}\cite[Lemmas 1.1.3 (ii) and 1.1.4 (i)]{Z}\label{L;Lemma2.1}
Assume that $g, h, i\in [0,d]$. Then $k_gp_{hi}^g=k_hp_{gi'}^h=k_ip_{h'g}^i$. Moreover, $p_{hi}^g\neq 0$, $p_{gi'}^h\neq 0$, $p_{h'g}^i\neq 0$ are pairwise equivalent.
\end{lem}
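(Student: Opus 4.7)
The plan is to prove all three equalities simultaneously by counting the same set of triples in three different ways. Specifically, I would consider
$$T = \{(x, y, a) \in \mathbb{X}^3 : (x,y) \in R_g,\ (x,a) \in R_h,\ (a,y) \in R_i\}$$
and evaluate $|T|$ by fixing, in turn, which of the three pairs $(x,y)$, $(x,a)$, or $(a,y)$ is treated as the known relation.

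First I would fix $(x,y) \in R_g$. Since each $x \in \mathbb{X}$ has exactly $k_g$ partners in $xR_g$, the total count of such pairs is $|\mathbb{X}| k_g$, and for each the definition of the intersection numbers directly supplies $p_{hi}^g$ choices for $a$; this yields $|T| = |\mathbb{X}| k_g p_{hi}^g$. Next, fixing $(x,a) \in R_h$ gives $|\mathbb{X}| k_h$ pairs, and the remaining $y$ must satisfy $(x,y) \in R_g$ together with $(a,y) \in R_i$, equivalently $(y,a) \in R_{i'}$, which is precisely the count governed by $p_{gi'}^h$. This produces $|T| = |\mathbb{X}| k_h p_{gi'}^h$. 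Finally, fixing $(a,y) \in R_i$ and rewriting $(x,a) \in R_h$ as $(a,x) \in R_{h'}$, the count of $x$ becomes $p_{h'g}^i$, giving $|T| = |\mathbb{X}| k_i p_{h'g}^i$. Dividing through by $|\mathbb{X}|$ delivers the chain $k_g p_{hi}^g = k_h p_{gi'}^h = k_i p_{h'g}^i$.

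For the \emph{moreover} assertion, the preliminaries already record $k_g, k_h, k_i > 0$, so $k_g p_{hi}^g$ vanishes if and only if $p_{hi}^g$ vanishes, and similarly for the other two products. The three equalities just established then force $p_{hi}^g$, $p_{gi'}^h$, $p_{h'g}^i$ to be zero or nonzero in unison, which is the pairwise equivalence claimed.

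The only real subtlety I anticipate is bookkeeping: one must carefully track which index gets dualised when converting between $R_a$ and $R_{a'}$ as the counting perspective shifts. This is routine, but placing the right index in the right superscript or subscript slot of $p_{\bullet\bullet}^{\bullet}$ is the single place where an error could swap the identities.
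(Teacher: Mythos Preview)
Your triple-counting argument is correct and is the standard proof of this identity; the paper does not supply its own proof but simply cites the result from Zieschang's monograph \cite{Z}. Your approach is exactly the one found there, so there is nothing to compare.
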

\begin{lem}\cite[Lemma 1.3.4]{Z}\label{L;Lemma2.2}
Assume that $\mathbb{T}$, $\mathbb{U}$, $\mathbb{V}$, $\mathbb{W}$ are nonempty subsets of $\mathbb{S}$. Then $\mathbb{TU}\cap\mathbb{VW}\neq \varnothing$ if and only if $\mathbb{T}'\mathbb{V}\cap\mathbb{U}\mathbb{W}'\neq\varnothing$.
\end{lem}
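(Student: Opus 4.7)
The plan is to show that both sides of the claimed equivalence are reformulations of a single combinatorial condition on $\mathbb{X}$: namely, that there exist $x, y, z_1, z_2 \in \mathbb{X}$ together with relations $R_t \in \mathbb{T}$, $R_u \in \mathbb{U}$, $R_v \in \mathbb{V}$, $R_w \in \mathbb{W}$ such that $(x, z_1) \in R_t$, $(z_1, y) \in R_u$, $(x, z_2) \in R_v$, and $(z_2, y) \in R_w$. Picture this as a ``rectangle'' with corners labelled so that the two paths $x \to z_1 \to y$ and $x \to z_2 \to y$ share endpoints. Once each side is shown to be equivalent to this rectangle condition, the lemma is immediate.

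For the left-hand side, I would unfold the complex multiplication: $\mathbb{TU}\cap\mathbb{VW}\neq\varnothing$ means there exists $R_a \in \mathbb{S}$ with $p_{tu}^a > 0$ and $p_{vw}^a > 0$ for suitable choices of $R_t \in \mathbb{T}$, $R_u \in \mathbb{U}$, $R_v \in \mathbb{V}$, $R_w \in \mathbb{W}$. Picking any pair $(x, y) \in R_a$ (which is possible since $R_a \neq \varnothing$), the positivity of these intersection numbers supplies intermediate vertices $z_1$ and $z_2$, producing the rectangle. Conversely, any such rectangle forces $(x, y)$ to lie in some $R_a$, and then the witnesses $z_1, z_2$ show $p_{tu}^a \geq 1$ and $p_{vw}^a \geq 1$, so $R_a \in \mathbb{TU}\cap\mathbb{VW}$.

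For the right-hand side, I would run the analogous unpacking but along the other diagonal of the rectangle: $\mathbb{T}'\mathbb{V}\cap\mathbb{U}\mathbb{W}'\neq\varnothing$ means there exists $R_b \in \mathbb{S}$ with $p_{t'v}^b > 0$ and $p_{uw'}^b > 0$ for appropriate indices. Picking $(z_1, z_2) \in R_b$, the positivity of $p_{t'v}^b$ supplies $x$ with $(z_1, x) \in R_{t'}$, equivalently $(x, z_1) \in R_t$, and $(x, z_2) \in R_v$; likewise $p_{uw'}^b > 0$ supplies $y$ with $(z_1, y) \in R_u$ and $(y, z_2) \in R_{w'}$, equivalently $(z_2, y) \in R_w$. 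Conversely, a rectangle puts $(z_1, z_2)$ into some $R_b$ with $p_{t'v}^b \geq 1$ and $p_{uw'}^b \geq 1$, so $R_b \in \mathbb{T}'\mathbb{V}\cap\mathbb{U}\mathbb{W}'$.

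The proof is essentially a symmetric double count of rectangles, and I expect the only real bookkeeping difficulty, rather than a genuine conceptual obstacle, to lie in correctly tracking the direction reversals under the involution $R_g \mapsto R_{g'}$ when translating between the two diagonals. These reversals are precisely encoded in the equivalence $p_{hi}^g \neq 0 \Longleftrightarrow p_{h'g}^i \neq 0$ from Lemma \ref{L;Lemma2.1}, which is the one formal input required; everything else is just unwinding the definitions of $R_g \mathbb{U}$, $\mathbb{U} R_g$ and $\mathbb{U}'$.
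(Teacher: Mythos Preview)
Your argument is correct and is the standard proof of this fact. Note, however, that the paper does not supply its own proof of this lemma: it is quoted verbatim from \cite[Lemma 1.3.4]{Z} and left unproved in the text, so there is no in-paper argument to compare against. Your rectangle reformulation is exactly the idea behind the proof in Zieschang's book, so there is no meaningful difference in approach to report.
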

\begin{lem}\cite[Lemma 1.5.2]{Z}\label{L;Lemma2.3}
Assume that $g, h\in [0,d]$. Then $|R_{g'}R_h|$ is less than or equal to the great common divisor of $k_g$ and $k_h$.%(see \cite[Lemma 2.5.9 (iii)]{Z}).
\end{lem}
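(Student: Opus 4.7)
The plan is to use the identity in Lemma~\ref{L;Lemma2.1} to show that for every $i\in[0,d]$ with $p_{g'h}^i>0$, the positive integer $k_ip_{g'h}^i$ is a common multiple of $k_g$ and $k_h$, and then to combine this divisibility with the total-count identity $k_gk_h=\sum_i k_ip_{g'h}^i$. Set $m:=|R_{g'}R_h|$; by the definition of complex multiplication, $m=|\{i\in[0,d]:p_{g'h}^i>0\}|$, which is the quantity we want to bound.

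For the first step, I would apply Lemma~\ref{L;Lemma2.1} to the structure constant $p_{g'h}^i$: after renaming indices, the three-way identity becomes $k_ip_{g'h}^i=k_gp_{ih'}^{g'}=k_hp_{gi}^h$ (using also the standard fact $k_{g'}=k_g$). Whenever $p_{g'h}^i>0$, the second assertion of Lemma~\ref{L;Lemma2.1} guarantees that both $p_{ih'}^{g'}$ and $p_{gi}^h$ are positive as well, so $N_i:=k_ip_{g'h}^i$ is a positive common multiple of $k_g$ and $k_h$, and hence $N_i\geq\operatorname{lcm}(k_g,k_h)=k_gk_h/\gcd(k_g,k_h)$.

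For the second step, a double count of pairs $(a,y)\in\mathbb{X}\times\mathbb{X}$ with $(x,a)\in R_{g'}$ and $(a,y)\in R_h$, for any fixed $x\in\mathbb{X}$, yields $k_gk_h=\sum_{i\in[0,d]}k_ip_{g'h}^i$. Restricting this sum to the $m$ indices for which $p_{g'h}^i>0$ and plugging in the lower bound on each $N_i$ produces $k_gk_h\geq m\cdot k_gk_h/\gcd(k_g,k_h)$, which rearranges to $m\leq\gcd(k_g,k_h)$, as required. The main conceptual point is recognizing that Lemma~\ref{L;Lemma2.1} forces $k_ip_{g'h}^i$ to be a common multiple of the two valencies; once this is noticed, the rest is a one-line pigeonhole estimate. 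The only real opportunity for error lies in the bookkeeping with Lemma~\ref{L;Lemma2.1}, whose statement has a cyclic shape that does not literally contain $p_{g'h}^i$ on the left, so one has to rename indices carefully to extract the identity actually needed.
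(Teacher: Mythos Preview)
The paper does not supply its own proof of this lemma; it is quoted verbatim from \cite[Lemma 1.5.2]{Z} without argument. Your proof is correct and is essentially the standard argument one finds for this fact: the key identity $k_ip_{g'h}^i=k_gp_{ih'}^{g'}=k_hp_{gi}^h$ (obtained from Lemma~\ref{L;Lemma2.1} with the substitution you describe, together with $k_{g'}=k_g$) shows each nonzero term $k_ip_{g'h}^i$ is at least $\operatorname{lcm}(k_g,k_h)$, and the counting identity $\sum_i k_ip_{g'h}^i=k_gk_h$ then bounds the number of nonzero terms by $\gcd(k_g,k_h)$.
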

The nonempty subset $\mathbb{U}$ of $\mathbb{S}$ is closed in $\mathbb{S}$ if $\mathbb{U}\mathbb{U}'\subseteq\mathbb{U}$. Write $\mathbb{U}\leq \mathbb{S}$ if $\mathbb{U}$ is closed in $\mathbb{S}$. If $\mathbb{U}\leq \mathbb{S}$, then $R_0\in \mathbb{U}$, $\mathbb{U}'=\mathbb{U}$, and $\mathbb{UU}\subseteq\mathbb{U}$. If $\mathbb{S}$ is commutative, $\mathbb{U}\leq \mathbb{S}$, and $\mathbb{V}\leq \mathbb{S}$, then $\mathbb{U}\mathbb{V}\leq \mathbb{S}$. If $\mathbb{U}\subseteq\mathbb{S}$, then the thin radical $\mathrm{O}_\vartheta(\mathbb{U})$ of $\mathbb{U}$ is defined to be $\{R_a: R_a\in \mathbb{U},\ k_a=1\}$. If $\mathbb{U}\leq \mathbb{S}$, notice that $\mathrm{O}_\vartheta(\mathbb{U})\leq \mathbb{S}$. Define $d_1=|\mathrm{O}_\vartheta(\mathbb{S})|$.

The intersection of the closed subsets in $\mathbb{S}$ is also closed in $\mathbb{S}$. For any nonempty subset $\mathbb{U}$ of $\mathbb{S}$, let $\langle \mathbb{U}\rangle$ be the intersection of all closed subsets in $\mathbb{S}$ containing $\mathbb{U}$. So $\langle \mathbb{U}\rangle\leq \mathbb{S}$. If $g\in [0,d]$, set $\langle R_g\rangle\!=\!\langle \{R_g\}\rangle$. If $\mathbb{U}\leq \mathbb{S}, \mathbb{V}\leq \mathbb{S}$, and $R_{g'}\mathbb{U}R_g\subseteq \mathbb{V}$ for any $R_g\in \mathbb{V}$, then $\mathbb{U}$ is called strongly normal in $\mathbb{V}$. Write $\mathbb{U}\unlhd\mathbb{V}$ if $\mathbb{U}$ is strongly normal in $\mathbb{V}$. So $\mathbb{S}\unlhd\mathbb{S}$. If $\mathbb{U}\leq\mathbb{S}$, then the intersection of the strongly normal closed subsets in $\mathbb{U}$ is also strongly normal in $\mathbb{U}$. For any $\mathbb{U}\leq \mathbb{S}$, the thin residue $\mathrm{O}^\vartheta(\mathbb{U})$ of $\mathbb{U}$ is the intersection of all strongly normal closed subsets in $\mathbb{U}$. We present two lemmas.
\begin{lem}\cite[Theorem 3.2.1 (ii)]{Z}\label{L;Lemma2.4}
If $\mathbb{U}\leq\mathbb{S}$, then $\mathrm{O}^\vartheta(\mathbb{U})=\langle\bigcup_{R_g\in\mathbb{U}}(R_{g'}R_g)\rangle$.
\end{lem}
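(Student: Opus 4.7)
Set $\mathbb{W}:=\langle\bigcup_{R_g\in\mathbb{U}}R_{g'}R_g\rangle$ and $\mathbb{O}:=\mathrm{O}^\vartheta(\mathbb{U})$. The strategy is a double-inclusion proof. The inclusion $\mathbb{W}\subseteq\mathbb{O}$ is the easy half: since $\mathbb{O}$ is strongly normal in $\mathbb{U}$ and $R_0\in\mathbb{O}$, for every $R_g\in\mathbb{U}$ one has $R_{g'}R_g=R_{g'}R_0R_g\subseteq R_{g'}\mathbb{O}R_g\subseteq\mathbb{O}$, so every generator of $\mathbb{W}$ lies in the closed set $\mathbb{O}$, which gives $\mathbb{W}\subseteq\mathbb{O}$.

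For the reverse inclusion, by minimality of $\mathbb{O}$ it suffices to show $\mathbb{W}\unlhd\mathbb{U}$. I would introduce the ``normalizer''
\[
\mathbb{W}^*:=\{R_h\in\mathbb{S}:R_{g'}R_hR_g\subseteq\mathbb{W}\text{ for all }R_g\in\mathbb{U}\},
\]
and establish (a) that $\mathbb{W}^*$ is closed, and (b) that $\mathbb{W}^*$ contains every generating relation $R_h\in R_{g_0'}R_{g_0}$ with $R_{g_0}\in\mathbb{U}$. Together these force $\mathbb{W}\subseteq\mathbb{W}^*$, which is exactly $\mathbb{W}\unlhd\mathbb{U}$. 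Step (a) is routine once one uses the trick of inserting $R_0\in R_gR_{g'}$: for $R_{h_1},R_{h_2}\in\mathbb{W}^*$ and $R_g\in\mathbb{U}$,
\[
R_{g'}R_{h_1}R_{h_2}R_g\subseteq R_{g'}R_{h_1}R_gR_{g'}R_{h_2}R_g\subseteq\mathbb{W}\cdot\mathbb{W}=\mathbb{W},
\]
while $R_{g'}R_{h'}R_g=(R_{g'}R_hR_g)'\subseteq\mathbb{W}$ gives closure of $\mathbb{W}^*$ under the $'$-operation.

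The crux of the plan is step (b). For $R_h\in R_{g_0'}R_{g_0}$ and $R_k\in\mathbb{U}$ one estimates
\[
R_{k'}R_hR_k\subseteq R_{k'}R_{g_0'}R_{g_0}R_k=(R_{g_0}R_k)'(R_{g_0}R_k)=\bigcup_{R_a,R_b\in R_{g_0}R_k}R_{a'}R_b,
\]
so the task reduces to showing $R_{a'}R_b\subseteq\mathbb{W}$ for all $R_a,R_b\in R_{g_0}R_k\subseteq\mathbb{U}$. The diagonal portion ($R_a=R_b$) is immediate from the very definition of $\mathbb{W}$. For the off-diagonal, Lemma \ref{L;Lemma2.1} yields $R_k\in R_{g_0'}R_a\cap R_{g_0'}R_b$, and Lemma \ref{L;Lemma2.2} then supplies an element $R_m\in R_aR_{b'}\cap R_{g_0}R_{g_0'}$; this $R_m$ lies in $\mathbb{W}$, since $R_{g_0'}\in\mathbb{U}$ forces $R_{g_0}R_{g_0'}\subseteq\mathbb{W}$ by construction. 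The principal obstacle I anticipate is upgrading this single seed $R_m\in\mathbb{W}\cap R_aR_{b'}$ to the full set-inclusion $R_{a'}R_b\subseteq\mathbb{W}$; this promotion appears to require a second application of Lemma \ref{L;Lemma2.2}, exploited together with the closure of $\mathbb{W}$ under complex products and the symmetric roles played by $R_a$ and $R_b$ inside $R_{g_0}R_k$.
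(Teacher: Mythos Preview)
The paper does not prove this lemma at all; it is simply quoted from Zieschang's book \cite[Theorem 3.2.1 (ii)]{Z}. So there is no ``paper's own proof'' to compare against, and your proposal is an attempt to reconstruct the argument from scratch.

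Your overall architecture is correct, and the reduction to showing $R_{a'}R_b\subseteq\mathbb{W}$ for all $R_a,R_b\in R_{g_0}R_k$ is exactly the right crux. The gap you flagged is genuine as written, but it closes with one small adjustment. Instead of producing a seed in $R_aR_{b'}\cap R_{g_0}R_{g_0'}$, produce one in $R_{a'}R_b\cap R_{k'}R_k$: from $R_a,R_b\in R_{g_0}R_k$ and Lemma~\ref{L;Lemma2.1} you get $R_{g_0}\in R_aR_{k'}\cap R_bR_{k'}$, and then Lemma~\ref{L;Lemma2.2} (with $\mathbb{T}=\{R_a\}$, $\mathbb{U}=\{R_{k'}\}$, $\mathbb{V}=\{R_b\}$, $\mathbb{W}=\{R_{k'}\}$) yields $R_{a'}R_b\cap R_{k'}R_k\neq\varnothing$. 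Since $R_k\in\mathbb{U}$ forces $R_{k'}R_k\subseteq\mathbb{W}$, you have a seed $R_m\in R_{a'}R_b\cap\mathbb{W}$.

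Now the promotion is a single line and does not need a second use of Lemma~\ref{L;Lemma2.2}. From $p_{a'b}^m\neq 0$ and Lemma~\ref{L;Lemma2.1} you get $p_{mb'}^{a'}\neq 0$, i.e.\ $R_{a'}\in R_mR_{b'}$. Hence
\[
R_{a'}R_b\ \subseteq\ R_mR_{b'}R_b\ \subseteq\ \mathbb{W}\cdot\mathbb{W}\ =\ \mathbb{W},
\]
because $R_m\in\mathbb{W}$ and $R_{b'}R_b\subseteq\mathbb{W}$ (as $R_b\in\mathbb{U}$). This completes step~(b), and with your step~(a) gives $\mathbb{W}\subseteq\mathbb{W}^*$, i.e.\ $\mathbb{W}\unlhd\mathbb{U}$, finishing the proof.
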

\begin{lem}\label{L;Lemma2.5}
If $\mathbb{S}$ is commutative and $\mathbb{U}\leq \mathbb{S}$, then $\mathbb{U}\unlhd\mathbb{S}$ if and only if $\mathrm{O}^\vartheta(\mathbb{S})\subseteq\mathbb{U}$.
\end{lem}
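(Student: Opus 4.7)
My plan is to split the biconditional along the obvious line: the forward direction should be essentially immediate from the definition of $\mathrm{O}^\vartheta(\mathbb{S})$, while the reverse direction will rely on commutativity together with Lemma \ref{L;Lemma2.4}.

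For the forward direction, I would argue that if $\mathbb{U}\leq\mathbb{S}$ and $\mathbb{U}\unlhd\mathbb{S}$, then $\mathbb{U}$ is by definition one of the strongly normal closed subsets of $\mathbb{S}$ whose intersection defines $\mathrm{O}^\vartheta(\mathbb{S})$. Hence $\mathrm{O}^\vartheta(\mathbb{S})\subseteq\mathbb{U}$ follows from the trivial fact that an intersection of sets is contained in each of them. This step uses nothing beyond the definitions and does not require $\mathbb{S}$ to be commutative.

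For the reverse direction, assume $\mathrm{O}^\vartheta(\mathbb{S})\subseteq\mathbb{U}$ and fix $R_g\in\mathbb{S}$; I must show $R_{g'}\mathbb{U}R_g\subseteq\mathbb{U}$. The key move is to use commutativity of $\mathbb{S}$ to rewrite the complex product as $R_{g'}\mathbb{U}R_g = R_{g'}R_g\mathbb{U}$. Then Lemma \ref{L;Lemma2.4} (applied to $\mathbb{U}=\mathbb{S}$) tells me that $R_{g'}R_g$ is a subset of the generating family of $\mathrm{O}^\vartheta(\mathbb{S})$, and in particular $R_{g'}R_g\subseteq\mathrm{O}^\vartheta(\mathbb{S})\subseteq\mathbb{U}$ by hypothesis. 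Combining this with $\mathbb{U}\mathbb{U}\subseteq\mathbb{U}$ (which holds because $\mathbb{U}\leq\mathbb{S}$, as recorded in the paragraph preceding Lemma \ref{L;Lemma2.4}), I conclude $R_{g'}R_g\mathbb{U}\subseteq\mathbb{U}\mathbb{U}\subseteq\mathbb{U}$, giving $\mathbb{U}\unlhd\mathbb{S}$.

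The only subtlety worth flagging is making sure the manipulation $R_{g'}\mathbb{U}R_g=R_{g'}R_g\mathbb{U}$ is justified rigorously; it follows from associativity of complex multiplication (stated in the preliminaries via \cite[Lemma 1.3.1]{Z}) together with $R_g\mathbb{U}=\mathbb{U}R_g$, which in turn is a consequence of the termwise commutativity $R_gR_h=R_hR_g$ valid in any commutative scheme. I expect no real obstacle here; the lemma is almost a direct unpacking of definitions once Lemma \ref{L;Lemma2.4} is in hand.
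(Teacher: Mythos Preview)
Your proposal is correct and follows essentially the same approach as the paper, which simply states that the lemma follows from the definition of $\mathrm{O}^\vartheta(\mathbb{S})$ and Lemma~\ref{L;Lemma2.4}. You have spelled out precisely the details that the paper leaves implicit: the forward direction from the definition of the thin residue as an intersection, and the reverse direction via the commutativity rearrangement $R_{g'}\mathbb{U}R_g=R_{g'}R_g\mathbb{U}$ together with $R_{g'}R_g\subseteq\mathrm{O}^\vartheta(\mathbb{S})$ from Lemma~\ref{L;Lemma2.4}.
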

\begin{proof}
The desired lemma follows from the definition of $\mathrm{O}^\vartheta(\mathbb{S})$ and Lemma \ref{L;Lemma2.4}.
\end{proof}

Fix $n\in \mathbb{N}_0\setminus\{0\}$. Let $g\!\in\! [0,2^n-1]$ and $\sum_{h=1}^{n}g_{(h)}2^{h-1}$ be the $2$-adic decomposition of $g$. Define $\nu(g)=(g_{(1)}, g_{(2)},\ldots, g_{(n)})$. Hence $\nu$ induces a bijective correspondence between the elements in $[0,2^n-1]$ and all $n$-dimensional $\{0,1\}$-vectors. Write $\mathbb{P}(g)$ for $\{a: g_{(a)}=1\}$. Let
$h\in[0,2^n-1]$. Write $g\leq_2 h$ if and only if $\mathbb{P}(g)\subseteq\mathbb{P}(h)$. For any $i\in [1, n]$, let $\mathbb{U}_i$ denote a fixed set and $|\mathbb{U}_i|=u_i\geq 2$. Set $n_2=|\{a: u_a>2\}|$ and $\mathbb{P}_2(g)=\{a: a\in \mathbb{P}(g),\ u_a>2\}$. Use $\prod_{i=1}^n\mathbb{U}_i$ to denote the cartesian product $\mathbb{U}_1\times \mathbb{U}_2\times\cdots\times \mathbb{U}_n$. For any $\mathbf{u}=(u(1), u(2),\ldots, u(n))\in\prod_{i=1}^n\mathbb{U}_i$, define $\mathbf{u}_i=u(i)$ for any $i\in [1,n]$. For any $\mathbf{u}, \mathbf{v}\in \prod_{i=1}^n\mathbb{U}_i$ and $j\in [0, 2^n-1]$, write $\mathbf{u}=_j\mathbf{v}$ to indicate that $\mathbf{u}_k\neq \mathbf{v}_k$ if and only if $k\in \mathbb{P}(j)$. By \cite[Page 344]{B}, $\mathbb{S}$ is called a factorial scheme with the parameters $u_1, u_2, \ldots, u_n$ if $\mathbb{X}=\prod_{i=1}^n\mathbb{U}_i$, $d=2^n-1$, $R_j=\{(\mathbf{a}, \mathbf{b}): \mathbf{a}=_j\mathbf{b}\}$ for any $j\in [0, d]$. If $\mathbb{S}$ is a factorial scheme with the parameters $u_1, u_2, \ldots, u_n$, notice that $\mathbb{S}$ is symmetric. Moreover, recall that $\mathbb{S}$ is triply regular (see \cite[Theorem 10]{W}).
\subsection{Algebras}
Let $\mathbb{Z}$ be the integer ring. Let $\F_p$ be the prime subfield of $\F$. Given $g\in \mathbb{Z}$, let $\overline{g}$ be the image of $g$ under the unital ring homomorphism from $\mathbb{Z}$ to $\F_p$.

Let $\mathbb{A}$ be an algebra with the identity element $1_{\mathbb{A}}$. The center $\mathrm{Z}(\mathbb{A})$ of $\mathbb{A}$ is defined to be $\{a: a\in \mathbb{A},\ ab=ba\ \forall\ b\in \mathbb{A}\}$. It is a subalgebra of $\mathbb{A}$ with the identity element $1_{\mathbb{A}}$. Let $\mathbb{I}$ be a two-sided ideal of $\mathbb{A}$. Write $\mathbb{A}/\mathbb{I}$ for the quotient algebra of $\mathbb{A}$ with respect to $\mathbb{I}$. Call $\mathbb{I}$ a nilpotent two-sided ideal of $\mathbb{A}$ if there is $h\!\in\!\mathbb{N}_0\setminus\{0\}$ such that the product of any $h$ elements in $\mathbb{I}$ is zero. If $\mathbb{I}$ is a nilpotent two-sided ideal of $\mathbb{A}$, the nilpotency $n(\mathbb{I})$ of $\mathbb{I}$ is the smallest choice of $h$. The Jacobson radical $\mathrm{Rad}(\mathbb{A})$ of $\mathbb{A}$ is the sum of all nilpotent two-sided ideals of $\mathbb{A}$. Recall that $\mathrm{Rad}(\mathbb{A})$ is also a nilpotent two-sided ideal of $\mathbb{A}$. Call $\mathbb{A}$ a semisimple algebra if $\mathbb{A}$ is a direct sum of its minimal two-sided ideals. Recall that $\mathbb{A}$ is semisimple if and only if $\mathrm{Rad}(\mathbb{A})$ is the zero space. If $e\in \mathbb{A}$ and $e^2=e$, let $e\mathbb{I}e=\{eae: a\in \mathbb{I}\}$. Notice that $e\mathbb{A}e$ is a subalgebra of $\mathbb{A}$ with the identity element $e$. An $\mathbb{A}$-module is called an irreducible $\mathbb{A}$-module if it does not have nonzero proper $\mathbb{A}$-submodule. We present two lemmas.
\begin{lem}\label{L;Lemma2.6}\cite[Proposition 3.2.4]{DK}
If $e\in \mathbb{A}$ and $e^2=e$, then $\mathrm{Rad}(e\mathbb{A}e)=e\mathrm{Rad}(\mathbb{A})e$. In particular, $\mathrm{Rad}(e\mathbb{A}e)\subseteq\mathrm{Rad}(\mathbb{A})$.
\end{lem}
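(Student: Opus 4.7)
The plan is to establish the two inclusions $e\mathrm{Rad}(\mathbb{A})e\subseteq\mathrm{Rad}(e\mathbb{A}e)$ and $\mathrm{Rad}(e\mathbb{A}e)\subseteq e\mathrm{Rad}(\mathbb{A})e$ separately; the ``in particular'' clause is then automatic, since $e\mathrm{Rad}(\mathbb{A})e\subseteq\mathrm{Rad}(\mathbb{A})$ is obvious.

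For the forward inclusion, I would first verify that $e\mathrm{Rad}(\mathbb{A})e$ is a two-sided ideal of $e\mathbb{A}e$: for $eae\in e\mathbb{A}e$ and $ere\in e\mathrm{Rad}(\mathbb{A})e$, the product $(eae)(ere)=e(aere)e$ lies in $e\mathrm{Rad}(\mathbb{A})e$ because $aere\in\mathrm{Rad}(\mathbb{A})$, and a symmetric argument handles right multiplication. Because $\mathbb{A}$ is a finite-dimensional algebra, $\mathrm{Rad}(\mathbb{A})$ is nilpotent of some nilpotency $m$, and then any product of $m$ elements of $e\mathrm{Rad}(\mathbb{A})e$ collapses to the form $er_1er_2\cdots er_me$, which vanishes since the inner string $r_1er_2\cdots er_m$ belongs to $\mathrm{Rad}(\mathbb{A})^m=0$. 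Thus $e\mathrm{Rad}(\mathbb{A})e$ is a nilpotent two-sided ideal of $e\mathbb{A}e$, whence contained in $\mathrm{Rad}(e\mathbb{A}e)$.

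For the reverse inclusion, I would pass to the semisimple quotient $\mathbb{B}:=\mathbb{A}/\mathrm{Rad}(\mathbb{A})$, equipped with the idempotent $\overline{e}:=e+\mathrm{Rad}(\mathbb{A})$. The canonical projection $\mathbb{A}\to\mathbb{B}$ restricts to an algebra homomorphism $e\mathbb{A}e\to\overline{e}\mathbb{B}\overline{e}$ which is surjective (every $\overline{e}\overline{b}\overline{e}$ is the image of $ebe$) and whose kernel is $e\mathbb{A}e\cap\mathrm{Rad}(\mathbb{A})$. Using $e^2=e$, any $eae$ lying in $\mathrm{Rad}(\mathbb{A})$ may be rewritten as $e(eae)e\in e\mathrm{Rad}(\mathbb{A})e$, so this kernel equals $e\mathrm{Rad}(\mathbb{A})e$. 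Thus $e\mathbb{A}e/e\mathrm{Rad}(\mathbb{A})e\cong \overline{e}\mathbb{B}\overline{e}$. Since $\mathbb{B}$ is semisimple and corners of semisimple algebras are again semisimple, $\overline{e}\mathbb{B}\overline{e}$ is semisimple, so $\mathrm{Rad}(e\mathbb{A}e/e\mathrm{Rad}(\mathbb{A})e)=0$, which forces $\mathrm{Rad}(e\mathbb{A}e)\subseteq e\mathrm{Rad}(\mathbb{A})e$ and completes the proof.

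The main obstacle is the reverse inclusion. The two standard inputs it needs—nilpotency of $\mathrm{Rad}(\mathbb{A})$ in a finite-dimensional algebra and the fact that corners of semisimple algebras are semisimple—are classical, so the only step requiring real care is the kernel computation $e\mathbb{A}e\cap\mathrm{Rad}(\mathbb{A})=e\mathrm{Rad}(\mathbb{A})e$, which is precisely where the idempotency $e^2=e$ is used.
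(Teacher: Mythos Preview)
Your proof is correct. Note, however, that the paper does not supply its own proof of this lemma: it is stated with a citation to \cite[Proposition 3.2.4]{DK} and used as a black box. What you have written is essentially the standard argument one finds in such references, so there is nothing to compare against here beyond observing that your two-inclusion approach (nilpotency of $e\mathrm{Rad}(\mathbb{A})e$ for the forward direction, passage to the semisimple quotient and the corner $\overline{e}\mathbb{B}\overline{e}$ for the reverse) is exactly the classical route.
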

\begin{lem}\label{L;Lemma2.7}\cite[Corollary 3.1.7]{DK}
If $g\in\mathbb{N}_0\setminus\{0\}$ and $\mathbb{A}/\mathrm{Rad}(\mathbb{A})$ is a direct sum of $g$ minimal two-sided ideals of $\mathbb{A}/\mathrm{Rad}(\mathbb{A})$, then the number of all isomorphic classes of irreducible $\mathbb{A}$-modules equals $g$.
\end{lem}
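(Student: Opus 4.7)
The plan is to reduce the counting of isomorphism classes of irreducible $\mathbb{A}$-modules to a counting of simple direct summands of the semisimple quotient $\mathbb{A}/\mathrm{Rad}(\mathbb{A})$, and then identify those summands with the $g$ minimal two-sided ideals supplied by the hypothesis.

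First I would establish the standard fact that every irreducible $\mathbb{A}$-module $M$ is annihilated by $\mathrm{Rad}(\mathbb{A})$. Given any nonzero $m\in M$, irreducibility forces $\mathbb{A}m=M$, while $\mathrm{Rad}(\mathbb{A})m$ is an $\mathbb{A}$-submodule of $M$. The nilpotency of $\mathrm{Rad}(\mathbb{A})$ recorded in the excerpt prevents $\mathrm{Rad}(\mathbb{A})m=M$, so $\mathrm{Rad}(\mathbb{A})m=0$. Consequently the inflation along the canonical surjection $\mathbb{A}\twoheadrightarrow\mathbb{A}/\mathrm{Rad}(\mathbb{A})$ induces a bijection between isomorphism classes of irreducible modules on both sides, and it suffices to count the irreducibles of $\mathbb{A}/\mathrm{Rad}(\mathbb{A})$.

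Next, using that $\mathbb{A}/\mathrm{Rad}(\mathbb{A})$ is semisimple (its Jacobson radical is zero, as noted in the excerpt), the hypothesis lets me write
\[
\mathbb{A}/\mathrm{Rad}(\mathbb{A})=\bigoplus_{i=1}^{g}\mathbb{A}_i,
\]
where each $\mathbb{A}_i$ is a minimal two-sided ideal and hence a simple finite-dimensional $\F$-algebra. The associated orthogonal central idempotents $e_1,\ldots,e_g$ sum to the identity. For any irreducible $\mathbb{A}/\mathrm{Rad}(\mathbb{A})$-module $N$ the decomposition $N=\bigoplus_{i=1}^{g}e_iN$ into submodules has exactly one nonzero summand $e_jN=N$, and by the Wedderburn--Artin structure theorem each simple $\F$-algebra $\mathbb{A}_j$ possesses a unique irreducible module up to isomorphism.

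Finally, distinct indices $j$ produce pairwise non-isomorphic modules, because the central idempotent $e_j$ acts as the identity on the irreducible module attached to $\mathbb{A}_j$ but as zero on every other; combining the preceding correspondences yields a bijection between $\{1,\ldots,g\}$ and the isomorphism classes of irreducible $\mathbb{A}$-modules. The only delicate point is invoking Wedderburn--Artin over the arbitrary field $\F$; everything else is a formal manipulation of central idempotents and the lifting of irreducibles through $\mathrm{Rad}(\mathbb{A})$.
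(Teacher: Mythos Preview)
Your argument is correct and is the standard textbook proof. Note, however, that the paper does not actually prove this lemma: it is stated with a citation to \cite[Corollary 3.1.7]{DK} and no proof is given, so there is nothing in the paper to compare your approach against. Your write-up is essentially what one would expect to find behind that citation.
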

\subsection{Terwilliger $\F$-algebras of schemes}
For any $g\in\mathbb{N}_0\setminus\{0\}$, let $\mathrm{M}_g(\F)$ denote the full matrix algebra of $(g\times g)$-matrices whose entries are in $\F$. For any nonempty finite set $\mathbb{U}$, let $\mathrm{M}_{\mathbb{U}}(\F)$ be the full matrix algebra of square $\F$-matrices whose rows and columns are labeled by the elements in $\mathbb{U}$. Then   $\mathrm{M}_\mathbb{U}(\F)\cong \mathrm{M}_{|\mathbb{U}|}(\F)$ as algebras. Let $I, J, O$ denote the identity matrix, the all-one matrix, and the all-zero matrix in $\mathrm{M}_\mathbb{X}(\F)$, respectively. If $x, y\in \mathbb{X}$ and $M\in \mathrm{M}_\mathbb{X}(\F)$, the $(x,y)$-entry of $M$ is denoted by $M(x,y)$. The transpose of $M$ is denoted by $M^T$.

For any $g, h\in \mathbb{N}_0$, let $\delta_{gh}$ denote the Kronecker delta of $g$ and $h$ whose values are from $\F$. For any $x\in \mathbb{X}$ and $g\in [0,d]$, the adjacency $\F$-matrix $A_g$ with respect to $R_g$ is the $\{\overline{0},\overline{1}\}$-matrix in $\mathrm{M}_\mathbb{X}(\F)$, where $A_g(y,z)=\overline{1}$ if and only if $z\in yR_g$. The dual $\F$-idempotent $E_g^*(x)$ with respect to $x$ and $R_g$ is the diagonal $\{\overline{0},\overline{1}\}$-matrix in $\mathrm{M}_\mathbb{X}(\F)$, where $E_g^*(x)(y,y)\!=\!\overline{1}$ if and only if $y\in xR_g$. For any $g, h\in [0,d]$ and $x\in \mathbb{X}$,
\begin{align}\label{Eq;1}
A_g^T=A_{g'}\ \text{and}\ {E_g^*(x)}^T=E_g^*(x) \ (A_g^T=A_g\ \text{if $\mathbb{S}$ is symmetric}),
\end{align}
\begin{align}\label{Eq;2}
E_g^*(x)E_h^*(x)=\delta_{gh}E_g^*(x),
\end{align}
\begin{align}\label{Eq;3}
A_0=I=\sum_{i=0}^dE_i^*(x).
\end{align}

Pick $x\in \mathbb{X}$. The Terwilliger $\F$-algebra of $\mathbb{S}$ with respect to $x$, denoted by $\mathbb{T}(x)$, is the subalgebra of $\mathrm{M}_\mathbb{X}(\F)$ generated by $E_0^*(x), E_1^*(x),\ldots, E_d^*(x), A_0, A_1,\ldots, A_d$. By the definition of $\mathbb{T}(x)$ and \eqref{Eq;1}, notice that $M\in\mathbb{T}(x)$ if and only if $M^T\in\mathbb{T}(x)$. Moreover, notice that $M\in \mathrm{Rad}(\mathbb{T}(x))$ if and only if $M^T\in\mathrm{Rad}(\mathbb{T}(x))$. It is known that the algebraic structures of $\mathbb{T}(x)$ and $\mathrm{Rad}(\mathbb{T}(x))$ may depend on the choices of the fixed field $\F$ and $x$ (\cite[5.1]{Han}). For some progress on the algebraic structure of $\mathbb{T}(x)$, one may refer to \cite{Han}, \cite{J1}, and \cite{J2}. The following lemmas are necessary for us.
\begin{lem}\label{L;Lemma2.8}\cite[Theorem 3.4]{Han}
Assume that $x\in \mathbb{X}$. Then $\mathbb{T}(x)$ is semisimple only if $p\nmid k_g$ for any $g\in [0,d]$.
\end{lem}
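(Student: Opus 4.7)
The plan is to prove the contrapositive: assuming $p\mid k_g$ for some $g\in[0,d]$, I will exhibit a nonzero nilpotent two-sided ideal of $\mathbb{T}(x)$, forcing $\mathrm{Rad}(\mathbb{T}(x))\neq\{O\}$ and hence non-semisimplicity. Since $k_0=p_{00}^{0}=1$, only the range $g\in[1,d]$ needs to be treated.

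First I would introduce the element
$$P:=\Bigl(\sum_{h=0}^{d}A_h\Bigr)E_g^*(x)=JE_g^*(x)\in\mathbb{T}(x).$$
A direct entry computation from \eqref{Eq;2}--\eqref{Eq;3} shows that $P(a,b)=\overline{1}$ exactly when $b\in xR_g$, and is $\overline{0}$ otherwise; in particular $P\neq O$. A similar entry computation yields the identity
$$PTP=s(T)\cdot P,\qquad s(T):=\sum_{a\in xR_g}\sum_{b\in \mathbb{X}} T(a,b)\in\F,$$
valid for every $T\in\mathrm{M}_\mathbb{X}(\F)$.

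The technical heart of the argument is to show that, under $p\mid k_g$, one has $s(T)=\overline{0}$ for every $T\in\mathbb{T}(x)$. Since $s$ is $\F$-linear and $\mathbb{T}(x)$ is spanned by products of generators of the shape
$$T=E_{i_0}^*(x)A_{h_1}E_{i_1}^*(x)A_{h_2}\cdots A_{h_n}E_{i_n}^*(x),$$
it suffices to verify $s(T)=\overline{k_g}\cdot\overline{N}$ for some nonnegative integer $N$ attached to each such product. Reading $T(a,b)$ combinatorially as the number of walks $(a=c_0,c_1,\ldots,c_n=b)$ with $(c_{j-1},c_j)\in R_{h_j}$ and $c_j\in xR_{i_j}$, one splits into cases: if $i_0\neq g$ then $T(a,b)=0$ for all $a\in xR_g$ and so $s(T)=0$ outright; otherwise, for each fixed $a\in xR_g$, the inner sum $\sum_{b}T(a,b)$ counts partial walks starting at $a$ with a free endpoint, and by iterated use of the intersection-number axiom this count depends only on the relation $R_g$ between $x$ and $a$. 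Hence it equals a common integer $N$ independent of $a\in xR_g$, so $s(T)=k_g\cdot N$ reduces to $\overline{0}$ in $\F$.

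Once $s$ vanishes identically on $\mathbb{T}(x)$, one has $P\,\mathbb{T}(x)\,P=\{O\}$, and so the two-sided ideal $\mathbb{I}:=\mathbb{T}(x)\,P\,\mathbb{T}(x)$ satisfies
$$\mathbb{I}^2=\mathbb{T}(x)\,P\,(\mathbb{T}(x)\mathbb{T}(x))\,P\,\mathbb{T}(x)\subseteq \mathbb{T}(x)\,(P\,\mathbb{T}(x)\,P)\,\mathbb{T}(x)=\{O\}.$$
Since $P\in\mathbb{I}$ and $P\neq O$, $\mathbb{I}$ is a nonzero nilpotent two-sided ideal of $\mathbb{T}(x)$, so $\mathrm{Rad}(\mathbb{T}(x))\neq\{O\}$ and $\mathbb{T}(x)$ is not semisimple. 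The main obstacle is the middle step: although the underlying walk-counting is classical, one must carefully confirm that the intersection-number axiom iterates to produce a walk-count independent of the starting vertex within the fibre $xR_g$, which is precisely what allows the factor $k_g$ to be extracted and converts $p\mid k_g$ into the vanishing of $s$.
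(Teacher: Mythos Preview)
Your argument is correct. The paper does not provide its own proof of this lemma; it simply cites \cite[Theorem 3.4]{Han}, so there is nothing in the paper itself to compare against.

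For completeness: the step you flag as the main obstacle is in fact routine. For a spanning word $T=E_{i_0}^*(x)A_{h_1}E_{i_1}^*(x)\cdots A_{h_n}E_{i_n}^*(x)$ with $i_0=g$, iterating the identity $|xR_{i_j}\cap c_{j-1}R_{h_j}|=p_{i_j h_j'}^{\,i_{j-1}}$ (valid at each stage because $c_{j-1}\in xR_{i_{j-1}}$) yields
\[
\sum_{b\in\mathbb{X}}T(a,b)=\prod_{j=1}^{n} p_{i_j h_j'}^{\,i_{j-1}}
\]
for every $a\in xR_g$, independently of $a$. Hence $s(T)=\overline{k_g}\cdot\overline{N}$ with $N=\prod_{j} p_{i_j h_j'}^{\,i_{j-1}}$, and the rest of your argument (namely $P\mathbb{T}(x)P=\{O\}$ and $(\mathbb{T}(x)P\mathbb{T}(x))^2=\{O\}$) goes through as written.
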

\begin{lem}\label{L;Lemma2.9}\cite[Lemma 2.4 (i) and (ii)]{J2}
Assume that $x\in \mathbb{X}$ and $g, h, i\in [0,d]$. Then $p_{gh}^i\neq 0$ if and only if $E_g^*(x)A_hE_i^*(x)\neq O$. Moreover, $\mathbb{T}(x)$ has an $\F$-linearly independent subset $\{E_a^*(x)A_bE_c^*(x): p_{ab}^c\neq0\}$ with cardinality $|\{(a, b, c): p_{ab}^c\neq0\}|$.
\end{lem}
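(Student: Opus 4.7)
The proof hinges on computing the $(y,z)$-entry of $E_g^*(x)A_hE_i^*(x)$ explicitly. Since $E_g^*(x)$ and $E_i^*(x)$ are diagonal, we have
\[
\bigl(E_g^*(x)A_hE_i^*(x)\bigr)(y,z)=E_g^*(x)(y,y)\,A_h(y,z)\,E_i^*(x)(z,z),
\]
which equals $\overline{1}$ precisely when $y\in xR_g$, $z\in yR_h$, and $z\in xR_i$, and equals $\overline{0}$ otherwise. The plan is to run this observation in both directions.

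First I would dispatch the forward implication by contraposition. Suppose $p_{gh}^i=0$. Because $k_i>0$, there exists some $z\in xR_i$, and $p_{gh}^i$ counts exactly the $y$'s with $(x,y)\in R_g$ and $(y,z)\in R_h$; thus no such $y$ exists for any $z\in xR_i$, and if $z\notin xR_i$ the rightmost factor already kills the entry. Hence every entry of $E_g^*(x)A_hE_i^*(x)$ vanishes. Conversely, assume $p_{gh}^i\ne 0$. Pick any $z\in xR_i$ (possible since $k_i>0$); then $p_{gh}^i\ne 0$ supplies a $y$ with $(x,y)\in R_g$ and $(y,z)\in R_h$, so the $(y,z)$-entry equals $\overline{1}\ne\overline{0}$. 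This gives the first statement.

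For linear independence, the key point is that the matrices in the set $\{E_a^*(x)A_bE_c^*(x): p_{ab}^c\ne 0\}$ have pairwise disjoint supports. Indeed, the calculation above shows that the support of $E_a^*(x)A_bE_c^*(x)$ is
\[
\{(y,z):y\in xR_a,\ (y,z)\in R_b,\ z\in xR_c\},
\]
and the families $\{xR_j\}_{j\in[0,d]}$ and $\{R_j\}_{j\in[0,d]}$ partition $\mathbb{X}$ and $\mathbb{X}\times\mathbb{X}$ respectively; so a given pair $(y,z)$ lies in the support of at most one such triple $(a,b,c)$. Combined with the first statement, which guarantees all these matrices are nonzero, the standard disjoint-support argument yields $\F$-linear independence, and the cardinality claim is immediate from the indexing.

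No serious obstacle is expected; the whole argument is a direct entrywise computation, and the only subtle bookkeeping is the observation that the support condition forces at most one admissible triple $(a,b,c)$ per entry $(y,z)$, which is why disjointness (hence linear independence) comes for free once the first statement is in hand.
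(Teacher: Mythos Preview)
Your proof is correct. The paper does not actually prove this lemma; it simply imports it verbatim from \cite[Lemma 2.4 (i) and (ii)]{J2}, so there is no in-paper argument to compare against. Your entrywise computation, followed by the disjoint-support observation, is exactly the standard way to establish this result and would serve as a self-contained proof had the paper chosen to include one.
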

\begin{lem}\label{L;Lemma2.10}
Assume that $x\in \mathbb{X}$ and $\mathbb{S}$ is triply regular. Then $\mathbb{T}(x)$ has an $\F$-basis $\{E_a^*(x)A_bE_c^*(x): p_{ab}^c\neq0\}$ with cardinality $|\{(a, b, c): p_{ab}^c\neq0\}|$. Moreover, if $\mathbb{S}$ is symmetric and $g\in [0,d]$, the subalgebra $E_g^*(x)\mathbb{T}(x)E_g^*(x)$ of $\mathbb{T}(x)$ is commutative.
\end{lem}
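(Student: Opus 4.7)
The plan is to establish the basis claim by proving that the span $\mathcal{M}$ of the proposed set coincides with $\mathbb{T}(x)$, then deduce commutativity from a transpose-plus-triple-regularity argument.

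For the basis, note that Lemma \ref{L;Lemma2.9} already supplies linear independence, so only spanning needs work. I would prove that $\mathcal{M}=\langle\{E_a^*(x)A_bE_c^*(x):a,b,c\in[0,d]\}\rangle_\F$ is a subalgebra of $\mathrm{M}_\mathbb{X}(\F)$ containing all generators of $\mathbb{T}(x)$. Containment of the $E_g^*(x)$ is immediate from $E_g^*(x)=E_g^*(x)A_0E_g^*(x)$, and containment of each $A_b$ follows by sandwiching with \eqref{Eq;3}: $A_b=\sum_{a,c}E_a^*(x)A_bE_c^*(x)$. For closure under multiplication, use \eqref{Eq;2} to reduce to showing that each product $E_a^*(x)A_{b_1}E_{a_1}^*(x)A_{b_2}E_c^*(x)$ lies in $\mathcal{M}$. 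Computing the $(y,z)$-entry: it vanishes unless $y\in xR_a$ and $z\in xR_c$, and for such $y,z$ with $(y,z)\in R_b$ it equals $\overline{|xR_{a_1}\cap yR_{b_1}\cap zR_{b_2'}|}$. By triple regularity this cardinality depends only on $a,c,b,a_1,b_1,b_2$, so the whole product is a fixed $\F$-linear combination of the matrices $E_a^*(x)A_bE_c^*(x)$, as required.

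For the commutativity statement, first observe that a basis of $E_g^*(x)\mathbb{T}(x)E_g^*(x)$ is $\{E_g^*(x)A_bE_g^*(x):p_{gb}^g\neq 0\}$: apply $E_g^*(x)$ on both sides of the basis from Part 1 and use \eqref{Eq;2}. Commutativity is equivalent to
\[
E_g^*(x)A_{b_1}E_g^*(x)A_{b_2}E_g^*(x)=E_g^*(x)A_{b_2}E_g^*(x)A_{b_1}E_g^*(x)
\]
for all $b_1,b_2$. Since $\mathbb{S}$ is symmetric, \eqref{Eq;1} gives $A_b^T=A_b$ and $E_g^*(x)^T=E_g^*(x)$, so transposing the left-hand side yields the right-hand side. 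Hence it suffices to show that the left-hand side is a symmetric matrix.

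Fix $y,z\in xR_g$ and let $b\in[0,d]$ satisfy $(y,z)\in R_b$; by symmetry $(z,y)\in R_b$ as well. The $(y,z)$-entry is $\overline{|xR_g\cap yR_{b_1}\cap zR_{b_2}|}$ while the $(z,y)$-entry is $\overline{|xR_g\cap zR_{b_1}\cap yR_{b_2}|}$. Triple regularity says each cardinality depends only on $(g,b_1,b_2)$ and the shape $(g,g,b)$ of the triangle $(x,y,z)$, respectively $(x,z,y)$, which is identical in both cases. Therefore the two entries coincide, the matrix is symmetric, and commutativity follows. The only delicate point is verifying that the swap $y\leftrightarrow z$ genuinely preserves the triangle shape under symmetry; this is where the symmetry hypothesis is indispensable, and is the main conceptual step of the argument.
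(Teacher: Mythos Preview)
Your argument is correct and is essentially the content behind the paper's one-line proof, which cites \cite[Lemma~4]{MA} together with Lemma~\ref{L;Lemma2.9} and \eqref{Eq;1}; you have simply unpacked Munemasa's triple-regularity computation and the transpose argument rather than citing them as a black box. The route---linear independence from Lemma~\ref{L;Lemma2.9}, spanning via closure of $\mathcal{M}$ under products using triple regularity, and commutativity via the symmetry of $E_g^*(x)A_{b_1}E_g^*(x)A_{b_2}E_g^*(x)$ from triple regularity plus $b'=b$---is exactly the intended one.
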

\begin{proof}
The desired lemma is from combining \cite[Lemma 4]{MA}, Lemma \ref{L;Lemma2.9}, \eqref{Eq;1}.
\end{proof}
We close this section by simplifying the notation. Recall that $n$ is a fixed number in $\mathbb{N}_0\setminus\{0\}$ and the fixed set $\mathbb{U}_g$ has cardinality $u_g$ for any $g\in [1,n]$. From now on, assume that $\mathbb{X}=\prod_{g=1}^n\mathbb{U}_g$, $d=2^n-1$, $R_h=\{(\mathbf{a}, \mathbf{b}): \mathbf{a}=_h\mathbf{b}\}$ for any $h\in [0,d]$. Therefore $\mathbb{S}$ is a factorial scheme with the parameters $u_1, u_2,\ldots, u_n$. From now on, we shall quote the fact that $\mathbb{S}$ is both symmetric and commutative without reference. Furthermore, we shall also quote the fact that $\mathbb{S}$ is triply regular without reference. Fix $\mathbf{x}\in \mathbb{X}$. For convenience, we write $\mathbb{T}=\mathbb{T}(\mathbf{x})$ and $E_h^*=E_h^*(\mathbf{x})$ for any $h\in [0,d]$.
\section{Closed subsets of factorial schemes}
In this section, we determine all closed subsets in $\mathbb{S}$. Moreover, we also determine all strongly normal closed subsets in $\mathbb{S}$. We start our discussion with two lemmas.
\begin{lem}\label{L;Lemma3.1}
Assume that $g, h\in [0,d]$. Then $g=h$ if and only if $\mathbb{P}(g)=\mathbb{P}(h)$. In particular, $\leq_2$ is a partial order on the set $[0, d]$.
\end{lem}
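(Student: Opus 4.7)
The plan is to reduce the lemma to the uniqueness of the $2$-adic expansion of nonnegative integers below $2^n$, and then transport the partial order $\subseteq$ on subsets of $[1,n]$ across the bijection $\nu$.

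First I would unpack the definitions. By construction, $\nu(g)=(g_{(1)},\ldots,g_{(n)})$ is simply the indicator vector of $\mathbb{P}(g)\subseteq[1,n]$: coordinate $a$ equals $\overline{1}$ (as an integer, $1$) precisely when $a\in\mathbb{P}(g)$. Hence the map $g\mapsto \mathbb{P}(g)$ factors as the composition of $\nu$ with the canonical bijection between $\{0,1\}^n$ and subsets of $[1,n]$. Since the paper already records that $\nu$ is a bijection (via uniqueness of $2$-adic decomposition for integers in $[0,2^n-1]$), the composite is a bijection too.

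Next I would carry out the forward and backward implications explicitly. If $g=h$, then obviously $\nu(g)=\nu(h)$, so $\mathbb{P}(g)=\mathbb{P}(h)$. Conversely, if $\mathbb{P}(g)=\mathbb{P}(h)$, then $g_{(a)}=h_{(a)}$ for every $a\in[1,n]$, so
\[
g=\sum_{a=1}^{n}g_{(a)}2^{a-1}=\sum_{a=1}^{n}h_{(a)}2^{a-1}=h.
\]
This gives the first assertion.

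For the partial-order statement I would observe that $\subseteq$ is a partial order on the power set of $[1,n]$ (reflexive, antisymmetric, transitive). The relation $\leq_2$ on $[0,d]$ is the pullback of $\subseteq$ along the map $g\mapsto\mathbb{P}(g)$, which by the first part is injective. Reflexivity and transitivity transfer for free; antisymmetry uses precisely the first part: if $g\leq_2 h$ and $h\leq_2 g$ then $\mathbb{P}(g)=\mathbb{P}(h)$, hence $g=h$. There is no real obstacle here; the content of the lemma is just the uniqueness of the base-$2$ representation of integers in $[0,2^n-1]$, and the rest is a transparent translation between integers, binary vectors, and subsets of $[1,n]$.
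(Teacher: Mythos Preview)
Your proposal is correct and follows essentially the same approach as the paper: both deduce the first statement from the bijectivity of $\nu$ (equivalently, uniqueness of the $2$-adic expansion), and both obtain the partial-order claim directly from the definition of $\leq_2$ together with the first statement. Your version spells out the antisymmetry step more explicitly, but the underlying argument is identical.
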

\begin{proof}
As $\mathbb{P}(g)$ uniquely determines $\nu(g)$ and $\nu(g)$ uniquely determines $g$, the first statement is proved. The desired lemma thus follows from the definition of $\leq_2$.
\end{proof}
\begin{lem}\label{L;Lemma3.2}
Assume that $g\in [0,d]$. Then there is a unique $R_h\in R_gR_g$ such that $\mathbb{P}(h)=\mathbb{P}_2(h)=\mathbb{P}_2(g)$.
\end{lem}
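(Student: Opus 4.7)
The plan is to first reformulate what it means for an $R_a$ to lie in $R_gR_g$ in the factorial setting: by definition of complex multiplication, $R_a\in R_gR_g$ exactly when there exist $\mathbf{x},\mathbf{y},\mathbf{z}\in\mathbb{X}$ with $\mathbf{x}=_g\mathbf{z}$, $\mathbf{z}=_g\mathbf{y}$, $\mathbf{x}=_a\mathbf{y}$. I would analyze this coordinatewise. For $k\notin\mathbb{P}(g)$, both relations force $\mathbf{x}_k=\mathbf{z}_k=\mathbf{y}_k$, so $k\notin\mathbb{P}(a)$. For $k\in\mathbb{P}(g)$ with $u_k=2$, the condition $\mathbf{x}_k\neq\mathbf{z}_k\neq\mathbf{y}_k$ in a two-element set forces $\mathbf{x}_k=\mathbf{y}_k$, so again $k\notin\mathbb{P}(a)$. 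Thus necessarily $\mathbb{P}(a)\subseteq\mathbb{P}_2(g)$.

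For existence, I would let $h$ be the unique element of $[0,d]$ with $\mathbb{P}(h)=\mathbb{P}_2(g)$, which is well defined by Lemma \ref{L;Lemma3.1}. Since every $k\in\mathbb{P}_2(g)$ satisfies $u_k>2$, we automatically get $\mathbb{P}_2(h)=\mathbb{P}(h)=\mathbb{P}_2(g)$. It remains to exhibit a realizer, i.e.\ to show $R_h\in R_gR_g$. Fixing the reference point $\mathbf{x}$, I would build $\mathbf{z}$ and $\mathbf{y}$ explicitly by specifying each coordinate: set $\mathbf{y}_k=\mathbf{x}_k$ for $k\notin\mathbb{P}_2(g)$ and pick $\mathbf{y}_k\neq\mathbf{x}_k$ for $k\in\mathbb{P}_2(g)$; then choose $\mathbf{z}_k=\mathbf{x}_k$ for $k\notin\mathbb{P}(g)$, pick the unique $\mathbf{z}_k\neq\mathbf{x}_k$ when $k\in\mathbb{P}(g)$ and $u_k=2$ (noting $\mathbf{y}_k=\mathbf{x}_k$ so $\mathbf{z}_k\neq\mathbf{y}_k$ as well), and for $k\in\mathbb{P}_2(g)$ use $u_k\geq 3$ to choose $\mathbf{z}_k$ distinct from both $\mathbf{x}_k$ and $\mathbf{y}_k$. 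The resulting triple verifies $\mathbf{x}=_g\mathbf{z}$, $\mathbf{z}=_g\mathbf{y}$, $\mathbf{x}=_h\mathbf{y}$, so $p_{gg}^h>0$ and $R_h\in R_gR_g$.

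For uniqueness, suppose $R_{h}$ and $R_{h'}$ both satisfy the stated properties. Then $\mathbb{P}(h)=\mathbb{P}_2(g)=\mathbb{P}(h')$, and Lemma \ref{L;Lemma3.1} gives $h=h'$. This completes the argument.

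The computation is essentially a bookkeeping exercise, so I do not anticipate any real obstacle; the only point that deserves care is the case $u_k=2$ with $k\in\mathbb{P}(g)$, where the forced choice of $\mathbf{z}_k$ and the automatic equality $\mathbf{x}_k=\mathbf{y}_k$ explain why $\mathbb{P}_2(g)$ (rather than $\mathbb{P}(g)$) is the correct candidate for $\mathbb{P}(h)$.
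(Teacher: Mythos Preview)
Your proof is correct and follows essentially the same route as the paper: a coordinatewise construction of a witnessing triple, with uniqueness coming from Lemma~\ref{L;Lemma3.1}. Your first paragraph (showing $\mathbb{P}(a)\subseteq\mathbb{P}_2(g)$ for every $R_a\in R_gR_g$) is correct but not needed for the lemma, since uniqueness of $h$ with $\mathbb{P}(h)=\mathbb{P}_2(g)$ already follows directly from Lemma~\ref{L;Lemma3.1}; the paper omits this step.
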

\begin{proof}
Pick $\mathbf{y}\in\mathbf{x}R_g$. Hence $\mathbf{y}_i\neq\mathbf{x}_i$ and $\mathbf{y}_j\!=\!\mathbf{x}_j$ for any $i\in \mathbb{P}(g)$ and $j\in [1,n]\setminus\mathbb{P}(g)$. Then there is $\mathbf{z}\in \mathbf{x}R_g$ such that $\mathbf{z}_i\notin\{\mathbf{x}_i, \mathbf{y}_i\}$, $\mathbf{z}_j=\mathbf{y}_j\neq\mathbf{x}_j$, and $\mathbf{z}_k=\mathbf{y}_k=\mathbf{x}_k$ for any $i\in \mathbb{P}_2(g)$, $j\in \mathbb{P}(g)\setminus\mathbb{P}_2(g)$, and $k\in [1,n]\setminus\mathbb{P}(g)$. So there is $R_h\in R_gR_g$ such that $\mathbf{z}\in\mathbf{y}R_h$ and $\mathbb{P}(h)=\mathbb{P}_2(h)=\mathbb{P}_2(g)$. As $\mathbb{P}(h)=\mathbb{P}_2(h)=\mathbb{P}_2(g)$, notice that the uniqueness of $h$ can be proved by Lemma \ref{L;Lemma3.1}. The desired lemma thus follows.
\end{proof}
Lemma \ref{L;Lemma3.2} motivates us to introduce the following notation and four lemmas.
\begin{nota}\label{N;Notation3.3}
\em Assume that $g\in[0,d]$. Then $h$ in Lemma \ref{L;Lemma3.2} is denoted by $\widetilde{g}$. Hence $R_{\widetilde{g}}$ is the unique element in $R_gR_g$ that has the property $\mathbb{P}(\widetilde{g})=\mathbb{P}_2(\widetilde{g})=\mathbb{P}_2(g)$. For example, if $n=u_1\!=\!2$ and $u_2\!=\!3$, then $d=3$, $\mathbb{P}(2)=\mathbb{P}_2(2)=\mathbb{P}_2(3)=\{2\}$, $\widetilde{3}=2$.
\end{nota}
\begin{lem}\label{L;Lemma3.4}
Assume that $g, h\in [0,d]$ and $h\leq_2\widetilde{g}$. Then $R_h\in \langle R_{\widetilde{g}}\rangle$.
\end{lem}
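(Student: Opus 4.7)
The plan is to exploit the closure property of $\langle R_{\widetilde{g}}\rangle$: since $\langle R_{\widetilde{g}}\rangle$ is a closed subset of $\mathbb{S}$ containing $R_{\widetilde{g}}$ and $\mathbb{S}$ is symmetric (so $\widetilde{g}'=\widetilde{g}$), the definition of closedness forces $R_{\widetilde{g}}R_{\widetilde{g}}=\{R_{\widetilde{g}}\}\{R_{\widetilde{g}}\}'\subseteq\langle R_{\widetilde{g}}\rangle$. Therefore it is enough to prove $R_h\in R_{\widetilde{g}}R_{\widetilde{g}}$, which, by the definition of complex multiplication, amounts to producing $\mathbf{y},\mathbf{z}\in\mathbb{X}$ with $(\mathbf{x},\mathbf{y})\in R_{\widetilde{g}}$, $(\mathbf{y},\mathbf{z})\in R_{\widetilde{g}}$, and $(\mathbf{x},\mathbf{z})\in R_h$.

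To carry out the construction, I would use the fact that $\mathbb{P}(\widetilde{g})=\mathbb{P}_2(g)$, so every coordinate $i\in\mathbb{P}(\widetilde{g})$ satisfies $u_i\geq 3$; the hypothesis $h\leq_2\widetilde{g}$ further gives $\mathbb{P}(h)\subseteq\mathbb{P}(\widetilde{g})$. I then define $\mathbf{y}$ and $\mathbf{z}$ coordinate by coordinate in three cases: for $i\in\mathbb{P}(h)$ pick $\mathbf{y}_i,\mathbf{z}_i$ so that $\mathbf{x}_i,\mathbf{y}_i,\mathbf{z}_i$ are pairwise distinct (possible because $u_i\geq 3$); for $i\in\mathbb{P}(\widetilde{g})\setminus\mathbb{P}(h)$ pick $\mathbf{y}_i\neq\mathbf{x}_i$ and set $\mathbf{z}_i=\mathbf{x}_i$; and for $i\notin\mathbb{P}(\widetilde{g})$ set $\mathbf{y}_i=\mathbf{z}_i=\mathbf{x}_i$.

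A routine inspection of the three cases then confirms $\{i:\mathbf{x}_i\neq\mathbf{y}_i\}=\{i:\mathbf{y}_i\neq\mathbf{z}_i\}=\mathbb{P}(\widetilde{g})$ and $\{i:\mathbf{x}_i\neq\mathbf{z}_i\}=\mathbb{P}(h)$, so by Lemma~\ref{L;Lemma3.1} and the definition of $R_j$ in a factorial scheme we obtain $\mathbf{x}=_{\widetilde{g}}\mathbf{y}$, $\mathbf{y}=_{\widetilde{g}}\mathbf{z}$, and $\mathbf{x}=_h\mathbf{z}$, whence $R_h\in R_{\widetilde{g}}R_{\widetilde{g}}\subseteq\langle R_{\widetilde{g}}\rangle$. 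The only step with genuine content is the middle one, and its engine is simply the observation that two successive $R_{\widetilde{g}}$-steps can either return or fail to return to $\mathbf{x}_i$ exactly when $u_i\geq 3$ at each coordinate in $\mathbb{P}(\widetilde{g})$; this is precisely the property of $\widetilde{g}$ secured by Lemma~\ref{L;Lemma3.2}, so I do not anticipate any serious obstacle.
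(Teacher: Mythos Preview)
Your proof is correct and takes essentially the same approach as the paper: both arguments show $R_h\in R_{\widetilde{g}}R_{\widetilde{g}}\subseteq\langle R_{\widetilde{g}}\rangle$ by an explicit coordinatewise construction exploiting $u_i\geq 3$ for every $i\in\mathbb{P}(\widetilde{g})$. The only cosmetic difference is that the paper arranges $(\mathbf{y},\mathbf{z})\in R_h$ with $\mathbf{x}$ as the intermediate point (setting $\mathbf{z}_j=\mathbf{y}_j$ on $\mathbb{P}(\widetilde{g})\setminus\mathbb{P}(h)$), whereas you arrange $(\mathbf{x},\mathbf{z})\in R_h$ with $\mathbf{y}$ as the intermediate point (setting $\mathbf{z}_j=\mathbf{x}_j$ there); these are symmetric variants of the same idea.
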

\begin{proof}
Pick $\mathbf{y}\in\mathbf{x}R_{\widetilde{g}}$. Hence $\mathbf{y}_i\neq\mathbf{x}_i$ and $\mathbf{y}_j\!=\!\mathbf{x}_j$ for any $i\in \mathbb{P}(\widetilde{g})$ and $j\in [1,n]\setminus\mathbb{P}(\widetilde{g})$. According to Notation \ref{N;Notation3.3} and the assumption $h\leq_2 \widetilde{g}$, there is $\mathbf{z}\in\mathbf{x}R_{\widetilde{g}}$ such that $\mathbf{z}_i\notin\{\mathbf{x}_i, \mathbf{y}_i\}$, $\mathbf{z}_j=\mathbf{y}_j\neq \mathbf{x}_j$, and $\mathbf{z}_k=\mathbf{y}_k=\mathbf{x}_k$ for any $i\in \mathbb{P}(h)$, $j\in \mathbb{P}(\widetilde{g})\setminus\mathbb{P}(h)$, and $k\in[1,n]\setminus\mathbb{P}(\widetilde{g})$. So $\mathbf{z}\in\mathbf{y}R_{h}$ and $R_{h}\in R_{\widetilde{g}}R_{\widetilde{g}}$.
The desired lemma thus follows.
\end{proof}
\begin{lem}\label{L;Lemma3.5}
Assume that $g\!\in\! [0,d]$. Then $\{R_a: a\!\leq_2\! g\}\!\leq\!\mathbb{S}$ and $\langle R_{\widetilde{g}}\rangle\!\!=\!\!\{R_a: a\leq_2 \widetilde{g}\}$.
\end{lem}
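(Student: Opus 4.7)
The plan is to prove the two statements in turn, with the second following quickly from the first combined with Lemma \ref{L;Lemma3.4}. The key combinatorial observation driving the first part is that in a factorial scheme, whenever $R_c\in R_aR_b$ one has $\mathbb{P}(c)\subseteq \mathbb{P}(a)\cup \mathbb{P}(b)$; this is a direct consequence of the definition of $=_j$ via coordinatewise (in)equality.

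First I would verify that $\mathbb{U}:=\{R_a: a\leq_2 g\}$ is closed in $\mathbb{S}$. Since $\mathbb{P}(0)=\varnothing\subseteq \mathbb{P}(g)$, we have $R_0\in \mathbb{U}$, so $\mathbb{U}$ is nonempty. Because $\mathbb{S}$ is symmetric, $\mathbb{U}'=\mathbb{U}$, and closedness reduces to checking $\mathbb{U}\mathbb{U}\subseteq \mathbb{U}$. For this, given $a,b\leq_2 g$ with $R_c\in R_aR_b$, I would pick $\mathbf{y}\in \mathbf{x}R_a$ and $\mathbf{z}\in \mathbf{y}R_b$, so that $(\mathbf{x},\mathbf{z})\in R_c$. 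For any $i\in [1,n]\setminus(\mathbb{P}(a)\cup \mathbb{P}(b))$ one has $\mathbf{x}_i=\mathbf{y}_i=\mathbf{z}_i$, so $i\notin \mathbb{P}(c)$. Hence $\mathbb{P}(c)\subseteq \mathbb{P}(a)\cup \mathbb{P}(b)\subseteq \mathbb{P}(g)$, which by Lemma \ref{L;Lemma3.1} yields $c\leq_2 g$ and closes $\mathbb{U}$ under complex multiplication.

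For the second claim, I would apply the first part of the lemma with $g$ replaced by $\widetilde{g}$ to deduce that $\{R_a: a\leq_2 \widetilde{g}\}$ is a closed subset of $\mathbb{S}$ containing $R_{\widetilde{g}}$. By the minimality built into the definition of $\langle R_{\widetilde{g}}\rangle$, this forces $\langle R_{\widetilde{g}}\rangle\subseteq\{R_a: a\leq_2 \widetilde{g}\}$. The reverse inclusion is exactly the content of Lemma \ref{L;Lemma3.4}, so the two sets coincide.

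There is no real obstacle: the argument is a clean support-containment check followed by an appeal to minimality. The main thing to get right is the translation between the coordinatewise description of the relations and the partial order $\leq_2$, which is handled by Lemma \ref{L;Lemma3.1} and the definition of $=_j$.
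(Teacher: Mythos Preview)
Your proposal is correct and follows essentially the same approach as the paper's own proof: both establish closedness via the coordinatewise observation that $R_c\in R_aR_b$ forces $\mathbb{P}(c)\subseteq\mathbb{P}(a)\cup\mathbb{P}(b)$, and both derive the second statement by combining the first with Lemma~\ref{L;Lemma3.4} and minimality of $\langle R_{\widetilde{g}}\rangle$. The only cosmetic difference is that you explicitly invoke symmetry to reduce $\mathbb{U}\mathbb{U}'\subseteq\mathbb{U}$ to $\mathbb{U}\mathbb{U}\subseteq\mathbb{U}$, while the paper works directly with the product $R_hR_i$; one small phrasing point is that when you write ``pick $\mathbf{y}\in\mathbf{x}R_a$ and $\mathbf{z}\in\mathbf{y}R_b$, so that $(\mathbf{x},\mathbf{z})\in R_c$'' you should make explicit that such witnesses exist because $p_{ab}^c\neq 0$, rather than suggesting any choice works.
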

\begin{proof}
Pick $R_h, R_i\in \{R_a: a\leq_2 g\}$, $\mathbf{w}\in\mathbb{X}$, and $\mathbf{y}\in \mathbf{w}R_h$. Therefore $\mathbf{y}_j\neq\mathbf{w}_j$ and $\mathbf{y}_k=\mathbf{w}_k$ for any $j\in\mathbb{P}(h)$ and $k\in [1,n]\setminus \mathbb{P}(h)$. Pick $\mathbf{z}\in\mathbf{w}R_i$. Hence $\mathbf{z}_j\!\neq\! \mathbf{w}_j$ and $\mathbf{z}_k\!=\!\mathbf{w}_k$ for any $j\!\in\!\mathbb{P}(i)$ and $k\!\in\! [1,n]\setminus \mathbb{P}(i)$. Hence there is $\ell\in[0,d]$ such that $\mathbf{z}\in\mathbf{y}R_\ell$ and $\mathbb{P}(\ell)\subseteq \mathbb{P}(h)\cup\mathbb{P}(i)$. As $h\leq_2 g$ and $i\leq_2 g$, $\mathbb{P}(\ell)\subseteq\mathbb{P}(h)\cup\mathbb{P}(i)\subseteq \mathbb{P}(g)$ and $\ell\leq_2 g$. As
$\mathbf{w}$ is chosen from $\mathbb{X}$ arbitrarily, notice that $R_hR_i\subseteq\{R_a: a\leq_2 g\}$. Since $R_h$ and $R_i$ are chosen from $\{R_a: a\leq_2 g\}$ arbitrarily, it is obvious that $\{R_a: a\leq_2 g\}\!\leq\!\mathbb{S}$. The first statement is proved. By Lemma \ref{L;Lemma3.4} and the first statement, the second statement is also proved. The desired lemma thus follows.
\end{proof}
\begin{lem}\label{L;Lemma3.6}
Assume that $g, h\in[0,d]$. If $\langle R_{\widetilde{g}}\rangle\!=\!\langle R_h\rangle$, then $R_{\widetilde{g}}\!=\!R_h$. In particular, $\langle R_{\widetilde{g}}\rangle\!=\!\langle R_{\widetilde{h}}\rangle$ if and only if $\mathbb{P}(\widetilde{g})\!=\!\mathbb{P}(\widetilde{h})$.
\end{lem}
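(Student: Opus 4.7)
The plan is to use Lemma \ref{L;Lemma3.5} twice, translating the hypothesis $\langle R_{\widetilde{g}}\rangle = \langle R_h\rangle$ into a pair of inclusions between $\mathbb{P}(h)$ and $\mathbb{P}(\widetilde{g})$, and then to conclude with Lemma \ref{L;Lemma3.1}.

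First, the second assertion of Lemma \ref{L;Lemma3.5} gives $\langle R_{\widetilde{g}}\rangle = \{R_a : a \leq_2 \widetilde{g}\}$, so the hypothesis forces $R_h \in \langle R_{\widetilde{g}}\rangle$, hence $h \leq_2 \widetilde{g}$, i.e., $\mathbb{P}(h) \subseteq \mathbb{P}(\widetilde{g})$. For the reverse inclusion I would invoke the first assertion of Lemma \ref{L;Lemma3.5}, applied to the index $h$ itself: the set $\{R_a : a \leq_2 h\}$ is a closed subset of $\mathbb{S}$ containing $R_h$, hence it contains the closed hull $\langle R_h\rangle$. Combining with the hypothesis,
$$R_{\widetilde{g}} \in \langle R_{\widetilde{g}}\rangle = \langle R_h\rangle \subseteq \{R_a : a \leq_2 h\},$$
so $\mathbb{P}(\widetilde{g}) \subseteq \mathbb{P}(h)$. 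The two inclusions force $\mathbb{P}(h) = \mathbb{P}(\widetilde{g})$, and Lemma \ref{L;Lemma3.1} then gives $h = \widetilde{g}$, whence $R_h = R_{\widetilde{g}}$.

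For the ``in particular'' clause, one direction is immediate: if $\mathbb{P}(\widetilde{g}) = \mathbb{P}(\widetilde{h})$, Lemma \ref{L;Lemma3.1} produces $\widetilde{g} = \widetilde{h}$, so $\langle R_{\widetilde{g}}\rangle = \langle R_{\widetilde{h}}\rangle$. Conversely, if $\langle R_{\widetilde{g}}\rangle = \langle R_{\widetilde{h}}\rangle$, applying what was just proved to the case $h = \widetilde{h}$ yields $R_{\widetilde{g}} = R_{\widetilde{h}}$, and Lemma \ref{L;Lemma3.1} once more gives $\mathbb{P}(\widetilde{g}) = \mathbb{P}(\widetilde{h})$.

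The argument is short and presents no real obstacle; the one point requiring mild attention is using Lemma \ref{L;Lemma3.5} in its two distinct forms — explicitly as $\langle R_{\widetilde{g}}\rangle = \{R_a : a \leq_2 \widetilde{g}\}$ to extract the first inclusion, and more weakly as the closedness of $\{R_a : a \leq_2 h\}$ for an \emph{arbitrary} index $h$ to sandwich $\langle R_h\rangle$ and extract the reverse inclusion.
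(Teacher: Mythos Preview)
Your proof is correct and follows the same overall plan as the paper's: extract the two inequalities $h \leq_2 \widetilde{g}$ and $\widetilde{g} \leq_2 h$ via Lemma \ref{L;Lemma3.5}, then conclude with Lemma \ref{L;Lemma3.1}. The only minor difference is in the reverse inclusion: the paper first observes $h = \widetilde{h}$ (since $\mathbb{P}(h)\subseteq\mathbb{P}(\widetilde{g})=\mathbb{P}_2(g)$ forces $\mathbb{P}_2(h)=\mathbb{P}(h)$) and then applies the second assertion of Lemma \ref{L;Lemma3.5} to $\langle R_h\rangle$ itself, whereas you bound $\langle R_h\rangle$ by $\{R_a: a\leq_2 h\}$ using only the first assertion --- your route sidesteps the $h=\widetilde{h}$ step and is arguably a touch more direct.
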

\begin{proof}
As $\langle R_{\widetilde{g}}\rangle=\langle R_h\rangle$, it is clear that $h\!=\!\widetilde{h}$ and $h\leq_2 \widetilde{g}\leq_2 h$ by Lemma \ref{L;Lemma3.5}. The first statement thus follows from Lemma \ref{L;Lemma3.1}. The second statement follows from the first one and Lemma \ref{L;Lemma3.1}. The desired lemma thus follows.
\end{proof}
\begin{lem}\label{L;Lemma3.7}
Assume that $\mathbb{U}\leq\mathbb{S}$, $R_g\!\in\!\mathbb{U}$, and $|\mathbb{P}(\widetilde{h})|\!\leq\!|\mathbb{P}(\widetilde{g})|$ for any $R_h\!\in\!\mathbb{U}$. Then $R_{\widetilde{h}}\in\langle R_{\widetilde{g}}\rangle$ for any $R_h\in \mathbb{U}$. In particular, $\langle R_{\widetilde{g}}\rangle$ is independent of the choice of $R_g$.
\end{lem}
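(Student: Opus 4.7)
The plan is to reduce the containment $R_{\widetilde{h}}\in\langle R_{\widetilde{g}}\rangle$ to the combinatorial inclusion $\mathbb{P}_2(h)\subseteq\mathbb{P}_2(g)$ of index sets, and then to extract this inclusion from the maximality hypothesis via the closure of $\mathbb{U}$ under complex multiplication. The reduction is immediate: Notation \ref{N;Notation3.3} gives $\mathbb{P}(\widetilde{g})=\mathbb{P}_2(g)$ and $\mathbb{P}(\widetilde{h})=\mathbb{P}_2(h)$, while Lemma \ref{L;Lemma3.5} identifies $\langle R_{\widetilde{g}}\rangle$ with $\{R_a:a\leq_2\widetilde{g}\}$. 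Thus, proving $R_{\widetilde{h}}\in\langle R_{\widetilde{g}}\rangle$ for each $R_h\in\mathbb{U}$ amounts to proving $\mathbb{P}_2(h)\subseteq\mathbb{P}_2(g)$ for each such $R_h$.

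Suppose for contradiction that some $R_h\in\mathbb{U}$ admits an index $k\in\mathbb{P}_2(h)\setminus\mathbb{P}_2(g)$. Membership of $k$ in $\mathbb{P}_2(h)$ forces $u_k>2$, so the definition of $\mathbb{P}_2$ together with $k\notin\mathbb{P}_2(g)$ yields $k\notin\mathbb{P}(g)$. The key step is to exhibit an $R_j\in R_gR_h$ satisfying $\mathbb{P}_2(g)\cup\{k\}\subseteq\mathbb{P}_2(j)$. Since $R_g,R_h\in\mathbb{U}$ and $\mathbb{U}\mathbb{U}\subseteq\mathbb{U}$, such an $R_j$ lies in $\mathbb{U}$; combined with Notation \ref{N;Notation3.3}, this gives $|\mathbb{P}(\widetilde{j})|=|\mathbb{P}_2(j)|>|\mathbb{P}_2(g)|=|\mathbb{P}(\widetilde{g})|$, contradicting the choice of $R_g$. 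To build $R_j$, I would pick any $\mathbf{y}\in\mathbf{x}R_g$ and define $\mathbf{z}\in\mathbf{y}R_h$ coordinatewise: at each $i\in\mathbb{P}(h)\cap\mathbb{P}_2(g)$ demand $\mathbf{z}_i\notin\{\mathbf{x}_i,\mathbf{y}_i\}$, which is permissible because $u_i>2$; at the remaining indices of $\mathbb{P}(h)$ take any $\mathbf{z}_i\neq\mathbf{y}_i$; and for $i\notin\mathbb{P}(h)$ set $\mathbf{z}_i=\mathbf{y}_i$. A coordinate check in the spirit of the proofs of Lemmas \ref{L;Lemma3.2} and \ref{L;Lemma3.4} then verifies $\mathbb{P}_2(g)\cup\{k\}\subseteq\mathbb{P}_2(j)$ for the relation $R_j$ with $\mathbf{z}\in\mathbf{x}R_j$.

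For the ``in particular'' clause, if $R_g,R_{g_1}\in\mathbb{U}$ both realize the maximum, then applying the main assertion in both directions yields $R_{\widetilde{g_1}}\in\langle R_{\widetilde{g}}\rangle$ and $R_{\widetilde{g}}\in\langle R_{\widetilde{g_1}}\rangle$; since both $\langle R_{\widetilde{g}}\rangle$ and $\langle R_{\widetilde{g_1}}\rangle$ are closed in $\mathbb{S}$, the minimality clause in the definition of $\langle\cdot\rangle$ forces $\langle R_{\widetilde{g}}\rangle=\langle R_{\widetilde{g_1}}\rangle$. I expect the main obstacle to lie in the coordinate bookkeeping of the middle step: one must ensure simultaneously that every index in $\mathbb{P}_2(g)$ survives the two-stage walk $\mathbf{x}\to\mathbf{y}\to\mathbf{z}$ and that the new index $k$ enters $\mathbb{P}_2(j)$. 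The strict inequality $u_i>2$ at indices of $\mathbb{P}_2(g)$ is precisely what permits the choice $\mathbf{z}_i\notin\{\mathbf{x}_i,\mathbf{y}_i\}$ on $\mathbb{P}(h)\cap\mathbb{P}_2(g)$, and it explains why the hypothesis is naturally phrased through $\mathbb{P}_2$ rather than $\mathbb{P}$.
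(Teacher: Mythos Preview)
Your proposal is correct and follows essentially the same strategy as the paper: argue by contradiction, and use closure of $\mathbb{U}$ together with a coordinate construction (exploiting $u_i>2$ at the relevant indices) to produce an element of $\mathbb{U}$ whose $\mathbb{P}_2$-set strictly contains $\mathbb{P}_2(g)$. The paper differs only cosmetically, proceeding in two stages---first isolating a single-coordinate relation $R_m$ from $R_{\widetilde{i}}R_{\widetilde{i}}$ and then combining it with $R_{\widetilde{g}}$---whereas your one-shot product $R_gR_h$ with the careful choice of $\mathbf{z}$ achieves the same contradiction more directly.
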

\begin{proof}
Assume that there exists $R_i\in \mathbb{U}$ such that $\mathbb{P}(\widetilde{i})\nsubseteq\mathbb{P}(\widetilde{g})$. Then $\mathbb{P}(\widetilde{i})\setminus \mathbb{P}(\widetilde{g})\neq \varnothing$. Pick $j\in \mathbb{P}(\widetilde{i})\setminus \mathbb{P}(\widetilde{g})$ and $\mathbf{v}\in \mathbf{x}R_{\widetilde{i}}$.
Hence $\mathbf{v}_k\neq \mathbf{x}_k$ and $\mathbf{v}_\ell=\mathbf{x}_\ell$ for any $k\in \mathbb{P}(\widetilde{i})$ and $\ell\in [1,n]\setminus \mathbb{P}(\widetilde{i})$.
Then there is $\mathbf{w}\in\mathbf{x}R_{\widetilde{i}}$ such that $\mathbf{w}_j\notin\{\mathbf{x}_j, \mathbf{v}_j\}$, $\mathbf{w}_k=\mathbf{v}_k\neq\mathbf{x}_k$, and
$\mathbf{w}_\ell=\mathbf{v}_\ell=\mathbf{x}_\ell$ for any $k\in\mathbb{P}(\widetilde{i})\setminus\{j\}$ and $\ell\in [1,n]\setminus\mathbb{P}(\widetilde{i})$. So there is $R_m\in R_{\widetilde{i}}R_{\widetilde{i}}$ such that $\mathbf{w}\!\in\!\mathbf{v}R_m$ and $\mathbb{P}(m)=\mathbb{P}_2(m)\!=\!\{j\}$. As $R_i\in \mathbb{U}$, $R_{\widetilde{i}}, R_m\in \mathbb{U}$ by Notation \ref{N;Notation3.3}.

Pick $\mathbf{y}\in\mathbf{x}R_{\widetilde{g}}$. So $\mathbf{y}_k\neq\mathbf{x}_k$ and $\mathbf{y}_\ell\!=\!\mathbf{x}_\ell$ for any $k\in\mathbb{P}(\widetilde{g})$ and $\ell\in[1,n]\setminus\mathbb{P}(\widetilde{g})$. As $j\in \mathbb{P}(m)\setminus \mathbb{P}(\widetilde{g})$, there is $\mathbf{z}\in\mathbf{x}R_m$ such that $\mathbf{z}_j\neq \mathbf{x}_j=\mathbf{y}_j$, $\mathbf{z}_k=\mathbf{x}_k\neq\mathbf{y}_k$, and $\mathbf{z}_\ell=\mathbf{x}_\ell=\mathbf{y}_\ell$ for any $k\in\mathbb{P}(\widetilde{g})$ and $\ell\in [1,n]\setminus(\mathbb{P}(\widetilde{g})\cup\{j\})$. Hence there is $R_q\in R_{\widetilde{g}}R_m$ such that $\mathbf{z}\in\mathbf{y}R_q$ and $\mathbb{P}(q)=\mathbb{P}_2(q)=\mathbb{P}_2(\widetilde{g})\cup \{j\}=\mathbb{P}(\widetilde{g})\cup \{j\}$. Hence $R_{\widetilde{q}}=R_q$. As $R_{\widetilde{g}}, R_m\in\mathbb{U}$,
$R_{\widetilde{q}}\in\mathbb{U}$ by Notation \ref{N;Notation3.3}. Hence $|\mathbb{P}(\widetilde{g})|+1=|\mathbb{P}(\widetilde{q})|\leq|\mathbb{P}(\widetilde{g})|$. This is absurd. So $\widetilde{h}\leq_2\widetilde{g}$ for any $R_h\in \mathbb{U}$. The first statement is thus from Lemma \ref{L;Lemma3.4}. The second statement is thus from the first one.
The desired lemma thus follows.
\end{proof}
Notation \ref{N;Notation3.3} and Lemmas \ref{L;Lemma3.7}, \ref{L;Lemma3.6} motivate us to introduce the following notation.
\begin{nota}\label{N;Notation3.8}
\em Assume that $\mathbb{U}\leq\mathbb{S}$, $R_g\in \mathbb{U}$, and $|\mathbb{P}(\widetilde{h})|\!\leq\! |\mathbb{P}(\widetilde{g})|$ for any $R_h\in \mathbb{U}$. As $R_{\widetilde{g}}\in R_gR_g$ by Notation \ref{N;Notation3.3}, notice that $\langle R_{\widetilde{g}}\rangle\subseteq\mathbb{U}$. As $\langle R_{\widetilde{g}}\rangle$ is uniquely determined by Lemma \ref{L;Lemma3.7}, set $\mathbb{U}_{\max}=\langle R_{\widetilde{g}}\rangle\subseteq\mathbb{U}$. Lemma \ref{L;Lemma3.7} thus implies that $R_{\widetilde{h}}\in \mathbb{U}_{\max}$ for any $R_h\in\mathbb{U}$. As $\mathbb{P}(\widetilde{g})$ is uniquely determined by Lemma \ref{L;Lemma3.6}, put $\mathbb{P}(\mathbb{U})=\mathbb{P}(\widetilde{g})$.
\end{nota}
For further discussion, the following notation and six lemmas are necessary.
\begin{nota}\label{N;Notation3.9}
\em Assume that $\mathbb{U}$ and $\mathbb{V}$ are nonempty subsets of $\mathbb{S}$. Use $\mathbb{U}^\gamma$ to denote $\{\{R_a\}: R_a\in\mathbb{U}\}$. Then $\mathbb{U}=\mathbb{V}$ if and only if $\mathbb{U}^\gamma=\mathbb{V}^\gamma$. If $\mathbb{U}^\gamma$ is a finite group with respect to the complex multiplication and the identity element $\{R_0\}$, then $\mathbb{U}\leq \mathbb{S}$.
\end{nota}
\begin{lem}\label{L;Lemma3.10}
Assume that $\mathbb{U}\leq \mathbb{S}$. Then $\mathrm{O}_\vartheta(\mathbb{U})^\gamma$ is an elementary abelian $2$-group with respect to the complex multiplication and the identity element $\{R_0\}$. Moreover, $\mathrm{O}_\vartheta(\mathbb{U})^\gamma$ is an elementary abelian $2$-subgroup of $\mathrm{O}_\vartheta(\mathbb{S})^\gamma$ with respect to the complex multiplication and the identity element $\{R_0\}$.
\end{lem}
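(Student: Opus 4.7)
The plan is to show that $\mathrm{O}_\vartheta(\mathbb{U})^\gamma$ is closed under complex multiplication, contains the identity $\{R_0\}$, that every element is its own inverse, and that the group is commutative. Since complex multiplication on nonempty subsets of $\mathbb{S}$ is associative (by \cite[Lemma 1.3.1]{Z}, already cited in the preliminaries), this gives the elementary abelian $2$-group structure, with every non-identity element of order $2$.

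The computational core is to determine $R_gR_h$ when both $R_g$ and $R_h$ have valency one. First I would observe that in the factorial scheme $\mathbb{S}$, $k_g=\prod_{i\in\mathbb{P}(g)}(u_i-1)$, so $k_g=\overline{1}$ (that is, equal to $1$ as a positive integer, not a field element) precisely when $u_i=2$ for every $i\in\mathbb{P}(g)$, i.e.\ when $\mathbb{P}_2(g)=\varnothing$. Next, given $R_g,R_h\in\mathrm{O}_\vartheta(\mathbb{S})$, I would take $(\mathbf{w},\mathbf{y})\in R_a$ with $p_{gh}^a>0$ and analyze the forced agreements or disagreements between $\mathbf{w},\mathbf{c},\mathbf{y}$ at each coordinate $i$, where $\mathbf{c}\in\mathbf{w}R_g\cap\mathbf{y}R_h$. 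On coordinates in $\mathbb{P}(g)\triangle\mathbb{P}(h)$ the positions $\mathbf{w}_i$ and $\mathbf{y}_i$ are forced to differ; on coordinates in $\mathbb{P}(g)\cap\mathbb{P}(h)$, the two steps $\mathbf{w}_i\to\mathbf{c}_i\to\mathbf{y}_i$ live in a set $\mathbb{U}_i$ of size $u_i=2$, so $\mathbf{y}_i=\mathbf{w}_i$; on coordinates outside $\mathbb{P}(g)\cup\mathbb{P}(h)$ everything agrees. Hence $\mathbb{P}(a)=\mathbb{P}(g)\triangle\mathbb{P}(h)$, which by Lemma \ref{L;Lemma3.1} determines $a$ uniquely, giving $R_gR_h=\{R_a\}$. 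Moreover $\mathbb{P}(a)\subseteq\mathbb{P}(g)\cup\mathbb{P}(h)$ consists only of indices with $u_i=2$, so $k_a=1$.

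Taking $h=g$ above yields $\mathbb{P}(a)=\varnothing$, hence $R_gR_g=\{R_0\}$, so each element is self-inverse. Closure of $\mathrm{O}_\vartheta(\mathbb{U})^\gamma$ follows from $R_a\in\mathbb{U}$, which holds because $\mathbb{U}\leq\mathbb{S}$ gives $\mathbb{UU}\subseteq\mathbb{U}$; combined with $k_a=1$ we conclude $R_a\in\mathrm{O}_\vartheta(\mathbb{U})$. Since $R_0\in\mathbb{U}$ and $k_0=1$, the identity $\{R_0\}$ lies in $\mathrm{O}_\vartheta(\mathbb{U})^\gamma$. Commutativity is immediate from the commutativity of $\mathbb{S}$. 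This proves the first statement.

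For the second statement, applying the first statement to the closed subset $\mathbb{S}\leq\mathbb{S}$ gives that $\mathrm{O}_\vartheta(\mathbb{S})^\gamma$ is an elementary abelian $2$-group under complex multiplication with identity $\{R_0\}$. Since $\mathrm{O}_\vartheta(\mathbb{U})\subseteq\mathrm{O}_\vartheta(\mathbb{S})$ by definition, $\mathrm{O}_\vartheta(\mathbb{U})^\gamma$ is a subset of this group that is already shown to be a group under the same operation with the same identity, hence a subgroup. The main (and essentially only) obstacle is the coordinate-wise case analysis of $R_gR_h$ sketched above; everything else is formal.
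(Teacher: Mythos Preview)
Your proof is correct, but it takes a genuinely different route from the paper. The paper's argument is scheme-theoretic and essentially generic: it quotes \cite[Preface]{Z} for the fact that $\mathrm{O}_\vartheta(\mathbb{U})^\gamma$ is always a finite group under complex multiplication (for any scheme and any closed subset), and then uses Lemma~\ref{L;Lemma2.3} together with the symmetry $g'=g$ to get $|R_gR_g|=|R_gR_{g'}|\le\gcd(k_g,k_g)=1$, whence $R_gR_g=\{R_0\}$. A group in which every element squares to the identity is elementary abelian of exponent~$2$, and the subgroup claim is then immediate from $\mathrm{O}_\vartheta(\mathbb{U})\subseteq\mathrm{O}_\vartheta(\mathbb{S})$.

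Your argument instead exploits the factorial structure directly: you compute $R_gR_h=\{R_a\}$ with $\mathbb{P}(a)=\mathbb{P}(g)\triangle\mathbb{P}(h)$ by a coordinate case analysis, using that $u_i=2$ on $\mathbb{P}(g)\cap\mathbb{P}(h)$ forces $\mathbf{w}_i=\mathbf{y}_i$ there. This buys you an explicit description of the group (it is the additive group of the $\mathbb{F}_2$-vector space on the coordinates with $u_i=2$), and it is entirely self-contained, not relying on the cited group-structure result from \cite{Z}. The paper's approach is shorter and applies verbatim to any symmetric scheme, while yours gives more concrete information about the factorial case at the cost of a small coordinate computation. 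Both are valid; the paper simply chose the general-theory shortcut.
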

\begin{proof}
By \cite[Preface]{Z}, $\mathrm{O}_\vartheta(\mathbb{U})^\gamma$ is always a finite group with respect to the complex multiplication and the identity element $\{R_0\}$. As $k_g\!=\!1$ for any $R_g\in\mathrm{O}_\vartheta(\mathbb{U})$, Lemma \ref{L;Lemma2.3} thus implies that $R_gR_g\!=\!R_gR_{g'}=\{R_0\}$ for any $R_g\in\mathrm{O}_\vartheta(\mathbb{U})$. The first statement thus follows. As $\mathbb{S}\leq \mathbb{S}$ and $\mathrm{O}_\vartheta(\mathbb{U})^\gamma\subseteq\mathrm{O}_\vartheta(\mathbb{S})^\gamma$, the second statement thus follows from the first one. The desired lemma thus follows from the above discussion.
\end{proof}
\begin{lem}\label{L;Lemma3.11}
Assume that $g\in [0,d]$. Then $k_g=\prod_{h\in\mathbb{P}(g)}(u_h-1)$, where the product over an empty set is equal to one. Moreover, $k_g=1$ if and only if $\widetilde{g}=0$.
\end{lem}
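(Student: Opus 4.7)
The plan is to compute $k_g$ by a direct count using the coordinate-wise description of $R_g$, and then deduce the characterization of $k_g=1$ from $u_h\geq 2$ together with Notation \ref{N;Notation3.3}.

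First I would recall that $k_g=|\mathbf{x}R_g|$ and that $\mathbf{y}\in\mathbf{x}R_g$ means $\mathbf{x}=_g\mathbf{y}$, i.e.\ $\mathbf{y}_h\neq\mathbf{x}_h$ for $h\in\mathbb{P}(g)$ and $\mathbf{y}_h=\mathbf{x}_h$ for $h\in[1,n]\setminus\mathbb{P}(g)$. Counting coordinate by coordinate, each $h\in\mathbb{P}(g)$ contributes $u_h-1$ choices for $\mathbf{y}_h$ (any element of $\mathbb{U}_h$ other than $\mathbf{x}_h$), and each $h\notin\mathbb{P}(g)$ contributes exactly one choice. Since the choices in different coordinates are independent, this yields
\[
k_g=\prod_{h\in\mathbb{P}(g)}(u_h-1),
\]
with the convention that the empty product equals $1$ (handling the case $g=0$, where $\mathbb{P}(0)=\varnothing$ and $k_0=1$).

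For the second statement I would argue as follows. Because $u_h\geq 2$ for every $h\in[1,n]$, each factor $u_h-1$ in the product is a positive integer, so the product equals $1$ if and only if every factor equals $1$, i.e.\ $u_h=2$ for every $h\in\mathbb{P}(g)$. By the definition of $\mathbb{P}_2$, this is equivalent to $\mathbb{P}_2(g)=\varnothing$. Now Notation \ref{N;Notation3.3} gives $\mathbb{P}(\widetilde{g})=\mathbb{P}_2(g)$, so the condition becomes $\mathbb{P}(\widetilde{g})=\varnothing=\mathbb{P}(0)$; by Lemma \ref{L;Lemma3.1} this is equivalent to $\widetilde{g}=0$. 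Combining these equivalences finishes the proof.

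Neither step is a genuine obstacle: the formula is a direct enumeration, and the second statement is an immediate consequence of the formula together with $u_h\geq 2$ and the defining property of $\widetilde{g}$. The only point that requires a little care is to invoke Lemma \ref{L;Lemma3.1} (rather than the definition of $\nu$ directly) to conclude $\widetilde{g}=0$ from $\mathbb{P}(\widetilde{g})=\varnothing$, in order to keep the argument inside the framework already set up in the paper.
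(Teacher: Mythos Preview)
Your proof is correct and follows essentially the same approach as the paper: a direct coordinate-by-coordinate count for the valency formula, then the equivalence $k_g=1\iff u_h=2$ for all $h\in\mathbb{P}(g)\iff\mathbb{P}_2(g)=\varnothing\iff\widetilde{g}=0$ via Notation \ref{N;Notation3.3} and Lemma \ref{L;Lemma3.1}.
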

\begin{proof}
The first statement follows as $\mathbf{x}R_g=\{\mathbf{a}: \mathbf{x}=_g\mathbf{a}\}$ and $k_g=|\mathbf{x}R_g|$. According to the first statement, notice that $k_g=1$ if and only if $u_h=2$ for any $h\in\mathbb{P}(g)$. As $\mathbb{P}_2(g)\subseteq\mathbb{P}(g)$, notice that $\mathbb{P}_2(g)\!=\!\varnothing$ if and only if $u_h=2$ for any $h\in\mathbb{P}(g)$. Notation \ref{N;Notation3.3} and Lemma \ref{L;Lemma3.1} imply that $\widetilde{g}=0$ if and only if $\mathbb{P}_2(g)=\varnothing$. The second statement thus follows from the above discussion. The desired lemma thus follows.
\end{proof}
\begin{lem}\label{L;Lemma3.12}
Assume that $g\in [0,d]$. Then there is a unique $R_h\in R_{\widetilde{g}}R_g$ such that $\mathbb{P}(h)=\mathbb{P}(g)\setminus \mathbb{P}_2(g)$ and $k_h=1$.
\end{lem}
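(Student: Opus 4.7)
The plan is to mimic the element-chasing arguments used in Lemmas \ref{L;Lemma3.2} and \ref{L;Lemma3.4}: I would directly construct suitable elements $\mathbf{y}\in\mathbf{x}R_{\widetilde{g}}$ and $\mathbf{z}\in\mathbf{y}R_g$ whose relative position to $\mathbf{x}$ realizes the claimed index $h$, then deduce uniqueness from Lemma \ref{L;Lemma3.1} and compute the valency via Lemma \ref{L;Lemma3.11}.

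First I would recall from Notation \ref{N;Notation3.3} that $\mathbb{P}(\widetilde{g})=\mathbb{P}_2(g)$, and fix $\mathbf{y}\in\mathbf{x}R_{\widetilde{g}}$, so that $\mathbf{y}_i\neq\mathbf{x}_i$ exactly for $i\in\mathbb{P}_2(g)$. Next I would build $\mathbf{z}$ coordinatewise according to the three blocks determined by the partition $[1,n]=(\,[1,n]\setminus\mathbb{P}(g)\,)\sqcup(\mathbb{P}(g)\setminus\mathbb{P}_2(g))\sqcup\mathbb{P}_2(g)$: for $k\notin\mathbb{P}(g)$ set $\mathbf{z}_k=\mathbf{y}_k=\mathbf{x}_k$; for $j\in\mathbb{P}(g)\setminus\mathbb{P}_2(g)$, the hypothesis $u_j=2$ forces $\mathbf{z}_j$ to be the unique element of $\mathbb{U}_j$ different from $\mathbf{y}_j=\mathbf{x}_j$; finally for $i\in\mathbb{P}_2(g)$, using $u_i>2$, choose $\mathbf{z}_i=\mathbf{x}_i$, which is legal because $\mathbf{x}_i\neq\mathbf{y}_i$. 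By construction $\mathbf{z}=_g\mathbf{y}$, hence $\mathbf{z}\in\mathbf{y}R_g$, while $\mathbf{z}$ differs from $\mathbf{x}$ exactly on $\mathbb{P}(g)\setminus\mathbb{P}_2(g)$. Therefore there exists $R_h\in R_{\widetilde{g}}R_g$ with $\mathbb{P}(h)=\mathbb{P}(g)\setminus\mathbb{P}_2(g)$.

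For uniqueness, any $R_h\in R_{\widetilde{g}}R_g$ satisfying $\mathbb{P}(h)=\mathbb{P}(g)\setminus\mathbb{P}_2(g)$ is determined by its associated subset of $[1,n]$, so Lemma \ref{L;Lemma3.1} immediately pins down $h$. For the valency, Lemma \ref{L;Lemma3.11} gives $k_h=\prod_{i\in\mathbb{P}(h)}(u_i-1)$; but by definition of $\mathbb{P}_2(g)$ we have $u_i=2$ for every $i\in\mathbb{P}(g)\setminus\mathbb{P}_2(g)=\mathbb{P}(h)$, so every factor is $1$ and hence $k_h=1$.

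I do not anticipate a real obstacle here: the statement is essentially a bookkeeping exercise about which coordinates can or must change along a walk $\mathbf{x}\to\mathbf{y}\to\mathbf{z}$, with the only nontrivial point being that choosing $\mathbf{z}_i=\mathbf{x}_i$ in the block $\mathbb{P}_2(g)$ is legitimate, which uses precisely the hypothesis $u_i>2$ defining $\mathbb{P}_2(g)$. Everything else is a direct application of Notation \ref{N;Notation3.3} and Lemmas \ref{L;Lemma3.1} and \ref{L;Lemma3.11}.
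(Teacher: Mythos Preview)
Your proof is correct and follows essentially the same element-chasing approach as the paper. The only cosmetic difference is that the paper constructs $\mathbf{z}\in\mathbf{x}R_g$ and reads off $R_h$ from the pair $(\mathbf{y},\mathbf{z})$, whereas you construct $\mathbf{z}\in\mathbf{y}R_g$ and read off $R_h$ from the pair $(\mathbf{x},\mathbf{z})$; since the scheme is symmetric these are equivalent and both yield $p_{\widetilde{g}g}^h\neq0$.
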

\begin{proof}
Pick $\mathbf{y}\in\mathbf{x}R_{\widetilde{g}}$. Hence $\mathbf{y}_i\!\neq\!\mathbf{x}_i$ and $\mathbf{y}_j\!=\!\mathbf{x}_j$ for any $i\in \mathbb{P}(\widetilde{g})$ and $j\in [1,n]\setminus \mathbb{P}(\widetilde{g})$. As $\mathbb{P}(\widetilde{g})=\mathbb{P}_2(g)$ by Notation \ref{N;Notation3.3}, there must be $\mathbf{z}\in\mathbf{x}R_{g}$ such that $\mathbf{z}_i=\mathbf{y}_i\neq\mathbf{x}_i$, $\mathbf{z}_j\neq \mathbf{y}_j=\mathbf{x}_j$, and $\mathbf{z}_k=\mathbf{x}_k$ for any $i\in \mathbb{P}_2(g)$, $j\in \mathbb{P}(g)\setminus \mathbb{P}_2(g)$, and $k\in [1,n]\setminus\mathbb{P}(g)$. Hence there is $R_h\in  R_{\widetilde{g}}R_g$ such that $\mathbf{z}\in\mathbf{y}R_h$ and $\mathbb{P}(h)=\mathbb{P}(g)\setminus \mathbb{P}_2(g)$. Notation \ref{N;Notation3.3} and Lemma \ref{L;Lemma3.11} imply that $k_h=1$. As $\mathbb{P}(h)=\mathbb{P}(g)\setminus \mathbb{P}_2(g)$, the uniqueness of $h$ is thus from Lemma \ref{L;Lemma3.1}. The desired lemma thus follows.
\end{proof}
\begin{lem}\label{L;Lemma3.13}
Assume that $\mathbb{U}\leq \mathbb{S}$. Then $\mathbb{U}\!=\!\mathbb{U}_{\max}\mathrm{O}_\vartheta(\mathbb{U})$ and $\mathbb{U}_{\max}\cap\mathrm{O}_\vartheta(\mathbb{U})=\{R_0\}$.
If $R_g, R_h\in \mathbb{U}_{\max}$, $R_i, R_j\in \mathrm{O}_\vartheta(\mathbb{U})$, $R_gR_i\cap R_hR_j\neq\varnothing$, then $R_g=R_h$ and $R_i=R_j$.
\end{lem}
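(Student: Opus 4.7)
The plan is to exploit a key dichotomy: by Notation \ref{N;Notation3.3} and Lemma \ref{L;Lemma3.5}, every $R_a \in \mathbb{U}_{\max} = \langle R_{\widetilde{g}}\rangle$ satisfies $\mathbb{P}(a) \subseteq \mathbb{P}(\mathbb{U}) = \mathbb{P}_2(g)$, so every coordinate in $\mathbb{P}(a)$ has $u_i > 2$; meanwhile, by Lemma \ref{L;Lemma3.11}, every $R_a \in \mathrm{O}_\vartheta(\mathbb{U})$ satisfies $\mathbb{P}_2(a) = \varnothing$, so every coordinate in $\mathbb{P}(a)$ has $u_i = 2$. This separation of supports across the partition of $[1,n]$ into the "$u_i>2$" coordinates and the "$u_i=2$" coordinates will drive all three statements.

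First I would dispatch the intersection claim. If $R_a \in \mathbb{U}_{\max}\cap\mathrm{O}_\vartheta(\mathbb{U})$, then every $i\in\mathbb{P}(a)$ simultaneously satisfies $u_i>2$ and $u_i=2$, forcing $\mathbb{P}(a)=\varnothing$. By Lemma \ref{L;Lemma3.1}, $a=0$, giving $\mathbb{U}_{\max}\cap\mathrm{O}_\vartheta(\mathbb{U})=\{R_0\}$. Next, for the uniqueness-of-factorization statement, I would compute $R_gR_i$ directly in the factorial scheme: since $\mathbb{P}(g)\cap\mathbb{P}(i)=\varnothing$ (by the dichotomy), a quick coordinatewise check shows $R_gR_i=\{R_\ell\}$ with $\mathbb{P}(\ell)=\mathbb{P}(g)\sqcup\mathbb{P}(i)$, and similarly $R_hR_j=\{R_m\}$ with $\mathbb{P}(m)=\mathbb{P}(h)\sqcup\mathbb{P}(j)$. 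If $R_gR_i\cap R_hR_j\neq\varnothing$, then $\ell=m$, so $\mathbb{P}(g)\sqcup\mathbb{P}(i)=\mathbb{P}(h)\sqcup\mathbb{P}(j)$; intersecting with $\{i:u_i>2\}$ and with $\{i:u_i=2\}$ separates these into $\mathbb{P}(g)=\mathbb{P}(h)$ and $\mathbb{P}(i)=\mathbb{P}(j)$, so Lemma \ref{L;Lemma3.1} yields $R_g=R_h$ and $R_i=R_j$.

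Finally, for the equality $\mathbb{U}=\mathbb{U}_{\max}\mathrm{O}_\vartheta(\mathbb{U})$: the inclusion $\supseteq$ is immediate because $\mathbb{U}_{\max}\subseteq\mathbb{U}$, $\mathrm{O}_\vartheta(\mathbb{U})\subseteq\mathbb{U}$, and $\mathbb{UU}\subseteq\mathbb{U}$ since $\mathbb{U}\leq\mathbb{S}$. For $\subseteq$, given $R_a\in\mathbb{U}$, I would invoke Lemma \ref{L;Lemma3.12} to produce the unique $R_h\in R_{\widetilde{a}}R_a$ with $\mathbb{P}(h)=\mathbb{P}(a)\setminus\mathbb{P}_2(a)$ and $k_h=1$; since $R_{\widetilde{a}}\in R_aR_a\subseteq\mathbb{U}$ we have $R_h\in\mathbb{U}$, hence $R_h\in\mathrm{O}_\vartheta(\mathbb{U})$. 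By Lemma \ref{L;Lemma3.7}, $R_{\widetilde{a}}\in\mathbb{U}_{\max}$. Now since $\mathbb{P}(\widetilde{a})=\mathbb{P}_2(a)$ and $\mathbb{P}(h)=\mathbb{P}(a)\setminus\mathbb{P}_2(a)$ are disjoint, the same coordinatewise computation as above yields $R_{\widetilde{a}}R_h=\{R_\ell\}$ with $\mathbb{P}(\ell)=\mathbb{P}(a)$, forcing $\ell=a$ by Lemma \ref{L;Lemma3.1}. Thus $R_a\in R_{\widetilde{a}}R_h\subseteq \mathbb{U}_{\max}\mathrm{O}_\vartheta(\mathbb{U})$.

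No single step is really an obstacle; the main care is in writing out the factorial-scheme product $R_gR_i$ correctly when the supports are disjoint and in correctly identifying $\mathbb{P}(\mathbb{U}_{\max})\subseteq\{i:u_i>2\}$ versus $\mathbb{P}(\mathrm{O}_\vartheta(\mathbb{U}))\subseteq\{i:u_i=2\}$. Once that dichotomy is set up cleanly, all three conclusions fall out by applying Lemmas \ref{L;Lemma3.1}, \ref{L;Lemma3.7}, \ref{L;Lemma3.11}, and \ref{L;Lemma3.12}.
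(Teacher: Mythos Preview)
Your argument is correct. The intersection claim and the factorization $\mathbb{U}=\mathbb{U}_{\max}\mathrm{O}_\vartheta(\mathbb{U})$ follow the paper's line closely (both rest on Lemma~\ref{L;Lemma3.12} and the fact $R_{\widetilde{a}}\in\mathbb{U}_{\max}$ from Notation~\ref{N;Notation3.8}), though where the paper flips $R_h\in R_{\widetilde{a}}R_a$ into $R_a\in R_{\widetilde{a}}R_h$ via the general valency identity Lemma~\ref{L;Lemma2.1}, you recompute the product coordinatewise. The genuine divergence is in the uniqueness statement: the paper applies Lemma~\ref{L;Lemma2.2} to convert $R_gR_i\cap R_hR_j\neq\varnothing$ into $R_gR_h\cap R_iR_j\neq\varnothing$, then uses closure of $\mathbb{U}_{\max}$ and $\mathrm{O}_\vartheta(\mathbb{U})$ together with $\mathbb{U}_{\max}\cap\mathrm{O}_\vartheta(\mathbb{U})=\{R_0\}$ to force $R_0\in R_gR_h$ and $R_0\in R_iR_j$, whence $g=h$ and $i=j$. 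You instead exploit the explicit factorial structure to show $R_gR_i$ and $R_hR_j$ are singletons with supports $\mathbb{P}(g)\sqcup\mathbb{P}(i)$ and $\mathbb{P}(h)\sqcup\mathbb{P}(j)$, and then separate along the partition $\{a:u_a>2\}\sqcup\{a:u_a=2\}$. Your route is more elementary and self-contained for this particular scheme, while the paper's route via Lemma~\ref{L;Lemma2.2} is a clean scheme-theoretic maneuver that would transfer to any setting where one has two closed subsets meeting only in $\{R_0\}$.
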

\begin{proof}
Assume that $R_k\in \mathbb{U}$. As $\mathbb{U}\leq \mathbb{S}$, Lemma \ref{L;Lemma3.12} and Notation \ref{N;Notation3.3} imply that $R_\ell\in R_{\widetilde{k}}R_k$ for $R_\ell\in\mathrm{O}_\vartheta(\mathbb{U})$. Lemma \ref{L;Lemma2.1} thus implies that $R_k\in R_{\widetilde{k}}R_\ell$. Notice that $\mathbb{U}_{\max}\mathrm{O}_\vartheta(\mathbb{U})\!\subseteq\! \mathbb{U}\!\subseteq\!\mathbb{U}_{\max}\mathrm{O}_\vartheta(\mathbb{U})$ as $R_k$ is chosen from $\mathbb{U}$ arbitrarily.
So $\mathbb{U}\!=\!\mathbb{U}_{\max}\mathrm{O}_\vartheta(\mathbb{U})$. Pick $R_m\in\mathbb{U}_{\max}\cap\mathrm{O}_\vartheta(\mathbb{U})$. The combination of Notation \ref{N;Notation3.8}, Lemmas \ref{L;Lemma3.5}, \ref{L;Lemma3.11} shows that $m=\widetilde{m}=0$. As $R_m$ is chosen from $\mathbb{U}_{\max}\cap\mathrm{O}_\vartheta(\mathbb{U})$ arbitrarily, it is obvious that $\mathbb{U}_{\max}\cap\mathrm{O}_\vartheta(\mathbb{U})=\{R_0\}$. As $R_gR_i\cap R_hR_j\neq\varnothing$, $R_gR_h\cap R_iR_j\neq\varnothing$ by Lemma \ref{L;Lemma2.2}. As $R_g, R_h\in \mathbb{U}_{\max}$, $R_i, R_j\in \mathrm{O}_\vartheta(\mathbb{U})$, $\mathbb{U}_{\max}\cap\mathrm{O}_\vartheta(\mathbb{U})=\{R_0\}$, Lemma \ref{L;Lemma2.1} thus implies that $R_g=R_h$ and $R_i=R_j$. The desired lemma thus follows.
\end{proof}
\begin{lem}\label{L;Lemma3.14}
Assume that $\mathbb{U}\!\leq\!\mathbb{S}$. Then $\mathbb{U}_{\max}\!=\!\mathrm{O}^\vartheta(\mathbb{U})$. Moreover, $\mathbb{U}\!=\!\mathrm{O}^\vartheta(\mathbb{U})\mathrm{O}_\vartheta(\mathbb{U})$ and $\mathrm{O}^\vartheta(\mathbb{U})\!\cap\! \mathrm{O}_\vartheta(\mathbb{U})\!=\!\{R_0\}$. In particular, $\mathbb{S}\!=\!\mathrm{O}^\vartheta(\mathbb{S})\mathrm{O}_\vartheta(\mathbb{S})$ and $\mathrm{O}^\vartheta(\mathbb{S})\cap \mathrm{O}_\vartheta(\mathbb{S})=\{R_0\}$.
\end{lem}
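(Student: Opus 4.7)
The strategy is to pin down the equality $\mathbb{U}_{\max}=\mathrm{O}^\vartheta(\mathbb{U})$, after which the remaining assertions fall out of Lemma \ref{L;Lemma3.13} by substitution, and the ``In particular'' clause is simply the case $\mathbb{U}=\mathbb{S}$. Since $\mathbb{S}$ is symmetric, $g'=g$ for every $g\in[0,d]$, so Lemma \ref{L;Lemma2.4} gives the working formula $\mathrm{O}^\vartheta(\mathbb{U})=\langle\bigcup_{R_g\in\mathbb{U}}R_gR_g\rangle$. I will fix once and for all an element $R_g\in\mathbb{U}$ whose $|\mathbb{P}(\widetilde{g})|$ is maximal, so that $\mathbb{U}_{\max}=\langle R_{\widetilde{g}}\rangle$ by Notation \ref{N;Notation3.8}.

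For the inclusion $\mathbb{U}_{\max}\subseteq\mathrm{O}^\vartheta(\mathbb{U})$, Notation \ref{N;Notation3.3} places $R_{\widetilde{g}}$ inside $R_gR_g$, hence inside $\mathrm{O}^\vartheta(\mathbb{U})$; since the latter is closed in $\mathbb{S}$, taking $\langle\cdot\rangle$ preserves the containment and gives $\langle R_{\widetilde{g}}\rangle\subseteq\mathrm{O}^\vartheta(\mathbb{U})$.

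The main work is the reverse inclusion, and I expect this to be the one substantial step. It suffices to verify $R_hR_h\subseteq\mathbb{U}_{\max}$ for every $R_h\in\mathbb{U}$, because $\mathbb{U}_{\max}$ is closed and then $\langle\bigcup_{R_h\in\mathbb{U}}R_hR_h\rangle\subseteq\mathbb{U}_{\max}$. To see this, let $R_k\in R_hR_h$ and pick $\mathbf{y},\mathbf{z}\in\mathbf{x}R_h$ with $\mathbf{z}\in\mathbf{y}R_k$, and argue coordinate by coordinate: for $j\in[1,n]\setminus\mathbb{P}(h)$ we have $\mathbf{y}_j=\mathbf{z}_j=\mathbf{x}_j$, while for $i\in\mathbb{P}(h)\setminus\mathbb{P}_2(h)$ the hypothesis $u_i=2$ forces $\mathbf{y}_i$ and $\mathbf{z}_i$ to both equal the unique non-$\mathbf{x}_i$ element of $\mathbb{U}_i$, so $\mathbf{y}_i=\mathbf{z}_i$. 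Thus $\mathbb{P}(k)\subseteq\mathbb{P}_2(h)=\mathbb{P}(\widetilde{h})$, i.e.\ $k\leq_2\widetilde{h}$. By the maximality of $g$ and Lemma \ref{L;Lemma3.7}, $\widetilde{h}\leq_2\widetilde{g}$, so $k\leq_2\widetilde{g}$, and Lemma \ref{L;Lemma3.5} places $R_k$ inside $\langle R_{\widetilde{g}}\rangle=\mathbb{U}_{\max}$.

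Having established $\mathbb{U}_{\max}=\mathrm{O}^\vartheta(\mathbb{U})$, the decomposition $\mathbb{U}=\mathrm{O}^\vartheta(\mathbb{U})\mathrm{O}_\vartheta(\mathbb{U})$ and the intersection identity follow verbatim from Lemma \ref{L;Lemma3.13}. The main obstacle is the coordinate analysis of $R_hR_h$; everything else is bookkeeping. The crux is recognising that coordinates with $u_i=2$ are ``thin'' and cannot enter $\mathbb{P}(k)$, which is precisely why $R_hR_h$ lands inside the $\leq_2$-downset generated by $\widetilde{g}$.
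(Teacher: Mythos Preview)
Your proof is correct. The overall structure matches the paper's: establish $\mathbb{U}_{\max}=\mathrm{O}^\vartheta(\mathbb{U})$, then invoke Lemma \ref{L;Lemma3.13} for the rest. The inclusion $\mathbb{U}_{\max}\subseteq\mathrm{O}^\vartheta(\mathbb{U})$ is handled identically.

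The difference is in the reverse inclusion $\mathrm{O}^\vartheta(\mathbb{U})\subseteq\mathbb{U}_{\max}$. You prove $R_hR_h\subseteq\{R_a:a\leq_2\widetilde{h}\}$ by a direct coordinate argument (coordinates with $u_i=2$ are forced to agree), then chain $\widetilde{h}\leq_2\widetilde{g}$ via Lemma \ref{L;Lemma3.7}. The paper instead uses the decomposition already available from Lemma \ref{L;Lemma3.13}: writing an arbitrary $R_g\in\mathbb{U}$ as $R_g\in R_hR_i$ with $R_h\in\mathbb{U}_{\max}$ and $R_i\in\mathrm{O}_\vartheta(\mathbb{U})$, commutativity plus the $2$-torsion of $\mathrm{O}_\vartheta(\mathbb{U})^\gamma$ from Lemma \ref{L;Lemma3.10} give $R_gR_g\subseteq R_hR_iR_hR_i=R_hR_h\subseteq\mathbb{U}_{\max}$. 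The paper's route is more structural and reuses machinery it has just built; your route is more elementary and self-contained, effectively re-deriving the containment $R_hR_h\subseteq\langle R_{\widetilde{h}}\rangle$ from scratch. Either way is perfectly sound.
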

\begin{proof}
Pick $R_g\in\mathbb{U}$. As $\mathbb{U}\leq \mathbb{S}$, Lemma \ref{L;Lemma3.13} shows that $R_g\in R_hR_i$ for $R_h\in\mathbb{U}_{\max}$ and $R_i\in \mathrm{O}_\vartheta(\mathbb{U})$.
So $R_gR_g\subseteq R_hR_iR_hR_i=R_hR_h\subseteq\mathbb{U}_{\max}$ by Lemma \ref{L;Lemma3.10}. As $R_g$ is chosen from $\mathbb{U}$ arbitrarily, Lemma \ref{L;Lemma2.4} thus implies that $\mathrm{O}^\vartheta(\mathbb{U})\subseteq\mathbb{U}_{\max}$. Furthermore, the combination of Notations \ref{N;Notation3.8}, \ref{N;Notation3.3}, and Lemma \ref{L;Lemma2.4} shows that $\mathbb{U}_{\max}\subseteq\mathrm{O}^\vartheta(\mathbb{U})$. Hence $\mathbb{U}_{\max}=\mathrm{O}^\vartheta(\mathbb{U})$. The first statement thus follows. The second statement thus follows from Lemma \ref{L;Lemma3.13} and the first one. Notice that $\mathbb{S}\leq\mathbb{S}$. The third statement thus follows from the second one. The desired lemma thus follows.
\end{proof}
\begin{lem}\label{L;Lemma3.15}
Assume that $\mathbb{U}\unlhd\mathbb{S}$. Then $\mathrm{O}^\vartheta(\mathbb{U})\!\!=\!\!\mathrm{O}^\vartheta(\mathbb{S})$. Moreover, $\mathbb{U}\!=\!\mathrm{O}^\vartheta(\mathbb{S})\mathrm{O}_\vartheta(\mathbb{U})$ and $\mathrm{O}^\vartheta(\mathbb{S})\!\cap\! \mathrm{O}_\vartheta(\mathbb{U})\!=\!\{R_0\}$.
\end{lem}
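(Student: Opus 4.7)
The plan is to reduce the lemma to the two previous structural results, Lemma \ref{L;Lemma2.5} and Lemma \ref{L;Lemma3.14}, by exhibiting a concrete element inside $\mathbb{U}$ which forces $\mathbb{U}_{\max}$ and $\mathbb{S}_{\max}$ to coincide.

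First, since $\mathbb{S}$ is commutative and $\mathbb{U}\unlhd\mathbb{S}$, Lemma \ref{L;Lemma2.5} gives $\mathrm{O}^\vartheta(\mathbb{S})\subseteq\mathbb{U}$, and Lemma \ref{L;Lemma3.14} applied to $\mathbb{S}$ yields $\mathrm{O}^\vartheta(\mathbb{S})=\mathbb{S}_{\max}$. Taking the element $R_d\in\mathbb{S}$ with $\mathbb{P}(d)=[1,n]$, Notation \ref{N;Notation3.3} gives $\mathbb{P}(\widetilde{d})=\mathbb{P}_2(d)=\{a:u_a>2\}$, a set of size $n_2$; this realises the maximum of $|\mathbb{P}(\widetilde{g})|$ over $R_g\in\mathbb{S}$, so by Notation \ref{N;Notation3.8}, $\mathbb{P}(\mathbb{S})=\{a:u_a>2\}$ and $\mathbb{S}_{\max}=\langle R_{\widetilde{d}}\rangle$. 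In particular $R_{\widetilde{d}}\in\mathrm{O}^\vartheta(\mathbb{S})\subseteq\mathbb{U}$.

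Next I would promote $R_{\widetilde{d}}$ to a witness of maximality inside $\mathbb{U}$. Since $\mathbb{P}(\widetilde{d})=\mathbb{P}_2(\widetilde{d})$, Notation \ref{N;Notation3.3} yields $\widetilde{\widetilde{d}}=\widetilde{d}$ and hence $|\mathbb{P}(\widetilde{\widetilde{d}})|=n_2$. For any $R_h\in\mathbb{U}\subseteq\mathbb{S}$, $|\mathbb{P}(\widetilde{h})|=|\mathbb{P}_2(h)|\leq n_2$, so $R_{\widetilde{d}}$ attains the maximum in $\mathbb{U}$ as well. Notation \ref{N;Notation3.8} then gives $\mathbb{U}_{\max}=\langle R_{\widetilde{\widetilde{d}}}\rangle=\langle R_{\widetilde{d}}\rangle=\mathbb{S}_{\max}$, and applying Lemma \ref{L;Lemma3.14} to $\mathbb{U}$ produces $\mathrm{O}^\vartheta(\mathbb{U})=\mathbb{U}_{\max}=\mathbb{S}_{\max}=\mathrm{O}^\vartheta(\mathbb{S})$.

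The remaining two identities are then immediate: Lemma \ref{L;Lemma3.14} already asserts $\mathbb{U}=\mathrm{O}^\vartheta(\mathbb{U})\mathrm{O}_\vartheta(\mathbb{U})$ and $\mathrm{O}^\vartheta(\mathbb{U})\cap\mathrm{O}_\vartheta(\mathbb{U})=\{R_0\}$, and substituting the first statement finishes the proof. The one step that requires some care, and which I would verify explicitly, is the identification of the maximum of $|\mathbb{P}(\widetilde{h})|$ over $\mathbb{U}$ with the witness $R_{\widetilde{d}}$; once this bookkeeping is in place the rest is a direct appeal to Lemma \ref{L;Lemma3.14}, and I do not expect any serious obstacle.
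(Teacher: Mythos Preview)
Your proof is correct and follows essentially the same approach as the paper's: both identify the generator of $\mathbb{S}_{\max}$ inside $\mathbb{U}$ (you use the explicit element $R_{\widetilde{d}}$, the paper an abstract $R_g\in\mathbb{U}$ with $\widetilde{g}=g$, which is the same thing) and then invoke Lemma~\ref{L;Lemma3.14}. The only minor difference is that the paper obtains merely $\mathbb{S}_{\max}\subseteq\mathbb{U}_{\max}$ from Notation~\ref{N;Notation3.8} and closes the argument with the reverse inclusion $\mathrm{O}^\vartheta(\mathbb{U})\subseteq\mathrm{O}^\vartheta(\mathbb{S})$ from Lemma~\ref{L;Lemma2.4}, whereas you observe directly that $R_{\widetilde{d}}$ already witnesses the maximum of $|\mathbb{P}(\widetilde{h})|$ over $\mathbb{U}$, giving $\mathbb{U}_{\max}=\mathbb{S}_{\max}$ in one step and bypassing Lemma~\ref{L;Lemma2.4}.
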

\begin{proof}
Notice that $\mathrm{O}^\vartheta(\mathbb{U})\subseteq\mathrm{O}^\vartheta(\mathbb{S})$ by Lemma \ref{L;Lemma2.4}. As $\mathbb{U}\!\unlhd\!\mathbb{S}$, notice that $\mathrm{O}^\vartheta(\mathbb{S})\subseteq \mathbb{U}$ by Lemma \ref{L;Lemma2.5}. As $\mathbb{S}\leq\mathbb{S}$, Lemma \ref{L;Lemma3.14} shows that $\mathbb{S}_{\max}=\mathrm{O}^\vartheta(\mathbb{S})\subseteq \mathbb{U}$. By combining Notations \ref{N;Notation3.8}, \ref{N;Notation3.3}, and Lemma \ref{L;Lemma3.1}, notice that
$\mathbb{S}_{\max}=\langle R_g\rangle$ for $R_g\in \mathbb{U}$ and $\widetilde{g}=g$. Notation \ref{N;Notation3.8} thus implies that $\mathbb{S}_{\max}\!\!=\!\!\langle R_g\rangle\subseteq \mathbb{U}_{\max}$. Hence $\mathrm{O}^\vartheta(\mathbb{S})\subseteq\mathrm{O}^\vartheta(\mathbb{U})$ by Lemma \ref{L;Lemma3.14}. The first statement thus follows. The second statement thus follows from the first one and Lemma \ref{L;Lemma3.14}. The desired lemma thus follows.
\end{proof}
We now can determine all closed subsets and strongly normal closed subsets in $\mathbb{S}$.
\begin{thm}\label{T;Theorem3.16}
Assume that $\mathbb{U}\subseteq \mathbb{S}$. Then $\mathbb{U}\leq \mathbb{S}$ if and only if there are $g\in [0,d]$ and $\mathbb{V}\leq \mathbb{S}$ such that $\mathbb{P}(g)=\mathbb{P}_2(g)$, $\mathbb{V}\subseteq\mathrm{O}_\vartheta(\mathbb{S})$, $\mathbb{U}=\langle R_g\rangle\mathbb{V}$, and $\langle R_g\rangle\cap \mathbb{V}=\{R_0\}$.
\end{thm}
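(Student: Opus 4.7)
The plan is to deduce both directions from the canonical decomposition $\mathbb{U}=\mathrm{O}^\vartheta(\mathbb{U})\mathrm{O}_\vartheta(\mathbb{U})$ provided by Lemma \ref{L;Lemma3.14}, together with the identification $\mathrm{O}^\vartheta(\mathbb{U})=\mathbb{U}_{\max}=\langle R_{\widetilde{h}}\rangle$ established there and the uniqueness property recorded in Lemma \ref{L;Lemma3.7}. The backward direction will be essentially free, while the forward direction amounts to choosing the right generator $R_g$ of $\mathrm{O}^\vartheta(\mathbb{U})$.

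For the forward implication, assume $\mathbb{U}\leq\mathbb{S}$. If $\mathbb{U}=\{R_0\}$ I would take $g=0$ and $\mathbb{V}=\{R_0\}$, in which case $\mathbb{P}(g)=\mathbb{P}_2(g)=\varnothing$ and the conclusion is trivial. Otherwise, I would pick $R_h\in\mathbb{U}$ with $|\mathbb{P}(\widetilde{h})|$ maximal among all elements of $\mathbb{U}$ and set $g=\widetilde{h}$. By Notation \ref{N;Notation3.3}, $\mathbb{P}(g)=\mathbb{P}(\widetilde{h})=\mathbb{P}_2(\widetilde{h})=\mathbb{P}_2(g)$, so the condition on $g$ holds. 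Let $\mathbb{V}=\mathrm{O}_\vartheta(\mathbb{U})$; this is closed in $\mathbb{S}$ (as noted in Section 2) and contained in $\mathrm{O}_\vartheta(\mathbb{S})$ by definition. By Notation \ref{N;Notation3.8} and Lemma \ref{L;Lemma3.14}, $\langle R_g\rangle=\langle R_{\widetilde{h}}\rangle=\mathbb{U}_{\max}=\mathrm{O}^\vartheta(\mathbb{U})$, so Lemma \ref{L;Lemma3.14} directly gives $\mathbb{U}=\langle R_g\rangle\mathbb{V}$ and $\langle R_g\rangle\cap\mathbb{V}=\{R_0\}$.

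For the backward implication, suppose $g\in[0,d]$ and $\mathbb{V}\leq\mathbb{S}$ satisfy the listed conditions. Then $\langle R_g\rangle\leq\mathbb{S}$ by definition of the generated closed subset, and $\mathbb{V}\leq\mathbb{S}$ by hypothesis. Since $\mathbb{S}$ is commutative, the standard fact quoted in Section 2 that the complex product of two closed subsets of a commutative scheme is again closed yields $\mathbb{U}=\langle R_g\rangle\mathbb{V}\leq\mathbb{S}$.

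The principal effort in this theorem lies in the forward direction, and even there the core combinatorial work has already been carried out in Lemmas \ref{L;Lemma3.13} and \ref{L;Lemma3.14}; the present theorem merely repackages those results. The only real item to verify is that the $g=\widetilde{h}$ chosen from a maximizing $R_h$ satisfies $\mathbb{P}(g)=\mathbb{P}_2(g)$, which is immediate from Notation \ref{N;Notation3.3}. I anticipate no substantial obstacle.
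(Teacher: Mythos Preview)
Your proof is correct and follows essentially the same approach as the paper: the backward direction uses that the complex product of two closed subsets of a commutative scheme is closed, and the forward direction picks $g=\widetilde{h}$ for a maximizing $R_h$ and sets $\mathbb{V}=\mathrm{O}_\vartheta(\mathbb{U})$, then reads off the decomposition and the trivial intersection. The only cosmetic difference is that you route through Lemma~\ref{L;Lemma3.14} (identifying $\mathbb{U}_{\max}$ with $\mathrm{O}^\vartheta(\mathbb{U})$) whereas the paper cites Lemma~\ref{L;Lemma3.13} directly; since Lemma~\ref{L;Lemma3.14} is itself an immediate consequence of Lemma~\ref{L;Lemma3.13}, this is the same argument.
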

\begin{proof}
If $g\in [0,d]$, $\mathbb{V}\leq \mathbb{S}$, and $\mathbb{U}=\langle R_g\rangle\mathbb{V}$, notice that $\mathbb{U}=\langle R_g\rangle\mathbb{V}\leq \mathbb{S}$. Hence the desired theorem follows from combining Lemma \ref{L;Lemma3.13}, Notations \ref{N;Notation3.8}, and \ref{N;Notation3.3}.
\end{proof}
\begin{thm}\label{T;Theorem3.17}
Assume that $\mathbb{U}\subseteq\mathbb{S}$. Then $\mathbb{U}\unlhd \mathbb{S}$ if and only if there are $\mathbb{V}\leq \mathbb{S}$ such that $\mathbb{V}\subseteq\mathrm{O}_\vartheta(\mathbb{S})$, $\mathbb{U}=\mathrm{O}^\vartheta(\mathbb{S})\mathbb{V}$, and $\mathrm{O}^\vartheta(\mathbb{S})\cap\mathbb{V}=\{R_0\}$.
\end{thm}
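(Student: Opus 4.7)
The plan is to split the desired biconditional and exploit Lemma \ref{L;Lemma3.15} in one direction and Lemma \ref{L;Lemma2.5} in the other, remembering that $\mathbb{S}$ is commutative (hence every closed subset inherits the product structure needed).

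For the forward direction, assume $\mathbb{U}\unlhd\mathbb{S}$. The natural choice is $\mathbb{V}=\mathrm{O}_\vartheta(\mathbb{U})$. Lemma \ref{L;Lemma3.15} immediately supplies $\mathbb{U}=\mathrm{O}^\vartheta(\mathbb{S})\mathrm{O}_\vartheta(\mathbb{U})$ and $\mathrm{O}^\vartheta(\mathbb{S})\cap\mathrm{O}_\vartheta(\mathbb{U})=\{R_0\}$, which are exactly the two equalities required. I would then verify the two side conditions on $\mathbb{V}$: first, $\mathbb{V}=\mathrm{O}_\vartheta(\mathbb{U})\leq\mathbb{U}\leq\mathbb{S}$, so $\mathbb{V}\leq\mathbb{S}$ by transitivity of being closed; second, every $R_a\in\mathrm{O}_\vartheta(\mathbb{U})$ has $k_a=1$ as a relation of $\mathbb{S}$ as well, so $\mathbb{V}\subseteq\mathrm{O}_\vartheta(\mathbb{S})$.

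For the backward direction, assume $\mathbb{V}\leq\mathbb{S}$ with $\mathbb{V}\subseteq\mathrm{O}_\vartheta(\mathbb{S})$, $\mathbb{U}=\mathrm{O}^\vartheta(\mathbb{S})\mathbb{V}$, and $\mathrm{O}^\vartheta(\mathbb{S})\cap\mathbb{V}=\{R_0\}$. Since $\mathbb{S}$ is commutative and both $\mathrm{O}^\vartheta(\mathbb{S})$ and $\mathbb{V}$ are closed in $\mathbb{S}$, the earlier recorded fact (from Section 2) that the complex product of two closed subsets in a commutative scheme is closed gives $\mathbb{U}\leq\mathbb{S}$. Next, because $R_0\in\mathbb{V}$, we have $\mathrm{O}^\vartheta(\mathbb{S})=\mathrm{O}^\vartheta(\mathbb{S})\{R_0\}\subseteq\mathrm{O}^\vartheta(\mathbb{S})\mathbb{V}=\mathbb{U}$. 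Lemma \ref{L;Lemma2.5} then yields $\mathbb{U}\unlhd\mathbb{S}$.

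Neither direction should present a genuine obstacle; the potential pitfall is purely bookkeeping, namely confirming the membership $\mathbb{V}\subseteq\mathrm{O}_\vartheta(\mathbb{S})$ and the closedness of $\mathbb{V}$ inside $\mathbb{S}$ (as opposed to inside $\mathbb{U}$) in the forward direction, and being careful in the backward direction to cite commutativity of $\mathbb{S}$ when asserting that $\mathrm{O}^\vartheta(\mathbb{S})\mathbb{V}$ is again closed. With these two checks handled, the theorem follows almost immediately from Lemmas \ref{L;Lemma2.5} and \ref{L;Lemma3.15}.
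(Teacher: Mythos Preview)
Your proposal is correct and follows essentially the same approach as the paper: the paper's one-line proof simply cites Lemmas \ref{L;Lemma2.4}, \ref{L;Lemma2.5}, and \ref{L;Lemma3.15}, and your argument is just an explicit unpacking of how those lemmas (together with the commutativity fact from Section~2 that a product of closed subsets is closed) yield both directions. The only cosmetic difference is that the paper also lists Lemma \ref{L;Lemma2.4}, but that lemma is already absorbed into the proof of Lemma \ref{L;Lemma2.5}, so your omission of it is harmless.
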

\begin{proof}
The desired theorem follows from combining Lemmas \ref{L;Lemma2.4}, \ref{L;Lemma2.5}, and \ref{L;Lemma3.15}.
\end{proof}
For an additional main result of this section, the following lemmas are required.
\begin{lem}\label{L;Lemma3.18}
Assume that $\mathbb{U}\!\leq\! \mathbb{S}$ and $\mathbb{V}\!\leq\! \mathbb{S}$. Then $\mathbb{U}\!=\!\mathbb{V}$ if and only if $\mathbb{P}(\mathbb{U})=\mathbb{P}(\mathbb{V})$ and $\mathrm{O}_\vartheta(\mathbb{U})=\mathrm{O}_\vartheta(\mathbb{V})$. If $\mathbb{U}\unlhd \mathbb{S}$ and $\mathbb{V}\unlhd\mathbb{S}$, then $\mathbb{U}=\mathbb{V}$ if and only if $\mathrm{O}_\vartheta(\mathbb{U})=\mathrm{O}_\vartheta(\mathbb{V})$.
\end{lem}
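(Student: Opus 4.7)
The plan is to derive both statements from the decomposition results already established, namely Lemmas \ref{L;Lemma3.13}, \ref{L;Lemma3.14}, \ref{L;Lemma3.15}, together with the uniqueness criterion from Lemma \ref{L;Lemma3.6}. For each biconditional the forward direction is immediate, so I would only write out the reverse directions.

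For the first statement, I would begin by invoking Lemma \ref{L;Lemma3.13} to write $\mathbb{U}=\mathbb{U}_{\max}\mathrm{O}_\vartheta(\mathbb{U})$ and $\mathbb{V}=\mathbb{V}_{\max}\mathrm{O}_\vartheta(\mathbb{V})$. Given $\mathrm{O}_\vartheta(\mathbb{U})=\mathrm{O}_\vartheta(\mathbb{V})$, it suffices to prove $\mathbb{U}_{\max}=\mathbb{V}_{\max}$. By Notation \ref{N;Notation3.8} we can write $\mathbb{U}_{\max}=\langle R_{\widetilde{g}}\rangle$ and $\mathbb{V}_{\max}=\langle R_{\widetilde{h}}\rangle$ for suitable representatives $R_g\in\mathbb{U}$, $R_h\in\mathbb{V}$, and moreover $\mathbb{P}(\widetilde{g})=\mathbb{P}(\mathbb{U})$, $\mathbb{P}(\widetilde{h})=\mathbb{P}(\mathbb{V})$. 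The hypothesis $\mathbb{P}(\mathbb{U})=\mathbb{P}(\mathbb{V})$ then gives $\mathbb{P}(\widetilde{g})=\mathbb{P}(\widetilde{h})$, and Lemma \ref{L;Lemma3.6} forces $\langle R_{\widetilde{g}}\rangle=\langle R_{\widetilde{h}}\rangle$, completing the reverse direction.

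For the second statement, assume $\mathbb{U}\unlhd\mathbb{S}$, $\mathbb{V}\unlhd\mathbb{S}$, and $\mathrm{O}_\vartheta(\mathbb{U})=\mathrm{O}_\vartheta(\mathbb{V})$. By Lemma \ref{L;Lemma3.15}, both $\mathbb{U}$ and $\mathbb{V}$ decompose as $\mathrm{O}^\vartheta(\mathbb{S})\mathrm{O}_\vartheta(\mathbb{U})$ and $\mathrm{O}^\vartheta(\mathbb{S})\mathrm{O}_\vartheta(\mathbb{V})$ respectively, because $\mathrm{O}^\vartheta(\mathbb{U})=\mathrm{O}^\vartheta(\mathbb{V})=\mathrm{O}^\vartheta(\mathbb{S})$ under strong normality. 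Hence $\mathbb{U}=\mathbb{V}$ is immediate from the equality of the thin radicals, and the strongly normal case follows.

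There is no real obstacle: the work has already been front-loaded into the structure theorems (Lemmas \ref{L;Lemma3.13}, \ref{L;Lemma3.14}, \ref{L;Lemma3.15}) and the uniqueness statement (Lemma \ref{L;Lemma3.6}). The only small point requiring care is the appeal to Notation \ref{N;Notation3.8} to pin down $\mathbb{P}(\mathbb{U})$ as $\mathbb{P}(\widetilde{g})$ for a maximal-support representative $R_g$, which is precisely what makes the first statement's reverse direction go through without any additional combinatorial argument.
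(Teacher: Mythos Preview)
Your proposal is correct and follows essentially the same approach as the paper: both arguments reduce the first statement to the equivalence $\mathbb{U}_{\max}=\mathbb{V}_{\max}\Leftrightarrow\mathbb{P}(\mathbb{U})=\mathbb{P}(\mathbb{V})$ via Notation~\ref{N;Notation3.8} and Lemma~\ref{L;Lemma3.6}, combined with the decomposition of Lemma~\ref{L;Lemma3.13}, and both derive the second statement directly from Lemma~\ref{L;Lemma3.15}.
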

\begin{proof}
By Notation \ref{N;Notation3.8} and Lemma \ref{L;Lemma3.13}, notice that $\mathbb{U}\!=\!\mathbb{V}$ if and only if $\mathbb{U}_{\max}\!=\!\mathbb{V}_{\max}$ and $\mathrm{O}_\vartheta(\mathbb{U})=\mathrm{O}_\vartheta(\mathbb{V})$. By Notation \ref{N;Notation3.8} and Lemma \ref{L;Lemma3.6}, notice that $\mathbb{U}_{\max}=\mathbb{V}_{\max}$ if and only if $\mathbb{P}(\mathbb{U})=\mathbb{P}(\mathbb{V})$. The first statement thus follows. The second statement follows from Lemma \ref{L;Lemma3.15}. The desired lemma thus follows.
\end{proof}
\begin{lem}\label{L;Lemma3.19}
Assume that $\mathbb{U}\leq\mathbb{S}$. Then $|\mathbb{U}|=|\mathbb{U}_{\max}||\mathrm{O}_\vartheta(\mathbb{U})|=|\mathrm{O}^\vartheta(\mathbb{U})||\mathrm{O}_\vartheta(\mathbb{U})|$.
\end{lem}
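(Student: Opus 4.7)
The plan is to view $\mathbb{U}$ as being parametrized by pairs $(R_g, R_i)$ with $R_g \in \mathbb{U}_{\max}$ and $R_i \in \mathrm{O}_\vartheta(\mathbb{U})$, and to establish a bijection between $\mathbb{U}_{\max} \times \mathrm{O}_\vartheta(\mathbb{U})$ and $\mathbb{U}$ via complex multiplication. The main ingredients are already in hand from Lemma \ref{L;Lemma3.13} (the decomposition $\mathbb{U} = \mathbb{U}_{\max}\mathrm{O}_\vartheta(\mathbb{U})$ together with the uniqueness statement about intersections of products), Lemma \ref{L;Lemma2.3} (to control the size of a single product), and Lemma \ref{L;Lemma3.14} (to pass to $\mathrm{O}^\vartheta(\mathbb{U})$).

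First I would observe that for any $R_g \in \mathbb{U}_{\max}$ and $R_i \in \mathrm{O}_\vartheta(\mathbb{U})$, the complex product $R_gR_i$ is in fact a singleton. Indeed, $k_i = 1$ by the definition of the thin radical, so Lemma \ref{L;Lemma2.3} gives $|R_{g'}R_i| \leq \gcd(k_g, k_i) = 1$, and since $R_{g'}R_i$ is nonempty this is an equality; symmetrically, $|R_gR_i| = 1$ (using that $\mathbb{S}$ is symmetric so $R_g = R_{g'}$, or alternatively by applying Lemma \ref{L;Lemma2.1} to convert). Thus the rule $(R_g, R_i) \mapsto$ the unique element of $R_gR_i$ gives a well-defined map $\varphi \colon \mathbb{U}_{\max} \times \mathrm{O}_\vartheta(\mathbb{U}) \to \mathbb{U}$, where the codomain is $\mathbb{U}$ because $\mathbb{U}_{\max}\mathrm{O}_\vartheta(\mathbb{U}) \subseteq \mathbb{U}$.

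Next I would verify that $\varphi$ is a bijection. Surjectivity is immediate from the equality $\mathbb{U} = \mathbb{U}_{\max}\mathrm{O}_\vartheta(\mathbb{U})$ supplied by Lemma \ref{L;Lemma3.13}. For injectivity, suppose $\varphi(R_g, R_i) = \varphi(R_h, R_j)$ for some $R_g, R_h \in \mathbb{U}_{\max}$ and $R_i, R_j \in \mathrm{O}_\vartheta(\mathbb{U})$; then $R_gR_i \cap R_hR_j \neq \varnothing$, and the last assertion of Lemma \ref{L;Lemma3.13} forces $R_g = R_h$ and $R_i = R_j$. Therefore $|\mathbb{U}| = |\mathbb{U}_{\max}| \cdot |\mathrm{O}_\vartheta(\mathbb{U})|$, which is the first equality.

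For the second equality, I would simply invoke Lemma \ref{L;Lemma3.14}, which identifies $\mathbb{U}_{\max}$ with $\mathrm{O}^\vartheta(\mathbb{U})$, and substitute. I do not anticipate a real obstacle: everything reduces to a counting argument from the already-proven direct-product-like decomposition in Lemma \ref{L;Lemma3.13}. The only delicate point is justifying that $R_gR_i$ is a singleton so that the counting map is well defined, and this is handled cleanly by the valency bound of Lemma \ref{L;Lemma2.3}.
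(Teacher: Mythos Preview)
Your proof is correct and uses essentially the same approach as the paper: both exploit Lemma~\ref{L;Lemma2.3} to see that each $R_gR_i$ with $R_i\in\mathrm{O}_\vartheta(\mathbb{U})$ is a singleton, then use the decomposition and uniqueness statements of Lemma~\ref{L;Lemma3.13} to count, and finish with Lemma~\ref{L;Lemma3.14}. The only cosmetic difference is that the paper organizes the count as a partition of $\mathbb{U}$ into the cosets $\mathbb{U}_{\max}R_a$, each of size $|\mathbb{U}_{\max}|$, whereas you build an explicit bijection $\mathbb{U}_{\max}\times\mathrm{O}_\vartheta(\mathbb{U})\to\mathbb{U}$; these are equivalent packagings of the same argument.
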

\begin{proof}
By Lemmas \ref{L;Lemma2.3} and \ref{L;Lemma3.13}, notice that $|\mathbb{U}_{\max}R_g|\!=\!|\mathbb{U}_{\max}|$ for any $R_g\in \mathrm{O}_\vartheta(\mathbb{U})$. By Lemma \ref{L;Lemma3.13} again, notice that $\{\mathbb{U}_{\max}R_a\!: R_a\!\in\! \mathrm{O}_\vartheta(\mathbb{U})\}$ forms a partition of $\mathbb{U}$. Hence $|\mathbb{U}|=|\mathbb{U}_{\max}||\mathrm{O}_\vartheta(\mathbb{U})|$. The desired lemma thus follows from Lemma \ref{L;Lemma3.14}.
\end{proof}
\begin{lem}\label{L;Lemma3.20}
Assume that $\mathbb{P}\subseteq\{a: u_a>2\}$, $\mathbb{O}\leq\mathbb{S}$, and $\mathbb{O}\subseteq\mathrm{O}_\vartheta(\mathbb{S})$. Then there exist $\mathbb{U}\leq \mathbb{S}$ and $\mathbb{V}\unlhd \mathbb{S}$ such that $\mathbb{P}(\mathbb{U})=\mathbb{P}$ and $\mathrm{O}_\vartheta(\mathbb{U})=\mathrm{O}_\vartheta(\mathbb{V})=\mathbb{O}$.
\end{lem}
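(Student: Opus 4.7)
The plan is to use the characterizations in Theorems \ref{T;Theorem3.16} and \ref{T;Theorem3.17} to build $\mathbb{U}$ and $\mathbb{V}$ as explicit products. Specifically, I will set $\mathbb{U} = \langle R_g \rangle \mathbb{O}$ for a suitable $g \in [0,d]$ and $\mathbb{V} = \mathrm{O}^\vartheta(\mathbb{S}) \mathbb{O}$. The desired equalities for $\mathbb{P}(\mathbb{U})$ and the thin radicals will then follow from the uniqueness built into Lemmas \ref{L;Lemma3.13}, \ref{L;Lemma3.14}, \ref{L;Lemma3.15}, and \ref{L;Lemma3.19}.

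To construct $\mathbb{U}$, first pick $g \in [0,d]$ with $\mathbb{P}(g) = \mathbb{P}$. Since $\mathbb{P} \subseteq \{a : u_a > 2\}$, one has $\mathbb{P}_2(g) = \mathbb{P}(g) = \mathbb{P}$, and Notation \ref{N;Notation3.3} together with Lemma \ref{L;Lemma3.1} then forces $\widetilde{g} = g$, which is exactly the hypothesis required by Theorem \ref{T;Theorem3.16}. To apply that theorem I would next verify $\langle R_g \rangle \cap \mathbb{O} = \{R_0\}$: any $R_a$ in the intersection satisfies $\mathbb{P}(a) \subseteq \mathbb{P}(g) \subseteq \{c : u_c > 2\}$ by Lemma \ref{L;Lemma3.5}, whereas $R_a \in \mathrm{O}_\vartheta(\mathbb{S})$ forces $\mathbb{P}_2(a) = \varnothing$ by Lemma \ref{L;Lemma3.11}. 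Together these give $\mathbb{P}(a) = \varnothing$, hence $a = 0$, and so $\mathbb{U} = \langle R_g \rangle \mathbb{O} \leq \mathbb{S}$. The construction of $\mathbb{V}$ is parallel: by Lemma \ref{L;Lemma3.14}, $\mathrm{O}^\vartheta(\mathbb{S}) = \langle R_h \rangle$ for the unique $h$ with $\mathbb{P}(h) = \{b : u_b > 2\}$, so the same intersection argument shows $\mathrm{O}^\vartheta(\mathbb{S}) \cap \mathbb{O} = \{R_0\}$, and Theorem \ref{T;Theorem3.17} delivers $\mathbb{V} = \mathrm{O}^\vartheta(\mathbb{S}) \mathbb{O} \unlhd \mathbb{S}$.

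The main step that I expect to require the most care is showing $\mathbb{U}_{\max} = \langle R_g \rangle$, from which $\mathbb{P}(\mathbb{U}) = \mathbb{P}$ is immediate. For this, given any $R_h \in \mathbb{U}$ and any decomposition $R_h \in R_i R_j$ with $R_i \in \langle R_g \rangle$ and $R_j \in \mathbb{O}$, I would use $\mathbb{P}(h) \subseteq \mathbb{P}(i) \cup \mathbb{P}(j)$ together with $\mathbb{P}_2(j) = \varnothing$ (because $k_j = 1$) and $\mathbb{P}_2(i) \subseteq \mathbb{P}(g)$ (because $i \leq_2 g$ and $\mathbb{P}(g) \subseteq \{c : u_c > 2\}$) to conclude $\mathbb{P}(\widetilde{h}) = \mathbb{P}_2(h) \subseteq \mathbb{P}(g) = \mathbb{P}(\widetilde{g})$. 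Hence $R_g$ realizes the maximum of $|\mathbb{P}(\widetilde{h})|$ over $R_h \in \mathbb{U}$, so by Notation \ref{N;Notation3.8} one has $\mathbb{U}_{\max} = \langle R_g \rangle$ and $\mathbb{P}(\mathbb{U}) = \mathbb{P}$.

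For the thin radicals, $\mathbb{O} \subseteq \mathrm{O}_\vartheta(\mathbb{U})$ is immediate from $\mathbb{O} \subseteq \mathbb{U}$ and $\mathbb{O} \subseteq \mathrm{O}_\vartheta(\mathbb{S})$. For the reverse inclusion I would run a cardinality count: Lemma \ref{L;Lemma3.19} gives $|\mathbb{U}| = |\mathbb{U}_{\max}||\mathrm{O}_\vartheta(\mathbb{U})|$, while the uniqueness-of-factorization argument used in the proof of Lemma \ref{L;Lemma3.13} applies equally to the pair $(\langle R_g \rangle, \mathbb{O})$ (the only input being that their intersection is $\{R_0\}$ and each is closed) and yields $|\mathbb{U}| = |\langle R_g \rangle||\mathbb{O}|$. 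Comparing and using $\mathbb{U}_{\max} = \langle R_g \rangle$ forces $\mathrm{O}_\vartheta(\mathbb{U}) = \mathbb{O}$. The identical counting argument with $\mathrm{O}^\vartheta(\mathbb{S})$ replacing $\langle R_g \rangle$, combined with Lemma \ref{L;Lemma3.15}, gives $\mathrm{O}_\vartheta(\mathbb{V}) = \mathbb{O}$, completing the construction.
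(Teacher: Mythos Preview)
Your proposal is correct and follows essentially the same construction as the paper: set $\mathbb{U}=\langle R_g\rangle\mathbb{O}$ and $\mathbb{V}=\mathrm{O}^\vartheta(\mathbb{S})\mathbb{O}$ for $g$ with $\mathbb{P}(g)=\mathbb{P}$, then verify the required properties via cardinality. The paper is slightly more economical: rather than first proving $\mathbb{U}_{\max}=\langle R_g\rangle$ by your direct argument and then running a separate count for $\mathrm{O}_\vartheta(\mathbb{U})=\mathbb{O}$, it obtains both at once from the single squeeze
\[
|\mathbb{U}|\;\leq\;|\langle R_g\rangle|\,|\mathbb{O}|\;\leq\;|\mathbb{U}_{\max}|\,|\mathrm{O}_\vartheta(\mathbb{U})|\;=\;|\mathbb{U}|,
\]
using only $\langle R_g\rangle\subseteq\mathbb{U}_{\max}$, $\mathbb{O}\subseteq\mathrm{O}_\vartheta(\mathbb{U})$, Lemma~\ref{L;Lemma2.3}, and Lemma~\ref{L;Lemma3.19}. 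Also, you do not need Theorems~\ref{T;Theorem3.16} and~\ref{T;Theorem3.17} to get $\mathbb{U}\leq\mathbb{S}$ and $\mathbb{V}\unlhd\mathbb{S}$: the former is immediate since products of closed subsets in a commutative scheme are closed, and the latter follows directly from Lemma~\ref{L;Lemma2.5}.
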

\begin{proof}
According to Notation \ref{N;Notation3.3} and the definition of $\mathbb{P}$, there is $g\in [0,d]$ such that $\mathbb{P}(\widetilde{g})\!=\!\mathbb{P}_2(\widetilde{g})\!=\!\mathbb{P}_2(g)=\mathbb{P}(g)=\mathbb{P}$. Hence $g=\widetilde{g}$ by Lemma \ref{L;Lemma3.1}.
Set $\mathbb{U}=\langle R_g\rangle\mathbb{O}\leq \mathbb{S}$ and $\mathbb{V}=\mathrm{O}^\vartheta(\mathbb{S})\mathbb{O}\unlhd\mathbb{S}$ by Lemma \ref{L;Lemma2.5}. As $R_g\!\in\!\mathbb{U}$ and Notation \ref{N;Notation3.8} holds, notice that $\langle R_g\rangle\!=\!\langle R_{\widetilde{g}}\rangle\!\subseteq\!\mathbb{U}_{\max}$ and $\mathbb{O}\!\!\subseteq\!\!\mathrm{O}_\vartheta(\mathbb{U})\!\!\cap\!\mathrm{O}_\vartheta(\mathbb{V})$. By Lemmas \ref{L;Lemma2.3} and \ref{L;Lemma3.13}, observe that $|\langle R_g\rangle R_h|=|\langle R_g\rangle|$ and $|\mathrm{O}^\vartheta(\mathbb{S})R_h|=|\mathrm{O}^\vartheta(\mathbb{S})|$ for any $R_h\in \mathbb{O}$. These equalities yield $|\mathbb{U}|\leq |\langle R_g\rangle||\mathbb{O}|\leq|\mathbb{U}_{\max}||\mathrm{O}_\vartheta(\mathbb{U})|=|\mathbb{U}|$ and $|\mathbb{V}|\leq |\mathrm{O}^\vartheta(\mathbb{S})||\mathbb{O}|\!\leq\! |\mathrm{O}^\vartheta(\mathbb{S})||\mathrm{O}_\vartheta(\mathbb{V})|\!=\!|\mathbb{V}|$ by Lemmas \ref{L;Lemma3.19} and \ref{L;Lemma3.15}. Therefore $\mathbb{U}_{\max}\!\!=\!\!\langle R_g\rangle$ and $\mathrm{O}_\vartheta(\mathbb{U})\!=\!\mathrm{O}_\vartheta(\mathbb{V})=\mathbb{O}$. As $g=\widetilde{g}$ and Notation \ref{N;Notation3.8} holds, $\mathbb{P}(\mathbb{U})\!=\!\mathbb{P}(\widetilde{g})\!=\!\mathbb{P}(g)\!=\!\mathbb{P}$. The desired lemma thus follows.
\end{proof}
We end this section by the remaining main result of this section and an example.
\begin{nota}\label{N;Notation3.21}
\em Assume that $g\!\in\!\mathbb{N}_0$ and $\mathbb{K}$ is a field of $h$ elements. Then the Galois number $G_h(g)$ is the number of all $\mathbb{K}$-subspaces in a $g$-dimensional $\mathbb{K}$-vector space.
\end{nota}
\begin{thm}\label{T;Theorem3.22}
The number of all closed subsets in $\mathbb{S}$ equals $2^{n_2}G_2(\log_2d_1)$. The number of all strongly normal closed subsets in $\mathbb{S}$ equals $G_2(\log_2d_1)$.
\end{thm}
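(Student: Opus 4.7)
The plan is to view Theorem \ref{T;Theorem3.22} as a direct corollary of the classification theorems \ref{T;Theorem3.16} and \ref{T;Theorem3.17}, together with the uniqueness lemma \ref{L;Lemma3.18} and the existence lemma \ref{L;Lemma3.20}. First I would verify that $\log_2 d_1$ is a well-defined nonnegative integer: by Lemma \ref{L;Lemma3.10}, $\mathrm{O}_\vartheta(\mathbb{S})^\gamma$ is an elementary abelian $2$-group, hence its order $d_1$ equals $2^m$ for some $m\in\mathbb{N}_0$, and this $m=\log_2 d_1$ is the $\mathbb{F}_2$-dimension of $\mathrm{O}_\vartheta(\mathbb{S})^\gamma$ viewed as a vector space over $\mathbb{F}_2$.

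Next I would establish the key auxiliary count: the number of $\mathbb{O}\leq\mathbb{S}$ with $\mathbb{O}\subseteq\mathrm{O}_\vartheta(\mathbb{S})$ equals $G_2(\log_2 d_1)$. Indeed, Notation \ref{N;Notation3.9} and Lemma \ref{L;Lemma3.10} show that such $\mathbb{O}$ correspond bijectively via $\mathbb{O}\mapsto\mathbb{O}^\gamma$ to the subgroups of the elementary abelian $2$-group $\mathrm{O}_\vartheta(\mathbb{S})^\gamma$; but subgroups of an $m$-dimensional elementary abelian $2$-group are precisely $\mathbb{F}_2$-subspaces of $\mathbb{F}_2^m$, and by Notation \ref{N;Notation3.21} these number $G_2(m)=G_2(\log_2 d_1)$.

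For the first statement, I would combine Lemma \ref{L;Lemma3.18} and Lemma \ref{L;Lemma3.20} to build a bijection
\[
\{\mathbb{U}:\mathbb{U}\leq\mathbb{S}\}\;\longleftrightarrow\;\{(\mathbb{P},\mathbb{O}):\mathbb{P}\subseteq\{a:u_a>2\},\ \mathbb{O}\leq\mathbb{S},\ \mathbb{O}\subseteq\mathrm{O}_\vartheta(\mathbb{S})\},\qquad \mathbb{U}\mapsto(\mathbb{P}(\mathbb{U}),\mathrm{O}_\vartheta(\mathbb{U})).
\]
The first part of Lemma \ref{L;Lemma3.18} furnishes injectivity. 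For surjectivity I rely on Lemma \ref{L;Lemma3.20}, which produces, for every admissible pair $(\mathbb{P},\mathbb{O})$, a closed subset $\mathbb{U}\leq\mathbb{S}$ with $\mathbb{P}(\mathbb{U})=\mathbb{P}$ and $\mathrm{O}_\vartheta(\mathbb{U})=\mathbb{O}$. I must also check that $\mathbb{P}(\mathbb{U})\subseteq\{a:u_a>2\}$ always holds, which is immediate from Notations \ref{N;Notation3.8} and \ref{N;Notation3.3} because $\mathbb{P}(\widetilde{g})=\mathbb{P}_2(\widetilde{g})$. Since the set of admissible $\mathbb{P}$'s has cardinality $2^{n_2}$ and the set of admissible $\mathbb{O}$'s has cardinality $G_2(\log_2 d_1)$ by the auxiliary count, the total equals $2^{n_2}G_2(\log_2 d_1)$.

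For the second statement, I would argue identically but with the second part of Lemma \ref{L;Lemma3.18} and the $\mathbb{V}\unlhd\mathbb{S}$ part of Lemma \ref{L;Lemma3.20}: the map $\mathbb{U}\mapsto\mathrm{O}_\vartheta(\mathbb{U})$ is a bijection between the strongly normal closed subsets of $\mathbb{S}$ and the admissible $\mathbb{O}$'s, yielding the count $G_2(\log_2 d_1)$. The main subtlety here will be double-checking that the parameters classifying closed subsets in Lemmas \ref{L;Lemma3.18}--\ref{L;Lemma3.20} line up exactly with the two counting problems, namely that in the strongly normal case the $\mathbb{P}$ coordinate is forced (since $\mathrm{O}^\vartheta(\mathbb{U})=\mathrm{O}^\vartheta(\mathbb{S})$ by Lemma \ref{L;Lemma3.15}) and therefore contributes no factor; this is the only step requiring care, and it is essentially bookkeeping.
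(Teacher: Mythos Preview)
Your proposal is correct and follows essentially the same route as the paper: the paper's proof also combines Lemma \ref{L;Lemma3.10} (elementary abelian $2$-group structure, making $G_2(\log_2 d_1)$ meaningful), Lemma \ref{L;Lemma3.18} (injectivity of $\mathbb{U}\mapsto(\mathbb{P}(\mathbb{U}),\mathrm{O}_\vartheta(\mathbb{U}))$), Lemma \ref{L;Lemma3.20} (surjectivity), and Notation \ref{N;Notation3.8}. Your write-up is simply more explicit about the bijections and the auxiliary count of closed $\mathbb{O}\subseteq\mathrm{O}_\vartheta(\mathbb{S})$, but the underlying argument is the same.
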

\begin{proof}
Recall that $d_1\!=\!|\mathrm{O}_\vartheta(\mathbb{S})|$ and $n_2\!=\!|\{a: u_a>2\}|$. For any $g\in \mathbb{N}_0$, notice that every elementary abelian $2$-group of order $2^g$ is a $g$-dimensional vector space over a field of two elements. So $G_2(g)$ also equals the number of all elementary abelian $2$-subgroups of an elementary abelian $2$-group of order $2^g$. So the desired theorem follows from combining Notation \ref{N;Notation3.8}, Lemmas \ref{L;Lemma3.10}, \ref{L;Lemma3.18}, and \ref{L;Lemma3.20}.
\end{proof}
\begin{eg}\label{E;Example3.23}
\em Assume that $n=u_1=2$ and $u_2=3$. So $d=3$, $n_2=k_0=k_1=1$, and $d_1=k_2=k_3=2$ by Lemma \ref{L;Lemma3.11}. So $\mathrm{O}^\vartheta(\mathbb{S})=\{R_0, R_2\}$ and $\mathrm{O}_\vartheta(\mathbb{S})=\{R_0, R_1\}$. Theorems \ref{T;Theorem3.16} and \ref{T;Theorem3.22} imply that $\{R_0\}, \{R_0, R_1\}, \{R_0, R_2\}, \{R_0, R_1, R_2, R_3\}$ are precisely
all closed subsets in $\mathbb{S}$. Furthermore, Theorems \ref{T;Theorem3.17} and \ref{T;Theorem3.22} imply that $\{R_0, R_2\}$ and $\{R_0, R_1, R_2, R_3\}$ are precisely all strongly normal closed subsets in $\mathbb{S}$.
\end{eg}
\section{$\F$-Dimensions of Terwilliger $\F$-algebras of factorial schemes}
In this section, we present an explicit formula for the $\F$-dimension of $\mathbb{T}$. Moreover, we also present two $\F$-bases of $\mathbb{T}$. We recall Notation \ref{N;Notation3.3} and present a needed lemma.
\begin{lem}\label{L;Lemma4.1}
Assume that $g, h\in [0,d]$. Then there are unique $i, j, k\in [0,d]$ such that $\mathbb{P}(i)=\mathbb{P}(g)\setminus\mathbb{P}(h)$, $\mathbb{P}(j)=\mathbb{P}(g)\cup\mathbb{P}(h)$,
and $\mathbb{P}(k)=\mathbb{P}(g)\cap\mathbb{P}(h)$.
\end{lem}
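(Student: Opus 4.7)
The plan is to reduce this to the bijective correspondence between $[0,d]$ and subsets of $[1,n]$ that is implicit in the definition of $\nu$ and $\mathbb{P}$. Recall that $d=2^n-1$, and the $2$-adic decomposition yields a bijection $\nu$ between $[0,2^n-1]$ and the set of $n$-dimensional $\{0,1\}$-vectors. Composing with the obvious bijection between $n$-dimensional $\{0,1\}$-vectors and subsets of $[1,n]$ (viewing a vector as a characteristic function) shows that the map $a\mapsto \mathbb{P}(a)$ is a bijection from $[0,d]$ onto the power set of $[1,n]$.

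Given this, I would argue as follows. The three sets $\mathbb{P}(g)\setminus \mathbb{P}(h)$, $\mathbb{P}(g)\cup\mathbb{P}(h)$, and $\mathbb{P}(g)\cap\mathbb{P}(h)$ are all subsets of $[1,n]$, since each of $\mathbb{P}(g)$ and $\mathbb{P}(h)$ is. Applying the surjectivity half of the bijection described above, I obtain at least one element $i\in[0,d]$ with $\mathbb{P}(i)=\mathbb{P}(g)\setminus\mathbb{P}(h)$, at least one $j\in[0,d]$ with $\mathbb{P}(j)=\mathbb{P}(g)\cup\mathbb{P}(h)$, and at least one $k\in[0,d]$ with $\mathbb{P}(k)=\mathbb{P}(g)\cap\mathbb{P}(h)$. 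The uniqueness of each of $i$, $j$, $k$ is an immediate consequence of Lemma \ref{L;Lemma3.1}, which says that the map $a\mapsto \mathbb{P}(a)$ is injective on $[0,d]$.

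Since the work is entirely bookkeeping about the bijection $\nu$, there is no substantial obstacle; the only thing to make sure of is that I quote Lemma \ref{L;Lemma3.1} correctly for uniqueness and explicitly invoke the fact (already established when $\nu$ was introduced) that every $n$-dimensional $\{0,1\}$-vector corresponds to some element of $[0,d]$ for existence. The resulting argument will be only a few lines long.
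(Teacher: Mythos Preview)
Your argument is correct and matches the paper's own proof, which simply says the lemma ``follows from an easy observation and Lemma \ref{L;Lemma3.1}.'' You have merely spelled out that easy observation (the bijection $a\mapsto\mathbb{P}(a)$ between $[0,d]$ and the power set of $[1,n]$) and invoked Lemma \ref{L;Lemma3.1} for uniqueness, exactly as intended.
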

\begin{proof}
The desired lemma follows from an easy observation and Lemma \ref{L;Lemma3.1}.
\end{proof}
Lemma \ref{L;Lemma4.1} motivates us to introduce the following notation and three lemmas.
\begin{nota}\label{N;Notation4.2}
\em Assume that $g, h\in [0,d]$. Then $i, j, k$ in Lemma \ref{L;Lemma4.1} are denoted by $g\setminus h, g\cup h, g\cap h$, respectively. Then $\mathbb{P}(g\setminus h)=\mathbb{P}(g)\setminus\mathbb{P}(h)$, $\mathbb{P}(g\cup h)=\mathbb{P}(g)\cup\mathbb{P}(h)$, and $\mathbb{P}(g\cap h)=\mathbb{P}(g)\cap\mathbb{P}(h)$. Set $g\oplus h=(g\setminus h)\cup(h\setminus g)$. Put $g\odot h=(g\oplus h)\cup (\widetilde{g\cap h})$. For example, if $n=u_1=2$ and $u_2=3$, notice that $d=3$, $2\oplus 3=1$, and $2\odot3=3$. The operation rules of $\setminus$, $\cup$, $\cap$ on $[0,d]$ are clear by Lemma \ref{L;Lemma4.1} and the operation rules of $\setminus$, $\cup$, $\cap$ on the power set of $[1,n]$. In particular, notice that $g\oplus h=(g\cup h)/(g\cap h)$ and $g\cup h=(g\oplus h)\cup (g\cap h)$. By Notation \ref{N;Notation3.3}, notice that $g\oplus h\leq_2g\odot h\leq_2g\cup h$.
\end{nota}
\begin{lem}\label{L;Lemma4.3}
Assume that $g, h, i\in[0,d]$. Then $p_{gh}^i\neq0$ only if $g\oplus h\leq_2 i\leq_2g\odot h$.
\end{lem}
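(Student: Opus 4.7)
The plan is to unpack $p_{gh}^i \neq 0$ into the concrete existence of vectors in $\mathbb{X}$ and then perform a coordinate-by-coordinate case analysis, translating everything back through $\mathbb{P}$ at the end via Lemma \ref{L;Lemma3.1} and the definition of $\leq_2$.

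First I would invoke the definition of $p_{gh}^i$: since $p_{gh}^i\neq 0$, there exist $\mathbf{u},\mathbf{v},\mathbf{w}\in\mathbb{X}$ with $\mathbf{u}=_g\mathbf{w}$, $\mathbf{w}=_h\mathbf{v}$, and $\mathbf{u}=_i\mathbf{v}$. Fix any coordinate $k\in[1,n]$ and consider the four disjoint cases determined by whether $k$ lies in $\mathbb{P}(g)$ and in $\mathbb{P}(h)$. If $k\in\mathbb{P}(g)\setminus\mathbb{P}(h)$, then $\mathbf{u}_k\neq\mathbf{w}_k=\mathbf{v}_k$, so $k\in\mathbb{P}(i)$; the symmetric case $k\in\mathbb{P}(h)\setminus\mathbb{P}(g)$ gives the same conclusion. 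If $k\notin\mathbb{P}(g)\cup\mathbb{P}(h)$, then $\mathbf{u}_k=\mathbf{w}_k=\mathbf{v}_k$, so $k\notin\mathbb{P}(i)$. Finally, if $k\in\mathbb{P}(g)\cap\mathbb{P}(h)$ with $u_k=2$, the two values $\mathbf{u}_k\neq\mathbf{w}_k$ exhaust $\mathbb{U}_k$, so $\mathbf{v}_k\neq\mathbf{w}_k$ forces $\mathbf{v}_k=\mathbf{u}_k$, i.e.\ $k\notin\mathbb{P}(i)$; whereas for $u_k>2$ nothing further is forced.

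Combining the four cases yields
\[
\mathbb{P}(g\oplus h)=(\mathbb{P}(g)\setminus\mathbb{P}(h))\cup(\mathbb{P}(h)\setminus\mathbb{P}(g))\subseteq\mathbb{P}(i)\subseteq\mathbb{P}(g\oplus h)\cup\mathbb{P}_2(g\cap h).
\]
By Notation \ref{N;Notation3.3}, $\mathbb{P}_2(g\cap h)=\mathbb{P}(\widetilde{g\cap h})$, and by Notation \ref{N;Notation4.2}, $\mathbb{P}(g\oplus h)\cup\mathbb{P}(\widetilde{g\cap h})=\mathbb{P}(g\odot h)$. Thus $\mathbb{P}(g\oplus h)\subseteq\mathbb{P}(i)\subseteq\mathbb{P}(g\odot h)$, which by the definition of $\leq_2$ gives $g\oplus h\leq_2 i\leq_2 g\odot h$, as required.

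The argument is essentially mechanical once the four-case split on each coordinate is set up, so I do not anticipate a substantive obstacle. The only mildly subtle point is the distinction at coordinates in $\mathbb{P}(g)\cap\mathbb{P}(h)$ between $u_k=2$ (forcing $k\notin\mathbb{P}(i)$) and $u_k>2$ (leaving $k$ undetermined); this is precisely what produces the gap between $g\oplus h$ and $g\odot h$ and makes the use of $\mathbb{P}_2$ rather than $\mathbb{P}(g\cap h)$ essential in the upper bound.
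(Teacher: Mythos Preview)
Your proof is correct and follows essentially the same approach as the paper: both arguments pick witnesses to $p_{gh}^i\neq 0$ and perform the identical coordinate-by-coordinate case split into $\mathbb{P}(g)\setminus\mathbb{P}(h)$, $\mathbb{P}(h)\setminus\mathbb{P}(g)$, $[1,n]\setminus\mathbb{P}(g\cup h)$, and $\mathbb{P}(g\cap h)$ (distinguishing $u_k=2$ from $u_k>2$), then translate the resulting inclusions on $\mathbb{P}$ back to $\leq_2$ via Notation~\ref{N;Notation3.3}.
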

\begin{proof}
As $p_{gh}^i\neq 0$, there are $\mathbf{w}, \mathbf{y}, \mathbf{z}\in \mathbb{X}$ such that $\mathbf{y}\in \mathbf{w}R_g$, $\mathbf{z}\in \mathbf{w}R_h$, and $\mathbf{z}\in\mathbf{y}R_i$.
There is no loss to assume that $\mathbb{P}(g\setminus h)\neq\varnothing$ and $\mathbb{P}(h\setminus g)\!\neq\!\varnothing$. Pick $j\!\in\! \mathbb{P}(g\setminus h)$. So  $j\in\mathbb{P}(g)\setminus \mathbb{P}(h)$, which implies that $\mathbf{y}_j\neq \mathbf{w}_j=\mathbf{z}_j$. So $j\in\mathbb{P}(i)$. As $j$ is chosen from $\mathbb{P}(g\setminus h)$ arbitrarily, $g\setminus h\leq_2 i$. Pick $k\in\mathbb{P}(h\setminus g)$. So $k\in \mathbb{P}(h)\setminus \mathbb{P}(g)$, which implies that $\mathbf{y}_k=\mathbf{w}_k\neq \mathbf{z}_k$ and $k\in\mathbb{P}(i)$. As $k$ is chosen from $\mathbb{P}(h\setminus g)$ arbitrarily, $h\!\setminus\! g\leq_2 i$. So $g\oplus h\leq_2 i$. For any $\ell\in [1,n]\setminus \mathbb{P}(g\cup h)$, notice that $\ell\notin \mathbb{P}(g)\cup\mathbb{P}(h)$, $\mathbf{y}_\ell\!=\!\mathbf{w}_\ell\!=\!\mathbf{z}_\ell$, and $\ell\notin\mathbb{P}(i)$. Hence $i\leq_2 g\cup h=(g\oplus h)\cup (g\cap h)$. For any $\ell\in \mathbb{P}(g\cap h)$ and $u_\ell=2$, notice that $\ell\in\mathbb{P}(g)\cap\mathbb{P}(h)$, $\mathbf{y}_\ell=\mathbf{z}_\ell\neq \mathbf{w}_\ell$, and $\ell\notin \mathbb{P}(i)$. The desired lemma thus follows from Notation \ref{N;Notation3.3}.
\end{proof}
\begin{lem}\label{L;Lemma4.4}
Assume that $g, h, i\in[0,d]$ and $\mathbb{P}(i)=\mathbb{P}(g\oplus h)\cup \mathbb{P}$, where $\mathbb{P}\subseteq \mathbb{P}_2(g\cap h)$ and $\mathbb{Q}=\mathbb{P}_2(g\cap h)\setminus \mathbb{P}$. Then $p_{gh}^i=\prod_{j\in \mathbb{P}}(u_j-2)\prod_{k\in\mathbb{Q}}(u_k-1)$, where the products over empty sets are equal to one. In particular, $p_{gh}^i\neq 0$.
\end{lem}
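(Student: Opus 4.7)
The plan is to compute $p_{gh}^i$ directly as a count, exploiting the product structure $\mathbb{X}=\prod_{k=1}^n\mathbb{U}_k$. Fix $\mathbf{y}\in\mathbf{x}R_i$; since $\mathbb{S}$ is symmetric, $p_{gh}^i=|\mathbf{x}R_g\cap\mathbf{y}R_h|$, and the membership of $\mathbf{a}$ in this intersection is a conjunction of independent coordinate-wise conditions: for each $k\in[1,n]$, $\mathbf{a}_k\neq\mathbf{x}_k$ iff $k\in\mathbb{P}(g)$, and $\mathbf{a}_k\neq\mathbf{y}_k$ iff $k\in\mathbb{P}(h)$. The count therefore factors as $\prod_{k=1}^n N_k$, where $N_k$ is the number of admissible values for $\mathbf{a}_k$.

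Next, I would evaluate $N_k$ by cases, using the relation $\mathbf{x}_k=\mathbf{y}_k\iff k\notin\mathbb{P}(i)$ coming from $\mathbf{y}\in\mathbf{x}R_i$. For $k\notin\mathbb{P}(g\cup h)$ the value $\mathbf{a}_k=\mathbf{x}_k$ is forced, giving $N_k=1$. For $k\in\mathbb{P}(g\oplus h)$ exactly one of $\mathbf{x}_k$ and $\mathbf{y}_k$ is permitted, again $N_k=1$. For $k\in\mathbb{P}(g\cap h)$ we require $\mathbf{a}_k\notin\{\mathbf{x}_k,\mathbf{y}_k\}$: this yields $N_k=u_k-2$ if $\mathbf{x}_k\neq\mathbf{y}_k$ (i.e.\ $k\in\mathbb{P}(i)$) and $N_k=u_k-1$ otherwise.

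Finally I would apply the hypothesis $\mathbb{P}(i)=\mathbb{P}(g\oplus h)\cup\mathbb{P}$ with $\mathbb{P}\subseteq\mathbb{P}_2(g\cap h)$. Because $\mathbb{P}(g\oplus h)$ and $\mathbb{P}(g\cap h)$ are disjoint, an index $k\in\mathbb{P}(g\cap h)$ lies in $\mathbb{P}(i)$ exactly when $k\in\mathbb{P}$. Thus the contribution from $\mathbb{P}(g\cap h)$ splits into $\prod_{j\in\mathbb{P}}(u_j-2)$, $\prod_{k\in\mathbb{Q}}(u_k-1)$, and trivial factors $u_k-1=1$ arising from $k\in\mathbb{P}(g\cap h)\setminus\mathbb{P}_2(g\cap h)$, where $u_k=2$. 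Multiplying gives the announced formula; since $u_j-2\geq 1$ for $j\in\mathbb{P}$ and $u_k-1\geq 2$ for $k\in\mathbb{Q}$, every factor is positive and $p_{gh}^i\neq 0$.

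No serious obstacle is expected: the whole argument is a disciplined case analysis on coordinate membership, and the hypothesis on $\mathbb{P}(i)$ is tailored precisely to avoid the only genuinely obstructive configuration (namely $k\in\mathbb{P}(g\cap h)$, $u_k=2$, and $k\in\mathbb{P}(i)$, which would demand $\mathbf{a}_k$ differ from two distinct values in a two-element set). The main care needed is to partition $\mathbb{P}(g\cap h)$ simultaneously by the size $u_k$ and by the $\mathbb{P}/\mathbb{Q}$ split, so that the nontrivial factors are correctly separated from those that collapse to $1$.
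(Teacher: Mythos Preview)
Your proposal is correct and follows essentially the same approach as the paper: both fix a pair in $R_i$ and count the elements of $\mathbf{x}R_g\cap\mathbf{y}R_h$ (resp.\ $\mathbf{y}R_g\cap\mathbf{z}R_h$) coordinate by coordinate, splitting according to membership in $\mathbb{P}(g\setminus h)$, $\mathbb{P}(h\setminus g)$, $\mathbb{P}$, $\mathbb{P}(g\cap h)\setminus\mathbb{P}$, and the complement. The only cosmetic difference is that the paper phrases the count by explicitly describing the admissible $\mathbf{w}$ on each block of coordinates, whereas you package the same case analysis as a product $\prod_k N_k$.
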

\begin{proof}
There are $\mathbf{y},\mathbf{z}\in \mathbb{X}$ such that $\mathbf{z}\in\mathbf{y}R_i$. Hence $\mathbf{y}_j\neq\mathbf{z}_j$ and $\mathbf{y}_k=\mathbf{z}_k$ for any $j\in \mathbb{P}(i)$
and $k\in [1,n]\setminus\mathbb{P}(i)$. As $\mathbb{P}\subseteq \mathbb{P}_2(g\cap h)$ and $\mathbb{P}(i)=\mathbb{P}(g\oplus h)\cup \mathbb{P}$, notice that $u_j>2$, $\mathbf{y}_j\neq\mathbf{z}_j$, and $|\mathbb{U}_j\setminus\{\mathbf{y}_j, \mathbf{z}_j\}|\!=\!u_j-2$ for any $j\in\mathbb{P}$. For any $j\in \mathbb{P}(g\cap h)\setminus \mathbb{P}$ and $u_j=2$, notice that $j\notin \mathbb{P}(i)$, $\mathbf{y}_j=\mathbf{z}_j$, and $|\mathbb{U}_j\setminus\{\mathbf{y}_j\}|\!=\!1$. For any $j\in \mathbb{P}(g\cap h)\setminus \mathbb{P}$ and $u_j\!>\!2$, notice that $j\notin \mathbb{P}(i)$, $\mathbf{y}_j\!=\!\mathbf{z}_j$, and $|\mathbb{U}_j\setminus\{\mathbf{y}_j\}|\!=\!u_j-1$. Then $\mathbf{w}\in\mathbf{y}R_g\cap \mathbf{z}R_h$ if and only if $\mathbf{w}_j=\mathbf{z}_j$, $\mathbf{w}_k=\mathbf{y}_k$, $\mathbf{w}_\ell\in \mathbb{U}_\ell\setminus\{\mathbf{y}_\ell,\mathbf{z}_\ell\}$, $\mathbf{w}_m\in\mathbb{U}_m\setminus\{\mathbf{y}_m\}$, and $\mathbf{w}_q=\mathbf{y}_q=\mathbf{z}_q$ for any $j\in \mathbb{P}(g\setminus h)$, $k\in\mathbb{P}(h\setminus g)$,
$\ell\in \mathbb{P}$, $m\!\in\! \mathbb{P}(g\cap h)\setminus \mathbb{P}$, and $q\!\in\! [1,n]\setminus \mathbb{P}(g\cup h)$. As $p_{gh}^i=|\mathbf{y}R_g\cap \mathbf{z}R_h|$, the first statement thus follows. The second statement thus follows from the first one. The desired lemma thus follows.
\end{proof}
\begin{lem}\label{L;Lemma4.5}
$\mathbb{T}$ has an $\F$-basis $\{E_a^*A_bE_c^*: a\oplus b\leq_2 c\leq_2a\odot b\}$ whose cardinality is $|\{(a, b, c):a\oplus b\leq_2 c\leq_2a\odot b\}|$.
\end{lem}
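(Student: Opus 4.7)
The plan is to deduce Lemma \ref{L;Lemma4.5} directly from Lemma \ref{L;Lemma2.10}, whose hypothesis (triple regularity of $\mathbb{S}$) is already in force. By Lemma \ref{L;Lemma2.10}, $\mathbb{T}$ has an $\F$-basis $\{E_a^*A_bE_c^*: p_{ab}^c\neq 0\}$ of cardinality $|\{(a,b,c): p_{ab}^c\neq 0\}|$, so it suffices to prove the set-theoretic identity
\[
\{(a,b,c)\in [0,d]^3 : p_{ab}^c\neq 0\} \;=\; \{(a,b,c)\in [0,d]^3 : a\oplus b\leq_2 c\leq_2 a\odot b\}.
\]

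The inclusion $\subseteq$ is just Lemma \ref{L;Lemma4.3}, which already states that $p_{ab}^c\neq 0$ implies $a\oplus b\leq_2 c\leq_2 a\odot b$. For the reverse inclusion $\supseteq$, suppose $a\oplus b\leq_2 c\leq_2 a\odot b$. Using Notation \ref{N;Notation4.2} together with Notation \ref{N;Notation3.3}, I will observe that
\[
\mathbb{P}(a\odot b)=\mathbb{P}(a\oplus b)\cup \mathbb{P}(\widetilde{a\cap b})=\mathbb{P}(a\oplus b)\cup \mathbb{P}_2(a\cap b),
\]
and that $\mathbb{P}(a\oplus b)$ and $\mathbb{P}_2(a\cap b)$ are disjoint (since $\mathbb{P}(a\oplus b)=\mathbb{P}(a)\vartriangle\mathbb{P}(b)$ is disjoint from $\mathbb{P}(a)\cap\mathbb{P}(b)\supseteq\mathbb{P}_2(a\cap b)$). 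Consequently the hypothesis forces $\mathbb{P}(c)=\mathbb{P}(a\oplus b)\cup \mathbb{P}$ for a uniquely determined $\mathbb{P}\subseteq \mathbb{P}_2(a\cap b)$. This is exactly the shape required by Lemma \ref{L;Lemma4.4}, which then gives the explicit nonzero value
\[
p_{ab}^c=\prod_{j\in \mathbb{P}}(u_j-2)\prod_{k\in \mathbb{P}_2(a\cap b)\setminus \mathbb{P}}(u_k-1)\neq 0,
\]
since each $u_j\geq 3$ on $\mathbb{P}\subseteq\{a:u_a>2\}$ and each $u_k\geq 2$ always.

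Combining the two inclusions with Lemma \ref{L;Lemma2.10} immediately yields the claimed $\F$-basis and its cardinality. The whole argument is really a matching up of two descriptions of the same index set, so there is no genuine obstacle; the only point requiring care is the bookkeeping in the $\supseteq$ direction, namely verifying that the decomposition $\mathbb{P}(a\odot b)=\mathbb{P}(a\oplus b)\sqcup \mathbb{P}_2(a\cap b)$ is disjoint so that $\mathbb{P}=\mathbb{P}(c)\setminus\mathbb{P}(a\oplus b)$ lands inside $\mathbb{P}_2(a\cap b)$ as required by Lemma \ref{L;Lemma4.4}.
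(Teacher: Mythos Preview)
Your proof is correct and takes essentially the same approach as the paper, which simply cites Lemmas \ref{L;Lemma2.10}, \ref{L;Lemma4.3}, and \ref{L;Lemma4.4}; you have merely spelled out explicitly how these three lemmas combine, including the disjointness check that justifies applying Lemma \ref{L;Lemma4.4} in the reverse inclusion.
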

\begin{proof}
The desired lemma follows from combining Lemmas \ref{L;Lemma2.10}, \ref{L;Lemma4.3}, and \ref{L;Lemma4.4}.
\end{proof}
Lemma \ref{L;Lemma4.5} allows us to finish the proof of the first main result of this paper. %deduce the first main result of this paper and a lemma.
\begin{thm}\label{T;Dimension}
The $\F$-dimension of $\mathbb{T}$ equals $$\sum_{g=0}^{n_2}\sum_{h=0}^{n-n_2}\sum_{i=0}^g\sum_{j=0}^h\binom{n_2}{g}\binom{n-n_2}{h}\binom{g}{i}\binom{h}{j}2^{n-g-h+i}.$$
\end{thm}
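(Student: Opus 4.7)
The plan is to invoke Lemma~\ref{L;Lemma4.5}, which already identifies $\dim_\F\mathbb{T}$ with the cardinality of the set $\{(a,b,c)\in[0,d]^3 : a\oplus b\leq_2 c\leq_2 a\odot b\}$, and then to enumerate this set by a fourfold stratification of the triples that chooses $a$ first, then $\mathbb{P}(a)\cap\mathbb{P}(b)$, then $\mathbb{P}(b)\setminus\mathbb{P}(a)$, and finally $c$.

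First I would translate the bracketing condition into subset inclusions in $[1,n]$. By Notations~\ref{N;Notation3.3} and~\ref{N;Notation4.2}, one has $\mathbb{P}(a\odot b)=(\mathbb{P}(a)\triangle\mathbb{P}(b))\cup\mathbb{P}_2(a\cap b)$, where $\triangle$ denotes symmetric difference; combined with Lemma~\ref{L;Lemma3.1}, the inequalities $a\oplus b\leq_2 c\leq_2 a\odot b$ become $\mathbb{P}(a)\triangle\mathbb{P}(b)\subseteq\mathbb{P}(c)\subseteq(\mathbb{P}(a)\triangle\mathbb{P}(b))\cup\mathbb{P}_2(a\cap b)$. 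Consequently, once $a$ and $b$ are fixed, the indices in $\mathbb{P}(a)\triangle\mathbb{P}(b)$ are forced into $\mathbb{P}(c)$, the indices in $\mathbb{P}(a\cap b)$ with $u_k=2$ together with every index outside $\mathbb{P}(a\cup b)$ are forced out of $\mathbb{P}(c)$, and the only positions where $c$ is free are the elements of $\mathbb{P}_2(a\cap b)$; in particular there are exactly $2^{|\mathbb{P}_2(a\cap b)|}$ legitimate choices for $c$.

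Next I would set $g=|\mathbb{P}_2(a)|$, $h=|\mathbb{P}(a)|-g$, $i=|\mathbb{P}_2(a)\cap\mathbb{P}(b)|$, and $j=|\mathbb{P}(a)\cap\mathbb{P}(b)|-i$, so that $(g,h,i,j)$ ranges over $[0,n_2]\times[0,n-n_2]\times[0,g]\times[0,h]$ and $|\mathbb{P}_2(a\cap b)|=i$. For each quadruple there are $\binom{n_2}{g}\binom{n-n_2}{h}$ ways to place $\mathbb{P}(a)$ inside $[1,n]$ with the prescribed split, $\binom{g}{i}\binom{h}{j}$ ways to pick $\mathbb{P}(a)\cap\mathbb{P}(b)$ as a subset of $\mathbb{P}(a)$ of the prescribed shape, $2^{n-g-h}$ ways to pick $\mathbb{P}(b)\setminus\mathbb{P}(a)$ as an arbitrary subset of the $n-g-h$ positions outside $\mathbb{P}(a)$, and $2^i$ ways to pick $c$ by the preceding paragraph. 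Multiplying yields a contribution of $\binom{n_2}{g}\binom{n-n_2}{h}\binom{g}{i}\binom{h}{j}\,2^{n-g-h+i}$ per quadruple, and summing produces exactly the formula in the theorem.

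The entire argument reduces to bookkeeping once Lemma~\ref{L;Lemma4.5} is in hand; the only substantive step is the translation of $\leq_2$ into set inclusions via $\mathbb{P}$ and the identification of $\mathbb{P}_2(a\cap b)$ as the free locus for $c$. The main obstacle I anticipate is purely organizational, namely ensuring that the four-step decomposition is bijective so that no admissible triple is double-counted or missed, which is why it is essential to fix $a$ outright rather than to stratify by a derived set such as $\mathbb{P}(a\cup b)$ or $\mathbb{P}(a\cap b)$.
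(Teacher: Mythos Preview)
Your proposal is correct and follows essentially the same approach as the paper: both invoke Lemma~\ref{L;Lemma4.5} and then stratify the triples $(a,b,c)$ by the four parameters $g=|\mathbb{P}_2(a)|$, $h=|\mathbb{P}(a)\setminus\mathbb{P}_2(a)|$, $i=|\mathbb{P}_2(a\cap b)|$, $j=|\mathbb{P}(a\cap b)\setminus\mathbb{P}_2(a\cap b)|$, arriving at the same count per stratum. Your write-up is in fact more explicit than the paper's, which simply asserts the per-stratum count without spelling out the four-step bijection or the identification of $\mathbb{P}_2(a\cap b)$ as the free locus for $c$.
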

\begin{proof}
Recall that $n_2\!\!=\!\!|\{a: u_a>2\}|$. For any $g\in [0, n_2]$, $h\!\in\! [0, n-n_2]$, $i\in [0, g]$, and $j\in [0,h]$, notice that the number of all $3$-tuples $(a, b, c)$ that satisfy the conditions $a,b,c\in [0,d]$, $|\mathbb{P}_2(a)|=g$, $|\mathbb{P}(a)\setminus \mathbb{P}_2(a)|=h$, $|\mathbb{P}_2(a\cap b)|=i$, $|\mathbb{P}(a\cap b)\setminus\mathbb{P}_2(a\cap b)|=j$, and $a\oplus b\leq_2 c\leq_2a\odot b$ equals $$\binom{n_2}{g}\binom{n-n_2}{h}\binom{g}{i}\binom{h}{j}2^{n-g-h+i}.$$ The desired theorem thus follows from Lemma \ref{L;Lemma4.5} and the above discussion.
\end{proof}
Our next goal is to find another $\F$-basis of $\mathbb{T}$. We start with the following lemma.
\begin{lem}\label{L;Lemma4.7}
Assume that $g, h, i, j, k\in [0,d]$. Then $$E_g^*A_hE_i^*A_jE_k^*=\sum_{g\oplus k\leq_2\ell\leq_2 g\odot k}c_{ghijk\ell}E_g^*A_\ell E_k^*,$$
where $c_{ghijk\ell}\!=\!\overline{|\mathbf{y}R_h\cap \mathbf{x}R_i\cap\mathbf{z}R_j|}\in \F$ for any $\mathbf{y}\!\in\!\mathbf{x}R_g$ and $\mathbf{z}\!\in\! \mathbf{x}R_k\cap \mathbf{y}R_\ell$. Moreover, the constant $c_{ghijk\ell}$ only depends on $g, h, i, j, k, \ell$ and is independent of the choices of $\mathbf{y}$ and $\mathbf{z}$. In particular, if $c_{ghijk\ell}\neq \overline{0}$, then $R_\ell\in R_gR_k\cap R_hR_j$.
\end{lem}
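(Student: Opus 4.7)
The plan is to expand $E_g^* A_h E_i^* A_j E_k^*$ entrywise, observe that each $E_g^* A_\ell E_k^*$ is supported on a very restricted block, and then regroup the resulting pairs $(\mathbf{y},\mathbf{z})$ according to which relation $R_\ell$ they belong to.

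First I would compute the $(\mathbf{y},\mathbf{z})$-entry of $E_g^* A_h E_i^* A_j E_k^*$ by unfolding the matrix product. Because $E_g^*$, $E_i^*$, $E_k^*$ are diagonal with supports $\mathbf{x}R_g$, $\mathbf{x}R_i$, $\mathbf{x}R_k$ respectively, this entry vanishes unless $\mathbf{y}\in\mathbf{x}R_g$ and $\mathbf{z}\in\mathbf{x}R_k$, in which case it equals
\[
\overline{|\{\mathbf{u}\in\mathbf{x}R_i : (\mathbf{y},\mathbf{u})\in R_h,\ (\mathbf{u},\mathbf{z})\in R_j\}|} = \overline{|\mathbf{y}R_h\cap\mathbf{x}R_i\cap\mathbf{z}R_{j'}|},
\]
which agrees with $\overline{|\mathbf{y}R_h\cap\mathbf{x}R_i\cap\mathbf{z}R_j|}$ since $\mathbb{S}$ is symmetric. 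In parallel, $E_g^* A_\ell E_k^*$ is the $\{\overline{0},\overline{1}\}$-matrix whose $(\mathbf{y},\mathbf{z})$-entry is $\overline{1}$ exactly when $\mathbf{y}\in\mathbf{x}R_g$, $\mathbf{z}\in\mathbf{x}R_k$, and $(\mathbf{y},\mathbf{z})\in R_\ell$.

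Next I would partition the pairs $(\mathbf{y},\mathbf{z})$ with $\mathbf{y}\in\mathbf{x}R_g$ and $\mathbf{z}\in\mathbf{x}R_k$ according to the unique $\ell\in[0,d]$ with $(\mathbf{y},\mathbf{z})\in R_\ell$. Triple regularity of $\mathbb{S}$ then guarantees that on each such block the cardinality $|\mathbf{y}R_h\cap\mathbf{x}R_i\cap\mathbf{z}R_j|$ depends only on $g,h,i,j,k,\ell$; this defines $c_{ghijk\ell}$ unambiguously. For such $\ell$, the triangle $\mathbf{y},\mathbf{x},\mathbf{z}$ (using symmetry to identify $R_{g'}$ with $R_g$) shows $p_{gk}^\ell\neq 0$, and Lemma~\ref{L;Lemma4.3} forces $g\oplus k\leq_2\ell\leq_2 g\odot k$. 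Matching the block decompositions of $E_g^* A_h E_i^* A_j E_k^*$ and the matrices $E_g^* A_\ell E_k^*$ entrywise then yields the asserted identity.

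For the final assertion, the $\ell$'s actually appearing in the sum already satisfy $R_\ell\in R_gR_k$ by the above argument. If in addition $c_{ghijk\ell}\neq\overline{0}$, then the nonnegative integer $|\mathbf{y}R_h\cap\mathbf{x}R_i\cap\mathbf{z}R_j|$ is itself nonzero, so some $\mathbf{u}$ lies in the intersection; then $(\mathbf{y},\mathbf{u})\in R_h$ and $(\mathbf{u},\mathbf{z})\in R_j$ force $R_\ell\in R_hR_j$ as well. The main obstacle is essentially bookkeeping: carefully deploying the symmetry of $\mathbb{S}$ to pass between $R_{g'},R_{j'}$ and $R_g,R_j$, and translating $p_{gk}^\ell\neq 0$ into the combinatorial interval $g\oplus k\leq_2\ell\leq_2 g\odot k$ via Lemma~\ref{L;Lemma4.3}. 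The essential conceptual input is triple regularity, which is exactly what makes $c_{ghijk\ell}$ independent of the choices of $\mathbf{y}$ and $\mathbf{z}$.
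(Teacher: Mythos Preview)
Your proposal is correct and follows essentially the same route as the paper: both arguments compute the $(\mathbf{y},\mathbf{z})$-entry of $E_g^*A_hE_i^*A_jE_k^*$ directly, invoke triple regularity to see that the resulting count depends only on $g,h,i,j,k,\ell$, and use Lemma~\ref{L;Lemma4.3} to restrict the range of $\ell$. The paper phrases this via the basis of Lemma~\ref{L;Lemma4.5} and a ``direct computation,'' while you spell out the entrywise matching explicitly; the content is the same.
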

\begin{proof}
For any $\ell\in [0,d]$, Lemmas \ref{L;Lemma2.9} and \ref{L;Lemma2.1} imply that $E_g^*A_\ell E_k^*\neq O$ if and only if $p_{gk}^\ell\neq 0$. The first statement is thus from combining \eqref{Eq;2}, Lemmas \ref{L;Lemma4.5}, \ref{L;Lemma4.3}, \ref{L;Lemma4.4}, and a direct computation. The second statement and the third statement can be proved by the first one and an easy observation. The desired lemma thus follows.
\end{proof}
For further discussion, the following notation and four lemmas are helpful to us.
\begin{nota}\label{N;Notation4.8}
\em Assume that $g, h, i, j, k\in [0,d]$. Let $m(g, h, i, j, k)$ denote the number $(g\oplus k)\cup ((\widetilde{g\cap k})\setminus i)\cup ((h\cup j)\cap(\widetilde{g\cap k})\cap i)$. Notice that $g\oplus k\leq_2 m(g, h, i, j, k)\leq_2 g\odot k$. For example, if $n=u_1=2$ and $u_2=3$, observe that $d=3$ and $m(2,3,3,2,3)=3$.
\end{nota}
\begin{lem}\label{L;Lemma4.9}
Assume that $g, h, i, j, k, \ell, m\in [0,d]$, $h\leq_2 g\cap k\cap i$, and $j\leq_2 g\cap k\cap i$. If $h\cap j=0$, $\ell=(g\oplus i)\cup h$, and $m=(k\oplus i)\cup j$, then $\ell\oplus m=(g\oplus k)\cup h\cup j$.
\end{lem}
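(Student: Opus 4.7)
The plan is to transport the entire identity through the bijection $\mathbb{P}\colon[0,d]\to 2^{[1,n]}$ and then verify a purely set-theoretic identity. By Notation \ref{N;Notation4.2} and Lemma \ref{L;Lemma3.1}, the operations $\setminus$, $\cup$, $\cap$, $\oplus$ on $[0,d]$ correspond under $\mathbb{P}$ to the usual set operations on $[1,n]$, so it suffices to prove
\[
\bigl[(G\oplus I)\cup H\bigr]\,\oplus\,\bigl[(K\oplus I)\cup J\bigr]\;=\;(G\oplus K)\cup H\cup J
\]
where $G=\mathbb{P}(g)$, $H=\mathbb{P}(h)$, $I=\mathbb{P}(i)$, $J=\mathbb{P}(j)$, $K=\mathbb{P}(k)$, under the hypotheses $H\subseteq G\cap K\cap I$, $J\subseteq G\cap K\cap I$, and $H\cap J=\varnothing$.

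The key identity I will exploit repeatedly is that for disjoint subsets $A,B$ of $[1,n]$, one has $A\cup B=A\oplus B$. First I will observe that $H\subseteq G\cap I$ makes $H$ disjoint from $G\oplus I=(G\setminus I)\cup(I\setminus G)$, so $\mathbb{P}(\ell)=(G\oplus I)\cup H=(G\oplus I)\oplus H$; symmetrically, $\mathbb{P}(m)=(K\oplus I)\oplus J$. Then, using associativity and commutativity of $\oplus$ together with $I\oplus I=\varnothing$, I will compute
\[
\mathbb{P}(\ell)\oplus\mathbb{P}(m)\;=\;(G\oplus I)\oplus H\oplus(K\oplus I)\oplus J\;=\;G\oplus K\oplus H\oplus J.
\]

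To finish, I will convert the right-hand side back from $\oplus$ to $\cup$. Since $H\subseteq G\cap K$ and $J\subseteq G\cap K$, both $H$ and $J$ are disjoint from $G\oplus K$; together with the hypothesis $H\cap J=\varnothing$, this makes the three sets $G\oplus K$, $H$, $J$ pairwise disjoint, so
\[
G\oplus K\oplus H\oplus J\;=\;(G\oplus K)\cup H\cup J,
\]
which is what is required. Transporting back via $\mathbb{P}^{-1}$ yields the claimed equality $\ell\oplus m=(g\oplus k)\cup h\cup j$.

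There is no substantial obstacle; the only point that requires care is bookkeeping the disjointness hypotheses so that each replacement of $\cup$ by $\oplus$ (or vice versa) is justified. The conditions $H,J\subseteq G\cap K\cap I$ are tailored precisely so that every such substitution in the computation above is legal, and once this is recognized, the proof reduces to a short chain of symmetric-difference manipulations.
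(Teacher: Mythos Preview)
Your proof is correct. It takes a genuinely different route from the paper's argument. The paper proves the identity by showing the two-way containment $\ell\oplus m\leq_2(g\oplus k)\cup h\cup j\leq_2\ell\oplus m$ directly: it expands $\ell=(g\setminus i)\cup(i\setminus g)\cup h$ and $m=(k\setminus i)\cup(i\setminus k)\cup j$, then chases each piece of $\ell\oplus m$ into the right-hand side and vice versa. Your argument instead exploits the abelian-group structure of symmetric difference: you convert each $\cup$ to $\oplus$ using the disjointness hypotheses, cancel $I\oplus I$, and convert back. Your route is shorter and more conceptual, making transparent exactly why the hypotheses $h,j\leq_2 g\cap k\cap i$ and $h\cap j=0$ are the natural ones; the paper's route, while more laborious, has the advantage of being self-contained in the elementary set-containment language used throughout the surrounding lemmas.
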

\begin{proof}
By Lemma \ref{L;Lemma3.1}, it suffices to check that $\ell\oplus m\leq_2 (g\oplus k)\cup h\cup j\leq_2\ell\oplus m$. Notice that $\ell\!=\!(g\setminus i)\cup (i\setminus g)\cup h$, $m=(k\setminus i)\cup (i\setminus k)\cup j$, and $\ell\oplus m=(\ell\setminus m)\cup(m\setminus \ell)$. Then $(g\setminus i)\cap(\ell\oplus m)\leq_2 g\setminus k$ and $(i\setminus g)\cap (\ell\oplus m)\leq_2k\setminus g$. Furthermore, notice that $(k\setminus i)\cap(\ell\oplus m)\leq_2k\setminus g$ and $(i\setminus k)\cap(\ell\oplus m)\leq_2g\setminus k$. Hence $\ell\oplus m\leq_2 (g\oplus k)\cup h\cup j$.

As $h\cup j\leq_2 g\cap k\cap i$ and $h\cap j=0$, notice that $h\leq_2 \ell\setminus m$ and $j\leq_2 m\setminus \ell$. It is clear that $g\setminus k=(g\setminus (k\cup i))\cup((g\cap i)\setminus k)\leq_2(\ell\setminus m)\cup(m\setminus \ell)=\ell\oplus m$. Furthermore, $k\setminus g=(k\setminus (g\cup i))\cup((i\cap k)\setminus g)\leq_2 (\ell\setminus m)\cup(m\setminus \ell)=\ell\oplus m$. Therefore $(g\oplus k)\cup h\cup j\leq_2 \ell\oplus m$. The desired lemma thus follows.
\end{proof}
\begin{lem}\label{L;Lemma4.10}
Assume that $g, h, i, j, k,\ell\in [0,d]$. Assume that $g\oplus i\leq_2 h\leq_2 g\odot i$, $k\oplus i\leq_2 j\leq_2 k\odot i$, and $g\oplus k\leq_2 \ell\leq_2 m(g, h, i, j, k)$.
Then there exist $m, q\in [0,d]$ such that $g\oplus i\leq_2 m\leq_2 h$, $k\oplus i\leq_2 q\leq_2 j$, and $R_\ell\in R_gR_k\cap R_mR_q$.
\end{lem}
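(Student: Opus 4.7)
The plan is to construct $m,q\in[0,d]$ explicitly on the level of the index sets $\mathbb{P}(\cdot)$, verify the containment chains $g\oplus i\leq_2 m\leq_2 h$ and $k\oplus i\leq_2 q\leq_2 j$ directly from the construction, and reduce $R_\ell\in R_g R_k\cap R_m R_q$ to two divisibility conditions via Lemma~\ref{L;Lemma4.4}. The first of these, $g\oplus k\leq_2\ell\leq_2 g\odot k$, is immediate from the hypothesis $g\oplus k\leq_2\ell\leq_2 m(g,h,i,j,k)$ together with Notation~\ref{N;Notation4.8}, so only $m\oplus q\leq_2\ell\leq_2 m\odot q$ requires work.

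First I would unpack the hypotheses set-theoretically. Since $g\oplus i\leq_2 h\leq_2 g\odot i$ and $\mathbb{P}(g\odot i)=\mathbb{P}(g\oplus i)\cup\mathbb{P}_2(g\cap i)$, there is a unique $h_0\in[0,d]$ with $\mathbb{P}(h_0)\subseteq\mathbb{P}_2(g\cap i)$ and $\mathbb{P}(h)=\mathbb{P}(g\oplus i)\cup\mathbb{P}(h_0)$ (a disjoint union). Analogously write $\mathbb{P}(j)=\mathbb{P}(k\oplus i)\cup\mathbb{P}(j_0)$ with $\mathbb{P}(j_0)\subseteq\mathbb{P}_2(k\cap i)$. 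For any $a\in\mathbb{P}(g\cap i)$ one has $a\in\mathbb{P}(h)\Leftrightarrow a\in\mathbb{P}(h_0)$, and symmetrically for $j$. Expanding $\mathbb{P}(m(g,h,i,j,k))$ via Notations~\ref{N;Notation3.3} and~\ref{N;Notation4.2}, the hypothesis $\ell\leq_2 m(g,h,i,j,k)$ then yields
\[
\mathbb{P}(\ell)\setminus\mathbb{P}(g\oplus k)\subseteq\bigl(\mathbb{P}_2(g\cap k)\setminus\mathbb{P}(i)\bigr)\cup\bigl((\mathbb{P}(h_0)\cup\mathbb{P}(j_0))\cap\mathbb{P}_2(g\cap k\cap i)\bigr).
\]

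Next, I set $\mathbb{P}(m_0):=\mathbb{P}(h_0)\cap\mathbb{P}(\ell)\cap\mathbb{P}_2(g\cap k\cap i)$ and $\mathbb{P}(q_0):=(\mathbb{P}(j_0)\setminus\mathbb{P}(h_0))\cap\mathbb{P}(\ell)\cap\mathbb{P}_2(g\cap k\cap i)$, and take $m,q\in[0,d]$ to be the unique elements with $\mathbb{P}(m)=\mathbb{P}(g\oplus i)\cup\mathbb{P}(m_0)$ and $\mathbb{P}(q)=\mathbb{P}(k\oplus i)\cup\mathbb{P}(q_0)$. Since $\mathbb{P}(m_0)\subseteq\mathbb{P}(h_0)$ and $\mathbb{P}(q_0)\subseteq\mathbb{P}(j_0)$, the required containments $g\oplus i\leq_2 m\leq_2 h$ and $k\oplus i\leq_2 q\leq_2 j$ are immediate. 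By construction $\mathbb{P}(m_0),\mathbb{P}(q_0)$ are disjoint subsets of $\mathbb{P}(g\cap k\cap i)$, so Lemma~\ref{L;Lemma4.9} (applied with $m_0,q_0$ in its $h,j$ roles) yields $\mathbb{P}(m\oplus q)=\mathbb{P}(g\oplus k)\cup\mathbb{P}(m_0)\cup\mathbb{P}(q_0)\subseteq\mathbb{P}(\ell)$, i.e.\ $m\oplus q\leq_2\ell$.

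For $\ell\leq_2 m\odot q$, I would take $a\in\mathbb{P}(\ell)\setminus\mathbb{P}(m\oplus q)$ and use the displayed inclusion above. If $a\in\mathbb{P}_2(g\cap k)\setminus\mathbb{P}(i)$ then $u_a>2$ and $a\in\mathbb{P}(g\oplus i)\cap\mathbb{P}(k\oplus i)\subseteq\mathbb{P}(m)\cap\mathbb{P}(q)$, so $a\in\mathbb{P}_2(m\cap q)=\mathbb{P}(\widetilde{m\cap q})\subseteq\mathbb{P}(m\odot q)$. Otherwise $a\in(\mathbb{P}(h_0)\cup\mathbb{P}(j_0))\cap\mathbb{P}_2(g\cap k\cap i)\cap\mathbb{P}(\ell)$, which by the definition of $m_0,q_0$ forces $a\in\mathbb{P}(m_0)\cup\mathbb{P}(q_0)\subseteq\mathbb{P}(m\oplus q)$, contradicting the choice of $a$. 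Hence $\ell\leq_2 m\odot q$, and Lemma~\ref{L;Lemma4.4} then delivers $R_\ell\in R_m R_q$. The main obstacle I anticipate is the set-theoretic bookkeeping needed to justify the upper bound on $\mathbb{P}(\ell)\setminus\mathbb{P}(g\oplus k)$ and to ensure that the disjoint allocation of the "free" positions in $\mathbb{P}_2(g\cap k\cap i)\cap\mathbb{P}(\ell)$ between $m_0$ and $q_0$ makes Lemma~\ref{L;Lemma4.9} applicable with no overlap.
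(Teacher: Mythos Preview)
Your proof is correct and follows essentially the same approach as the paper's: both construct $m=(g\oplus i)\cup s$ and $q=(k\oplus i)\cup t$ for a disjoint splitting $s,t$ of $(h\cup j)\cap(\widetilde{g\cap k})\cap i\cap\ell$ (you make the specific choice $s=h\cap r\cap\ell$, $t=(j\setminus h)\cap r\cap\ell$ where the paper allows any such splitting), then invoke Lemma~\ref{L;Lemma4.9} for $m\oplus q$ and Lemma~\ref{L;Lemma4.4} for the conclusion. The only cosmetic difference is that the paper verifies $\ell\leq_2 m\odot q$ by first computing $m\cap q=((g\cap k)\setminus i)\cup(i\setminus(g\cup k))$ in closed form, whereas you do it by an element-by-element case split.
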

\begin{proof}
Set $r=(\widetilde{g\cap k})\cap i\leq_2g\cap k\cap i$. Since $g\oplus k\leq_2 \ell\leq_2 m(g, h, i, j, k)\leq_2g\odot k$, notice that $R_\ell\in R_gR_k$ by Lemma \ref{L;Lemma4.4}. There are $s, t\in[0,d]$ such that $s\leq_2 h\cap r\cap \ell$, $t\leq_2 j\cap r\cap\ell$, $s\cup t=(h\cup j)\cap r\cap \ell$, and $s\cap t=0$. Define $m=(g\oplus i)\cup s$ and $q=(k\oplus i)\cup t$. As $g\oplus i\leq_2 h\leq_2 g\odot i$ and $k\oplus i\leq_2 j\leq_2 k\odot i$, it is obvious that $g\oplus i\leq_2 m\leq_2 h$ and $k\oplus i\leq_2 q\leq_2 j$. Since $s\cap t=0$ and $g\oplus k\leq_2 \ell\leq_2 m(g, h, i, j, k)$, notice that $m\oplus q=(g\oplus k)\cup s\cup t=(g\oplus k)\cup((h\cup j)\cap r\cap \ell)\leq_2 \ell$ by Lemma \ref{L;Lemma4.9}. As $s\cap t=0$, $m=(g\setminus i)\cup(i\setminus g)\cup s$, and $q=(k\setminus i)\cup(i\setminus k)\cup t$, it is not difficult to see that $m\cap q=((g\cap k)\setminus i)\cup(i\setminus(g\cup k))$. As $g\oplus k\leq_2 \ell\leq_2 m(g, h, i, j, k)$, notice that $\ell\leq_2(g\oplus k)\cup ((\widetilde{g\cap k})\setminus i)\cup ((h\cup j)\cap r\cap \ell)\leq_2 (m\oplus q)\cup(\widetilde{m\cap q})=m\odot q$. So $m\oplus q\leq_2 \ell\leq_2 m\odot q$ and $R_\ell\in R_mR_q$ by Lemma \ref{L;Lemma4.4}. The desired lemma follows.
\end{proof}
\begin{lem}\label{L;Lemma4.11}
Assume that $g, h, i, j, k,\ell\in [0,d]$. Assume that $g\oplus i\leq_2 h\leq_2 g\odot i$ and $k\oplus i\leq_2 j\leq_2 k\odot i$. If there exist $m, q\in [0,d]$ such that $g\oplus i\leq_2 m\leq_2 h$, $k\oplus i\leq_2 q\leq_2 j$, and $R_\ell\in R_gR_k\cap R_mR_q$, then $g\oplus k\leq_2 \ell\leq_2 m(g, h, i, j, k)$.
\end{lem}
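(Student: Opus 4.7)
The plan is to reduce everything to Lemma \ref{L;Lemma4.3} applied twice, once to $R_\ell\in R_gR_k$ and once to $R_\ell\in R_mR_q$, and then to trace the subset $\mathbb{P}(\ell)$ coordinate-by-coordinate through the definition of $m(g,h,i,j,k)$. The lower bound $g\oplus k\leq_2\ell$ is immediate from Lemma \ref{L;Lemma4.3} applied to $R_\ell\in R_gR_k$, which simultaneously gives $\ell\leq_2 g\odot k$, i.e.\ $\mathbb{P}(\ell)\subseteq\mathbb{P}(g\oplus k)\cup\mathbb{P}_2(g\cap k)$. So only the upper bound $\ell\leq_2 m(g,h,i,j,k)$ requires work.

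For the upper bound, I would apply Lemma \ref{L;Lemma4.3} to $R_\ell\in R_mR_q$ to get $\ell\leq_2 m\odot q$, so $\mathbb{P}(\ell)\subseteq\mathbb{P}(m\oplus q)\cup\mathbb{P}_2(m\cap q)$. Combined with the obvious containments $\mathbb{P}(m)\subseteq\mathbb{P}(h)$ and $\mathbb{P}(q)\subseteq\mathbb{P}(j)$ (coming from $m\leq_2 h$ and $q\leq_2 j$), this is the only additional ingredient I need.

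Now I would do a clean case analysis on an element $x\in\mathbb{P}(\ell)$. If $x\in\mathbb{P}(g\oplus k)$ then $x$ lies in the first term of $\mathbb{P}(m(g,h,i,j,k))$. Otherwise, by the first application of Lemma \ref{L;Lemma4.3}, $x\in\mathbb{P}_2(g\cap k)$. If moreover $x\notin\mathbb{P}(i)$, then $x\in\mathbb{P}_2(g\cap k)\setminus\mathbb{P}(i)$, which is the second term. If instead $x\in\mathbb{P}_2(g\cap k)\cap\mathbb{P}(i)$, then I use the second application: $x$ lies in $\mathbb{P}(m\oplus q)\cup\mathbb{P}_2(m\cap q)\subseteq\mathbb{P}(m)\cup\mathbb{P}(q)\subseteq\mathbb{P}(h)\cup\mathbb{P}(j)$, placing $x$ in the third term $(\mathbb{P}(h)\cup\mathbb{P}(j))\cap\mathbb{P}_2(g\cap k)\cap\mathbb{P}(i)$. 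This exhausts the cases and gives $\mathbb{P}(\ell)\subseteq\mathbb{P}(m(g,h,i,j,k))$, hence $\ell\leq_2 m(g,h,i,j,k)$ by Lemma \ref{L;Lemma3.1}.

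There is no real obstacle; the argument is purely bookkeeping on subsets of $[1,n]$, and the only subtle point is noticing that the hypotheses $g\oplus i\leq_2 h\leq_2 g\odot i$ and $k\oplus i\leq_2 j\leq_2 k\odot i$ themselves are not actually needed in the proof (they only enter indirectly by ensuring that $m$ and $q$ are meaningful choices in the statement). The proof is essentially a one-shot application of Lemma \ref{L;Lemma4.3} in both directions, organized by whether the ambient coordinate lies inside or outside $\mathbb{P}(i)$.
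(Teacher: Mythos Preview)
Your proof is correct and in fact cleaner than the paper's. Both proofs obtain $g\oplus k\leq_2\ell\leq_2 g\odot k$ and $\ell\leq_2 m\odot q$ from Lemma~\ref{L;Lemma4.3}, and both then argue that the portion of $\mathbb{P}(\ell)$ lying in $\mathbb{P}_2(g\cap k)\cap\mathbb{P}(i)$ is contained in $\mathbb{P}(h)\cup\mathbb{P}(j)$. The difference is how this last point is handled. The paper writes $m=(g\oplus i)\cup s$ and $q=(k\oplus i)\cup t$ with $s\leq_2(\widetilde{g\cap i})\cap h$ and $t\leq_2(\widetilde{k\cap i})\cap j$, then explicitly computes $r\cap(m\oplus q)$ and $r\cap m\cap q$ in terms of $s$ and $t$ before bounding them. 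You bypass all of that with the single observation $\mathbb{P}(m\odot q)\subseteq\mathbb{P}(m)\cup\mathbb{P}(q)\subseteq\mathbb{P}(h)\cup\mathbb{P}(j)$, which already suffices once one intersects with $\mathbb{P}_2(g\cap k)\cap\mathbb{P}(i)$. Your remark that the bounds $g\oplus i\leq_2 h\leq_2 g\odot i$ and $k\oplus i\leq_2 j\leq_2 k\odot i$ are unused is also correct; the paper's route needs them to set up the $s,t$ decomposition, whereas yours only needs $m\leq_2 h$ and $q\leq_2 j$.
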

\begin{proof}
Set $r=(\widetilde{g\cap k})\cap i\leq_2 g\cap k\cap i$. Notice that $g\odot k=(g\oplus k)\cup((\widetilde{g\cap k})\setminus i)\cup r$. As $R_\ell\in R_gR_k\cap R_mR_q$, Lemma \ref{L;Lemma4.3} implies that $g\oplus k\leq_2\ell\leq_2 g\odot k$ and $\ell\leq m\odot q$. As $\ell=(g\odot k)\cap \ell$, it suffices to check that $r\cap \ell \leq_2 r\cap (m\odot q)\leq_2m(g, h, i, j, k)$. As $g\oplus i\leq_2 h\leq_2 g\odot i$, $k\oplus i\leq_2 j\leq_2 k\odot i$, $g\oplus i\leq_2 m\leq_2 h$, $k\oplus i\leq_2 q\leq_2 j$, notice that there exist $s, t\in [0,d]$ such that $s\leq_2(\widetilde{g\cap i})\cap h$, $t\leq_2(\widetilde{k\cap i})\cap j$, $m=(g\oplus i)\cup s$, and $q=(k\oplus i)\cup t$. Therefore $m=(g\setminus i)\cup(i\setminus g)\cup s$ and $q=(k\setminus i)\cup(i\setminus k)\cup t$. Notice that $((g\setminus i)\cup(i\setminus g))\cap(m\oplus q)\leq_2 (g\setminus k)\cup (k\setminus g)$. Moreover, it is clear that $((k\setminus i)\cup (i\setminus k))\cap(m\oplus q)\leq_2 (g\setminus k)\cup (k\setminus g)$. So $m\oplus q\leq_2 (g\setminus k)\cup (k\setminus g)\cup s\cup t$.
Hence $r\cap (m\oplus q)\leq_2 r\cap( (g\setminus k)\cup (k\setminus g)\cup s\cup t)=(r\cap s)\cup(r\cap t)\leq_2(h\cup j)\cap r$ and $r\cap m\cap q=r\cap ((g\setminus i)\cup(i\setminus g)\cup s)\cap ((k\setminus i)\cup(i\setminus k)\cup t)=r\cap s\cap t$. Notice that $r\cap (m\odot q)=r\cap ((m\oplus q)\cup (\widetilde{m\cap q}))\leq_2(r\cap s)\cup (r\cap t)\cup (r\cap s\cap t)$. Hence $r\cap\ell\!\leq_2\! r\cap (m\odot q)\!\leq_2\!(h\cup j)\cap r\!\leq_2\! m(g, h, i, j, k)$. The desired lemma thus follows.
\end{proof}
\begin{lem}\label{L;Lemma4.12}
Assume that $g, h, i, j, k\in [0,d]$, $p_{gh}^i\neq0$, and $p_{ij}^k\neq0$. Then $$ (\sum_{g\oplus i\leq_2\ell\leq_2 h}E_g^*A_\ell E_i^*)(\sum_{k\oplus i\leq_2m\leq_2 j}E_i^*A_mE_k^*)=\sum_{g\oplus k\leq_2 q\leq_2 m(g, h, i, j, k)}c_q E_g^*A_q E_k^*,$$ where $c_q\in \F$ for any $g\oplus k\leq_2 q\leq_2 m(g, h, i, j, k)$.
\end{lem}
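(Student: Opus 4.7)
The plan is to expand the product on the left-hand side termwise, apply Lemma \ref{L;Lemma4.7} to each resulting five-fold product $E_g^* A_\ell E_i^* A_m E_k^*$, and then use Lemma \ref{L;Lemma4.11} to identify the correct support of the resulting coefficients.

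First, by distributivity together with the idempotent relation $E_i^* E_i^* = E_i^*$ from \eqref{Eq;2}, the left-hand side equals
\begin{equation*}
\sum_{g\oplus i\leq_2\ell\leq_2 h}\ \sum_{k\oplus i\leq_2 m\leq_2 j} E_g^*A_\ell E_i^*A_m E_k^*.
\end{equation*}
Lemma \ref{L;Lemma4.7} then rewrites each summand as $\sum_q c_{g\ell imkq}\, E_g^*A_q E_k^*$ with $q$ ranging over $g\oplus k \leq_2 q \leq_2 g\odot k$, and with the additional property that $c_{g\ell imkq} \neq \overline{0}$ forces $R_q \in R_g R_k \cap R_\ell R_m$. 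Interchanging the order of summation, the left-hand side becomes $\sum_q c_q\, E_g^*A_q E_k^*$ for suitable $c_q \in \F$, and it remains to show that $c_q = \overline{0}$ whenever $q$ lies outside $[g\oplus k, m(g,h,i,j,k)]$ under $\leq_2$.

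For this I would invoke Lemma \ref{L;Lemma4.11}. By Lemma \ref{L;Lemma2.1} together with the fact that $\mathbb{S}$ is symmetric and commutative, the hypotheses $p_{gh}^i \neq 0$ and $p_{ij}^k \neq 0$ yield $p_{gi}^h \neq 0$ and $p_{ki}^j \neq 0$, and Lemma \ref{L;Lemma4.3} therefore gives $g\oplus i \leq_2 h \leq_2 g\odot i$ and $k\oplus i \leq_2 j \leq_2 k\odot i$; these are exactly the standing hypotheses of Lemma \ref{L;Lemma4.11}. Consequently, whenever the inner scalar $c_{g\ell imkq}$ is nonzero, the containment $R_q \in R_g R_k \cap R_\ell R_m$ together with the ranges $g\oplus i \leq_2 \ell \leq_2 h$ and $k\oplus i \leq_2 m \leq_2 j$ forces $g\oplus k \leq_2 q \leq_2 m(g,h,i,j,k)$ by Lemma \ref{L;Lemma4.11}, which truncates the outer index set to the desired interval.

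The main obstacle is essentially bookkeeping: one must correctly match the inputs of Lemma \ref{L;Lemma4.7} with the five-fold products appearing in the expansion, verify that the technical hypotheses of Lemma \ref{L;Lemma4.11} follow from $p_{gh}^i \neq 0$ and $p_{ij}^k \neq 0$ via Lemmas \ref{L;Lemma2.1} and \ref{L;Lemma4.3}, and recognize that the bound produced by Lemma \ref{L;Lemma4.11} coincides with the upper bound $m(g,h,i,j,k)$ of Notation \ref{N;Notation4.8}. No further nontrivial combinatorics is required, since the claim only asserts $c_q \in \F$ and makes no statement about nonvanishing.
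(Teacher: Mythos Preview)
Your proposal is correct and follows essentially the same approach as the paper's proof: expand via Lemma~\ref{L;Lemma4.7}, use Lemma~\ref{L;Lemma2.1} and Lemma~\ref{L;Lemma4.3} to obtain $g\oplus i\leq_2 h\leq_2 g\odot i$ and $k\oplus i\leq_2 j\leq_2 k\odot i$, and then restrict the support of the resulting $q$'s. The only difference is that the paper cites both Lemma~\ref{L;Lemma4.10} and Lemma~\ref{L;Lemma4.11} to identify $R_gR_k\cap\{R_a: g\oplus i\leq_2 a\leq_2 h\}\{R_a: k\oplus i\leq_2 a\leq_2 j\}$ \emph{exactly} as $\{R_a: g\oplus k\leq_2 a\leq_2 m(g,h,i,j,k)\}$, whereas you invoke only Lemma~\ref{L;Lemma4.11} for the containment direction; since the statement asserts nothing about nonvanishing of the $c_q$, your one-sided argument is already sufficient.
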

\begin{proof}
As $p_{gh}^i\neq0$ and $p_{ij}^k\neq 0$, notice that $p_{gi}^h\neq 0$ and $p_{ik}^j\neq 0$ by Lemma \ref{L;Lemma2.1}. Hence $g\oplus i\leq_2 h\leq_2 g\odot i$ and $k\oplus i\leq_2 j\leq_2 k\odot i$ by Lemma \ref{L;Lemma4.3}. By Lemmas \ref{L;Lemma4.10} and \ref{L;Lemma4.11}, $R_gR_k\cap\{R_a: g\oplus i\leq_2 a\leq_2 h\}\{R_a: k\oplus i\leq_2 a\leq_2 j\}$ is precisely $\{R_a: g\oplus k\leq_2 a\leq_2m(g, h, i, j, k)\}$. The desired lemma follows from Lemma \ref{L;Lemma4.7}.
\end{proof}
For further discussion, the following notation and five lemmas are necessary.
\begin{nota}\label{N;Notation4.13}
\em Assume that $\mathbf{y}, \mathbf{z}\in\mathbb{X}$ and $g, h\in [0,d]$. Then $\mathbf{y}(g, \mathbf{z})$ is the element in $\mathbb{X}$ that satisfies the equalities $\mathbf{y}(g, \mathbf{z})_i=\mathbf{y}_i$ and $\mathbf{y}(g, \mathbf{z})_j=\mathbf{z}_j$ for any $i\in\mathbb{P}(g)$ and $j\in [1,n]\setminus \mathbb{P}(g)$. Assume that $\mathbf{z}\in\mathbf{y}R_h$. It is obvious to see that $\mathbf{z}\in\mathbf{y}(g,\mathbf{z})R_{g\cap h}$.
\end{nota}
\begin{lem}\label{L;Lemma4.14}
Assume that $\mathbf{y}, \mathbf{z}\in\mathbb{X}$, $g, h, i, j\in[0,d]$, $p_{hi}^j\neq 0$, $h\!\leq_2\!g$, and $\mathbf{z}\!\in\!\mathbf{y}R_j$. Then $\mathbf{z}R_h\cap \mathbf{y}R_i=\mathbf{z}R_h\cap \mathbf{y}(g,\mathbf{z})R_{g\cap i}$, $\mathbf{z}\in \mathbf{y}(g,\mathbf{z})R_{g\cap j}$, $i\oplus j\leq_2(g\cap i)\oplus (g\cap j)\leq_2 g$.
\end{lem}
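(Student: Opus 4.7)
The plan is to treat the three assertions separately, with all of them reduced to coordinate-wise checks using the characterization of $R_a$ as the relation of ``differing exactly on $\mathbb{P}(a)$'' and the consequences of $p_{hi}^j\neq 0$ furnished by Lemma \ref{L;Lemma4.3}.

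First I would dispose of the second assertion, $\mathbf{z}\in\mathbf{y}(g,\mathbf{z})R_{g\cap j}$. By Notation \ref{N;Notation4.13}, $\mathbf{y}(g,\mathbf{z})$ agrees with $\mathbf{z}$ off $\mathbb{P}(g)$ and with $\mathbf{y}$ on $\mathbb{P}(g)$. Since $\mathbf{z}\in\mathbf{y}R_j$ means $\mathbf{z}$ and $\mathbf{y}$ differ exactly on $\mathbb{P}(j)$, the positions where $\mathbf{z}$ and $\mathbf{y}(g,\mathbf{z})$ differ are exactly $\mathbb{P}(g)\cap\mathbb{P}(j)=\mathbb{P}(g\cap j)$, giving the claim. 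Next I would prove the third assertion. The bound $(g\cap i)\oplus(g\cap j)\leq_2 g$ is immediate since $\mathbb{P}((g\cap i)\oplus(g\cap j))\subseteq\mathbb{P}(g)$. For $i\oplus j\leq_2(g\cap i)\oplus(g\cap j)$, invoke Lemma \ref{L;Lemma4.3}: $p_{hi}^j\neq 0$ yields $h\oplus i\leq_2 j\leq_2 h\odot i\leq_2 h\cup i$. If $k\in\mathbb{P}(i)\setminus\mathbb{P}(j)$, then $k\in\mathbb{P}(h)$ (else $k\in\mathbb{P}(i)\setminus\mathbb{P}(h)\subseteq\mathbb{P}(j)$), and so $k\in\mathbb{P}(g)$ by $h\leq_2 g$; symmetrically, if $k\in\mathbb{P}(j)\setminus\mathbb{P}(i)$, then $k\in\mathbb{P}(h)\subseteq\mathbb{P}(g)$ since $\mathbb{P}(j)\subseteq\mathbb{P}(h)\cup\mathbb{P}(i)$. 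In either case $k\in\mathbb{P}((g\cap i)\oplus(g\cap j))$.

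For the first assertion, I would verify both inclusions position by position, exploiting that any $\mathbf{w}\in\mathbf{z}R_h$ automatically agrees with $\mathbf{z}$ on $[1,n]\setminus\mathbb{P}(g)$ since $\mathbb{P}(h)\subseteq\mathbb{P}(g)$. Take $\mathbf{w}\in\mathbf{z}R_h$. For positions $k\in\mathbb{P}(g)$, $\mathbf{y}(g,\mathbf{z})_k=\mathbf{y}_k$, so $\mathbf{w}_k\neq\mathbf{y}(g,\mathbf{z})_k$ iff $\mathbf{w}_k\neq\mathbf{y}_k$. For positions $k\notin\mathbb{P}(g)$, $\mathbf{w}_k=\mathbf{z}_k=\mathbf{y}(g,\mathbf{z})_k$, and the requirement ``$k\notin\mathbb{P}(g\cap i)$'' is automatic. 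So $\mathbf{w}\in\mathbf{y}R_i$ forces the differing set with $\mathbf{y}(g,\mathbf{z})$ to be $\mathbb{P}(g)\cap\mathbb{P}(i)$; this gives $\subseteq$. For $\supseteq$, suppose $\mathbf{w}\in\mathbf{z}R_h\cap\mathbf{y}(g,\mathbf{z})R_{g\cap i}$. For $k\in\mathbb{P}(g)$, the equivalence $\mathbf{w}_k\neq\mathbf{y}_k\iff k\in\mathbb{P}(i)$ is immediate. For $k\notin\mathbb{P}(g)$, we have $\mathbf{w}_k=\mathbf{z}_k$, and we must show $\mathbf{w}_k=\mathbf{y}_k\iff k\notin\mathbb{P}(i)$, equivalently $\mathbf{z}_k=\mathbf{y}_k\iff k\notin\mathbb{P}(i)$, i.e., $k\in\mathbb{P}(j)\iff k\in\mathbb{P}(i)$; this is where the containments $\mathbb{P}(i)\setminus\mathbb{P}(h)\subseteq\mathbb{P}(j)$ and $\mathbb{P}(j)\subseteq\mathbb{P}(h)\cup\mathbb{P}(i)$ from Lemma \ref{L;Lemma4.3}, together with $k\notin\mathbb{P}(g)\supseteq\mathbb{P}(h)$, deliver both directions.

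The whole argument is bookkeeping rather than a genuine difficulty; the one step that requires care, and which I expect to be the main obstacle, is the $\supseteq$ direction of statement one: it is the unique place where all three hypotheses ($h\leq_2 g$, $\mathbf{z}\in\mathbf{y}R_j$, and $p_{hi}^j\neq 0$) must be used simultaneously, and one has to correctly identify that the precise containments yielded by $h\oplus i\leq_2 j\leq_2 h\odot i$ are what reconcile $\mathbb{P}(i)$ and $\mathbb{P}(j)$ off $\mathbb{P}(g)$.
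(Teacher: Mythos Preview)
Your proposal is correct and takes essentially the same approach as the paper: a coordinate-by-coordinate verification of each assertion, with the $\supseteq$ direction of the first equality hinging on the consequences $h\oplus i\leq_2 j\leq_2 h\cup i$ of Lemma~\ref{L;Lemma4.3}. The only cosmetic difference is that for the third assertion the paper first passes through Lemma~\ref{L;Lemma2.1} to obtain $p_{ij}^h\neq 0$ and hence $i\oplus j\leq_2 h\leq_2 g$ in one stroke, whereas you argue the inclusion $\mathbb{P}(i\oplus j)\subseteq\mathbb{P}((g\cap i)\oplus(g\cap j))$ by a direct case split; both reach the same conclusion.
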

\begin{proof}
Since $p_{hi}^j\neq 0$, notice that $\mathbf{z}R_h\cap\mathbf{y}R_i\neq\varnothing$. Pick $\mathbf{v}\in\mathbf{z}R_h\cap\mathbf{y}R_i$. Notice that $\mathbf{v}_k\neq \mathbf{y}_k$
and $\mathbf{v}_\ell=\mathbf{y}_\ell$ for any $k\in \mathbb{P}(i)$ and $\ell\in [1,n]\setminus \mathbb{P}(i)$. Then $\mathbf{v}_k\neq\mathbf{y}_k=\mathbf{y}(g,\mathbf{z})_k$
and $\mathbf{v}_\ell\!=\!\mathbf{y}_\ell\!=\!\mathbf{y}(g,\mathbf{z})_\ell$ for any $k\in \mathbb{P}(g\cap i)$ and $\ell\in\mathbb{P}(g\setminus i)$. As $h\!\leq_2\!g$ and $\mathbf{v}_k=\mathbf{z}_k$ for any $k\in [1,n]\setminus \mathbb{P}(h)$, $\mathbf{v}_k=\mathbf{z}_k=\mathbf{y}(g,\mathbf{z})_k$ for any $k\in [1,n]\setminus\mathbb{P}(g)$.
So $\mathbf{v}\in\mathbf{y}(g,\mathbf{z})R_{g\cap i}$. As $\mathbf{v}$ is chosen from $\mathbf{z}R_h\cap\mathbf{y}R_i$ arbitrarily, notice that $\mathbf{z}R_h\cap\mathbf{y}R_i\!\subseteq\!\mathbf{z}R_h\cap\mathbf{y}(g,\mathbf{z})R_{g\cap i}$.

As $\varnothing\neq \mathbf{z}R_h\cap\mathbf{y}R_i\subseteq\mathbf{z}R_h\cap\mathbf{y}(g,\mathbf{z})R_{g\cap i}$, notice that $\mathbf{z}R_h\cap\mathbf{y}(g,\mathbf{z})R_{g\cap i}\neq\varnothing$. Pick $\mathbf{w}\in\mathbf{z}R_h\cap\mathbf{y}(g,\mathbf{z})R_{g\cap i}$. So $\mathbf{w}_k\neq\mathbf{y}(g,\mathbf{z})_k$ and $\mathbf{w}_\ell=\mathbf{y}(g,\mathbf{z})_\ell$ for any $k\in \mathbb{P}(g\cap i)$ and $\ell\in [1,n]\setminus\mathbb{P}(g\cap i)$. So
$\mathbf{w}_k\neq\mathbf{y}(g,\mathbf{z})_k=\mathbf{y}_k$ and $\mathbf{w}_\ell=\mathbf{y}(g,\mathbf{z})_\ell=\mathbf{z}_\ell$ for any $k\in\mathbb{P}(g\cap i)$ and $\ell\in\mathbb{P}(i\setminus g)$. As $h\!\leq_2\!g$, notice that $i\setminus g\leq_2 i\setminus h$. As $\mathbf{z}\in\mathbf{y}R_j$, Lemma \ref{L;Lemma4.3} implies that $i\setminus g\leq_2 i\setminus h\leq_2 j$ and  $\mathbf{w}_k\!=\!\mathbf{z}_k\!\neq\!\mathbf{y}_k$ for any $k\in\mathbb{P}(i\setminus g)$. For any $k\in \mathbb{P}(g\setminus i)$, notice that $\mathbf{w}_k=\mathbf{y}(g,\mathbf{z})_k=\mathbf{y}_k$. As $h\leq_2 g$ and $\mathbf{z}\in\mathbf{y}R_ j$, Lemma \ref{L;Lemma4.3} implies that $j\leq_2 h\odot i\leq_2 h\cup i\leq_2 g\cup i$ and $\mathbf{w}_k\!=\!\mathbf{y}(g,\mathbf{z})_k\!=\!\mathbf{z}_k\!=\!\mathbf{y}_k$ for any $k\in[1,n]\setminus\mathbb{P}(g\cup i)$. Hence $\mathbf{w}\in \mathbf{z}R_h\cap\mathbf{y}R_i$. As $\mathbf{w}$ is chosen from $\mathbf{z}R_h\cap\mathbf{y}(g,\mathbf{z})R_{g\cap i}$ arbitrarily, the first statement thus follows. Since $\mathbf{z}\!\in\!\mathbf{y}R_j$, $\mathbf{z}_k\neq \mathbf{y}_k=\mathbf{y}(g, \mathbf{z})_k$ and $\mathbf{z}_\ell=\mathbf{y}_\ell=\mathbf{y}(g, \mathbf{z})_\ell$ for any $k\!\in\!\mathbb{P}(g\cap j)$ and $\ell\in \mathbb{P}(g\setminus j)$. Moreover, $\mathbf{z}_k=\mathbf{y}(g,\mathbf{z})_k$ for any $k\in [1,n]\setminus \mathbb{P}(g)$. The second statement thus follows. For the third statement, notice that $g\cap i\leq_2 g$, $g\cap j\leq_2 g$, and $(g\cap i)\oplus (g\cap j)\leq_2g$. As $p_{hi}^j\neq 0$ and $h\leq_2 g$, notice that $i\oplus j\leq_2 h\leq_2 g$ by Lemmas \ref{L;Lemma2.1} and \ref{L;Lemma4.3}. So $i\oplus j\leq_2 (g\cap i)\oplus (g\cap j)$. So $i\oplus j\leq_2(g\cap i)\oplus (g\cap j)\leq_2 g$. The desired lemma thus follows.
\end{proof}
\begin{lem}\label{L;Lemma4.15}
Assume that $\mathbf{y}, \mathbf{z}\in \mathbb{X}$ and $g, h\in[0,d]$. Then $$\sum_{i\leq_2 g}|\mathbf{z}R_i\cap\mathbf{y}R_h|\leq \sum_{i\leq_2 g}|\mathbf{z}R_i\cap\mathbf{y}(g,\mathbf{z})R_{g\cap h}|\leq |\mathbf{y}(g,\mathbf{z})R_{g\cap h}|=k_{g\cap h}.$$
\end{lem}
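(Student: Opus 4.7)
The plan is to split the compound statement into three pieces: the two inequalities and the terminal equality. The equality $|\mathbf{y}(g,\mathbf{z})R_{g\cap h}|=k_{g\cap h}$ is immediate from the definition of valency, so the substance lies in the two inequalities.

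For the first inequality, I would argue summand by summand: it suffices to show $|\mathbf{z}R_i\cap\mathbf{y}R_h|\leq|\mathbf{z}R_i\cap\mathbf{y}(g,\mathbf{z})R_{g\cap h}|$ for each fixed $i\leq_2 g$. Let $j\in[0,d]$ be the unique index with $\mathbf{z}\in\mathbf{y}R_j$. If $p_{ih}^j=0$, then using the symmetry of $\mathbb{S}$ together with the defining property of the structure constants one gets $\mathbf{z}R_i\cap\mathbf{y}R_h=\varnothing$, and the inequality is trivial. Otherwise, I apply Lemma \ref{L;Lemma4.14} with the names of its $h$ and $i$ interchanged: the hypotheses $p_{ih}^j\neq 0$, $i\leq_2 g$, and $\mathbf{z}\in\mathbf{y}R_j$ are exactly what that lemma demands, and it yields the stronger equality $\mathbf{z}R_i\cap\mathbf{y}R_h=\mathbf{z}R_i\cap\mathbf{y}(g,\mathbf{z})R_{g\cap h}$. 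Summing over $i\leq_2 g$ produces the first inequality.

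For the second inequality, I would in fact establish the equality $\sum_{i\leq_2 g}|\mathbf{z}R_i\cap\mathbf{y}(g,\mathbf{z})R_{g\cap h}|=|\mathbf{y}(g,\mathbf{z})R_{g\cap h}|$. The family $\{\mathbf{z}R_a\}_{a\in[0,d]}$ partitions $\mathbb{X}$, so it is enough to prove the containment $\mathbf{y}(g,\mathbf{z})R_{g\cap h}\subseteq\bigcup_{i\leq_2 g}\mathbf{z}R_i$. Given $\mathbf{w}\in\mathbf{y}(g,\mathbf{z})R_{g\cap h}$ and $\ell\in[1,n]\setminus\mathbb{P}(g)$, the facts $\ell\notin\mathbb{P}(g\cap h)$ and (by Notation \ref{N;Notation4.13}) $\mathbf{y}(g,\mathbf{z})_\ell=\mathbf{z}_\ell$ together force $\mathbf{w}_\ell=\mathbf{z}_\ell$. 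Hence the unique $i\in[0,d]$ with $\mathbf{w}\in\mathbf{z}R_i$ satisfies $\mathbb{P}(i)\subseteq\mathbb{P}(g)$, i.e., $i\leq_2 g$.

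The main bookkeeping obstacle is lining up the variable renaming needed to invoke Lemma \ref{L;Lemma4.14} correctly, together with the symmetric-scheme identification $p_{ih}^j=|\mathbf{z}R_i\cap\mathbf{y}R_h|$ when $\mathbf{z}\in\mathbf{y}R_j$. Once these match-ups are in place, the remainder of the argument is routine bookkeeping with the coordinate description of $\mathbf{y}(g,\mathbf{z})$ given in Notation \ref{N;Notation4.13}.
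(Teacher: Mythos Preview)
Your proof is correct and follows essentially the same route as the paper: both argue the first inequality summand by summand via Lemma \ref{L;Lemma4.14} (with the appropriate variable swap), and both deduce the second inequality from the fact that $\{\mathbf{z}R_a:a\in[0,d]\}$ partitions $\mathbb{X}$. The only minor difference is that you go on to verify the containment $\mathbf{y}(g,\mathbf{z})R_{g\cap h}\subseteq\bigcup_{i\leq_2 g}\mathbf{z}R_i$, thereby upgrading the second inequality to an equality; the paper is content with the trivial bound $\sum_{i\leq_2 g}\leq\sum_{i\in[0,d]}$, which already suffices.
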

\begin{proof}
There is no loss to assume that the leftmost side of the desired inequality is not equal to zero. Assume that $j\leq_2 g$ and $|\mathbf{z}R_j\cap\mathbf{y}R_h|\neq 0$. By Lemma \ref{L;Lemma4.14}, notice that $|\mathbf{z}R_j\cap\mathbf{y}R_h|=|\mathbf{z}R_j\cap\mathbf{y}(g,\mathbf{z})R_{g\cap h}|\leq |\mathbf{y}(g,\mathbf{z})R_{g\cap h}|=k_{g\cap h}$. As  $\{\mathbf{z}R_a: a\in[0,d]\}$ forms a partition of $\mathbb{X}$, the desired inequality thus follows.
\end{proof}
\begin{lem}\label{L;Lemma4.16}
Assume that $\mathbf{y}, \mathbf{z}\!\in\! \mathbb{X}$ and $g, h ,i, j, k, \ell\!\in\! [0,d]$. Assume that $g\oplus i\leq_2 h$, $i\setminus j\leq_2 k\leq_2 i\cup j$, and $g\oplus k\leq_2 \ell$. Assume that $\mathbf{y}\in\mathbf{x}R_g\cap\mathbf{z}R_\ell$, $\mathbf{v}=\mathbf{x}(j, \mathbf{z})$, and $\mathbf{w}\in\mathbf{v}(h,\mathbf{y})R_{h\cap i\cap j}$. Then there is $m\!\in\! [0,d]$ such that $g\oplus i\leq_2 m\leq_2h$ and $\mathbf{w}\!\in\!\mathbf{y}R_m$.
\end{lem}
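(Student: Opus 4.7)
The plan is to take $m$ to be the unique index in $[0,d]$ with $(\mathbf{y},\mathbf{w})\in R_m$, which exists and is unique because $\mathbb{S}$ partitions $\mathbb{X}\times\mathbb{X}$ (and $m$ is characterized by $\mathbb{P}(m)=\{a:\mathbf{w}_a\neq\mathbf{y}_a\}$ via Lemma~\ref{L;Lemma3.1}). It then suffices to verify the two inclusions $\mathbb{P}(g\oplus i)\subseteq \mathbb{P}(m)\subseteq \mathbb{P}(h)$, which by Lemma~\ref{L;Lemma3.1} are equivalent to $g\oplus i\leq_2 m\leq_2 h$.

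For the inclusion $\mathbb{P}(m)\subseteq \mathbb{P}(h)$, I observe that whenever $a\notin \mathbb{P}(h)$, Notation~\ref{N;Notation4.13} forces $\mathbf{v}(h,\mathbf{y})_a=\mathbf{y}_a$; since $h\cap i\cap j\leq_2 h$, the defining property of $\mathbf{w}\in\mathbf{v}(h,\mathbf{y})R_{h\cap i\cap j}$ yields $\mathbf{w}_a=\mathbf{v}(h,\mathbf{y})_a=\mathbf{y}_a$, so $a\notin \mathbb{P}(m)$.

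For the harder inclusion $\mathbb{P}(g\oplus i)\subseteq \mathbb{P}(m)$, I take $a\in \mathbb{P}(g\oplus i)$ (which by $g\oplus i\leq_2 h$ already lies in $\mathbb{P}(h)$) and split into four subcases according to whether $a\in \mathbb{P}(g)\setminus \mathbb{P}(i)$ or $a\in \mathbb{P}(i)\setminus \mathbb{P}(g)$, and whether $a\in \mathbb{P}(j)$ or not. The two subcases with $a\in \mathbb{P}(j)$ are routine: if $a\in \mathbb{P}(g)\setminus \mathbb{P}(i)$ then $a\notin \mathbb{P}(h\cap i\cap j)$, so $\mathbf{w}_a=\mathbf{v}(h,\mathbf{y})_a=\mathbf{x}_a$ while $\mathbf{y}_a\neq \mathbf{x}_a$ (because $a\in \mathbb{P}(g)$ and $\mathbf{y}\in \mathbf{x}R_g$); if $a\in \mathbb{P}(i)\setminus \mathbb{P}(g)$ then $a\in \mathbb{P}(h\cap i\cap j)$, so $\mathbf{w}_a\neq \mathbf{v}(h,\mathbf{y})_a=\mathbf{x}_a=\mathbf{y}_a$.

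The main obstacle is the two subcases with $a\notin \mathbb{P}(j)$, where $\mathbf{v}(h,\mathbf{y})_a=\mathbf{z}_a$ and $a\notin \mathbb{P}(h\cap i\cap j)$ force $\mathbf{w}_a=\mathbf{z}_a$; here I must produce a disagreement $\mathbf{y}_a\neq \mathbf{z}_a$, i.e., show $a\in \mathbb{P}(\ell)$, and this is exactly where the hypotheses $i\setminus j\leq_2 k\leq_2 i\cup j$ and $g\oplus k\leq_2 \ell$ come in. Concretely, if $a\in \mathbb{P}(g)\setminus \mathbb{P}(i)$ and $a\notin \mathbb{P}(j)$, then $k\leq_2 i\cup j$ gives $a\notin \mathbb{P}(k)$, so $a\in \mathbb{P}(g)\setminus \mathbb{P}(k)\subseteq \mathbb{P}(g\oplus k)\subseteq \mathbb{P}(\ell)$; if $a\in \mathbb{P}(i)\setminus \mathbb{P}(g)$ and $a\notin \mathbb{P}(j)$, then $i\setminus j\leq_2 k$ gives $a\in \mathbb{P}(k)$, so $a\in \mathbb{P}(k)\setminus \mathbb{P}(g)\subseteq \mathbb{P}(g\oplus k)\subseteq \mathbb{P}(\ell)$. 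In either case $\mathbf{y}\in \mathbf{z}R_\ell$ yields $\mathbf{y}_a\neq \mathbf{z}_a=\mathbf{w}_a$, as desired, and the lemma follows.
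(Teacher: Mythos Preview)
Your proof is correct and follows essentially the same approach as the paper: you choose $m$ via $\mathbf{w}\in\mathbf{y}R_m$, obtain $m\leq_2 h$ from the fact that $\mathbf{v}(h,\mathbf{y})$ agrees with $\mathbf{y}$ outside $\mathbb{P}(h)$, and then verify $g\oplus i\leq_2 m$ via the same four-way case split $\mathbb{P}(g\oplus i)=(\mathbb{P}(g\cap j)\setminus\mathbb{P}(i))\cup(\mathbb{P}(g)\setminus\mathbb{P}(i\cup j))\cup(\mathbb{P}(i\cap j)\setminus\mathbb{P}(g))\cup(\mathbb{P}(i)\setminus\mathbb{P}(g\cup j))$, using $k\leq_2 i\cup j$ and $i\setminus j\leq_2 k$ in the two $a\notin\mathbb{P}(j)$ subcases exactly as the paper does.
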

\begin{proof}
There exists $m\in [0,d]$ such that $\mathbf{w}\in\mathbf{y}R_m$. As $\mathbf{w}\in\mathbf{v}(h,\mathbf{y})R_{h\cap i\cap j}$, $\mathbf{w}_q=\mathbf{y}_q$ for any $q\notin\mathbb{P}(h)$. Hence $m\leq_2 h$ as $\mathbf{w}\in\mathbf{y}R_m$. As $g\oplus i\leq_2 h$, $(g\setminus i)\cup(i\setminus g)\leq_2 h$. As $\mathbf{w}\in\mathbf{v}(h,\mathbf{y})R_{h\cap i\cap j}$ and $\mathbf{y}\in \mathbf{x}R_g$, $\mathbf{w}_q=\mathbf{v}_q=\mathbf{x}_q\neq \mathbf{y}_q$ for any $q\in \mathbb{P}(g\cap j)\setminus \mathbb{P}(i)$. As $g\oplus k\leq_2 \ell$, it is obvious to see that $(g\setminus k)\cup(k\setminus g)\leq_2 \ell$. As $i\setminus j\leq_2 k\leq_2 i\cup j$, $(g\setminus k)\cup (k\setminus g)\leq_2\ell$, $\mathbf{w}\in\mathbf{v}(h,\mathbf{y})R_{h\cap i\cap j}$, and $\mathbf{y}\in\mathbf{z}R_\ell$, it is obvious to see that $\mathbf{w}_q\!=\!\mathbf{v}_q\!=\!\mathbf{z}_q\!\neq\!\mathbf{y}_q$ for any $q\!\in\! (\mathbb{P}(g)\setminus \mathbb{P}(i\cup j))\cup(\mathbb{P}(i)\setminus\mathbb{P}(g\cup j))$. For any $q\in \mathbb{P}(i\cap j)\setminus\mathbb{P}(g)$, observe that $\mathbf{w}_q\neq \mathbf{v}_q=\mathbf{x}_q=\mathbf{y}_q$ since $\mathbf{w}\in\mathbf{v}(h,\mathbf{y})R_{h\cap i\cap j}$ and $\mathbf{y}\in \mathbf{x}R_g$. Therefore $g\oplus i=((g\cap j)\setminus i)\cup(g\setminus(i\cup j))\cup((i\cap j)\setminus g)\cup (i\setminus(g\cup j))\leq_2 m$ by the assumption $\mathbf{w}\in\mathbf{y}R_m$. Hence $g\oplus i\leq_2 m\leq_2h$. The desired lemma thus follows.
\end{proof}
\begin{lem}\label{L;Lemma4.17}
Assume that $\mathbf{y}, \mathbf{z}\in \mathbb{X}$, $g, h ,i, j, k\in [0,d]$, $h\oplus i\leq_2 g\leq_2 h\cup i$, and $k\leq_2 h\cup j$. Assume that $\mathbf{y}\in\mathbf{x}R_g\cap\mathbf{z}R_k$ and $\mathbf{v}\!=\!\mathbf{x}(j, \mathbf{z})$. Then $\mathbf{v}(h,\mathbf{y})R_{h\cap i\cap j}\subseteq\mathbf{v}R_{i\cap j}$.
\end{lem}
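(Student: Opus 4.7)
My plan is to prove the set inclusion coordinate-by-coordinate: pick an arbitrary $\mathbf{w} \in \mathbf{v}(h,\mathbf{y})R_{h\cap i\cap j}$ and verify, for each index $q \in [1,n]$, that $\mathbf{w}_q \neq \mathbf{v}_q$ iff $q \in \mathbb{P}(i\cap j)$. This will use only the defining equalities of $\mathbf{v} = \mathbf{x}(j,\mathbf{z})$ and $\mathbf{v}(h,\mathbf{y})$ together with the three containment hypotheses $h\oplus i\leq_2 g\leq_2 h\cup i$, $k\leq_2 h\cup j$, and the relations $\mathbf{y}\in\mathbf{x}R_g\cap\mathbf{z}R_k$.

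First I would unpack the notation. By Notation \ref{N;Notation4.13}, $\mathbf{v}(h,\mathbf{y})_q$ equals $\mathbf{v}_q$ when $q\in\mathbb{P}(h)$ and $\mathbf{y}_q$ otherwise; and $\mathbf{v}_q$ equals $\mathbf{x}_q$ when $q\in\mathbb{P}(j)$ and $\mathbf{z}_q$ otherwise. The condition $\mathbf{w}\in\mathbf{v}(h,\mathbf{y})R_{h\cap i\cap j}$ says $\mathbf{w}_q\neq \mathbf{v}(h,\mathbf{y})_q$ exactly when $q\in\mathbb{P}(h\cap i\cap j)$.

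Next I would handle the easy half, namely $q\in\mathbb{P}(i\cap j)$. If additionally $q\in\mathbb{P}(h)$ then $q\in\mathbb{P}(h\cap i\cap j)$, so $\mathbf{w}_q\neq \mathbf{v}(h,\mathbf{y})_q=\mathbf{v}_q$. If instead $q\in\mathbb{P}(i\cap j)\setminus\mathbb{P}(h)$, then $\mathbf{w}_q=\mathbf{v}(h,\mathbf{y})_q=\mathbf{y}_q$ and $\mathbf{v}_q=\mathbf{x}_q$ (since $q\in\mathbb{P}(j)$); here the hypothesis $h\oplus i\leq_2 g$ gives $q\in \mathbb{P}(i\setminus h)\subseteq\mathbb{P}(g)$, so $\mathbf{y}\in\mathbf{x}R_g$ forces $\mathbf{y}_q\neq \mathbf{x}_q$.

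The harder half is showing $\mathbf{w}_q=\mathbf{v}_q$ whenever $q\notin\mathbb{P}(i\cap j)$. If $q\in\mathbb{P}(h)$, then $q\notin\mathbb{P}(h\cap i\cap j)$, so $\mathbf{w}_q=\mathbf{v}(h,\mathbf{y})_q=\mathbf{v}_q$ directly. The remaining case $q\notin\mathbb{P}(h)$ splits further according to $\mathbb{P}(i)$ and $\mathbb{P}(j)$. Here I would record the implication $\mathbf{w}_q=\mathbf{v}(h,\mathbf{y})_q=\mathbf{y}_q$ and reduce to showing $\mathbf{y}_q=\mathbf{v}_q$. If $q\in\mathbb{P}(j)\setminus\mathbb{P}(i)$ (hence $q\notin\mathbb{P}(h)\cup\mathbb{P}(i)$), then $g\leq_2 h\cup i$ gives $q\notin\mathbb{P}(g)$, so $\mathbf{y}_q=\mathbf{x}_q=\mathbf{v}_q$. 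If $q\in\mathbb{P}(i)\setminus\mathbb{P}(j)$ (hence $q\notin\mathbb{P}(h)\cup\mathbb{P}(j)$), then $k\leq_2 h\cup j$ gives $q\notin\mathbb{P}(k)$, and $\mathbf{y}\in\mathbf{z}R_k$ yields $\mathbf{y}_q=\mathbf{z}_q=\mathbf{v}_q$. If $q\notin\mathbb{P}(i)\cup\mathbb{P}(j)\cup\mathbb{P}(h)$, both $g\leq_2 h\cup i$ and $k\leq_2 h\cup j$ apply, so $\mathbf{y}_q=\mathbf{x}_q=\mathbf{z}_q=\mathbf{v}_q$. Since $\mathbf{w}$ was arbitrary, this completes the proof.

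No single step is truly difficult; the only obstacle is bookkeeping the disjoint cases without skipping one. I would therefore organise the write-up around the partition of $[1,n]$ induced by the three sets $\mathbb{P}(h)$, $\mathbb{P}(i)$, $\mathbb{P}(j)$, and invoke each of the three hypotheses exactly at the case where it is needed.
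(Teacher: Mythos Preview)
Your proof is correct and follows essentially the same approach as the paper: a coordinate-by-coordinate case analysis partitioning $[1,n]$ according to membership in $\mathbb{P}(h)$, $\mathbb{P}(i)$, $\mathbb{P}(j)$, invoking each hypothesis ($h\oplus i\leq_2 g$, $g\leq_2 h\cup i$, $k\leq_2 h\cup j$) in exactly the subcase where it is needed. Your organization is in fact slightly tidier than the paper's, since you collapse the three subcases with $q\in\mathbb{P}(h)\setminus\mathbb{P}(i\cap j)$ into the single observation that $\mathbf{w}_q=\mathbf{v}(h,\mathbf{y})_q=\mathbf{v}_q$ there.
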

\begin{proof}
Pick $\mathbf{w}\in\mathbf{v}(h,\mathbf{y})R_{h\cap i\cap j}$. It is obvious that $\mathbf{w}_\ell\neq \mathbf{v}_\ell$ for any $\ell\in \mathbb{P}(h\cap i\cap j)$. As $h\oplus i\leq_2 g$, notice that $(h\setminus i)\cup(i\setminus h)\leq_2 g$. As $\mathbf{w}\in\mathbf{v}(h,\mathbf{y})R_{h\cap i\cap j}$, $\mathbf{y}\in\mathbf{x}R_g$, and $(i\cap j)\setminus h\leq_2g$, $\mathbf{w}_\ell=\mathbf{y}_\ell\neq \mathbf{x}_\ell=\mathbf{v}_\ell$ for any $\ell\in\mathbb{P}(i\cap j)\setminus \mathbb{P}(h)$. As $\mathbf{w}\in\mathbf{v}(h,\mathbf{y})R_{h\cap i\cap j}$, notice that $\mathbf{w}_\ell=\mathbf{v}_\ell$ for any $\ell\in \mathbb{P}(h\cap i)\setminus \mathbb{P}(j)$. As $\mathbf{w}\in\mathbf{v}(h,\mathbf{y})R_{h\cap i\cap j}$, $k\leq_2 h\cup j$, and $\mathbf{y}\in\mathbf{z}R_k$, $\mathbf{w}_\ell=\mathbf{y}_\ell=\mathbf{z}_\ell=\mathbf{v}_\ell$ for any $\ell\in \mathbb{P}(i)\setminus\mathbb{P}(h\cup j)$. As $\mathbf{w}\in\mathbf{v}(h,\mathbf{y})R_{h\cap i\cap j}$, notice that $\mathbf{w}_\ell=\mathbf{v}_\ell$ for any $\ell\in \mathbb{P}(h\cap j)\setminus \mathbb{P}(i)$. As $\mathbf{w}\in\mathbf{v}(h,\mathbf{y})R_{h\cap i\cap j}$, $\mathbf{y}\in\mathbf{x}R_g$, and $g\leq_2 h\cup i$, $\mathbf{w}_\ell=\mathbf{y}_\ell=\mathbf{x}_\ell=\mathbf{v}_\ell$ for any $\ell\in \mathbb{P}(j)\setminus\mathbb{P}(h\cup i)$. As $\mathbf{w}\!\in\!\mathbf{v}(h,\mathbf{y})R_{h\cap i\cap j}$, it is clear that $\mathbf{w}_\ell=\mathbf{v}_\ell$ for any $\ell\in \mathbb{P}(h)\setminus \mathbb{P}(i\cup j)$. As $\mathbf{w}\!\in\!\mathbf{v}(h,\mathbf{y})R_{h\cap i\cap j}$, $\mathbf{y}\in\mathbf{z}R_k$, and $k\leq_2 h\cup j\leq_2 h\cup i\cap j$, notice that $\mathbf{w}_\ell=\mathbf{y}_\ell=\mathbf{z}_\ell=\mathbf{v}_\ell$ for any $\ell\in [1,n]\setminus\mathbb{P}(h\cup i\cup j)$. In conclusion, the above discussion shows that $\mathbf{w}_\ell\neq\mathbf{v}_\ell$ if and only if $\ell\in \mathbb{P}(i\cap j)$. As $\mathbf{w}$ is chosen from $\mathbf{v}(h,\mathbf{y})R_{h\cap i\cap j}$ arbitrarily, the desired lemma thus follows.
\end{proof}
\begin{lem}\label{L;Lemma4.18}
Assume that $\mathbf{y}, \mathbf{z}\in \mathbb{X}$, $g, h, i, j, k, \ell\in [0,d]$, $p_{gh}^i\neq0$, and $p_{ij}^k\neq 0$. Assume that $g\oplus k\leq_2 \ell\leq_2 m(g, h, i, j, k)$, $\mathbf{y}\in \mathbf{x}R_g\cap \mathbf{z}R_\ell$, and $\mathbf{z}\in \mathbf{x}R_k$. Then
$$ \sum_{g\oplus i\leq_2 m\leq_2 h}\sum_{k\oplus i\leq_2 q\leq_2 j}|\mathbf{y}R_m\cap\mathbf{x}R_i\cap\mathbf{z}R_q|=k_{h\cap i\cap j}.$$
\end{lem}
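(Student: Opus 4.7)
The plan is to reorganize the double sum as the cardinality of a single subset of $\mathbb{X}$, and then compute that cardinality by reducing it to independent local constraints at each coordinate.

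Since $\{\mathbf{y}R_a : a \in [0,d]\}$ and $\{\mathbf{z}R_a : a \in [0,d]\}$ each partition $\mathbb{X}$, every $\mathbf{w} \in \mathbb{X}$ determines unique indices $m(\mathbf{w}), q(\mathbf{w}) \in [0,d]$ with $\mathbf{w} \in \mathbf{y}R_{m(\mathbf{w})} \cap \mathbf{z}R_{q(\mathbf{w})}$. The pieces $\mathbf{y}R_m \cap \mathbf{z}R_q$ are pairwise disjoint, so the LHS equals $|N|$, where
$$N = \{\mathbf{w} \in \mathbf{x}R_i : g \oplus i \leq_2 m(\mathbf{w}) \leq_2 h,\ k \oplus i \leq_2 q(\mathbf{w}) \leq_2 j\}.$$

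Next I would translate membership in $N$ into independent local conditions at each coordinate $\alpha \in [1,n]$, using that $\alpha \in \mathbb{P}(m(\mathbf{w}))$ iff $\mathbf{w}_\alpha \neq \mathbf{y}_\alpha$, and analogously for $q(\mathbf{w})$. Explicitly, $\mathbf{w} \in N$ iff at every $\alpha$: (i) $\mathbf{w}_\alpha \neq \mathbf{x}_\alpha$ iff $\alpha \in \mathbb{P}(i)$; (ii) $\mathbf{w}_\alpha = \mathbf{y}_\alpha$ whenever $\alpha \notin \mathbb{P}(h)$; (iii) $\mathbf{w}_\alpha = \mathbf{z}_\alpha$ whenever $\alpha \notin \mathbb{P}(j)$; (iv) $\mathbf{w}_\alpha \neq \mathbf{y}_\alpha$ whenever $\alpha \in \mathbb{P}(g \oplus i)$; (v) $\mathbf{w}_\alpha \neq \mathbf{z}_\alpha$ whenever $\alpha \in \mathbb{P}(k \oplus i)$. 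Since all five conditions are local to $\alpha$, one has $|N| = \prod_{\alpha \in [1,n]} N_\alpha$ where $N_\alpha$ counts the $\mathbf{w}_\alpha \in \mathbb{U}_\alpha$ satisfying the local constraints.

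To evaluate $N_\alpha$ I would case-split on the memberships of $\alpha$ in $\mathbb{P}(g), \mathbb{P}(h), \mathbb{P}(i), \mathbb{P}(j), \mathbb{P}(k)$. The ambient facts I would use are: from $p_{gh}^i \neq 0$ and $p_{ij}^k \neq 0$, together with Lemmas \ref{L;Lemma2.1} and \ref{L;Lemma4.3} and symmetry of $\mathbb{S}$, $\mathbb{P}(g \oplus i) \subseteq \mathbb{P}(h)$ and $\mathbb{P}(k \oplus i) \subseteq \mathbb{P}(j)$; unwinding Notation \ref{N;Notation4.8} with these inclusions gives $\mathbb{P}(m(g,h,i,j,k)) \subseteq \mathbb{P}(h) \cup \mathbb{P}(j)$, whence $\mathbb{P}(\ell) \subseteq \mathbb{P}(h) \cup \mathbb{P}(j)$, and combined with $\mathbf{y} \in \mathbf{z}R_\ell$ this forces $\mathbf{y}_\alpha = \mathbf{z}_\alpha$ for every $\alpha \notin \mathbb{P}(h) \cup \mathbb{P}(j)$. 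A systematic tabulation then yields $N_\alpha = u_\alpha - 1$ when $\alpha \in \mathbb{P}(h) \cap \mathbb{P}(i) \cap \mathbb{P}(j) = \mathbb{P}(h \cap i \cap j)$, and $N_\alpha = 1$ otherwise. Multiplying and invoking Lemma \ref{L;Lemma3.11} delivers $|N| = k_{h \cap i \cap j}$, as required.

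The main obstacle is the coordinate-wise case analysis in the last step: with five subsets of $[1,n]$ to track and no single symmetry collapsing the cases, each subregion requires its own verification of local consistency. The place where the hypothesis $\ell \leq_2 m(g,h,i,j,k)$ is indispensable is the subcase $\alpha \in \mathbb{P}(i) \setminus (\mathbb{P}(h) \cup \mathbb{P}(j))$: conditions (ii) and (iii) there force both $\mathbf{w}_\alpha = \mathbf{y}_\alpha$ and $\mathbf{w}_\alpha = \mathbf{z}_\alpha$, and these are simultaneously satisfiable precisely because $\alpha \notin \mathbb{P}(\ell)$ yields $\mathbf{y}_\alpha = \mathbf{z}_\alpha$.
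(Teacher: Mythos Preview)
Your approach is correct and genuinely different from the paper's. The paper does not carry out a direct coordinate count. Instead it sandwiches the sum between two quantities: it first uses the projection element $\mathbf{x}(j,\mathbf{z})$ of Notation~\ref{N;Notation4.13} together with Lemmas~\ref{L;Lemma4.14} and~\ref{L;Lemma4.15} to collapse the inner sum over $q$ and obtain the upper bound $\leq k_{h\cap i\cap j}$; it then uses Lemmas~\ref{L;Lemma4.16} and~\ref{L;Lemma4.17} to exhibit an explicit subset $\mathbf{v}(h,\mathbf{y})R_{h\cap i\cap j}$ of cardinality $k_{h\cap i\cap j}$ contained in $\bigcup_m(\mathbf{y}R_m\cap\mathbf{v}R_{i\cap j})$, yielding the matching lower bound. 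Your argument bypasses all four of those auxiliary lemmas by computing $|N|$ as a product of local factors, at the price of a longer coordinate-wise case analysis. One small sharpening worth noting: your conditions (iv) and (v) are in fact redundant once (i) is in force, since for $\alpha\in\mathbb{P}(g\oplus i)$ the constraint $\mathbf{w}_\alpha\neq\mathbf{y}_\alpha$ already follows from (i) together with $\mathbf{y}\in\mathbf{x}R_g$; this trims the tabulation. Either way, your identification of the subcase $\alpha\in\mathbb{P}(i)\setminus(\mathbb{P}(h)\cup\mathbb{P}(j))$ as the one place where $\ell\leq_2 m(g,h,i,j,k)$ is essential is exactly right, and the inclusion $m(g,h,i,j,k)\leq_2 h\cup j$ you invoke there is the content of the first half of Lemma~\ref{L;Lemma6.8}.
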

\begin{proof}
As $p_{gh}^i\neq 0$ and $p_{ij}^k\neq 0$, notice that $g\oplus i\leq_2 h\leq_2 g\odot i$ and $k\oplus i\leq_2 j\leq_2 k\odot i$ by Lemmas \ref{L;Lemma2.1} and \ref{L;Lemma4.3}. Hence the left part of the desired equality is defined. Notice that $k\oplus i\leq_2 q\leq_2 j\leq_2 k\odot i$ for any $k\oplus i\leq_2 q\leq_2 j$. So Lemmas \ref{L;Lemma4.4} and \ref{L;Lemma2.1} show that $p_{iq}^k\neq 0$ for any $k\oplus i\leq_2 q\leq_2 j$. Hence $k\oplus i\leq_2 (k\cap j)\oplus (i\cap j)\leq_2 j$ by Lemma \ref{L;Lemma4.14}. Pick $r\in [0,d]$. As $\mathbf{z}\in \mathbf{x}R_k$, the combination of Lemmas \ref{L;Lemma4.14}, \ref{L;Lemma2.1}, \ref{L;Lemma3.5}, and \ref{L;Lemma4.3} implies that $|\mathbf{x}(j,\mathbf{z})R_{i\cap j}\cap\mathbf{z}R_r|\!\neq\! 0$ only if $k\oplus i\leq_2 r\leq_2 j$. As $\{\mathbf{z}R_a: a\in [0,d]\}$ forms a partition of $\mathbb{X}$ and Lemmas \ref{L;Lemma4.14}, \ref{L;Lemma4.15} hold, the following inequality holds:
\begin{align*}
\sum_{g\oplus i\leq_2m\leq_2 h}\sum_{k\oplus i\leq_2q\leq_2 j} |\mathbf{y}R_m\cap\mathbf{x}R_i\cap\mathbf{z}R_q|\!=\!&\sum_{g\oplus i\leq_2m\leq_2 h}\sum_{k\oplus i\leq_2q\leq_2 j}|\mathbf{y}R_m\cap\mathbf{x}(j,\mathbf{z})R_{i\cap j}\!\cap\! \mathbf{z}R_q|\\
=&\sum_{g\oplus i\leq_2m\leq_2 h}|\mathbf{y}R_m\cap\mathbf{x}(j,\mathbf{z})R_{i\cap j}|\leq k_{h\cap i\cap j}.
\end{align*}

As $g\oplus i\leq_2 h\leq_2 g\odot i$, $k\oplus i\leq_2 j\leq_2 k\odot i$, and $g\oplus k\leq_2 \ell\leq_2 m(g, h, i, j, k)$, Lemma \ref{L;Lemma4.10} thus implies that there exist $s, t\in [0,d]$ such that $g\oplus i\leq_2 s\leq_2 h$, $k\oplus i\leq_2 t\leq_2 j$, and $R_\ell\in R_gR_k\cap R_sR_t$. Hence $\ell\leq_2 s\odot t\leq_2 h\cup j$ by Lemma \ref{L;Lemma4.3}. Set $\mathbf{v}=\mathbf{x}(j,\mathbf{z})$. As $p_{gh}^i\neq 0$ and $p_{ij}^k\neq 0$, notice that $h\oplus i\leq_2 g\leq_2 h\odot i\leq_2 h\cup i$ and $i\setminus j\leq_2 i\oplus j\leq_2 k\leq_2 i\odot j\leq_2 i\cup j$ by Lemmas \ref{L;Lemma2.1} and \ref{L;Lemma4.3}. Lemmas \ref{L;Lemma4.16} and \ref{L;Lemma4.17} thus imply the following containment
$$ \mathbf{v}(h,\mathbf{y})R_{h\cap i\cap j}\subseteq\bigcup_{g\oplus i\leq_2 m\leq_2 h}(\mathbf{y}R_m\cap \mathbf{v}R_{i\cap j}).$$
The desired equality thus follows since $\{\mathbf{y}R_a: a\in [0,d]\}$ forms a partition of $\mathbb{X}$.
\end{proof}
The following notation and two lemmas motivate us to give another $\F$-basis of $\mathbb{T}$.
\begin{nota}\label{N;Notation4.19}
\em Assume that $g, h, i\in [0,d]$ and $p_{gh}^i\neq 0$. Then $\sum_{g\oplus i\leq_2j\leq_2h}E_g^*A_jE_i^*$ is denoted by $B_{g, h, i}$. As $p_{gh}^i\!\neq\! 0$, notice that $O\neq B_{g, h, i}\in \mathbb{T}$ by Lemmas \ref{L;Lemma4.3} and \ref{L;Lemma2.10}. Notice that $|\{B_{a,b, c}: p_{ab}^c\neq 0\}|=|\{(a, b, c): p_{ab}^c\neq 0\}|$ by \eqref{Eq;2} and Lemma \ref{L;Lemma2.10}.
\end{nota}
\begin{lem}\label{L;Lemma4.20}
Assume that $g, h, i, j, k, \ell\in [0,d]$, $p_{gh}^i\neq0$, and $p_{\ell j}^k\neq0$. Then
$$\ B_{g, m(g,h,i,j,k), k}\ \text{is defined and}\ B_{g,h,i}B_{\ell,j,k}=\delta_{i\ell}\overline{k_{h\cap i\cap j}}B_{g, m(g,h,i,j,k), k}.$$
\end{lem}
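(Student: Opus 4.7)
The plan is to split according to whether $i=\ell$, then expand the product using the earlier lemmas and identify the coefficients.

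First, I would dispose of the case $i\neq\ell$. Every summand of $B_{g,h,i}$ has $E_i^*$ as its right factor and every summand of $B_{\ell,j,k}$ has $E_\ell^*$ as its left factor; by \eqref{Eq;2} the product $E_i^*E_\ell^*=O$, so $B_{g,h,i}B_{\ell,j,k}=O$, which matches $\delta_{i\ell}\overline{k_{h\cap i\cap j}}B_{g,m(g,h,i,j,k),k}$. Next I would confirm that $B_{g,m(g,h,i,j,k),k}$ is well defined: by Notation \ref{N;Notation4.8} we have $g\oplus k\leq_2 m(g,h,i,j,k)\leq_2 g\odot k$, so Lemma \ref{L;Lemma4.4} gives $p_{g,m(g,h,i,j,k)}^k\neq0$, which is exactly what Notation \ref{N;Notation4.19} requires.

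Now assume $i=\ell$. Since $p_{gh}^i\neq0$ and $p_{ij}^k\neq0$, Lemma \ref{L;Lemma4.12} applies and yields
\[
B_{g,h,i}B_{i,j,k}=\sum_{g\oplus k\leq_2 r\leq_2 m(g,h,i,j,k)}c_r E_g^*A_rE_k^*
\]
for some scalars $c_r\in\F$. The task is therefore to compute each $c_r$. Expanding the product directly and using Lemma \ref{L;Lemma4.7} on each piece $E_g^*A_mE_i^*A_qE_k^*$, I get
\[
B_{g,h,i}B_{i,j,k}=\sum_{\substack{g\oplus i\leq_2 m\leq_2 h\\ k\oplus i\leq_2 q\leq_2 j}}\ \sum_{g\oplus k\leq_2 r\leq_2 g\odot k}c_{g,m,i,q,k,r}E_g^*A_rE_k^*,
\]
where $c_{g,m,i,q,k,r}=\overline{|\mathbf{y}R_m\cap\mathbf{x}R_i\cap\mathbf{z}R_q|}$ for any $\mathbf{y}\in\mathbf{x}R_g$ and $\mathbf{z}\in\mathbf{x}R_k\cap\mathbf{y}R_r$ (this quantity is choice-independent by Lemma \ref{L;Lemma4.7}). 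By Lemma \ref{L;Lemma2.10} the matrices $E_g^*A_rE_k^*$ are $\F$-linearly independent, so I may swap the order of summation and read off
\[
c_r=\sum_{\substack{g\oplus i\leq_2 m\leq_2 h\\ k\oplus i\leq_2 q\leq_2 j}}\overline{|\mathbf{y}R_m\cap\mathbf{x}R_i\cap\mathbf{z}R_q|}
\]
for each $g\oplus k\leq_2 r\leq_2 m(g,h,i,j,k)$, after fixing compatible $\mathbf{y}$ and $\mathbf{z}$.

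The key point is then to invoke Lemma \ref{L;Lemma4.18}: its hypotheses $p_{gh}^i\neq0$, $p_{ij}^k\neq0$, $g\oplus k\leq_2 r\leq_2 m(g,h,i,j,k)$, $\mathbf{y}\in\mathbf{x}R_g\cap\mathbf{z}R_r$, $\mathbf{z}\in\mathbf{x}R_k$ are precisely what we have (here I rely on the choice-independence to pick the same $\mathbf{y},\mathbf{z}$ for all $(m,q)$). That lemma gives the integer sum $\sum_{m,q}|\mathbf{y}R_m\cap\mathbf{x}R_i\cap\mathbf{z}R_q|=k_{h\cap i\cap j}$, so $c_r=\overline{k_{h\cap i\cap j}}$ uniformly in $r$. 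Factoring out this common scalar collapses the expression into $\overline{k_{h\cap i\cap j}}B_{g,m(g,h,i,j,k),k}$, finishing the proof.

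The main obstacle is the bookkeeping of coefficients when interchanging the double sum: one must be sure that the same choice of $\mathbf{y}$ and $\mathbf{z}$ (for a fixed $r$) can serve simultaneously as the witnesses in Lemma \ref{L;Lemma4.7} for every pair $(m,q)$, so that Lemma \ref{L;Lemma4.18} applies as a single summation identity. This uses the independence-from-choices clause built into Lemma \ref{L;Lemma4.7}, and with that the rest of the argument is mechanical.
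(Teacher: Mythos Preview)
Your proposal is correct and follows essentially the same approach as the paper: dispose of $i\neq\ell$ via \eqref{Eq;2}, confirm $B_{g,m(g,h,i,j,k),k}$ is defined via Notation \ref{N;Notation4.8} and Lemma \ref{L;Lemma4.4}, use Lemma \ref{L;Lemma4.12} to confine the support of the product, and then compute each coefficient via Lemmas \ref{L;Lemma4.7} and \ref{L;Lemma4.18}. Your explicit discussion of the choice-independence needed to apply Lemma \ref{L;Lemma4.18} simultaneously across all $(m,q)$ is a point the paper leaves implicit, but otherwise the arguments coincide.
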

\begin{proof}
Set $\mathbb{U}\!\!=\!\!\{E_g^*A_aE_k^*: g\oplus k\leq_2 a\leq_2 m(g, h,i,j,k)\}$. By combining Lemmas \ref{L;Lemma4.4}, \ref{L;Lemma2.1}, and \ref{L;Lemma2.9}, notice that $O\notin \mathbb{U}$ and $B_{g,m(g, h,i,j,k), k}$ is defined. If $i\neq \ell$, notice that $B_{g,h,i}B_{\ell, j, k}=B_{g,h,i}E_i^*E_\ell^* B_{\ell, j, k}=O$ by \eqref{Eq;2}. Assume that $i=\ell$.
By Lemma \ref{L;Lemma4.12}, $B_{g, h,i}B_{i, j,k}$ is an $\F$-linear combination of the elements in $\mathbb{U}$. If $E_g^*A_mE_k^*\in \mathbb{U}$, let $c_m$ be the coefficient of $E_g^*A_mE_k^*$ in this $\F$-linear combination of $B_{g, h,i}B_{i, j,k}$. It suffices to check that $c_m=\overline{k_{h\cap i\cap j}}$ for any $E_g^*A_mE_k^*\in \mathbb{U}$. Pick $E_g^*A_qE_k^*\in \mathbb{U}$. Notice that there exist $\mathbf{y}, \mathbf{z}\in \mathbb{X}$ such that $\mathbf{y}\in\mathbf{x}R_g\cap\mathbf{z}R_q$ and $\mathbf{z}\in\mathbf{x}R_k$.
By combining the conditions $p_{gh}^i\neq0$, $p_{ij}^k\neq0$, $g\oplus k\leq_2 q\leq_2 m(g, h,i,j,k)$, Lemmas \ref{L;Lemma4.7}, and \ref{L;Lemma4.18},
$$c_q=\sum_{g\oplus i\leq_2 r\leq_2 h}\sum_{k\oplus i\leq_2 s\leq_2 j}\overline{|\mathbf{y}R_r\cap\mathbf{x}R_i\cap\mathbf{z}R_s|}=\overline{k_{h\cap i\cap j}}.$$
The desired lemma thus follows as $E_g^*A_qE_k^*$ is chosen from $\mathbb{U}$ arbitrarily.
\end{proof}
\begin{lem}\label{L;Lemma4.21}
$\mathbb{T}$ has an $\F$-linearly independent subset $\{B_{a,b,c}:a\oplus b\leq_2 c\leq_2 a\odot b\}$.
\end{lem}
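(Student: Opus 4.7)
The plan is to combine the $\F$-basis of Lemma \ref{L;Lemma4.5} with a block decomposition by outer indices plus a unitriangularity argument on each block. First I would re-index the candidate family using the symmetry of $\mathbb{S}$: since $\mathbb{S}$ is symmetric, Lemma \ref{L;Lemma2.1} gives $p_{gh}^i\neq 0\Leftrightarrow p_{gi}^h\neq 0$, and combining with Lemmas \ref{L;Lemma4.3} and \ref{L;Lemma4.4} this last condition is $g\oplus i\leq_2 h\leq_2 g\odot i$. Thus for each fixed pair $(g,i)\in [0,d]^2$, both the basis vectors $E_g^*A_jE_i^*$ from Lemma \ref{L;Lemma4.5} and the elements $B_{g,h,i}$ of the candidate family range, respectively in $j$ and in $h$, over the same interval $I_{g,i}:=\{a\in[0,d]: g\oplus i\leq_2 a\leq_2 g\odot i\}$ of the Boolean lattice $([0,d],\leq_2)$.

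Next I would observe that the sub-$\F$-spaces $\langle E_g^*A_jE_i^*: j\in I_{g,i}\rangle_\F$ for distinct pairs $(g,i)$ are $\F$-linearly independent inside $\mathbb{T}$, since $E_g^*ME_i^*$ is supported only in rows indexed by $\mathbf{x}R_g$ and columns indexed by $\mathbf{x}R_i$, and the sets $\{\mathbf{x}R_a: a\in [0,d]\}$ form a partition of $\mathbb{X}$. Because each $B_{g,h,i}$ lies in the $(g,i)$-block, it suffices to show, for each fixed pair $(g,i)$, that $\{B_{g,h,i}: h\in I_{g,i}\}$ is $\F$-linearly independent.

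For this final step I would unwind Notation \ref{N;Notation4.19} to write
\[
B_{g,h,i}=\sum_{j\in I_{g,i},\ j\leq_2 h}E_g^*A_jE_i^*,
\]
and then pick any linear extension of the partial order $\leq_2$ on the finite poset $I_{g,i}$. Listing the elements of $I_{g,i}$ in this linear order, the matrix expressing $\{B_{g,h,i}\}_{h\in I_{g,i}}$ in terms of the basis $\{E_g^*A_jE_i^*: j\in I_{g,i}\}$ is unitriangular: the diagonal $j=h$ entry is $\overline{1}$, while any above-diagonal entry would require $j\leq_2 h$ with $j$ appearing strictly after $h$ in the linear extension, which is impossible. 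Hence this coefficient matrix is invertible over $\F$, and the desired linear independence follows.

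I do not expect any serious obstacle. The whole argument is the standard observation that the zeta matrix of a finite poset is unitriangular, combined with the block structure of $\mathbb{T}$ induced by the dual $\F$-idempotents $E_a^*$; the only subtlety is the symmetry-based swap between $p_{gh}^i\neq 0$ and $p_{gi}^h\neq 0$, which is immediate from Lemma \ref{L;Lemma2.1} in the symmetric setting.
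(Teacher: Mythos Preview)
Your proof is correct and follows essentially the same approach as the paper: both reduce to a single $(g,i)$-block via the dual idempotents and then exploit the triangular structure of the expansion $B_{g,h,i}=\sum_{g\oplus i\leq_2 j\leq_2 h}E_g^*A_jE_i^*$ with respect to $\leq_2$. The paper phrases the block argument as a contradiction using a $\leq_2$-maximal index, while you package the same idea as unitriangularity of the zeta matrix; these are equivalent formulations.
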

\begin{proof}
Set $\mathbb{U}\!=\!\{B_{a,b,c}: a\oplus b\leq_2 c\leq_2 a\odot b\}$. According to Lemma \ref{L;Lemma4.4}, notice that $M\!\neq\! O$ for any $M\!\in\! \mathbb{U}$. For any $B_{g, h, i}, B_{j, k, \ell}\in \mathbb{U}$, Notation \ref{N;Notation4.19} thus implies that $B_{g, h, i}=B_{j, k, \ell}$ if and only if $g\!=\!j$, $h=k$, $i=\ell$. Assume that $\sum_{M\in \mathbb{U}}c_MM=O$ and $c_M\in \F$ for any $M\in \mathbb{U}$. It suffices to check that $c_M=\overline{0}$ for any $M\in\mathbb{U}$. Assume that there is $N\in \mathbb{U}$ such that $c_N\neq \overline{0}$. For any $B_{g, h, i}\in \mathbb{U}$ and $j ,k\in [0,d]$, notice that $E_j^*B_{g, h ,i}E_k^*\!=\!\delta_{gj}\delta_{ik}B_{g,h,i}$ by \eqref{Eq;2}. By \eqref{Eq;3} and \eqref{Eq;2}, $N=INI=E_m^*NE_q^*$ for some $m, q\in [0,d]$. So $\mathbb{V}=\{M: M\in\mathbb{U},\ c_M\neq \overline{0},\ E_m^*ME_q^*=M\}\neq\varnothing$. Therefore there exist $r\in \mathbb{N}_0\setminus\{0\}$ and $s_1, s_2,\ldots, s_r\in [0,d]$ such that $s_1, s_2,\ldots, s_r$ are pairwise distinct and $\mathbb{V}=\{B_{m, s_1, q}, B_{m, s_2, q},\ldots, B_{m, s_r, q}\}$. If $r=1$, notice that $c_NN=O$ and $c_N=\overline{0}$ by \eqref{Eq;2}. It is absurd. So $r>1$. By Lemma \ref{L;Lemma3.1}, there is no loss to assume that $s_1$ is a maximum element of $\{s_1, s_2, \ldots, s_r\}$ with respect to $\leq_2$. By the choices of $s_1, s_2, \ldots, s_r$, observe that $B_{m, s_1, q}$ is an $\F$-linear combination of the elements in $\{B_{m, s_2, q}, B_{m, s_3, q}\ldots, B_{m, s_r, q}\}$. It is absurd by combining the choices of $s_1, s_2, \ldots, s_r$, Notation \ref{N;Notation4.19}, Lemma \ref{L;Lemma4.5}. So $c_M\!=\!\overline{0}$ for any $M\!\in\! \mathbb{U}$. The desired lemma follows.
\end{proof}
We close this section by the other main result of this section and an example.
\begin{thm}\label{T;Theorem4.22}
$\mathbb{T}$ has an $\F$-basis $\{B_{a,b,c}: a\oplus b\leq_2 c\leq_2 a\odot b\}$ whose cardinality is $|\{(a, b, c): a\oplus b\leq_2 c\leq_2 a\odot b\}|$.
\end{thm}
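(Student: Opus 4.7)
The plan is to combine the linear independence established in Lemma \ref{L;Lemma4.21} with a cardinality comparison against the known $\F$-dimension of $\mathbb{T}$. By Lemma \ref{L;Lemma4.5}, $\mathbb{T}$ already admits an $\F$-basis $\{E_a^*A_bE_c^*: a\oplus b\leq_2 c\leq_2 a\odot b\}$, so $\dim_\F \mathbb{T}$ equals $|\{(a,b,c): a\oplus b\leq_2 c\leq_2 a\odot b\}|$. Since Lemma \ref{L;Lemma4.21} already guarantees that the proposed set $\{B_{a,b,c}: a\oplus b\leq_2 c\leq_2 a\odot b\}$ is $\F$-linearly independent in $\mathbb{T}$, it suffices to verify that this set has the same cardinality as $\dim_\F \mathbb{T}$.

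For the cardinality count, I would appeal directly to Notation \ref{N;Notation4.19}, which records that the assignment $(a,b,c) \mapsto B_{a,b,c}$ is injective on triples $(a,b,c)$ with $p_{ab}^c \neq 0$. The combination of Lemmas \ref{L;Lemma4.3} and \ref{L;Lemma4.4} shows that the condition $p_{ab}^c \neq 0$ is equivalent to $a\oplus b \leq_2 c \leq_2 a\odot b$. Hence $|\{B_{a,b,c}: a\oplus b \leq_2 c \leq_2 a\odot b\}|$ equals $|\{(a,b,c): a\oplus b \leq_2 c \leq_2 a\odot b\}|$, which matches $\dim_\F \mathbb{T}$ as recorded above.

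Since an $\F$-linearly independent subset of $\mathbb{T}$ whose cardinality equals $\dim_\F \mathbb{T}$ must be an $\F$-basis, both assertions of the theorem follow at once. Honestly, there is no real obstacle at this stage: all of the difficult combinatorial work --- the compatibility of $\leq_2$ with the scheme structure (Lemma \ref{L;Lemma4.3}), the non-vanishing of the relevant intersection numbers (Lemma \ref{L;Lemma4.4}), the fact that distinct triples yield distinct $B_{a,b,c}$ (Notation \ref{N;Notation4.19}), and the $\F$-linear independence (Lemma \ref{L;Lemma4.21}) --- has already been dispatched earlier, so the proof reduces to a short dimension-count conclusion.
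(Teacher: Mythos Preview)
Your argument is correct and follows exactly the same route as the paper's own proof, which simply cites Lemmas \ref{L;Lemma4.21}, \ref{L;Lemma4.3}, \ref{L;Lemma4.4}, \ref{L;Lemma4.5} and Notation \ref{N;Notation4.19}. You have merely spelled out explicitly how these ingredients fit together into the dimension count, so there is nothing to add.
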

\begin{proof}
The theorem is from combining Lemmas \ref{L;Lemma4.21}, \ref{L;Lemma4.3}, \ref{L;Lemma4.4}, \ref{L;Lemma4.5}, Notation \ref{N;Notation4.19}.
\end{proof}
\begin{eg}\label{E;Example4.23}
\em Assume that $n\!=\!u_1\!=\!2$ and $u_2\!=\!3$. Therefore $d=3$ and $\mathbb{T}$ has an $\F$-basis containing precisely $B_{0,0,0}$, $B_{0,1,1}$, $B_{0,2,2}$, $B_{0,3,3}$, $B_{1,0,1}$, $B_{1,1,0}$, $B_{1,2,3}$, $B_{1,3,2}$, $B_{2,0,2}$, $B_{2,1,3}$, $B_{2,2,0}$, $B_{2,2,2}$, $B_{2,3,1}$, $B_{2,3,3}$, $B_{3,0,3}$, $B_{3,1,2}$, $B_{3,2,1}$, $B_{3,2,3}$, $B_{3,3,0}$, $B_{3,3,2}$ by Theorem \ref{T;Theorem4.22} and a direct computation. Then $B_{2,3,3}B_{3,2,3}=\overline{2}B_{2,3,3}$ by Lemma \ref{L;Lemma4.20}.
\end{eg}
\section{Centers of Terwilliger $\F$-algebras of factorial schemes}
In this section, we give an $\F$-basis of $\mathrm{Z}(\mathbb{T})$ and determine the $\F$-dimension of $\mathrm{Z}(\mathbb{T})$. For our purpose, we recall Notations \ref{N;Notation3.3}, \ref{N;Notation4.2}, \ref{N;Notation4.8}, \ref{N;Notation4.19} and begin with three lemmas.
\begin{lem}\label{L;Lemma5.1}
Assume that $g, h, i, j\!\!\in\!\! [0,d]$ and $g\oplus i\!\leq_2\! h$. Then $k_{j\setminus i}k_{h\cap i\cap j}\!=\!k_{j\setminus g}k_{g\cap h\cap j}$.
\end{lem}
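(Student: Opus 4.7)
The plan is to reduce the identity to a set-theoretic equality of the form $\mathbb{P}(\,\cdot\,)\cup\mathbb{P}(\,\cdot\,)=\mathbb{P}(\,\cdot\,)\cup\mathbb{P}(\,\cdot\,)$ via Lemma \ref{L;Lemma3.11}, which expresses every valency as a product over the corresponding $\mathbb{P}$-set. Since $\mathbb{P}(j\setminus i)=\mathbb{P}(j)\setminus\mathbb{P}(i)$ is disjoint from $\mathbb{P}(h\cap i\cap j)\subseteq\mathbb{P}(i)$, one has
\[
k_{j\setminus i}k_{h\cap i\cap j}\;=\;\prod_{\ell\in A}(u_\ell-1),\qquad A:=\bigl(\mathbb{P}(j)\setminus\mathbb{P}(i)\bigr)\cup\bigl(\mathbb{P}(h)\cap\mathbb{P}(i)\cap\mathbb{P}(j)\bigr),
\]
and symmetrically $k_{j\setminus g}k_{g\cap h\cap j}=\prod_{\ell\in B}(u_\ell-1)$ with
$B:=\bigl(\mathbb{P}(j)\setminus\mathbb{P}(g)\bigr)\cup\bigl(\mathbb{P}(g)\cap\mathbb{P}(h)\cap\mathbb{P}(j)\bigr)$. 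Thus the lemma is equivalent to the set equality $A=B$.

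Next I would rephrase membership: both $A$ and $B$ are subsets of $\mathbb{P}(j)$, and for any $\ell\in\mathbb{P}(j)$ one checks directly that
\[
\ell\in A\;\Longleftrightarrow\;\ell\notin\mathbb{P}(i)\setminus\mathbb{P}(h),\qquad \ell\in B\;\Longleftrightarrow\;\ell\notin\mathbb{P}(g)\setminus\mathbb{P}(h).
\]
Hence $A=B$ will follow at once from the identity
\[
\mathbb{P}(g)\setminus\mathbb{P}(h)\;=\;\mathbb{P}(i)\setminus\mathbb{P}(h),
\]
which is in fact independent of $j$ and is the only place where the hypothesis $g\oplus i\leq_2 h$ enters.

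To establish this identity, note that by Notation \ref{N;Notation4.2} the hypothesis $g\oplus i\leq_2 h$ translates to $\bigl(\mathbb{P}(g)\setminus\mathbb{P}(i)\bigr)\cup\bigl(\mathbb{P}(i)\setminus\mathbb{P}(g)\bigr)\subseteq\mathbb{P}(h)$. If $\ell\in\mathbb{P}(g)\setminus\mathbb{P}(h)$, then $\ell\in\mathbb{P}(g)$ and $\ell\notin\mathbb{P}(h)$, so $\ell\notin\mathbb{P}(g)\setminus\mathbb{P}(i)$, forcing $\ell\in\mathbb{P}(i)$ and therefore $\ell\in\mathbb{P}(i)\setminus\mathbb{P}(h)$; the reverse inclusion is symmetric.

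The argument is essentially bookkeeping, so I do not expect any real obstacle; the only mildly delicate point is making sure that the two products combine correctly (i.e., checking the disjointness of the two $\mathbb{P}$-sets whose valencies are being multiplied), which follows immediately from $\mathbb{P}(h\cap i\cap j)\subseteq\mathbb{P}(i)$ and $\mathbb{P}(g\cap h\cap j)\subseteq\mathbb{P}(g)$ respectively.
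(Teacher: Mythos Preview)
Your proof is correct and follows essentially the same approach as the paper: both reduce the valency identity, via Lemma~\ref{L;Lemma3.11} and the disjointness observations $(j\setminus i)\cap(h\cap i\cap j)=0$ and $(j\setminus g)\cap(g\cap h\cap j)=0$, to the set equality $\mathbb{P}(j\setminus i)\cup\mathbb{P}(h\cap i\cap j)=\mathbb{P}(j\setminus g)\cup\mathbb{P}(g\cap h\cap j)$, and then establish that equality using the hypothesis $g\oplus i\leq_2 h$. Your passage through complements in $\mathbb{P}(j)$, isolating the $j$-free identity $\mathbb{P}(g)\setminus\mathbb{P}(h)=\mathbb{P}(i)\setminus\mathbb{P}(h)$, is a slightly cleaner way to organize the same element-chase the paper does directly with the $\leq_2$ ordering, but the substance is identical.
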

\begin{proof}
As $g\oplus i\leq_2 h$, $(g\setminus i)\cup(i\setminus g)\leq_2 h$. Notice that $j\setminus i=(j\setminus(g\cup i))\cup((g\cap j)\setminus i)$ and $h\cap i\cap j\leq_2\!(j\setminus g)\cup (g\cap h\cap j)$. According to the fact $g\setminus i\leq_2 h$, notice that $j\setminus i=(j\setminus(g\cup i))\cup((g\cap j)\setminus i)\leq_2(j\setminus(g\cup i))\!\cup\!((g\cap h\cap j)\setminus i)\leq_2 (j\setminus g)\cup (g\cap h\cap j)$. Hence $(j\setminus i)\cup(h\cap i\cap j)\leq_2 (j\setminus g)\cup(g\cap h\cap j)$. By exchanging the roles of $g$ and $i$ in the fact $(j\setminus i)\cup(h\cap i\cap j)\leq_2 (j\setminus g)\cup(g\cap h\cap j)$, it is obvious to notice that $(j\setminus g)\cup(g\cap h\cap j)\leq_2 (j\setminus i)\cup(h\cap i\cap j)$.  Therefore Lemma \ref{L;Lemma3.1} implies that $(j\setminus i)\cup(h\cap i\cap j)=(j\setminus g)\cup(g\cap h\cap j)$. Notice that $(j\setminus i)\cap h\cap i\cap j=0$ and $(j\setminus g)\cap g\cap h\cap j=0$. The desired lemma thus follows from Lemma \ref{L;Lemma3.11}.
\end{proof}
\begin{lem}\label{L;Lemma5.2}
Assume that $g, h, i, j\!\!\in\!\! [0,d]$. Then $m(g, h, i, i\cap j, i)=m(g, g\cap j, g, h, i)$.
\end{lem}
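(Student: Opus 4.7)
The plan is to unfold the definition of $m$ from Notation \ref{N;Notation4.8} on both sides and observe that every piece collapses under the basic containment $\widetilde{g\cap i}\leq_2 g\cap i$, which is built into Notation \ref{N;Notation3.3}. Writing $m(a_1,a_2,a_3,a_4,a_5)=(a_1\oplus a_5)\cup((\widetilde{a_1\cap a_5})\setminus a_3)\cup((a_2\cup a_4)\cap(\widetilde{a_1\cap a_5})\cap a_3)$, I would specialize once to $(g,h,i,i\cap j,i)$ and once to $(g,g\cap j,g,h,i)$. In both cases the first summand is $g\oplus i$ and the ``$\widetilde{\ }$''-factor is $\widetilde{g\cap i}$, so the outer skeleton of the two expressions already matches.

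Next I would use $\widetilde{g\cap i}\leq_2 g\cap i$ to kill the middle summands: on the left $(\widetilde{g\cap i})\setminus i=0$ because $\widetilde{g\cap i}\leq_2 i$, while on the right $(\widetilde{g\cap i})\setminus g=0$ because $\widetilde{g\cap i}\leq_2 g$. The same containments give $(\widetilde{g\cap i})\cap i=\widetilde{g\cap i}=(\widetilde{g\cap i})\cap g$, so both sides reduce to
\[
(g\oplus i)\cup\bigl((h\cup(i\cap j))\cap\widetilde{g\cap i}\bigr)\quad\text{and}\quad(g\oplus i)\cup\bigl(((g\cap j)\cup h)\cap\widetilde{g\cap i}\bigr)
\]
respectively.

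Finally, to match the two remaining intersections, I would distribute $\cap$ over $\cup$ (using Notation \ref{N;Notation4.2}, whose rules on $[0,d]$ mimic those on the power set of $[1,n]$ by Lemma \ref{L;Lemma4.1}). On the left this yields $(h\cap\widetilde{g\cap i})\cup((i\cap j)\cap\widetilde{g\cap i})=(h\cap\widetilde{g\cap i})\cup(j\cap\widetilde{g\cap i})$, where the last equality uses $\widetilde{g\cap i}\leq_2 i$. On the right the analogous distribution, together with $\widetilde{g\cap i}\leq_2 g$, gives $(j\cap\widetilde{g\cap i})\cup(h\cap\widetilde{g\cap i})$, which is identical. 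Appealing to Lemma \ref{L;Lemma3.1} to conclude equality of the resulting elements of $[0,d]$ then finishes the proof.

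There is no real obstacle here; the only place where one has to be careful is to keep straight which of the five formal slots of $m$ each concrete argument occupies, so that the reductions $(\widetilde{g\cap i})\setminus i=0$ (left) and $(\widetilde{g\cap i})\setminus g=0$ (right) are applied to the correct summands. Once that bookkeeping is in place the identity is a short routine manipulation with $\setminus,\cup,\cap$ on $[0,d]$.
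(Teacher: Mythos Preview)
Your proof is correct and follows exactly the same approach as the paper: both compute each side directly from Notation \ref{N;Notation4.8} and simplify using $\widetilde{g\cap i}\leq_2 g\cap i$ to obtain the common value $(g\oplus i)\cup((\widetilde{g\cap i})\cap h)\cup((\widetilde{g\cap i})\cap j)$. The paper's proof simply states this result ``by a direct computation'', whereas you have spelled out the steps.
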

\begin{proof}
As $m(g, h, i, i\cap j, i)=m(g, g\cap j, g, h, i)=(g\oplus i)\cup((\widetilde{g\cap i})\cap h)\cup((\widetilde{g\cap i})\cap j)$ by a direct computation, the desired lemma thus follows from the above computation.
\end{proof}
\begin{lem}\label{L;Lemma5.3}
Assume that $g, h ,i, j\!\in\![0,d]$ and $j\!\leq_2\!\widetilde{d}$. Then $B_{g, g\cap j, g}$ and $B_{i, i\cap j, i}$ are defined. Moreover, if $p_{gh}^i\!\neq\! 0$, then $$\overline{k_{j\setminus i}}B_{g,h,i}B_{i, i\cap j, i}=\overline{k_{j\setminus g}}B_{g, g\cap j,g}B_{g, h, i}.$$
\end{lem}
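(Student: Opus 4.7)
The plan is to verify that both $B$'s on each side are defined and then reduce the identity to a matter of matching scalar coefficients after applying the product formula from Lemma \ref{L;Lemma4.20}.

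First I would check that $B_{g, g\cap j, g}$ and $B_{i, i\cap j, i}$ are well-defined, i.e., $p_{g, g\cap j}^g\neq 0$ and $p_{i, i\cap j}^i\neq 0$. By Lemma \ref{L;Lemma4.4} it is enough to show $g\oplus g\leq_2 g\leq_2 g\odot (g\cap j)$, and similarly for $i$. The left bound is trivial since $g\oplus g=0$. For the right bound, note that $g\odot(g\cap j)=(g\setminus j)\cup\widetilde{g\cap j}$ by Notation \ref{N;Notation4.2}, so the required containment amounts to $g\cap j\leq_2\widetilde{g\cap j}$, i.e. $\mathbb{P}(g\cap j)\subseteq\mathbb{P}_2(g\cap j)$. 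The hypothesis $j\leq_2\widetilde{d}$ means $\mathbb{P}(j)\subseteq \mathbb{P}_2(d)=\{a:u_a>2\}$, hence $\mathbb{P}(g\cap j)\subseteq\{a:u_a>2\}=\mathbb{P}_2(g\cap j)\cup (\mathbb{P}_2(g\cap j))$, which forces $\mathbb{P}(g\cap j)=\mathbb{P}_2(g\cap j)$, as required. The same argument works for $i$.

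Next I would apply Lemma \ref{L;Lemma4.20} to each side. Since $p_{gh}^i\neq 0$ and $p_{i, i\cap j}^i\neq 0$, Lemma \ref{L;Lemma4.20} gives
\begin{equation*}
B_{g,h,i}B_{i, i\cap j, i}=\overline{k_{h\cap i\cap j}}\, B_{g, m(g, h, i, i\cap j, i), i}.
\end{equation*}
Symmetrically, since $p_{g, g\cap j}^g\neq 0$ and $p_{gh}^i\neq 0$,
\begin{equation*}
B_{g, g\cap j, g}B_{g, h, i}=\overline{k_{g\cap h\cap j}}\, B_{g, m(g, g\cap j, g, h, i), i}.
\end{equation*}
By Lemma \ref{L;Lemma5.2} the two matrices $B_{g, m(g, h, i, i\cap j, i), i}$ and $B_{g, m(g, g\cap j, g, h, i), i}$ coincide, so it suffices to verify the scalar identity $\overline{k_{j\setminus i}}\,\overline{k_{h\cap i\cap j}}=\overline{k_{j\setminus g}}\,\overline{k_{g\cap h\cap j}}$ in $\F$.

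Finally, I would derive this scalar identity from Lemma \ref{L;Lemma5.1}. The hypothesis $p_{gh}^i\neq 0$ combined with Lemma \ref{L;Lemma4.3} gives $g\oplus i\leq_2 h$, which is exactly the hypothesis of Lemma \ref{L;Lemma5.1}; that lemma then yields $k_{j\setminus i}k_{h\cap i\cap j}=k_{j\setminus g}k_{g\cap h\cap j}$ as integers, and reducing modulo $p$ gives the required equality in $\F$. No step looks like a serious obstacle; the only subtle point is verifying the well-definedness of the $B$'s, where the hypothesis $j\leq_2\widetilde{d}$ is essential.
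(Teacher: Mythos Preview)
Your proof is correct and follows essentially the same route as the paper's: verify well-definedness via $j\leq_2\widetilde{d}$, apply Lemma~\ref{L;Lemma4.20} on each side, match the $B$-terms via Lemma~\ref{L;Lemma5.2}, and match the scalars via Lemma~\ref{L;Lemma5.1}. Two small slips to fix: the left inequality for well-definedness should read $g\oplus(g\cap j)\leq_2 g$ (which is $g\setminus j\leq_2 g$, still trivial) rather than $g\oplus g\leq_2 g$; and deducing $g\oplus i\leq_2 h$ from $p_{gh}^i\neq 0$ requires Lemma~\ref{L;Lemma2.1} together with Lemma~\ref{L;Lemma4.3}, not Lemma~\ref{L;Lemma4.3} alone.
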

\begin{proof}
Pick $k\in [0,d]$. As $j\!\leq_2\!\widetilde{d}$, notice that $j\cap k\leq_2\widetilde{k}$. Notice that the combination of Lemmas \ref{L;Lemma2.1}, \ref{L;Lemma4.3}, and \ref{L;Lemma4.4} implies that $p_{k\ell}^k\neq 0$ if and only if $\ell\leq_2\widetilde{k}$. In particular, notice that $p_{k(j\cap k)}^k\neq 0$. As $k$ is chosen from $[0,d]$ arbitrarily, notice that $p_{g(g\cap j)}^g\neq0$ and $p_{i(i\cap j)}^i\!\neq\! 0$. So $B_{g, g\cap j, g}$ and $B_{i, i\cap j, i}$ are defined. The first statement thus follows. As $p_{gh}^i\neq 0$, Lemmas \ref{L;Lemma2.1} and \ref{L;Lemma4.3} show that $g\oplus i\leq_2 h$. As the first statement holds,
\begin{align*}
\overline{k_{j\setminus i}}B_{g,h,i}B_{i, i\cap j, i}=&\overline{k_{j\setminus i}}\overline{k_{h\cap i\cap j}}B_{g,m(g, h,i,i\cap j, i), i}\\
=&\overline{k_{j\setminus g}k_{g\cap h\cap j}}B_{g,m(g, g\cap j, g, h, i), i}=\overline{k_{j\setminus g}}B_{g, g\cap j,g}B_{g, h, i}
\end{align*}
by combining Lemmas \ref{L;Lemma4.20}, \ref{L;Lemma5.1}, and \ref{L;Lemma5.2}. The desired lemma thus follows.
\end{proof}
Lemma \ref{L;Lemma5.3} motivates us to introduce the following notation and another lemma.
\begin{nota}\label{N;Notation5.4}
\em Assume that $g, h\in [0,d]$ and $g\cup h\leq_2\widetilde{d}$. Then $\sum_{i=0}^d \overline{k_{g\setminus i}}B_{i, g\cap i, i}$ is defined by Lemma \ref{L;Lemma5.3}. Denote this sum by $C_g$. As $g\setminus d=0$, notice that $C_g\neq O$ by Theorem \ref{T;Theorem4.22}. \eqref{Eq;2} and Theorem \ref{T;Theorem4.22} imply that $C_g=C_h$ if and only if $g=h$.
\end{nota}
\begin{lem}\label{L;Lemma5.5}
$\mathrm{Z}(\mathbb{T})$ has an $\F$-linearly independent subset $\{C_a: a\leq_2\widetilde{d}\}$.
\end{lem}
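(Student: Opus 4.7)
The plan is to verify first that each $C_a$ with $a\leq_2\widetilde{d}$ lies in $\mathrm{Z}(\mathbb{T})$, then to deduce $\F$-linear independence from the $\F$-basis of $\mathbb{T}$ in Theorem \ref{T;Theorem4.22}.

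For centrality, since $\{B_{g,h,i}:p_{gh}^i\neq 0\}$ spans $\mathbb{T}$, it suffices to show $C_aB_{g,h,i}=B_{g,h,i}C_a$ for every such basis vector. I will compute both products directly via Lemma \ref{L;Lemma4.20}. On one hand, the product $B_{j,a\cap j,j}B_{g,h,i}$ is nonzero only when $j=g$, giving
$$C_aB_{g,h,i}=\overline{k_{a\setminus g}k_{a\cap g\cap h}}B_{g,m(g,a\cap g,g,h,i),i}.$$
Symmetrically, $B_{g,h,i}B_{j,a\cap j,j}$ is nonzero only when $j=i$, giving
$$B_{g,h,i}C_a=\overline{k_{a\setminus i}k_{a\cap i\cap h}}B_{g,m(g,h,i,a\cap i,i),i}.$$
By Lemma \ref{L;Lemma5.2} (applied with the roles of the arguments specialized to the present tuple) the two $B$-matrices on the right-hand sides coincide. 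For the scalars, since $p_{gh}^i\neq 0$ yields $g\oplus i\leq_2 h$ by Lemmas \ref{L;Lemma2.1} and \ref{L;Lemma4.3}, Lemma \ref{L;Lemma5.1} applied with $j=a$ gives $k_{a\setminus i}k_{h\cap i\cap a}=k_{a\setminus g}k_{g\cap h\cap a}$, so the two scalars agree modulo $p$. Hence $C_a\in \mathrm{Z}(\mathbb{T})$.

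For linear independence, suppose $\sum_{a\leq_2\widetilde{d}}c_aC_a=O$ with $c_a\in\F$. Expanding via Notation \ref{N;Notation5.4} gives
$$\sum_{a\leq_2\widetilde{d}}\sum_{i=0}^dc_a\overline{k_{a\setminus i}}B_{i,a\cap i,i}=O.$$
I will isolate the contribution from $i=d$: here $a\setminus d=0$, so $\overline{k_{a\setminus d}}=\overline{1}$, and $a\cap d=a$, so this summand contributes $\sum_{a\leq_2\widetilde{d}}c_aB_{d,a,d}$. The basis elements $B_{d,a,d}$ with $a\leq_2\widetilde{d}$ are pairwise distinct members of the $\F$-basis from Theorem \ref{T;Theorem4.22}, and they are not cancelled by any summand with $i\neq d$ (since $E_d^*B_{i,a\cap i,i}E_d^*=O$ whenever $i\neq d$, by \eqref{Eq;2}). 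Therefore multiplying the relation on both sides by $E_d^*$ forces $\sum_{a\leq_2\widetilde{d}}c_aB_{d,a,d}=O$, and Theorem \ref{T;Theorem4.22} yields $c_a=\overline{0}$ for every $a\leq_2\widetilde{d}$.

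The main technical obstacle is the commutativity computation: one must recognize that Lemma \ref{L;Lemma5.2} exactly bridges the two applications of the $m$-operation that arise from multiplying $C_a$ with $B_{g,h,i}$ on either side, and that Lemma \ref{L;Lemma5.1} supplies the matching scalar identity. Everything else is bookkeeping: applying Lemma \ref{L;Lemma4.20} carefully to the pairs $(B_{j,a\cap j,j},B_{g,h,i})$ and $(B_{g,h,i},B_{j,a\cap j,j})$, and extracting the $i=d$ component for independence.
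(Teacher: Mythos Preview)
Your proof is correct and follows essentially the same approach as the paper. The only cosmetic difference is that for centrality the paper packages your computation with Lemmas \ref{L;Lemma4.20}, \ref{L;Lemma5.1}, and \ref{L;Lemma5.2} into the single identity $\overline{k_{a\setminus i}}B_{g,h,i}B_{i,a\cap i,i}=\overline{k_{a\setminus g}}B_{g,a\cap g,g}B_{g,h,i}$ (Lemma \ref{L;Lemma5.3}) and then cites it, whereas you carry out that calculation inline; the linear independence argument via $E_d^*$ and Theorem \ref{T;Theorem4.22} is identical.
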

\begin{proof}
Set $\mathbb{U}\!=\!\{C_a: a\leq_2\widetilde{d}\}$. Let $g, h, i\in [0,d]$, $p_{gh}^i\!\neq\! 0$, and $M\in \mathbb{U}$. Notice that $B_{g, h, i}M=MB_{g, h, i}$ by \eqref{Eq;2} and Lemma \ref{L;Lemma5.3}. As $M$ is chosen from $\mathbb{U}$ arbitrarily and Theorem \ref{T;Theorem4.22} holds, observe that $\mathbb{U}\subseteq \mathrm{Z}(\mathbb{T})$. Let  $\sum_{N\in \mathbb{U}} c_NN=O$ and $c_N\in\F$ for any $N\in \mathbb{U}$. Notice that $\sum_{N\in \mathbb{U}} c_NE_d^*NE_d^*=O$ by \eqref{Eq;2}. Hence Notation \ref{N;Notation5.4} and Theorem \ref{T;Theorem4.22} imply that $c_N\!=\!\overline{0}$ for any $N\in\mathbb{U}$. The desired lemma thus follows.
\end{proof}
The main theorem of this section is proved by the following notation and lemmas.
\begin{nota}\label{N;Notation5.6}
\em Define $\mathbb{B}=\{B_{a, b, c}:a\oplus b\leq_2 c\leq_2 a\odot b\}$. Hence $\mathbb{T}$ has an $\F$-basis $\mathbb{B}$ by Theorem \ref{T;Theorem4.22}. Observe that $\{B_{a, b, a}: b\leq_2\widetilde{a}\}=\{B_{a, b, a}: p_{ab}^a\neq 0\}\subseteq\mathbb{B}$ by combining Lemmas \ref{L;Lemma4.4}, \ref{L;Lemma2.1}, \ref{L;Lemma4.3}. Assume that $M\in \mathbb{T}$. Then $M$ must be a unique $\F$-linear combination of the elements in $\mathbb{B}$. For any $g, h, i\in [0,d]$ and $p_{gh}^i\neq 0$, let $c_{g, h, i}(M)$ be the coefficient of $B_{g, h, i}$ in this $\F$-linear combination of $M$. Let $\mathrm{Supp}_\mathbb{B}(M)$ denote $\{B_{a,b, c}: B_{a,b,c}\in \mathbb{B},\ c_{a,b,c}(M)\neq \overline{0}\}$. Notice that $M=O$ if and only if $\mathrm{Supp}_\mathbb{B}(M)=\varnothing$.
\end{nota}
\begin{lem}\label{L;Lemma5.7}
Assume that $M\in\mathrm{Z}(\mathbb{T})$. Then $\mathrm{Supp}_\mathbb{B}(M)\subseteq\{B_{a, b, a}:b\leq_2\widetilde{a}\}$.
\end{lem}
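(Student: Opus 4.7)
The plan is to exploit the fact that a central element $M$ must commute with every dual idempotent $E_g^*$, and then read off strong constraints on $\mathrm{Supp}_\mathbb{B}(M)$ by expanding $M$ in the basis $\mathbb{B}$ of Theorem \ref{T;Theorem4.22}.

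First I would observe the key multiplication rules for the basis elements: for any $g \in [0,d]$ and any $B_{r,s,t} \in \mathbb{B}$, the definition in Notation \ref{N;Notation4.19} together with \eqref{Eq;2} gives
$$E_g^* B_{r,s,t} = \delta_{gr}\,B_{r,s,t}\qquad\text{and}\qquad B_{r,s,t}\,E_g^* = \delta_{gt}\,B_{r,s,t}.$$
These identities are immediate from $E_g^* E_r^* = \delta_{gr} E_r^*$ applied to each summand $E_r^* A_j E_t^*$ of $B_{r,s,t}$.

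Next I would write $M = \sum_{B_{r,s,t} \in \mathbb{B}} c_{r,s,t}(M)\, B_{r,s,t}$ in accordance with Notation \ref{N;Notation5.6}, and compute $E_g^* M - M E_g^*$ using the identities above:
$$E_g^* M - M E_g^* = \sum_{(s,t):\, B_{g,s,t}\in\mathbb{B}} c_{g,s,t}(M)\,B_{g,s,t} \;-\; \sum_{(r,s):\, B_{r,s,g}\in\mathbb{B}} c_{r,s,g}(M)\,B_{r,s,g}.$$
Since $M \in \mathrm{Z}(\mathbb{T})$ this difference is zero, and Theorem \ref{T;Theorem4.22} supplies the linear independence of $\mathbb{B}$. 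Comparing the coefficient of an individual basis element $B_{g,s,t}$ with $t \neq g$ (so that it occurs only in the first sum) then forces $c_{g,s,t}(M) = \overline{0}$. Letting $g$ range over $[0,d]$ shows $c_{r,s,t}(M) \neq \overline{0}$ implies $r = t$.

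Finally, if $B_{r,s,r} \in \mathbb{B}$, the defining inequality $r \oplus r \leq_2 s \leq_2 r \odot r$ collapses, via Notation \ref{N;Notation4.2}, to $0 \leq_2 s \leq_2 \widetilde{r}$, yielding exactly $s \leq_2 \widetilde{r}$. This places every element of $\mathrm{Supp}_\mathbb{B}(M)$ in $\{B_{a,b,a} : b \leq_2 \widetilde{a}\}$. The only substantive step is the bookkeeping in the comparison of coefficients; the main potential pitfall is making sure the two sums are indexed over the correct subsets of $\mathbb{B}$ so that the comparison is legitimate, but this is handled cleanly by the orthogonality relations for the $E_g^*$.
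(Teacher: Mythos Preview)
Your proof is correct and follows essentially the same idea as the paper: both arguments exploit commutativity of $M$ with the dual idempotents $E_g^*$ together with the orthogonality \eqref{Eq;2}. The paper's version is slightly more compressed, observing directly that if some $B_{g,s,h}$ with $g\neq h$ survived in $\mathrm{Supp}_\mathbb{B}(M)$ then $O\neq E_g^*ME_h^*=ME_g^*E_h^*=O$, whereas you obtain the same conclusion by expanding $E_g^*M-ME_g^*$ in the basis $\mathbb{B}$ and reading off coefficients; there is no substantive difference.
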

\begin{proof}
Notice that $\{B_{a, b, a}:b\leq_2\widetilde{a}\}$ is a defined nonempty subset of $\mathbb{B}$. As $M\in \mathrm{Z}(\mathbb{T})$, Theorem \ref{T;Theorem4.22} thus implies that $M$ is an $\F$-linear combination of the elements in $\mathbb{B}$. Assume that $\mathrm{Supp}_\mathbb{B}(M)\not\subseteq\{B_{a, b, a}: b\leq_2\widetilde{a}\}$. By \eqref{Eq;3} and \eqref{Eq;2}, there are distinct $g, h\in [0,d]$ such that $E_g^*ME_h^*\!\neq\! O$. As $E_g^*, E_h^*\!\in\!\mathbb{T}$ and $M\!\in\! \mathrm{Z}(\mathbb{T})$, \eqref{Eq;2} implies that $O\!\neq\! E_g^*ME_h^*\!=\!ME_g^*E_h^*\!=\!O$. This is absurd. The desired lemma thus follows.
\end{proof}
\begin{lem}\label{L;Lemma5.8}
Assume that $g\in [0,d]$ and $g\leq_2 \widetilde{d}$. If $M\in \mathrm{Z}(\mathbb{T})$, $c_{d, g, d}(M)=\overline{1}$, and $\mathrm{Supp}_\mathbb{B}(E_d^*M)=\{B_{d, g, d}\}$, then $M=C_g$.
\end{lem}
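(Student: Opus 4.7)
The plan is to use the centrality of $M$ with a carefully chosen family of test elements that ``sees'' the $E_d^*$-block of $M$ to pin down every coefficient. By Lemma \ref{L;Lemma5.7} and Notation \ref{N;Notation5.6}, I would first write
\[ M = \sum_{h\in [0,d]}\sum_{b\leq_2\widetilde{h}} c_{h,b,h}(M)\,B_{h,b,h}, \]
and observe that the hypothesis forces $E_d^* M = B_{d,g,d}$. The goal is to show, for every $h\in[0,d]$, that $c_{h,b,h}(M)=\overline{k_{g\setminus h}}$ when $b=g\cap h$ and $c_{h,b,h}(M)=\overline{0}$ otherwise; this will match $C_g$ by Notation \ref{N;Notation5.4}.

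The key trick is to test centrality against $T=B_{h,h\oplus d,d}=E_h^*A_{h\oplus d}E_d^*\in \mathbb{T}$ for each fixed $h$. Using $E_d^* M=B_{d,g,d}$, I would compute
\[ TM = B_{h,h\oplus d,d}\cdot B_{d,g,d} \]
by Lemma \ref{L;Lemma4.20}. A direct computation with Notation \ref{N;Notation4.8} (using $h\cap d=h$, $\widetilde{h}\leq_2 h$, and $(h\oplus d)\cap \widetilde{h}=0$) gives $m(h,h\oplus d,d,g,d)=(h\oplus d)\cup(g\cap \widetilde{h})$ and coefficient $\overline{k_{(h\oplus d)\cap g}}=\overline{k_{g\setminus h}}$. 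For the other side,
\[ MT = \sum_{b\leq_2\widetilde{h}} c_{h,b,h}(M)\,B_{h,b,h}\cdot B_{h,h\oplus d,d}, \]
and another application of Lemma \ref{L;Lemma4.20} yields $B_{h,b,h}B_{h,h\oplus d,d}=B_{h,(h\oplus d)\cup b,d}$, since $b\cap h\cap (h\oplus d)=0$ and the $m$-computation collapses to $(h\oplus d)\cup b$.

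Equating $TM=MT$ and using Theorem \ref{T;Theorem4.22} (the elements $B_{h,(h\oplus d)\cup b,d}$ for distinct $b\leq_2\widetilde{h}$ are distinct basis vectors, since $b$ is recovered as the intersection with $\widetilde{h}$), I would read off that $c_{h,b,h}(M)=\overline{0}$ unless $b=g\cap\widetilde{h}$, in which case $c_{h,b,h}(M)=\overline{k_{g\setminus h}}$. The hypothesis $g\leq_2\widetilde{d}$ then forces $g\cap\widetilde{h}=g\cap h$ (bits of $g$ already lie where $u_i>2$). Summing over $h$ gives $M=\sum_{h} \overline{k_{g\setminus h}} B_{h,g\cap h,h}=C_g$.

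The main obstacle is bookkeeping the Notation \ref{N;Notation4.8} formula for $m$ in the two key products; everything else is formal manipulation. The conceptual step worth highlighting is the choice of the test element $T=B_{h,h\oplus d,d}$, which simultaneously activates the known block $E_d^*M=B_{d,g,d}$ and probes the $E_h^*$-block of $M$, packaging all of the information into an identity between linearly independent basis elements of $\mathbb{T}$.
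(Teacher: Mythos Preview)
Your proof is correct and essentially identical to the paper's own argument: the paper tests centrality against $B_{h,d\oplus h,d}$ (your $T$), computes both $B_{h,d\oplus h,d}M$ and $MB_{h,d\oplus h,d}$ via Lemma \ref{L;Lemma4.20}, and compares coefficients in the basis of Theorem \ref{T;Theorem4.22} to force $c_{h,i,h}(M)=\overline{k_{g\setminus h}}\delta_{i,g\cap h}$. The only cosmetic difference is that the paper records $m(h,d\oplus h,d,g,d)=(d\oplus h)\cup(g\cap h)$ directly (using $g\leq_2\widetilde{d}$ up front), whereas you first obtain $(h\oplus d)\cup(g\cap\widetilde{h})$ and then invoke $g\cap\widetilde{h}=g\cap h$ at the end.
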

\begin{proof}
As $g\leq_2 \widetilde{d}\leq_2 d$, $B_{d, g, d}$ and $c_{d, g, d}(M)$ are defined by Lemma \ref{L;Lemma5.3} and Notation \ref{N;Notation5.6}. Pick $h\!\in\! [0,d]$. Notice that $(d\oplus h)\cap d\cap g\!=\!((d\setminus h)\cup (h\setminus d))\cap d\cap g\!=\!g\setminus h$ and $m(h, d\oplus h, d, g, d)\!=\!(d\oplus h)\cup(g\cap h)$ by a direct computation. By Lemmas \ref{L;Lemma2.1} and \ref{L;Lemma4.3}, notice that $B_{h, d\oplus h, d}$ is defined. As $c_{d, g, d}(M)\!=\!\overline{1}$ and $\mathrm{Supp}_\mathbb{B}(E_d^*M)\!\!=\!\!\{B_{d, g, d}\}$, the combination of \eqref{Eq;2}, Lemmas \ref{L;Lemma5.7}, and \ref{L;Lemma4.20} implies that
\begin{align}\label{Eq;5.1}
B_{h, d\oplus h, d}M=c_{d, g, d}(M)B_{h, d\oplus h, d}B_{d, g,d}=\overline{k_{g\setminus h}}B_{h,(d\oplus h)\cup(g\cap h), d}.
\end{align}

For any $i\in [0,d]$ and $i\leq_2 \widetilde{h}$, notice that $i\cap h\cap (d\oplus h)=i\cap h\cap((d\setminus h)\cup(h\setminus d))=0$ and $m(h, i, h, d\oplus h, d)\!\!=\!\!(d\oplus h)\!\cup\! i$ by a direct computation. As $h\in [0,d]$, notice that $i\!\leq_2\! d\cap h$ for any $i\!\in\! [0,d]$ and $i\!\leq_2\! \widetilde{h}$. By combining \eqref{Eq;2}, Lemmas \ref{L;Lemma5.7}, and \ref{L;Lemma4.20},
\begin{align}\label{Eq;5.2}
MB_{h, d\oplus h, d}=\sum_{i\leq_2 \widetilde{h}}c_{h,i,h}(M)B_{h,i,h}B_{h, d\oplus h, d}=\sum_{i\leq_2 \widetilde{h}}c_{h,i,h}(M)B_{h,(d\oplus h)\cup i ,d}.
\end{align}

For any $i\in [0,d]$ and $i\leq_2 \widetilde{h}$, notice that $g\cap h\leq_2 d\cap h$ and $i\leq_2 d\cap h$. Therefore $(d\oplus h)\cap(g\cap h)=(d\oplus h)\cap i=0$ for any $i\in [0,d]$ and $i\leq_2 \widetilde{h}$. For any $i\in [0,d]$ and $i\leq_2 \widetilde{h}$, Lemma \ref{L;Lemma3.1} implies that $B_{h,(d\oplus h)\cup(g\cap h), d}=B_{h,(d\oplus h)\cup i ,d}$ if and only if $i=g\cap h$. As $M\in\mathrm{Z}(\mathbb{T})$ and $B_{h, d\oplus h, d}\in \mathbb{T}$ by Theorem \ref{T;Theorem4.22}, \eqref{Eq;5.1} and \eqref{Eq;5.2} imply that $c_{h, g\cap h, h}(M)=\overline{k_{g\setminus h}}$ and $c_{h, i, h}(M)=\overline{0}$ for any $i\in [0,d]$, $i\leq_2 \widetilde{h}$, and $i\neq g\cap h$. As $h$ is chosen from $[0,d]$ arbitrarily, the desired lemma thus follows.
\end{proof}
\begin{thm}\label{T;Center}
$\mathrm{Z}(\mathbb{T})$ has an $\F$-basis $\{C_a: a\leq_2\widetilde{d}\}$.
\end{thm}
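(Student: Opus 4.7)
The plan is to combine Lemma \ref{L;Lemma5.5} (linear independence of $\{C_a: a\leq_2 \widetilde{d}\}$ inside $\mathrm{Z}(\mathbb{T})$) with a spanning argument that reduces an arbitrary central element to zero after subtracting the appropriate combination of the $C_a$. The key input is Lemma \ref{L;Lemma5.7}, which forces every element of $\mathrm{Z}(\mathbb{T})$ to lie in the span of $\{B_{a,b,a}: b\leq_2 \widetilde{a}\}$.

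Given $M\in\mathrm{Z}(\mathbb{T})$, I would first look at $E_d^*ME_d^*$. By Lemma \ref{L;Lemma5.7} and \eqref{Eq;2}, this is a linear combination $\sum_{g\leq_2\widetilde{d}}c_{d,g,d}(M)B_{d,g,d}$. Using Notation \ref{N;Notation5.4} (so that $c_{d,g,d}(C_g)=\overline{k_{g\setminus d}}=\overline{1}$ while $c_{d,h,d}(C_g)=\overline{0}$ for $h\neq g$), I would set
\[
M'=M-\sum_{g\leq_2\widetilde{d}}c_{d,g,d}(M)\,C_g\in\mathrm{Z}(\mathbb{T}).
\]
By construction $c_{d,g,d}(M')=\overline{0}$ for all $g\leq_2\widetilde{d}$, so $E_d^*M'E_d^*=O$ by Lemma \ref{L;Lemma5.7}. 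Because $M'$ commutes with each $E_h^*$, one has $E_d^*M'E_h^*=E_d^*E_h^*M'=\delta_{dh}E_d^*M'$, and hence $E_d^*M'=O$. The remaining task is to show $M'=O$, after which $M=\sum_{g\leq_2\widetilde{d}}c_{d,g,d}(M)C_g$, completing the spanning half.

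For the vanishing of $M'$, I would fix an arbitrary $h\in[0,d]$ and exploit centrality against $B_{h,d\oplus h,d}$. Since $B_{h,d\oplus h,d}=E_h^*A_{d\oplus h}E_d^*$ ends in $E_d^*$, one gets $B_{h,d\oplus h,d}M'=E_h^*A_{d\oplus h}(E_d^*M')=O$, and centrality then gives $M'B_{h,d\oplus h,d}=O$. Writing $M'=\sum_{a,b:\,b\leq_2\widetilde{a}}\beta_{a,b}B_{a,b,a}$ by Lemma \ref{L;Lemma5.7} and using \eqref{Eq;2}, only the $a=h$ terms contribute, so
\[
O=M'B_{h,d\oplus h,d}=\sum_{b\leq_2\widetilde{h}}\beta_{h,b}\,B_{h,b,h}B_{h,d\oplus h,d}.
\]
Applying Lemma \ref{L;Lemma4.20}, the cross-intersection $b\cap h\cap(d\oplus h)$ is $0$ (since $b\leq_2\widetilde{h}\leq_2 h$ and $d\oplus h=d\setminus h$), so the coefficient $\overline{k_{b\cap h\cap(d\oplus h)}}=\overline{1}$, and a direct computation of $m(h,b,h,d\oplus h,d)$ yields $(d\oplus h)\cup b$. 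Thus
\[
O=\sum_{b\leq_2\widetilde{h}}\beta_{h,b}\,B_{h,(d\oplus h)\cup b,\,d}.
\]

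The conclusion then follows because the elements $\{B_{h,(d\oplus h)\cup b,d}: b\leq_2\widetilde{h}\}$ belong to the $\F$-basis $\mathbb{B}$ of Theorem \ref{T;Theorem4.22} (one checks $h\oplus d\leq_2 (d\oplus h)\cup b\leq_2 h\odot d=(d\oplus h)\cup\widetilde{h}$) and are pairwise distinct, since $b$ is recoverable from $(d\oplus h)\cup b$ as $b=((d\oplus h)\cup b)\cap\widetilde{h}$. Linear independence forces every $\beta_{h,b}=\overline{0}$, and since $h\in[0,d]$ was arbitrary, $M'=O$. I expect the main obstacle to be the bookkeeping in the $m(h,b,h,d\oplus h,d)$ computation and the verification that the resulting $B_{h,(d\oplus h)\cup b,d}$ really do lie in $\mathbb{B}$; once those are in hand, Lemma \ref{L;Lemma4.20} converts centrality of $M'$ into the desired independence statement.
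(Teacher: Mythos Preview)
Your argument is correct and is essentially the same approach as the paper's: the paper packages your vanishing step as Lemma~\ref{L;Lemma5.8} (reducing to a single $B_{d,g,d}$ and then multiplying by $B_{h,d\oplus h,d}$ on both sides), whereas you subtract all the way to $E_d^*M'=O$ and carry out the identical $B_{h,d\oplus h,d}$ computation inline. The bookkeeping you flag---$m(h,b,h,d\oplus h,d)=(d\oplus h)\cup b$ and the membership/distinctness of the resulting $B_{h,(d\oplus h)\cup b,d}$ in $\mathbb{B}$---is exactly what appears in the proof of Lemma~\ref{L;Lemma5.8}, so there is no gap.
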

\begin{proof}
For any $M\!\in\!\mathrm{Z}(\mathbb{T})$, Notation \ref{N;Notation5.4} and Lemma \ref{L;Lemma5.5} imply that there exists an $\F$-linear combination $N$ of the elements in $\{C_a: a\leq_2\widetilde{d}\}$ such that $M\!+\!N\!\in\!\mathrm{Z}(\mathbb{T})$, $c_{d, g, d}(M+N)=\overline{1}$, and $\mathrm{Supp}_\mathbb{B}(E_d^*(M+N))=\{B_{d,g,d}\}$ for some $g\in[0,d]$ and $g\leq_2 \widetilde{d}$. The desired theorem thus follows from Lemmas \ref{L;Lemma5.8} and \ref{L;Lemma5.5}.
\end{proof}
Theorem \ref{T;Center} motivates us to introduce two corollaries that may be interesting.
\begin{cor}\label{C;Corollary5.10}
The $\F$\!-\!dimension of $\mathrm{Z}(\mathbb{T})$ equals $2^{n_2}$. In particular, the $\F$-dimension of $\mathrm{Z}(\mathbb{T})$ is independent of the choice of $\F$.
\end{cor}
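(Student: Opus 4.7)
The plan is to deduce the corollary directly from Theorem \ref{T;Center}, which furnishes the explicit $\F$-basis $\{C_a : a \leq_2 \widetilde{d}\}$ of $\mathrm{Z}(\mathbb{T})$. By Notation \ref{N;Notation5.4}, distinct indices yield distinct basis elements, so the $\F$-dimension of $\mathrm{Z}(\mathbb{T})$ equals the cardinality of $\{a \in [0,d] : a \leq_2 \widetilde{d}\}$. Thus the entire argument reduces to computing this cardinality.

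First I would identify $\mathbb{P}(\widetilde{d})$ explicitly. Since $d = 2^n - 1$, we have $\mathbb{P}(d) = [1,n]$, so by the definition of $\mathbb{P}_2$, $\mathbb{P}_2(d) = \{a : u_a > 2\}$, which has cardinality $n_2$. By Notation \ref{N;Notation3.3} applied to $g = d$, $\mathbb{P}(\widetilde{d}) = \mathbb{P}_2(\widetilde{d}) = \mathbb{P}_2(d)$, hence $|\mathbb{P}(\widetilde{d})| = n_2$. Next I would count: by Lemma \ref{L;Lemma3.1}, the map $a \mapsto \mathbb{P}(a)$ is a bijection between $[0,d]$ and the power set of $[1,n]$, and under this bijection $a \leq_2 \widetilde{d}$ corresponds to $\mathbb{P}(a) \subseteq \mathbb{P}(\widetilde{d})$. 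Therefore the number of such $a$ equals the number of subsets of $\mathbb{P}(\widetilde{d})$, which is $2^{n_2}$.

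For the second statement, I would simply observe that $n_2$ is defined solely in terms of the parameters $u_1, u_2, \ldots, u_n$ of the factorial scheme $\mathbb{S}$, with no reference to $\F$. Hence $2^{n_2}$ is independent of the choice of $\F$, completing the proof. There is no real obstacle here; the only point requiring care is verifying that $\widetilde{d}$ is well-defined (which follows from Lemma \ref{L;Lemma3.2}) and correctly matching the index set $\{a \leq_2 \widetilde{d}\}$ with subsets of an $n_2$-element set via Lemma \ref{L;Lemma3.1}.
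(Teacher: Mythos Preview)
Your proposal is correct and follows essentially the same approach as the paper: both invoke Theorem \ref{T;Center} and Notation \ref{N;Notation5.4} to identify $\dim_\F \mathrm{Z}(\mathbb{T})$ with $|\{a:a\leq_2\widetilde{d}\}|$, then use Lemma \ref{L;Lemma3.1} together with $|\mathbb{P}(\widetilde{d})|=|\mathbb{P}_2(d)|=n_2$ to obtain $2^{n_2}$. Your write-up simply spells out the counting step more explicitly than the paper's terse version.
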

\begin{proof}
Recall that $d\!=\!2^n-1$ and $n_2\!=\!|\{a: u_a>2\}|\!=\!|\mathbb{P}_2(d)|=|\mathbb{P}(\widetilde{d})|$. Hence the first statement follows from combining Theorem \ref{T;Center}, Notation \ref{N;Notation5.4}, and Lemma \ref{L;Lemma3.1}. The second statement is from the first one. The desired lemma thus follows.
\end{proof}
\begin{cor}\label{C;Corollary5.11}
The reciprocal of the $\F$-dimension of $\mathrm{Z}(\mathbb{T})$ is equal to the classical probability of choosing a strongly normal closed subset of
$\mathbb{S}$ from the set of all closed subsets of $\mathbb{S}$ randomly.
\end{cor}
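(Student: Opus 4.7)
The plan is to combine Corollary \ref{C;Corollary5.10} with Theorem \ref{T;Theorem3.22} in a straightforward accounting argument; no new content is required beyond organizing these already-established facts.

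First I would record that, by Corollary \ref{C;Corollary5.10}, the $\F$-dimension of $\mathrm{Z}(\mathbb{T})$ equals $2^{n_2}$, so its reciprocal is $1/2^{n_2}$. Next I would invoke Theorem \ref{T;Theorem3.22}, which asserts that the total number of closed subsets in $\mathbb{S}$ equals $2^{n_2}G_2(\log_2 d_1)$, while the number of strongly normal closed subsets in $\mathbb{S}$ equals $G_2(\log_2 d_1)$. Since the classical probability in question is by definition the ratio of the latter count to the former, it equals
\[
\frac{G_2(\log_2 d_1)}{2^{n_2}G_2(\log_2 d_1)}=\frac{1}{2^{n_2}},
\]
and this matches the reciprocal of the $\F$-dimension of $\mathrm{Z}(\mathbb{T})$.

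The only minor thing worth checking is that the denominator $G_2(\log_2 d_1)$ is genuinely nonzero, so that the probability is well defined; this is immediate from Notation \ref{N;Notation3.21} since a $g$-dimensional vector space over the two-element field always has at least one subspace (the zero subspace), whence $G_2(g)\geq 1$ for every $g\in \mathbb{N}_0$. There is no serious obstacle here: the entire content of the corollary is the cancellation of the factor $G_2(\log_2 d_1)$ in the ratio provided by Theorem \ref{T;Theorem3.22}, matched against the exact formula $2^{n_2}$ from Corollary \ref{C;Corollary5.10}.
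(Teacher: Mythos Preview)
Your proposal is correct and takes essentially the same approach as the paper, which simply cites Theorem \ref{T;Theorem3.22} and Corollary \ref{C;Corollary5.10}. Your explicit cancellation of the factor $G_2(\log_2 d_1)$ and the sanity check that this factor is positive are exactly the content behind that citation.
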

\begin{proof}
The desired corollary can be proved by Theorem \ref{T;Theorem3.22} and Corollary \ref{C;Corollary5.10}.
\end{proof}
We next investigate the structure constants of the $\F$-basis $\{C_a: a\leq_2\widetilde{d}\}$ in $\mathrm{Z}(\mathbb{T})$.
\begin{lem}\label{L;Lemma5.12}
Assume that $g, h, i\in [0,d]$. Then $k_{g\setminus i}k_{h\setminus i}k_{g\cap h\cap i}=k_{(g\cup h)\setminus i}k_{g\cap h}$.
\end{lem}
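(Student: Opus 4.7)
The plan is to reduce everything to Lemma~\ref{L;Lemma3.11}, which gives $k_g=\prod_{j\in\mathbb{P}(g)}(u_j-1)$. Thus each $k$-value is a product of factors $(u_j-1)$ indexed by a subset of $[1,n]$, and so the equality $k_{g\setminus i}k_{h\setminus i}k_{g\cap h\cap i}=k_{(g\cup h)\setminus i}k_{g\cap h}$ will follow once I verify that on each side every factor $(u_j-1)$ appears with the same total multiplicity.

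The cleanest way to run the bookkeeping is to split it into two elementary multiset identities, both of which are immediate from Notation~\ref{N;Notation4.2}. First, observe that $\mathbb{P}(g\setminus i)\cap\mathbb{P}(h\setminus i)=\mathbb{P}((g\cap h)\setminus i)$ and $\mathbb{P}(g\setminus i)\cup\mathbb{P}(h\setminus i)=\mathbb{P}((g\cup h)\setminus i)$ as subsets of $[1,n]$, so the multiplicity of each $j$ in the multiset $\mathbb{P}(g\setminus i)\uplus\mathbb{P}(h\setminus i)$ coincides with its multiplicity in $\mathbb{P}((g\cup h)\setminus i)\uplus\mathbb{P}((g\cap h)\setminus i)$. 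Applying Lemma~\ref{L;Lemma3.11} this yields
\begin{equation*}
k_{g\setminus i}\,k_{h\setminus i}=k_{(g\cup h)\setminus i}\,k_{(g\cap h)\setminus i}.
\end{equation*}
Second, $\mathbb{P}((g\cap h)\setminus i)$ and $\mathbb{P}(g\cap h\cap i)$ are disjoint with union $\mathbb{P}(g\cap h)$, hence again by Lemma~\ref{L;Lemma3.11},
\begin{equation*}
k_{(g\cap h)\setminus i}\,k_{g\cap h\cap i}=k_{g\cap h}.
\end{equation*}
Multiplying the first equality by $k_{g\cap h\cap i}$ and substituting the second then gives the desired identity.

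There is no substantial obstacle here: the only thing that requires care is the distinction between set equality and multiset equality, since the product $k_{g\setminus i}k_{h\setminus i}$ squares precisely those factors $(u_j-1)$ for which $j\in\mathbb{P}(g)\cap\mathbb{P}(h)\setminus\mathbb{P}(i)$. Keeping track of this repeated overlap is the only nontrivial point, and the two-step decomposition above isolates it into the single clean identity $k_{g\setminus i}k_{h\setminus i}=k_{(g\cup h)\setminus i}k_{(g\cap h)\setminus i}$. As a sanity check one can also verify the full identity index-by-index: for each $j\in[1,n]$, the exponent of $(u_j-1)$ on both sides equals $[j\in\mathbb{P}(g)]+[j\in\mathbb{P}(h)]-[j\in\mathbb{P}(g)\cap\mathbb{P}(h)\cap\mathbb{P}(i)^{c}]$ by a short Boolean computation, so the two proofs agree.
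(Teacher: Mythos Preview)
Your proof is correct and takes essentially the same approach as the paper: both reduce everything to Lemma~\ref{L;Lemma3.11} and verify the identity via Boolean set decompositions of the index sets $\mathbb{P}(\cdot)$. The paper breaks things into the four atomic pieces $g\setminus(h\cup i)$, $h\setminus(g\cup i)$, $(g\cap h)\setminus i$, $g\cap h\cap i$ and reassembles, whereas you use the slightly slicker inclusion--exclusion step $k_{g\setminus i}k_{h\setminus i}=k_{(g\cup h)\setminus i}k_{(g\cap h)\setminus i}$ followed by $k_{(g\cap h)\setminus i}k_{g\cap h\cap i}=k_{g\cap h}$; the difference is purely cosmetic.
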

\begin{proof}
As $g\setminus i\!=\!(g\setminus (h\cup i))\cup ((g\cap h)\setminus i)$ and $(g\setminus (h\cup i))\cap ((g\cap h)\setminus i)=0$, notice that $k_{g\setminus i}=k_{g\setminus (h\cup i)}k_{(g\cap h)\setminus i}$ by Lemma \ref{L;Lemma3.11}. As $h\setminus i=(h\setminus(g\cup i))\cup((g\cap h)\setminus i)$ and $(h\setminus(g\cup i))\cap((g\cap h)\setminus i)=0$,
notice that $k_{h\setminus i}=k_{h\setminus(g\cup i)}k_{(g\cap h)\setminus i}$ by Lemma \ref{L;Lemma3.11}. As $g\cap h=(g\cap h\cap i)\cup((g\cap h)\setminus i)$ and $g\cap h\cap i\cap ((g\cap h)\setminus i)=0$, $k_{g\cap h}=k_{g\cap h\cap i}k_{(g\cap h)\setminus i}$ by Lemma \ref{L;Lemma3.11}. Notice that $(g\cup h)\setminus i=(g\setminus(h\cup i))\cup(h\setminus(g\cup i))\cup ((g\cap h)\setminus i)$. Notice that $\mathbb{P}(g\setminus(h\cup i))$, $\mathbb{P}(h\setminus(g\cup i))$, and $\mathbb{P}((g\cap h)\setminus i)$ are pairwise disjoint. So Lemma \ref{L;Lemma3.11} implies that $k_{(g\cup h)\setminus i}\!=\!k_{g\setminus(h\cup i)}k_{h\setminus(g\cup i)}k_{(g\cap h)\setminus i}$. The desired lemma thus follows from the equality $k_{g\setminus i}k_{h\setminus i}k_{g\cap h\cap i}=k_{g\setminus (h\cup i)}k_{(g\cap h)\setminus i}k_{h\setminus(g\cup i)}k_{(g\cap h)\setminus i}k_{g\cap h\cap i}=k_{(g\cup h)\setminus i}k_{g\cap h}$.
\end{proof}
\begin{lem}\label{L;Lemma5.13}
Assume that $g, h\!\in\! [0,d]$ and $g\cup h\leq_2\widetilde{d}$. Then $C_gC_h=\overline{k_{g\cap h}}C_{g\cup h}$.
\end{lem}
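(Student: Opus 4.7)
The plan is to expand $C_gC_h$ via Notation \ref{N;Notation5.4}, multiply term by term using Lemma \ref{L;Lemma4.20}, and then contract the resulting coefficient using Lemma \ref{L;Lemma5.12}. Concretely, write
$$C_gC_h=\sum_{i=0}^d\sum_{j=0}^d\overline{k_{g\setminus i}}\,\overline{k_{h\setminus j}}\,B_{i,g\cap i,i}B_{j,h\cap j,j}.$$
By Lemma \ref{L;Lemma4.20} (with the $\delta$-factor), the summand vanishes unless $i=j$, in which case
$$B_{i,g\cap i,i}B_{i,h\cap i,i}=\overline{k_{(g\cap i)\cap i\cap(h\cap i)}}\,B_{i,m(i,g\cap i,i,h\cap i,i),i}=\overline{k_{g\cap h\cap i}}\,B_{i,m(i,g\cap i,i,h\cap i,i),i}.$$

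The first key step is to compute the middle index. Using Notation \ref{N;Notation4.8} and the fact that $\widetilde{i}\leq_2 i$ (which follows because $\mathbb{P}(\widetilde{i})=\mathbb{P}_2(i)\subseteq\mathbb{P}(i)$ by Notation \ref{N;Notation3.3}), a direct expansion gives
$$m(i,g\cap i,i,h\cap i,i)=(i\oplus i)\cup((\widetilde{i})\setminus i)\cup\bigl(((g\cap i)\cup(h\cap i))\cap\widetilde{i}\cap i\bigr)=(g\cup h)\cap\widetilde{i}.$$
Now I invoke the hypothesis $g\cup h\leq_2\widetilde{d}$. Since $\mathbb{P}(\widetilde{d})=\mathbb{P}_2(d)=\{a:u_a>2\}$, every position in $\mathbb{P}(g\cup h)$ satisfies $u_a>2$, so intersecting with $\widetilde{i}$ is the same as intersecting with $i$; that is, $(g\cup h)\cap\widetilde{i}=(g\cup h)\cap i$. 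Thus
$$C_gC_h=\sum_{i=0}^d\overline{k_{g\setminus i}k_{h\setminus i}k_{g\cap h\cap i}}\,B_{i,(g\cup h)\cap i,i}.$$

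The second key step is the scalar identity. By Lemma \ref{L;Lemma5.12},
$$k_{g\setminus i}k_{h\setminus i}k_{g\cap h\cap i}=k_{(g\cup h)\setminus i}k_{g\cap h},$$
so the displayed sum becomes
$$\overline{k_{g\cap h}}\sum_{i=0}^d\overline{k_{(g\cup h)\setminus i}}\,B_{i,(g\cup h)\cap i,i}=\overline{k_{g\cap h}}\,C_{g\cup h},$$
where the defined-ness of $C_{g\cup h}$ follows from Notation \ref{N;Notation5.4} together with the hypothesis $g\cup h\leq_2\widetilde{d}$. This yields the claimed equality.

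The main obstacle is the simplification of $m(i,g\cap i,i,h\cap i,i)$ and recognizing why $g\cup h\leq_2\widetilde{d}$ is exactly the hypothesis needed to collapse $(g\cup h)\cap\widetilde{i}$ to $(g\cup h)\cap i$; once that reduction is in place, the remainder is an algebraic manipulation controlled by Lemma \ref{L;Lemma5.12}.
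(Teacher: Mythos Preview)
Your proof is correct and follows essentially the same route as the paper's: expand $C_gC_h$ termwise, use Lemma \ref{L;Lemma4.20} (together with the orthogonality of the $E_i^*$) to reduce to the diagonal $i=j$, compute $m(i,g\cap i,i,h\cap i,i)=(g\cup h)\cap i$ via the hypothesis $g\cup h\leq_2\widetilde{d}$, and finish with Lemma \ref{L;Lemma5.12}. The paper records the key reductions $g\cap i=g\cap\widetilde{i}$ and $h\cap i=h\cap\widetilde{i}$ a bit more tersely, but the argument is the same.
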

\begin{proof}
As $g\cup h\leq_2\widetilde{d}$, notice that $g\!\leq_2\! \widetilde{d}$, $h\!\leq_2\! \widetilde{d}$, and $C_g$, $C_h$, $C_{g\cup h}$ are defined. Pick $i\in [0,d]$. Then $g\cap i=g\cap \widetilde{i}$, $h\cap i=h\cap \widetilde{i}$, $m(i, g\cap i, i, h\cap i, i)=(g\cup h)\cap i$ by a direct computation. By combining \eqref{Eq;2}, Lemmas \ref{L;Lemma4.20}, and \ref{L;Lemma5.12}, observe that
\begin{align*}
&\overline{k_{g\setminus i}}B_{i,g\cap i, i}\overline{k_{h\setminus i}}B_{i,h\cap i, i}=\overline{k_{g\setminus i}}\overline{k_{h\setminus i}}\overline{k_{g\cap h\cap i}}B_{i,m(i, g\cap i, i, h\cap i, i), i}=\overline{k_{(g\cup h)\setminus i}k_{g\cap h}}B_{i,(g\cup h)\cap i ,i}\ \text{and}\\
&\overline{k_{g\setminus i}}B_{i,g\cap i, i}\overline{k_{h\setminus j}}B_{j,h\cap j, j}=\overline{k_{g\setminus i}}\overline{k_{h\setminus j}}B_{i,g\cap i, i}E_i^*E_j^*B_{j,h\cap j, j}=O\ \text{for any $j\in [0,d]\setminus\{i\}$}.
\end{align*}
As $i$ is chosen from $[0,d]$ arbitrarily, the desired lemma thus follows from combining \eqref{Eq;2}, Notation \ref{N;Notation5.4}, and the above computation.
\end{proof}
We conclude this section by giving an example of Theorem \ref{T;Center} and Lemma \ref{L;Lemma5.13}.
\begin{eg}\label{E;Example5.14}
\em Assume that $n=u_1=2$ and $u_2=3$. Notice that $d=3$ and $\widetilde{d}=2$. Hence $n_2=1$ and the $\F$-dimension of $\mathrm{Z}(\mathbb{T})$ is two by Corollary \ref{C;Corollary5.10}.
Theorem \ref{T;Center} implies that $\{C_0, C_2\}$ is an $\F$-basis of $\mathrm{Z}(\mathbb{T})$. By Notation \ref{N;Notation5.4} and \eqref{Eq;3}, notice that
\begin{align*}
C_0=B_{0,0,0}+B_{1,0,1}+B_{2,0,2}+B_{3,0,3}=I\ \text{and}\ C_2=\overline{2}B_{0,0,0}+\overline{2}B_{1,0,1}+B_{2,2,2}+B_{3,2,3}.
\end{align*}
Notice that $C_0C_0=C_0$, $C_0C_2=C_2C_0=C_2$, and $C_2C_2=\overline{2}C_2$ by Lemma \ref{L;Lemma5.13}.
\end{eg}
\section{Semisimplicity of Terwilliger $\F$-algebras of factorial schemes}
In this section, we determine the semisimplicity of $\mathbb{T}$. For our purpose, we recall Notations \ref{N;Notation3.3}, \ref{N;Notation4.2}, \ref{N;Notation4.8}, \ref{N;Notation4.19}, \ref{N;Notation5.4}, \ref{N;Notation5.6}. By Lemma \ref{L;Lemma2.10}, we recall that the subalgebras $E_0^*\mathbb{T}E_0^*, E_1^*\mathbb{T}E_1^*,\ldots, E_d^*\mathbb{T}E_d^*$ of $\mathbb{T}$ are commutative. We first list two needed lemmas.
\begin{lem}\label{L;Lemma6.1}
Assume that $g, h, i\in [0,d]$ and $h\cup i\leq_2 \widetilde{g}$. Then the subalgebra $E_g^*\mathbb{T}E_g^*$ of $\mathbb{T}$ has an $\F$-basis $\{B_{g, a, g}: a\leq_2 \widetilde{g}\}$. Furthermore, $B_{g, h, g}B_{g, i, g}=\overline{k_{h\cap i}}B_{g, h\cup i, g}$.
\end{lem}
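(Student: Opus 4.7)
The plan is to deduce both claims of Lemma 6.1 directly from the general machinery already set up in Sections 4 and 5, without any new combinatorial input.

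For the first assertion, I would begin by showing that $\{E_g^*A_bE_g^*: b\leq_2\widetilde{g}\}$ is an $\F$-basis of $E_g^*\mathbb{T}E_g^*$. Applying the projection $M\mapsto E_g^*ME_g^*$ to the $\F$-basis of $\mathbb{T}$ furnished by Lemma \ref{L;Lemma4.5}, equation \eqref{Eq;2} kills every term $E_a^*A_bE_c^*$ with $(a,c)\neq(g,g)$, so the image is spanned by the elements $E_g^*A_bE_g^*$ for which $g\oplus g\leq_2 b\leq_2 g\odot g$. Using $g\oplus g=0$ and $g\odot g=\widetilde{g}$ (immediate from Notations \ref{N;Notation3.3} and \ref{N;Notation4.2}), this indexing set reduces to $\{b: b\leq_2\widetilde{g}\}$. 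Linear independence is inherited as these are a subset of the basis in Lemma \ref{L;Lemma4.5}.

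To pass to the $B_{g,a,g}$'s, note that by Notation \ref{N;Notation4.19}
\[
B_{g,a,g}=\sum_{j\leq_2 a}E_g^*A_jE_g^*\quad\text{for every }a\leq_2\widetilde{g},
\]
so the transition matrix from $\{E_g^*A_bE_g^*: b\leq_2\widetilde{g}\}$ to $\{B_{g,a,g}: a\leq_2\widetilde{g}\}$ is unitriangular with respect to the partial order $\leq_2$ on $\{b\in[0,d]: b\leq_2\widetilde{g}\}$, hence invertible. This yields the claimed $\F$-basis of $E_g^*\mathbb{T}E_g^*$.

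For the product formula, I plan to apply Lemma \ref{L;Lemma4.20} to the two triples $(g,h,g)$ and $(g,i,g)$. Since $h,i\leq_2\widetilde{g}$, Lemmas \ref{L;Lemma4.3} and \ref{L;Lemma4.4} guarantee $p_{gh}^g\neq 0$ and $p_{gi}^g\neq 0$, so the hypotheses of Lemma \ref{L;Lemma4.20} are satisfied and the $\delta$-factor equals $\overline{1}$. The coefficient is $\overline{k_{h\cap g\cap i}}=\overline{k_{h\cap i}}$, using $h,i\leq_2\widetilde{g}\leq_2 g$. The only real computation is to unpack $m(g,h,g,i,g)$ via Notation \ref{N;Notation4.8}:
\[
m(g,h,g,i,g)=(g\oplus g)\cup\bigl((\widetilde{g\cap g})\setminus g\bigr)\cup\bigl((h\cup i)\cap(\widetilde{g\cap g})\cap g\bigr),
\]
which simplifies to $(h\cup i)\cap\widetilde{g}=h\cup i$ because $\widetilde{g}\leq_2 g$ and, by hypothesis, $h\cup i\leq_2\widetilde{g}$. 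Substituting gives exactly $B_{g,h,g}B_{g,i,g}=\overline{k_{h\cap i}}\,B_{g,h\cup i,g}$.

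The only step that is not purely mechanical is the simplification of $m(g,h,g,i,g)$; this is where the hypothesis $h\cup i\leq_2\widetilde{g}$ is essential, since otherwise the result would be $(h\cup i)\cap\widetilde{g}$ rather than $h\cup i$, and the multiplication table on this basis would not close into $\{B_{g,a,g}:a\leq_2\widetilde{g}\}$. Everything else is a direct invocation of Lemmas \ref{L;Lemma4.5}, \ref{L;Lemma4.20} and \eqref{Eq;2}.
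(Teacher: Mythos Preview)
Your proof is correct and takes essentially the same approach as the paper (which cites Theorem~\ref{T;Theorem4.22} directly for the basis rather than Lemma~\ref{L;Lemma4.5} plus unitriangularity, and then performs the identical $m(g,h,g,i,g)=h\cup i$ computation via Lemma~\ref{L;Lemma4.20}). One small slip: the condition in Lemma~\ref{L;Lemma4.5} with $a=c=g$ reads $g\oplus b\leq_2 g\leq_2 g\odot b$, not $g\oplus g\leq_2 b\leq_2 g\odot g$; since both are equivalent to $p_{gb}^g\neq 0$ (Lemmas~\ref{L;Lemma2.1}, \ref{L;Lemma4.3}, \ref{L;Lemma4.4}) and hence to $b\leq_2\widetilde{g}$, your conclusion stands.
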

\begin{proof}
By combining Theorem \ref{T;Theorem4.22}, \eqref{Eq;2}, Lemmas \ref{L;Lemma4.4}, \ref{L;Lemma2.1}, \ref{L;Lemma4.3}, and Notation \ref{N;Notation5.6}, the subalgebra $E_g^*\mathbb{T}E_g^*$ of $\mathbb{T}$ has an $\F$-basis $\{B_{g, a, g}: a\leq_2 \widetilde{g}\}$. The first statement is proved. As $h\cup i\leq_2 \widetilde{g}$, notice that $m(g, h, g, i, g)\!=\!h\cup i$ by a direct computation. As $h\cap i\leq_2 g$, the second statement thus can be proved by \eqref{Eq;2} and Lemma \ref{L;Lemma4.20}. The desired lemma thus follows.
\end{proof}
\begin{lem}\label{L;Lemma6.2}
Assume that $g\in [0,d]$. Then $\langle\{B_{g, a, g}: a\leq_2 \widetilde{g},\ p\mid k_a\}\rangle_\F$ is a two-sided ideal of the subalgebra $E_g^*\mathbb{T}E_g^*$ of $\mathbb{T}$.
\end{lem}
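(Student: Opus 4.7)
The plan is to exploit the explicit multiplication rule from Lemma \ref{L;Lemma6.1} together with the product formula for valencies from Lemma \ref{L;Lemma3.11}. Recall that $E_g^*\mathbb{T}E_g^*$ is commutative (Lemma \ref{L;Lemma2.10}) with $\F$-basis $\{B_{g,a,g}: a\leq_2\widetilde{g}\}$, so in order to verify that the subspace $\mathbb{I}=\langle\{B_{g,a,g}: a\leq_2\widetilde{g},\ p\mid k_a\}\rangle_\F$ is a two-sided ideal of $E_g^*\mathbb{T}E_g^*$, it suffices to check that $B_{g,h,g}B_{g,i,g}\in\mathbb{I}$ whenever $h\leq_2\widetilde{g}$ and $B_{g,i,g}$ lies in the given spanning set.

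The key computation is as follows. Given such $h$ and $i$, we have $h\cup i\leq_2\widetilde{g}$ because both $h$ and $i$ are, so Lemma \ref{L;Lemma6.1} applies to give
\[
B_{g,h,g}B_{g,i,g}=\overline{k_{h\cap i}}B_{g,h\cup i,g}.
\]
If the scalar $\overline{k_{h\cap i}}$ is $\overline{0}$, the product is zero and hence lies trivially in $\mathbb{I}$. Otherwise, I claim $B_{g,h\cup i,g}\in\mathbb{I}$, that is, $p\mid k_{h\cup i}$. Here is where Lemma \ref{L;Lemma3.11} enters: since $\mathbb{P}(i)\subseteq\mathbb{P}(h\cup i)$,
\[
k_{h\cup i}=\prod_{j\in\mathbb{P}(h\cup i)}(u_j-1)=k_i\prod_{j\in\mathbb{P}(h\cup i)\setminus\mathbb{P}(i)}(u_j-1),
\]
so $k_i\mid k_{h\cup i}$ in $\mathbb{Z}$, and the assumption $p\mid k_i$ forces $p\mid k_{h\cup i}$. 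Consequently $B_{g,h\cup i, g}\in\mathbb{I}$ and so is the scalar multiple $\overline{k_{h\cap i}}B_{g,h\cup i,g}$.

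By linearity this shows $M\cdot B_{g,i,g}\in\mathbb{I}$ for every $M\in E_g^*\mathbb{T}E_g^*$ and every generator $B_{g,i,g}$ of $\mathbb{I}$, hence $\mathbb{I}$ is a left ideal; by commutativity of $E_g^*\mathbb{T}E_g^*$ it is a two-sided ideal. There is no genuine obstacle here; the only thing to be careful about is to note up front that $h\cup i\leq_2\widetilde{g}$ so that Lemma \ref{L;Lemma6.1} is actually applicable, and to observe that the divisibility $k_i\mid k_{h\cup i}$ follows immediately from the product form in Lemma \ref{L;Lemma3.11} together with $\mathbb{P}(i)\subseteq\mathbb{P}(h\cup i)$.
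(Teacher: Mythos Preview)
Your proof is correct and follows essentially the same approach as the paper: both reduce to checking $B_{g,h,g}B_{g,i,g}\in\mathbb{I}$ for $h\leq_2\widetilde{g}$, $i\leq_2\widetilde{g}$, $p\mid k_i$, apply Lemma \ref{L;Lemma6.1} to get $\overline{k_{h\cap i}}B_{g,h\cup i,g}$, and use Lemma \ref{L;Lemma3.11} to deduce $p\mid k_{h\cup i}$ from $p\mid k_i$. Your case split on whether $\overline{k_{h\cap i}}=\overline{0}$ is harmless but unnecessary, since the divisibility $p\mid k_{h\cup i}$ holds regardless and already yields $\overline{k_{h\cap i}}B_{g,h\cup i,g}\in\mathbb{I}$.
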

\begin{proof}
As $E_g^*\mathbb{T}E_g^*$ is commutative and Lemma \ref{L;Lemma6.1} holds, it is enough to check that $B_{g, h, g}B_{g, i, g}\in\langle\{B_{g, a, g}: a\leq_2\widetilde{g},\ p\mid k_a\}\rangle_\F$ for any $h, i\!\in\! [0,d]$, $h\!\leq_2\! \widetilde{g}$, $i\leq_2\widetilde{g}$, and $p\mid k_i$. For any $h, i\in [0,d]$, $h\leq_2\widetilde{g}$, $i\leq_2\widetilde{g}$, and $p\mid k_i$, Lemma \ref{L;Lemma3.11} implies that $p\mid k_{h\cup i}$ as $p\mid k_i$. For any $h, i\in [0,d]$, $h\leq_2\widetilde{g}$, $i\leq_2\widetilde{g}$, and $p\mid k_i$, \eqref{Eq;2} and Lemma \ref{L;Lemma6.1} thus imply that $B_{g, h, g}B_{g, i, g}\!=\!\overline{k_{h\cap i}}B_{g, h\cup i, g}\!\in\! \langle\{B_{g, a, g}: a\leq_2 \widetilde{g},\ p\mid k_a\}\rangle_\F$. The desired lemma thus follows from the above discussion.
\end{proof}
Lemma \ref{L;Lemma6.2} motivates us to introduce the following notation and another lemma.
\begin{nota}\label{N;Notation6.3}
\em Assume that $g\in [0,d]$. Set $\mathbb{I}_g\!=\!\langle\{B_{g, a, g}: a\leq_2 \widetilde{g},\ p\mid k_a\}\rangle_\F$. Hence Lemma \ref{L;Lemma6.2} implies that $\mathbb{I}_g$ is a two-sided ideal of the subalgebra $E_g^*\mathbb{T}E_g^*$ of $\mathbb{T}$.
\end{nota}
\begin{lem}\label{L;Lemma6.4}
Assume that $g\in[0,d]$. Then $\mathbb{I}_g$ is a nilpotent two-sided ideal of the subalgebra $E_g^*\mathbb{T}E_g^*$ of $\mathbb{T}$. Furthermore, $n(\mathbb{I}_g)=|\{a: a\in \mathbb{P}(g),\ u_a\!\equiv\! 1\pmod p\}|+1$.
\end{lem}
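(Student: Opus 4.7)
The plan is to work entirely inside the commutative subalgebra $E_g^*\mathbb{T}E_g^*$ (per Lemma \ref{L;Lemma2.10}) and reduce everything to the multiplication rule of Lemma \ref{L;Lemma6.1}. Introduce the auxiliary set $\mathbb{Q}_g=\{a:a\in\mathbb{P}(g),\,u_a\equiv 1\pmod p\}$; observe that $\mathbb{Q}_g\subseteq\mathbb{P}_2(g)=\mathbb{P}(\widetilde{g})$, since if $u_a=2$ then $u_a-1=1$ cannot be divisible by any prime (and in characteristic $0$ the set is empty). By Lemma \ref{L;Lemma6.2}, $\mathbb{I}_g$ is already known to be a two-sided ideal of $E_g^*\mathbb{T}E_g^*$, so the task reduces to showing that $\mathbb{I}_g$ is nilpotent and that its nilpotency equals $|\mathbb{Q}_g|+1$.

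First I would iterate Lemma \ref{L;Lemma6.1}: for any $a_1,\dots,a_m$ with $a_j\leq_2\widetilde{g}$ (so all successive unions remain $\leq_2\widetilde{g}$), one has
\[
B_{g,a_1,g}B_{g,a_2,g}\cdots B_{g,a_m,g}=\overline{\prod_{j=2}^{m}k_{(a_1\cup\cdots\cup a_{j-1})\cap a_j}}\,B_{g,\,a_1\cup\cdots\cup a_m,\,g}.
\]
Since a general element of $\mathbb{I}_g$ is an $\F$-linear combination of generators $B_{g,a,g}$ with $p\mid k_a$, nilpotency reduces to showing that every such monomial of sufficient length vanishes.

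For the upper bound $n(\mathbb{I}_g)\leq|\mathbb{Q}_g|+1$, assume the monomial above is nonzero and each $a_j$ satisfies $p\mid k_{a_j}$. By Lemma \ref{L;Lemma3.11}, the non-vanishing of each scalar $k_{(a_1\cup\cdots\cup a_{j-1})\cap a_j}$ in $\F$ means that no $h$ in $\mathbb{P}(a_j)\cap(\mathbb{P}(a_1)\cup\cdots\cup\mathbb{P}(a_{j-1}))$ lies in $\mathbb{Q}_g$. On the other hand, $p\mid k_{a_j}$ forces some $q_j\in\mathbb{P}(a_j)\cap\mathbb{Q}_g$ to exist; the previous constraint then forces $q_j\notin\mathbb{P}(a_i)$ for all $i<j$, so $q_1,\dots,q_m$ are pairwise distinct elements of $\mathbb{Q}_g$ and $m\leq|\mathbb{Q}_g|$.

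For the matching lower bound, enumerate $\mathbb{Q}_g=\{q_1,\dots,q_r\}$ and, for each $q_i$, use Lemma \ref{L;Lemma3.1} to pick $b_i\in[0,d]$ with $\mathbb{P}(b_i)=\{q_i\}$; then $b_i\leq_2\widetilde{g}$ and $k_{b_i}=u_{q_i}-1$ is divisible by $p$, so $B_{g,b_i,g}\in\mathbb{I}_g$. In the product $B_{g,b_1,g}\cdots B_{g,b_r,g}$, every intersection $(b_1\cup\cdots\cup b_{j-1})\cap b_j$ has empty $\mathbb{P}$-set and so equals $0$, giving scalars $\overline{k_0}=\overline{1}$; the resulting monomial is $B_{g,c,g}$ where $\mathbb{P}(c)=\mathbb{Q}_g\subseteq\mathbb{P}(\widetilde{g})$, which is nonzero by Notation \ref{N;Notation4.19}. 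Hence $r<n(\mathbb{I}_g)$, completing the equality $n(\mathbb{I}_g)=|\mathbb{Q}_g|+1$. The main obstacle is purely bookkeeping: one must be careful that every intermediate union $a_1\cup\cdots\cup a_j$ stays $\leq_2\widetilde{g}$ so that Lemma \ref{L;Lemma6.1} applies at each step, and that the combinatorial extraction of distinct $q_j\in\mathbb{Q}_g$ from a nonvanishing monomial genuinely uses both the hypothesis $p\mid k_{a_j}$ and the non-divisibility at every stage.
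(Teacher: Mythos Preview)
Your proposal is correct and follows essentially the same approach as the paper: both arguments reduce everything to the multiplication rule of Lemma \ref{L;Lemma6.1}, use singleton-support generators $B_{g,b_i,g}$ with $\mathbb{P}(b_i)=\{q_i\}\subseteq\mathbb{Q}_g$ to witness a nonzero product of length $|\mathbb{Q}_g|$, and use a pigeonhole-type argument for the upper bound. The only minor difference is that for the upper bound the paper argues directly via pigeonhole (among $|\mathbb{Q}_g|+1$ factors, two must share an element of $\mathbb{Q}_g$ in their $\mathbb{P}$-sets, forcing $\overline{k_{i_k\cap i_\ell}}=\overline{0}$), whereas you extract a strictly increasing sequence of distinct $q_j$'s from a hypothetical nonvanishing monomial; these are equivalent.
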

\begin{proof}
Set $\mathbb{U}=\{a: a\!\in\! \mathbb{P}(g),\ u_a\!\equiv\! 1\pmod p\}$ and $h\!=\!|\mathbb{U}|+1$. By Notation \ref{N;Notation6.3} and the definition of a nilpotent two-sided ideal of $\mathbb{T}$, it suffices to check that $n(\mathbb{I}_g)=h$. By Notation \ref{N;Notation6.3} and Lemma \ref{L;Lemma3.11}, notice that $\mathbb{U}=\varnothing$ if and only if $\mathbb{I}_g$ is the zero space. Moreover, $\mathbb{I}_g$ is the zero space if and only if $n(\mathbb{I}_g)=1$. So there is no loss to assume that $\mathbb{U}\!\neq\!\varnothing$. Pick $B_{g, i_1, g}, B_{g, i_2, g},\ldots, B_{g, i_h, g}\!\in\! \mathbb{I}_g$. Notation \ref{N;Notation6.3} and Lemma \ref{L;Lemma3.11} imply that $\mathbb{P}(i_j)\cap\mathbb{U}\neq\varnothing$ for any $j\in [1,h]$. By the Pigeonhole Principle, there exist $k, \ell\in [1,h]$ such that $\mathbb{P}(i_k)\cap \mathbb{P}(i_\ell)\cap \mathbb{U}\neq \varnothing$. Hence $p\mid k_{i_k\cap i_\ell}$ by Lemma \ref{L;Lemma3.11}. As $E_g^*\mathbb{T}E_g^*$ is commutative and $B_{g, i_k, g}B_{g, i_\ell, g}=O$ by Lemma \ref{L;Lemma6.1}, it is obvious that $\prod_{j=1}^hB_{g, i_j, g}=O$ by \eqref{Eq;2}. Hence $n(\mathbb{I}_g)\leq h$ as $B_{g, i_1, g}, B_{g, i_2, g},\ldots, B_{g, i_h, g}$ are chosen from $\mathbb{I}_g$ arbitrarily and Notation \ref{N;Notation6.3} holds.
Assume that $B_{g, \ell_1, g}, B_{g, \ell_2, g}, \ldots, B_{g, \ell_{h-1}, g}$ are pairwise distinct elements in $\mathbb{I}_g$, where $\mathbb{P}(\ell_m)\subseteq\mathbb{U}$, $|\mathbb{P}(\ell_m)|=1$, $\mathbb{P}(\ell_m)\neq\mathbb{P}(\ell_q)$ for any $m, q\in [1, h-1]$ and $m\neq q$. Observe that $\prod_{m=1}^{h-1}B_{g, \ell_m, g}=B_{g,\ell_1\cup\ell_2\cup\cdots\cup \ell_{h-1}, g}\neq O$ by combining \eqref{Eq;2}, the choices of $\ell_1, \ell_2,\ldots, \ell_{h-1}$, and Lemma \ref{L;Lemma6.1}. So $n(\mathbb{I}_g)>h-1$, which implies that $h-1<n(\mathbb{I}_g)\leq h$. The desired lemma thus follows.
\end{proof}
For further discussion, the next notation and combinatorial lemmas are required.
\begin{nota}\label{N;Notation6.5}
\em Assume that $g, h, i, j\in [0,d]$, $p_{gh}^i\neq 0$, $p\nmid k_h$, and $h\leq_2 j$. Use $n_{h, j}$ to denote $|\{a:a\in \mathbb{P}(j)\setminus\mathbb{P}(h),\ u_a\not\equiv1\pmod p\}|$. For any $k\in [0, n_{h, j}]$, use $\mathbb{U}_{h, j, k}$ to denote $\{a: h\!\leq_2\! a\!\leq_2\! j,\ p\nmid k_a,\ |\mathbb{P}(a)|\!-\!|\mathbb{P}(h)|\!=\!k\}$. For example, if $p=n=u_1\!=\!2$ and $u_2=3$, notice that $d=3$, $k_0=k_1=1$, and $k_2\!=\!k_3\!=\!2$ by Lemma \ref{L;Lemma3.11}. Furthermore, notice that $n_{0,3}\!=\!1$, $\mathbb{U}_{0,3,0}\!=\!\{0\}$, and $\mathbb{U}_{0,3,1}\!=\!\{1\}$. For any distinct $k, \ell\in [0, n_{h,j}]$, $\mathbb{U}_{h, j, k}\neq\varnothing=\mathbb{U}_{h, j, k}\cap \mathbb{U}_{h, j, \ell}$ by Lemma \ref{L;Lemma3.11}. So $\sum_{k=0}^{n_{h, g\odot i}}\sum_{m\in \mathbb{U}_{h, g\odot i, k}}(\overline{-1})^k\overline{k_{i\cap m}}^{-1}B_{g, m, i}$ is defined
by combining Lemmas \ref{L;Lemma3.11}, \ref{L;Lemma4.3}, \ref{L;Lemma4.4}, \ref{L;Lemma2.1}, Notation \ref{N;Notation4.19}. This listed sum is denoted by $D_{g, h, i}$. So $D_{g, h, i}\!\neq\! O$ by Theorem \ref{T;Theorem4.22}. If $q, r\in [0,d]$, $p\nmid k_r$, $r\leq_2 \widetilde{q}$, Lemmas \ref{L;Lemma4.4} and \ref{L;Lemma2.1} thus imply that $p_{qr}^q\neq 0$, $D_{q, r, q}$ is defined, and $D_{q,r,q}\!\in\! E_q^*\mathbb{T}E_q^*$.
\end{nota}
\begin{lem}\label{L;Lemma6.6}
Assume that $g, h, i, j, k, \ell, m\in [0,d]$, $q=\widetilde{g\cap i}$, $p_{gh}^i\!\neq\! 0$, and $h\leq_2 j$. Assume that $((h\cap k)\cup (j\setminus k))\cap q\leq_2 m\leq_2 ((h\cap k)\cup (j\setminus k)\cup (k\setminus (h\cap k)))\cap q$. Then $h\leq_2 (g\oplus i)\cup m\leq_2 g\odot i$ and $m(\ell, k, g, j, i)=m(\ell, k, g, (g\oplus i)\cup m, i)$. Moreover, if $p\nmid k_hk_jk_k$, then $p\nmid k_{(g\oplus i)\cup m}$.
\end{lem}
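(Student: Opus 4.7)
The plan is to translate the statement into set-theoretic assertions about $\mathbb{P}(\cdot)\subseteq[1,n]$ via Lemma \ref{L;Lemma3.1}, and to exploit the decomposition $g\odot i = (g\oplus i)\cup q$ with $(g\oplus i)\cap q = 0$ (from Notations \ref{N;Notation3.3} and \ref{N;Notation4.2}, since $\mathbb{P}(q)\subseteq\mathbb{P}(g)\cap\mathbb{P}(i)$ while $\mathbb{P}(g\oplus i)=\mathbb{P}(g)\triangle\mathbb{P}(i)$). From $p_{gh}^i\neq 0$ and Lemma \ref{L;Lemma4.3}, $g\oplus i\leq_2 h\leq_2 (g\oplus i)\cup q$. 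A useful preliminary simplification is that $(h\cap k)\cup(k\setminus(h\cap k))=k$, so the stated upper bound on $m$ collapses to $m\leq_2 (j\cup k)\cap q$, while the stated lower bound reads $((h\cap k)\cup(j\setminus k))\cap q\leq_2 m$.

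For the first assertion, the upper bound $(g\oplus i)\cup m\leq_2 g\odot i$ is immediate from $m\leq_2 q$. For the lower bound $h\leq_2 (g\oplus i)\cup m$, I would split $h=(h\cap(g\oplus i))\cup(h\cap q)$ using $h\leq_2(g\oplus i)\cup q$; the first piece lies in $g\oplus i$, and for the second I would write $h\cap q=(h\cap k\cap q)\cup((h\setminus k)\cap q)$ and invoke $h\leq_2 j$ to embed it inside $((h\cap k)\cup(j\setminus k))\cap q\leq_2 m$.

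For the second assertion, unfolding Notation \ref{N;Notation4.8} reduces $m(\ell,k,g,j,i)=m(\ell,k,g,(g\oplus i)\cup m,i)$ to the identity $(k\cup j)\cap P = (k\cup(g\oplus i)\cup m)\cap P$, where $P=(\widetilde{\ell\cap i})\cap g$. Since $P\subseteq g\cap i$ one has $P\cap(g\oplus i)=0$ and $P\leq_2 q$, so the identity collapses to $j\cap P\leq_2 k\cup m$ and $m\cap P\leq_2 k\cup j$. The first follows from the lower bound $(j\setminus k)\cap q\leq_2 m$ (yielding $(j\cap P)\setminus k\leq_2 m$), and the second from the upper bound $m\leq_2 j\cup k$.

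For the third assertion, Lemma \ref{L;Lemma3.11} says that $p\nmid k_a$ is equivalent to $u_b\not\equiv 1\pmod{p}$ for every $b\in\mathbb{P}(a)$, so it suffices to show $(g\oplus i)\cup m\leq_2 h\cup j\cup k$; this follows at once from $g\oplus i\leq_2 h$ (Lemma \ref{L;Lemma4.3}) together with the simplified upper bound $m\leq_2 j\cup k$. I do not anticipate a genuine obstacle: once the bounds on $m$ have been rewritten as $(j\cup k)\cap q$ and $((h\cap k)\cup(j\setminus k))\cap q$, each part reduces to a short set-theoretic check, with the only mild subtlety being to keep in mind that $P\subseteq q$ and $P\cap(g\oplus i)=0$ in part~(2).
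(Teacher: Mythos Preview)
Your proposal is correct and follows essentially the same approach as the paper. The only minor difference is in part (2): the paper first establishes the slightly stronger identity $(j\cup k)\cap q=(m\cup k)\cap q$ at the $q$-level (using that both the lower and upper bounds on $m$, unioned with $k$, equal $j\cup k$) and then restricts to $q\cap\ell=g\cap(\widetilde{\ell\cap i})=P$, whereas you verify the two inclusions $j\cap P\leq_2 k\cup m$ and $m\cap P\leq_2 k\cup j$ directly at the $P$-level; both arguments are straightforward set-theoretic checks of the same facts.
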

\begin{proof}
As $p_{gh}^i\neq 0$, Lemmas \ref{L;Lemma2.1} and \ref{L;Lemma4.3} imply that $h=(g\oplus i)\cup(h\cap q)$. It is obvious that $h=(h\cap k)\cup(h\setminus k)$. As $h\leq_2 j$, notice that   $h\cap q\leq_2((h\cap k)\cup (j\setminus k))\cap q\leq_2 m$ and $h\leq_2 (g\oplus i)\cup m\leq_2 g\odot i$ by a direct computation. As $(h\cap k)\cup(j\setminus k)\cup k=j\cup k$ and $(h\cap k)\cup(j\setminus k)\cup(k\setminus(h\cap k))=j\cup k$, notice that $(j\cup k)\cap q=(m\cup k)\cap q$ by Lemma \ref{L;Lemma3.1}. Therefore $(j\cup k)\cap q\cap \ell=(m\cup k)\cap q\cap\ell$. As $q\cap \ell=g\cap(\widetilde{\ell\cap i})$, it is clear that $m(\ell, k, g, j, i)=m(\ell, k, g, (g\oplus i)\cup m, i)$ by a direct computation. For the remaining statement, Lemma \ref{L;Lemma3.11} implies that $p\nmid k_{g\oplus i}$ as $p\nmid k_h$. As $p\nmid k_jk_k$, Lemma \ref{L;Lemma3.11} also implies that $p\nmid k_m$ and $p\nmid k_{(g\oplus i)\cup m}$. The desired lemma thus follows.
\end{proof}
\begin{lem}\label{L;Lemma6.7}
Assume that $g, h, i,j, k, \ell, m\!\in\! [0,d]$ and $q\!=\!\widetilde{g\cap i}$. Assume that $g\setminus \ell\!\!\leq_2\!\! k$. Then $(j\cup k)\cap q=(m\cup k)\cap q$ if and only if $(j\cup k)\cap q\cap\ell=(m\cup k)\cap q\cap \ell$. Moreover, assume that $h\cap q\leq_2 m\leq_2 q$. Then $m(\ell, k, g, j, i)=m(\ell, k, g, (g\oplus i)\cup m, i)$ only if $((h\cap k)\cup (j\setminus k))\cap q\leq_2 m\leq_2 ((h\cap k)\cup (j\setminus k)\cup (k\setminus (h\cap k)))\cap q$.
\end{lem}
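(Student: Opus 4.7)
The plan is to translate both statements into set-theoretic identities on the subsets $\mathbb{P}(\cdot)\subseteq [1,n]$, since Lemma \ref{L;Lemma3.1} makes equality in $[0,d]$ equivalent to equality of the associated $\mathbb{P}$-sets, and $\setminus$, $\cup$, $\cap$, $\leq_2$ behave as ordinary set operations and inclusion under this correspondence. Throughout, I will use the basic containment $q\leq_2 g$ and $q\leq_2 i$, both immediate from $\mathbb{P}(q)=\mathbb{P}_2(g\cap i)\subseteq\mathbb{P}(g)\cap\mathbb{P}(i)$ (Notation \ref{N;Notation3.3}).

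For the first equivalence, the forward direction is immediate by intersecting both sides with $\ell$. For the reverse, the hypothesis $g\setminus\ell\leq_2 k$ is equivalent to $g\leq_2 k\cup\ell$; combining with $q\leq_2 g$ yields $q\setminus\ell\leq_2 k$. Therefore the ``outside-$\ell$'' part of $(j\cup k)\cap q$, namely $(j\cup k)\cap(q\setminus\ell)$, collapses to $q\setminus\ell$ because $q\setminus\ell\leq_2 k\leq_2 j\cup k$; the same holds for $(m\cup k)\cap q$. Hence both sides decompose as $(\cdot\cap\ell)\cup(q\setminus\ell)$, and the assumed equality of the $\ell$-parts gives the full equality.

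For the ``moreover'' part, I would first expand Notation \ref{N;Notation4.8} to write $m(\ell,k,g,j,i)=(\ell\oplus i)\cup(\widetilde{\ell\cap i}\setminus g)\cup\bigl((k\cup j)\cap\widetilde{\ell\cap i}\cap g\bigr)$ and similarly for $m(\ell,k,g,(g\oplus i)\cup m,i)$. Two identities then drive the argument: $\widetilde{\ell\cap i}\cap g = q\cap\ell$ (both sides have $\mathbb{P}$-set $\{a\in\mathbb{P}(g)\cap\mathbb{P}(i)\cap\mathbb{P}(\ell):u_a>2\}$), and $(g\oplus i)\cap q=0$ (since $\mathbb{P}(g\oplus i)$ is disjoint from $\mathbb{P}(g)\cap\mathbb{P}(i)\supseteq\mathbb{P}(q)$). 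These reduce the assumed equality of the two $m$'s to $(j\cup k)\cap q\cap\ell=(m\cup k)\cap q\cap\ell$, which, by the first part of the lemma, upgrades to $(j\cup k)\cap q=(m\cup k)\cap q$. Using $m\leq_2 q$ to simplify the right side as $m\cup(k\cap q)$, this becomes the key relation $m\cup(k\cap q)=(j\cap q)\cup(k\cap q)$.

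From this single relation, both desired bounds fall out. The upper bound holds because the right-hand side equals $\bigl((h\cap k)\cup(j\setminus k)\cup(k\setminus(h\cap k))\bigr)\cap q$, by virtue of $(h\cap k)\cup(k\setminus(h\cap k))=k$ and $(j\cap q)\cup(k\cap q)=(k\cap q)\cup((j\setminus k)\cap q)$. For the lower bound, removing $k\cap q$ from both sides of the key relation gives $(j\cap q)\setminus k\leq_2 m$, i.e., $(j\setminus k)\cap q\leq_2 m$; combining this with $h\cap q\leq_2 m$ (which implies $(h\cap k)\cap q\leq_2 m$) yields $\bigl((h\cap k)\cup(j\setminus k)\bigr)\cap q\leq_2 m$, as required. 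I expect the main obstacle to be executing the set-theoretic bookkeeping cleanly, especially the identification $\widetilde{\ell\cap i}\cap g=q\cap\ell$ and the correct attribution of each containment to the relevant hypothesis ($g\setminus\ell\leq_2 k$ for Part 1; $m\leq_2 q$ and $h\cap q\leq_2 m$ for the two bounds in Part 2).
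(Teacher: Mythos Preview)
Your proof is correct and follows essentially the same approach as the paper: both use $q\setminus\ell\leq_2 k$ to handle the first equivalence, and both reduce the equality of the two $m(\cdot)$-values to $(j\cup k)\cap q=(m\cup k)\cap q$ before extracting the bounds. You are in fact more explicit than the paper about the ``direct computation'' step, spelling out the identities $\widetilde{\ell\cap i}\cap g=q\cap\ell$ and $(g\oplus i)\cap q=0$; the only small point left implicit is that the common part $(\ell\oplus i)\cup(\widetilde{\ell\cap i}\setminus g)$ is disjoint from $q\cap\ell$, which is what lets you recover the equality of the third pieces from the equality of the unions.
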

\begin{proof}
As $g\setminus \ell\leq_2k$, notice that $q\setminus \ell=(q\cap k)\setminus \ell$. It is obvious to notice that $(j\cup k)\cap (q\setminus\ell)=q\setminus \ell=(m\cup k)\cap (q\setminus\ell)$. It implies that $(j\cup k)\cap q=(m\cup k)\cap q$ if and only if $(j\cup k)\cap q\cap\ell=(m\cup k)\cap q\cap \ell$. The first statement is proved. By a direct computation and the first statement, $m(\ell, k, g, j, i)=m(\ell, k, g, (g\oplus i)\cup m, i)$ shows that $(j\cup k)\cap q=(m\cup k)\cap q$. As $((h\cap k)\cup (j\setminus k)\cup k)\cap q=(m\cup k)\cap q$ and $h\cap q\leq_2 m\leq_2 q$, notice that $((h\cap k)\cup (j\setminus k))\cap q\leq_2 m$. It is obvious to notice that $j\cup k=(h\cap k)\cup (j\setminus k)\cup (k\setminus (h\cap k))$. The desired lemma thus follows.
\end{proof}
\begin{lem}\label{L;Lemma6.8}
Assume that $g, h, i, j,k, \ell\in [0,d]$, $g\oplus i\leq_2 h$, $k\oplus i\leq_2 j$, and $p\nmid k_hk_j$. Then $p\nmid k_{m(g, h, i, j, k)}$. Moreover, if $h\leq_2 \ell$ and $m\in [0, n_{h,\ell}]$, then $|\mathbb{U}_{h, \ell, m}|={n_{h,\ell}\choose m}$.
\end{lem}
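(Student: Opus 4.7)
The plan is to establish the two parts separately, using Lemma \ref{L;Lemma3.11} (i.e., $k_g=\prod_{a\in\mathbb{P}(g)}(u_a-1)$) as the essential computational tool. The key observation for the first part is that $m(g,h,i,j,k)$ is built out of three explicit pieces (see Notation \ref{N;Notation4.8}), and it will suffice to show $\mathbb{P}(m(g,h,i,j,k))\subseteq\mathbb{P}(h)\cup\mathbb{P}(j)$; once that is verified, the hypothesis $p\nmid k_hk_j$ forces $u_a\not\equiv1\pmod{p}$ for every $a\in\mathbb{P}(m(g,h,i,j,k))$, which by Lemma \ref{L;Lemma3.11} yields $p\nmid k_{m(g,h,i,j,k)}$.

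To verify the containment I would treat the three pieces of $m(g,h,i,j,k)=(g\oplus k)\cup((\widetilde{g\cap k})\setminus i)\cup((h\cup j)\cap(\widetilde{g\cap k})\cap i)$ in turn. The third piece is trivially inside $\mathbb{P}(h)\cup\mathbb{P}(j)$. For the first, using the identity $\mathbb{P}(g)\bigtriangleup\mathbb{P}(k)=(\mathbb{P}(g)\bigtriangleup\mathbb{P}(i))\bigtriangleup(\mathbb{P}(i)\bigtriangleup\mathbb{P}(k))$ together with $g\oplus i\leq_2h$ and $k\oplus i\leq_2j$ (which give $\mathbb{P}(g)\bigtriangleup\mathbb{P}(i)\subseteq\mathbb{P}(h)$ and $\mathbb{P}(i)\bigtriangleup\mathbb{P}(k)\subseteq\mathbb{P}(j)$) yields $\mathbb{P}(g\oplus k)\subseteq\mathbb{P}(h)\cup\mathbb{P}(j)$. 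For the second piece, any $a\in\mathbb{P}((\widetilde{g\cap k})\setminus i)$ lies in $\mathbb{P}(g)\setminus\mathbb{P}(i)\subseteq\mathbb{P}(g\oplus i)\subseteq\mathbb{P}(h)$. This completes the first statement.

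For the "Moreover" part I would count $\mathbb{U}_{h,\ell,m}$ directly. Since $p\nmid k_h$ (inherited from $p\nmid k_hk_j$), Lemma \ref{L;Lemma3.11} implies $u_b\not\equiv1\pmod{p}$ for every $b\in\mathbb{P}(h)$. An element $a\in\mathbb{U}_{h,\ell,m}$ is determined by its "new" coordinate set $\mathbb{P}(a)\setminus\mathbb{P}(h)$, which must be an $m$-subset of $\mathbb{P}(\ell)\setminus\mathbb{P}(h)$; and the condition $p\nmid k_a$ is then equivalent, via Lemma \ref{L;Lemma3.11}, to requiring every member of this $m$-subset to come from $\{b:u_b\not\equiv1\pmod{p}\}$. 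Thus $|\mathbb{U}_{h,\ell,m}|$ equals the number of $m$-subsets of the set $\{b\in\mathbb{P}(\ell)\setminus\mathbb{P}(h):u_b\not\equiv1\pmod{p}\}$, which by definition has cardinality $n_{h,\ell}$. This gives $\binom{n_{h,\ell}}{m}$, as required. I do not anticipate a substantive obstacle; the only point requiring care is the symmetric-difference manipulation for the piece $\mathbb{P}(g\oplus k)$ in the first part.
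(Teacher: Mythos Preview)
Your proposal is correct and follows essentially the same approach as the paper: both arguments establish $m(g,h,i,j,k)\leq_2 h\cup j$ piece by piece and then invoke Lemma \ref{L;Lemma3.11}, and both handle the ``Moreover'' part by direct counting via Notation \ref{N;Notation6.5} and Lemmas \ref{L;Lemma3.11}, \ref{L;Lemma3.1}. Your symmetric-difference identity for $\mathbb{P}(g\oplus k)$ is a slightly cleaner packaging of the paper's separate treatment of $g\setminus k$ and $k\setminus g$, but the substance is identical.
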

\begin{proof}
As $g\oplus i\leq_2 h$ and $k\oplus i\leq_2 j$, $(g\setminus i)\cup(i\setminus g)\leq_2 h$ and $(k\setminus i)\cup(i\setminus k)\leq_2 j$. Notice that $(\widetilde{g\cap k})\setminus i\leq_2 k\setminus i\leq_2 j$ and $(h\cup j)\cap(\widetilde{g\cap k})\cap i\leq_2 h\cup j$. Notice that $g\setminus k=(g\setminus(i\cup k))\cup((g\cap i)\setminus k)\!\leq_2\!(g\setminus i)\cup(i\setminus k)\!\leq_2\! h\cup j$. Moreover, notice that $k\setminus g\!=\!(k\setminus(g\cup i))\cup((k\cap i)\setminus g)\leq_2 (k\setminus i)\cup(i\setminus g)\leq_2 h\cup j$. As $p\nmid k_hk_j$, the first statement is thus from Notation \ref{N;Notation4.8} and Lemma \ref{L;Lemma3.11}. The second statement is from combining Notation \ref{N;Notation6.5}, Lemmas \ref{L;Lemma3.11}, \ref{L;Lemma3.1}. The desired lemma thus follows.
\end{proof}
\begin{lem}\label{L;Lemma6.9}
Assume that $g, h, i, j, k, \ell\in [0,d]$, $m=(h\cap k)\cup(j\setminus k)$, $q=\widetilde{g\cap i}$, $r\!=\!(g\oplus i)\cup ((j\cup k)\cap q)$, and $s\!=\!|\mathbb{P}((k\cap q)\setminus(h\cap k\cap q))|$. Assume that $p_{gh}^i\!\neq\! 0$, $p\nmid k_hk_jk_k$, and $h\!\leq_2\! j\!\leq_2\! g\odot i$. Then $m\!\in\!\mathbb{U}_{h, g\odot i, n_{h, m}}$, $n_{m,r}$ is defined, $n_{m,r}\!=\!s$, and $\mathbb{U}_{m, r, 0}, \mathbb{U}_{m, r, 1},\ldots, \mathbb{U}_{m, r, s}$ are nonempty. Moreover, if $g\setminus \ell\leq_2 k$, the disjoint union of $\mathbb{U}_{m, r, 0}, \mathbb{U}_{m, r, 1},\ldots, \mathbb{U}_{m, r, s}$ is $\{a: h\leq_2 a\leq_2 g\odot i, \ m(\ell, k, g, j, i)=m(\ell, k, g, a, i)\}$.
\end{lem}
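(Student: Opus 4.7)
The plan is to reduce everything to a single set-theoretic identity, namely $r\setminus m = (k\cap q)\setminus(h\cap k\cap q)$, and then invoke Lemmas \ref{L;Lemma6.6} and \ref{L;Lemma6.7} to reconcile the two descriptions of admissible $a$.

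First I will exploit the hypothesis $p_{gh}^i\neq 0$. Lemmas \ref{L;Lemma2.1} and \ref{L;Lemma4.3} yield $g\oplus i\leq_2 h\leq_2 g\odot i$, and since $g\odot i = (g\oplus i)\cup q$ with $(g\oplus i)\cap q=0$, every element of $[0,d]$ sitting between $h$ and $g\odot i$ in the $\leq_2$ order decomposes uniquely as $(g\oplus i)$ joined with a subset of $q$. In particular $h=(g\oplus i)\cup(h\cap q)$ and $j=(g\oplus i)\cup(j\cap q)$. From these decompositions I will compute $m\cap(g\oplus i)=g\oplus i$ and $m\cap q=((h\cap k)\cup(j\setminus k))\cap q$, which already gives $m\leq_2 r$, and then $r\setminus m=(k\cap q)\setminus(h\cap k\cap q)$, so $|\mathbb{P}(r\setminus m)|=s$.

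With this identity in hand, assertions (i)--(iv) follow quickly. The containment $h\leq_2 m\leq_2 j\leq_2 g\odot i$ is immediate; Lemma \ref{L;Lemma3.11} together with the disjointness of $\mathbb{P}(h\cap k)$ and $\mathbb{P}(j\setminus k)$ gives $k_m=k_{h\cap k}k_{j\setminus k}$, so $p\nmid k_m$; and because every $u_a-1$ for $a\in\mathbb{P}(r)\setminus\mathbb{P}(m)\subseteq\mathbb{P}(k)$ is coprime to $p$ (and similarly for $\mathbb{P}(m)\setminus\mathbb{P}(h)$), we obtain $n_{h,m}=|\mathbb{P}(m)|-|\mathbb{P}(h)|$ and $n_{m,r}=s$. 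Non-emptiness of $\mathbb{U}_{m,r,\alpha}$ for each $\alpha\in[0,s]$ then follows by choosing a size-$\alpha$ subset $T$ of $\mathbb{P}(r)\setminus\mathbb{P}(m)$ and taking the unique $a\in[0,d]$ with $\mathbb{P}(a)=\mathbb{P}(m)\cup T$ guaranteed by Lemma \ref{L;Lemma3.1}; a second application of Lemma \ref{L;Lemma3.11} gives $p\nmid k_a$.

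The final assertion is the real content. By Notation \ref{N;Notation6.5} the disjoint union is precisely $\{a:m\leq_2 a\leq_2 r,\ p\nmid k_a\}$. Given such an $a$, I will decompose $a=(g\oplus i)\cup a'$ with $a'=a\cap q$; using the computations above and the identity $r\cap q=(j\cup k)\cap q=((h\cap k)\cup(j\setminus k)\cup(k\setminus(h\cap k)))\cap q$, the bounds $m\leq_2 a\leq_2 r$ translate into exactly the bracketing hypothesis of Lemma \ref{L;Lemma6.6} applied to $a'$. That lemma then returns $h\leq_2 a\leq_2 g\odot i$ and $m(\ell,k,g,j,i)=m(\ell,k,g,a,i)$. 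For the converse containment, $h\leq_2 a\leq_2 g\odot i$ supplies $h\cap q\leq_2 a'\leq_2 q$, and then Lemma \ref{L;Lemma6.7} (this is where the hypothesis $g\setminus\ell\leq_2 k$ is used) converts the equality of $m$-values back into the bracketing, hence into $m\leq_2 a\leq_2 r$; the condition $p\nmid k_a$ is the final clause of Lemma \ref{L;Lemma6.6}. The main obstacle will be the set-theoretic bookkeeping of the opening step --- once the translations $m\cap q=((h\cap k)\cup(j\setminus k))\cap q$ and $r\cap q=(j\cup k)\cap q$ are verified, everything else is a mechanical dictionary between the interval $[m,r]$ and the bracketed interval of subsets of $q$ featured in Lemmas \ref{L;Lemma6.6} and \ref{L;Lemma6.7}.
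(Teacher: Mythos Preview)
Your proposal is correct and follows essentially the same route as the paper's own proof. Both arguments hinge on the decomposition $a=(g\oplus i)\cup(a\cap q)$ for $h\leq_2 a\leq_2 g\odot i$, the identity $(j\cup k)\cap q=(m\cap q)\cup\bigl((k\cap q)\setminus(h\cap k\cap q)\bigr)$ (your version $r\setminus m=(k\cap q)\setminus(h\cap k\cap q)$ is equivalent), and then Lemmas~\ref{L;Lemma6.6} and~\ref{L;Lemma6.7} to identify the interval $\{a:m\leq_2 a\leq_2 r\}$ with $\{a:h\leq_2 a\leq_2 g\odot i,\ m(\ell,k,g,j,i)=m(\ell,k,g,a,i)\}$; the only cosmetic difference is that the paper obtains $p\nmid k_a$ in the reverse inclusion from $a\leq_2 r$ and $p\nmid k_r$ via Lemma~\ref{L;Lemma3.11}, whereas you invoke the final clause of Lemma~\ref{L;Lemma6.6}.
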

\begin{proof}
As $h\leq_2 j\leq_2 g\odot i$, it is clear that $h=(h\cap k)\cup(h\setminus k)\leq_2 m\leq_2g\odot i$. As $p\nmid k_hk_j$ and $m\leq_2 h\cup j$, Lemma \ref{L;Lemma3.11} and Notation \ref{N;Notation6.5} thus imply that $p\nmid k_m$ and $|\mathbb{P}(m)|-|\mathbb{P}(h)|=n_{h,m}$. Hence $m\in \mathbb{U}_{h, g\odot i, n_{h, m}}$ by Notation \ref{N;Notation6.5}. As $p_{gh}^i\!\neq\!0$, notice that $g\oplus i\leq_2 h$ by Lemmas \ref{L;Lemma2.1} and \ref{L;Lemma4.3}. So $r\leq_2 h\cup j\cup k$. Hence $p\nmid k_r$ as $p\nmid k_hk_jk_k$ and Lemma \ref{L;Lemma3.11} holds. As $g\oplus i\leq_2 h\leq_2 m\leq_2g\odot i$, Lemma \ref{L;Lemma4.4} thus implies that $p_{gm}^i\neq 0$. Therefore $m\!=\!(g\oplus i)\cup (m\cap q)$ by Lemmas \ref{L;Lemma2.1} and \ref{L;Lemma4.3}. Notice that $m\leq_2 r$ as $(j\cup k)\cap q\!=\!(m\cap q)\cup ((k\cap q)\setminus (h\cap k\cap q))$. As $p\nmid k_r$ and $m\leq_2 r$, Lemma \ref{L;Lemma3.11} and Notation \ref{N;Notation6.5} imply that $n_{m,r}=|\mathbb{P}(r)|-|\mathbb{P}(m)|=s$. So $\mathbb{U}_{m, r, 0}, \mathbb{U}_{m, r, 1},\ldots, \mathbb{U}_{m, r, s}$
are nonempty and pairwise disjoint by Notation \ref{N;Notation6.5}. The first statement thus follows.

Set $\mathbb{U}=\bigcup_{t=0}^s\mathbb{U}_{m,r,t}$ and $\mathbb{V}=\{a: h\leq_2 a\leq_2 g\odot i, \ m(\ell, k, g, j, i)=m(\ell, k, g, a, i)\}$. As $h\leq_2 m\leq_2 r\leq_2 g\odot i$, Lemma \ref{L;Lemma6.6} thus implies that $\mathbb{U}\subseteq\mathbb{V}$. Pick $u\in \mathbb{V}$. Therefore $g\oplus i\leq_2 h\leq_2 u\leq_2 g\odot i$. Hence $u=(g\oplus i)\cup (u\cap q)$. As $m(\ell, k, g, j, i)=m(\ell, k, g, u, i)$, notice that $m\cap q\leq_2 u\cap q\leq_2 (j\cup k)\cap q$ by Lemma \ref{L;Lemma6.7}. Therefore $m\leq_2 u\leq_2 r$. As $p\nmid k_r$, Lemma \ref{L;Lemma3.11} thus implies that $p\nmid k_u$. Hence $u\in \mathbb{U}$ by Notation \ref{N;Notation6.5}. As $u$ is chosen from $\mathbb{V}$ arbitrarily, notice that $\mathbb{V}\subseteq \mathbb{U}$. The desired lemma thus follows.
\end{proof}
\begin{lem}\label{L;Lemma6.10}
Assume that $g, h, i, j, k, \ell\in [0,d]$, $m=(h\cap k)\cup(j\setminus k)$, $q=\widetilde{g\cap i}$, $r\!=\!(g\oplus i)\!\cup\!((j\cup k)\cap q)$, and $s\!=\!|\mathbb{P}((k\cap q)\setminus(h\cap k\cap q))|$. Assume that $p_{gh}^i\!\!\neq\!\! 0$, $p\nmid k_hk_jk_k$, and $h\!\leq_2\! j\!\leq_2\! g\odot i$. If $\ell\in [0,s]$, then $\mathbb{U}_{m, r, \ell}$ is defined and $\mathbb{U}_{m, r, \ell}\subseteq\mathbb{U}_{h, g\odot i, \ell+n_{h, m}}$.
\end{lem}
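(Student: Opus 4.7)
The plan is to deduce Lemma 6.10 almost entirely from Lemma 6.9, which already did the heavy lifting of comparing $m$ and $r$ with $h$ and $g\odot i$.

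First I would check that $\mathbb{U}_{m,r,\ell}$ is defined for every $\ell\in[0,s]$. Lemma 6.9 gives $n_{m,r}=s$ together with nonemptiness of $\mathbb{U}_{m,r,0},\ldots,\mathbb{U}_{m,r,s}$, and in particular the hypothesis $m\leq_2 r$ and $p\nmid k_r$ needed in Notation \ref{N;Notation6.5} to make the $\mathbb{U}_{m,r,t}$'s sensible. So this part is a citation, not a computation.

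Next I would take an arbitrary $a\in\mathbb{U}_{m,r,\ell}$ and verify the three conditions defining membership in $\mathbb{U}_{h,g\odot i,\ell+n_{h,m}}$. By Notation \ref{N;Notation6.5}, $a\in\mathbb{U}_{m,r,\ell}$ means $m\leq_2 a\leq_2 r$, $p\nmid k_a$, and $|\mathbb{P}(a)|-|\mathbb{P}(m)|=\ell$. The chain $h\leq_2 m\leq_2 r\leq_2 g\odot i$ established in the course of proving Lemma 6.9 immediately yields $h\leq_2 a\leq_2 g\odot i$; the coprimality $p\nmid k_a$ is carried over verbatim; and adding the identity $|\mathbb{P}(m)|-|\mathbb{P}(h)|=n_{h,m}$ (which was explicitly derived inside the proof of Lemma 6.9 via Lemma \ref{L;Lemma3.11} and the definition of $n_{h,m}$) to $|\mathbb{P}(a)|-|\mathbb{P}(m)|=\ell$ gives $|\mathbb{P}(a)|-|\mathbb{P}(h)|=\ell+n_{h,m}$. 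Invoking Notation \ref{N;Notation6.5} again places $a$ in $\mathbb{U}_{h,g\odot i,\ell+n_{h,m}}$.

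There is essentially no obstacle; the whole statement is a book-keeping consequence of the inclusion chain and the additivity of $|\mathbb{P}(\cdot)|$ on $\leq_2$-intervals. The only mildly delicate point is making sure one is quoting the right pieces of the Lemma 6.9 proof (the inclusion $m\leq_2 r$, the value $n_{m,r}=s$, and the equality $|\mathbb{P}(m)|-|\mathbb{P}(h)|=n_{h,m}$) rather than re-deriving them, so the proof stays short and does not redo the case analysis that distinguishes parts of $q$ lying in $h\cap k$, in $j\setminus k$, or in $(k\cap q)\setminus(h\cap k\cap q)$.
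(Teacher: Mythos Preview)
Your proposal is correct and follows essentially the same route as the paper: cite Lemma~\ref{L;Lemma6.9} for $m\in\mathbb{U}_{h,g\odot i,n_{h,m}}$ and $n_{m,r}=s$, unpack these via Notation~\ref{N;Notation6.5} to get the chain $h\leq_2 m\leq_2 r\leq_2 g\odot i$ and the identity $|\mathbb{P}(m)|-|\mathbb{P}(h)|=n_{h,m}$, then for an arbitrary element of $\mathbb{U}_{m,r,\ell}$ verify the three membership conditions for $\mathbb{U}_{h,g\odot i,\ell+n_{h,m}}$ by transitivity and addition. The only cosmetic difference is that the paper extracts $h\leq_2 m\leq_2 r\leq_2 g\odot i$ from the \emph{statement} of Lemma~\ref{L;Lemma6.9} (plus the trivial observation $r\leq_2 g\odot i$ from the definitions), whereas you phrase it as coming from its proof.
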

\begin{proof}
Notice that $m\in\mathbb{U}_{h, g\odot i, n_{h, m}}$ and $n_{m,r}=s$ by Lemma \ref{L;Lemma6.9}. As $\ell\in [0,s]$, $\mathbb{U}_{m,r,\ell}$ is defined and nonempty by Notation \ref{N;Notation6.5}. Moreover, notice that $h\leq_2 m\leq_2 r\leq_2 g\odot i$ and $|\mathbb{P}(m)|\!-\!|\mathbb{P}(h)|\!\!=\!\!n_{h,m}$ by Notation \ref{N;Notation6.5}. Pick $t\!\in\!\mathbb{U}_{m, r, \ell}$. Hence $p\nmid k_t$, $m\leq_2 t\leq_2 r$, and $|\mathbb{P}(t)|-|\mathbb{P}(m)|=\ell$ by Notation \ref{N;Notation6.5}. Therefore $|\mathbb{P}(t)|-|\mathbb{P}(h)|=\ell+n_{h,m}$. The desired lemma thus follows as $t$ is chosen from $\mathbb{U}_{m, r, \ell}$ arbitrarily.
\end{proof}
\begin{lem}\label{L;Lemma6.11}
Assume that $g, h, i, j, k, \ell, m\!\in\! [0,d]$, $q\!\!=\!\!\widetilde{g\cap i}$, and $g\setminus \ell\leq_2 k$. Assume that $h\cap q\leq_2 m\leq_2q$ and $m(\ell, k, g, j, i)=m(\ell, k, g, (g\oplus i)\cup m, i)$. Then $$\frac{k_{k\cap g\cap ((g\oplus i)\cup m)}}{k_{i\cap((g\oplus i)\cup m)}}=\frac{k_{(g\cap k)\setminus i}}{k_{i\setminus g}k_{(j\setminus k)\cap q}}.$$
\end{lem}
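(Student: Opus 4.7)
The plan is to apply Lemma~3.11 to expand both sides as products of factors $(u_h-1)$ and reduce the identity to a single set equality, which the second statement of Lemma~6.7 then supplies directly.

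First I would set $a = (g\oplus i)\cup m$ and exploit the hypothesis $m \leq_2 q = \widetilde{g\cap i}$, which forces $\mathbb{P}(m) \subseteq \mathbb{P}(g)\cap\mathbb{P}(i)$; in particular $\mathbb{P}(m)$ is disjoint from $\mathbb{P}(g\oplus i) = (\mathbb{P}(g)\setminus\mathbb{P}(i))\cup(\mathbb{P}(i)\setminus\mathbb{P}(g))$. A short case analysis using Notation~4.2 then yields the disjoint decompositions
$$k\cap g\cap a = ((g\cap k)\setminus i)\cup(k\cap m), \qquad i\cap a = (i\setminus g)\cup m, \qquad m = (m\cap k)\cup(m\setminus k).$$
Applying Lemma~3.11 to each of these gives $k_{k\cap g\cap a} = k_{(g\cap k)\setminus i}\,k_{k\cap m}$, $k_{i\cap a} = k_{i\setminus g}\,k_m$, and $k_m = k_{m\cap k}\,k_{m\setminus k}$, so the left-hand side of the desired identity collapses to
$$\frac{k_{k\cap g\cap a}}{k_{i\cap a}} = \frac{k_{(g\cap k)\setminus i}\,k_{k\cap m}}{k_{i\setminus g}\,k_{m\cap k}\,k_{m\setminus k}} = \frac{k_{(g\cap k)\setminus i}}{k_{i\setminus g}\,k_{m\setminus k}}.$$
The problem therefore reduces to proving $k_{m\setminus k} = k_{(j\setminus k)\cap q}$, which via Lemma~3.11 and Lemma~3.1 is equivalent to the set equality $m\setminus k = (j\setminus k)\cap q$.

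This last equality is exactly what the second statement of Lemma~6.7 delivers. Under the present hypotheses $g\setminus\ell\leq_2 k$, $h\cap q\leq_2 m\leq_2 q$, and $m(\ell,k,g,j,i) = m(\ell,k,g,(g\oplus i)\cup m,i)$, that lemma furnishes the sandwich
$$((h\cap k)\cup(j\setminus k))\cap q \leq_2 m \leq_2 ((h\cap k)\cup(j\setminus k)\cup(k\setminus(h\cap k)))\cap q.$$
Taking $\setminus k$ on each bound collapses both to the single value $(j\setminus k)\cap q$, because $h\cap k$ and $k\setminus(h\cap k)$ are subsets of $k$ and vanish under $\setminus k$, while $j\setminus k$ is preserved. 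Since $\setminus k$ preserves $\leq_2$, I obtain $m\setminus k$ sandwiched between two copies of $(j\setminus k)\cap q$, and Lemma~3.1 then forces $m\setminus k = (j\setminus k)\cap q$.

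The main obstacle I anticipate is not any single deep step but the careful bookkeeping needed to verify the disjointness claims that underwrite the applications of Lemma~3.11 and to track which terms vanish upon applying $\setminus k$ to the bounds from Lemma~6.7. Once those verifications are dispatched, the identity assembles immediately.
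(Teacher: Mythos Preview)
Your argument is correct. Both your proof and the paper's hinge on Lemma~3.11 and on the bounds for $m$ supplied by Lemma~6.7, but you organize the cancellation differently. The paper writes $m$ as the lower bound $((h\cap k)\cup(j\setminus k))\cap q$ together with an auxiliary piece $r\leq_2(k\setminus(h\cap k))\cap q$, then decomposes both $k_m$ and $k_{k\cap g\cap a}$ into three factors each (involving $k_{h\cap k\cap q}$ and $k_r$) and cancels those common factors. You instead split $m$ along $k$ as $(m\cap k)\cup(m\setminus k)$, cancel $k_{m\cap k}$ against $k_{k\cap m}$ in one stroke, and then pin down $m\setminus k$ by applying $\setminus k$ to both ends of the Lemma~6.7 sandwich and observing they collapse to the same value $(j\setminus k)\cap q$. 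Your route avoids introducing the auxiliary $r$ and is slightly more streamlined; the paper's route makes the three-piece structure of $m$ more explicit. One minor quibble: you say the equality $k_{m\setminus k}=k_{(j\setminus k)\cap q}$ is \emph{equivalent} to $m\setminus k=(j\setminus k)\cap q$, but in general equal valencies need not force equal indices; since you actually prove the set equality, only the sufficient direction is needed and the argument goes through.
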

\begin{proof}
As $m\leq_2 q\leq_2 i$, $i\cap((g\oplus i)\cup m)\!=\!(i\setminus g)\cup m$ by a direct computation. As $(i\setminus g)\cap m=0$, Lemma \ref{L;Lemma3.11} thus implies that $k_{i\cap((g\oplus i)\cup m)}=k_{i\setminus g}k_m$. By Lemma \ref{L;Lemma6.7}, notice that there exists $r\!\in\! [0,d]$ such that $m=(((h\cap k)\cup (j\setminus k))\cap q)\cup r$ and $r\!\leq_2\! (k\setminus (h\cap k))\cap q$. As $\mathbb{P}(h\cap k\cap q)$, $\mathbb{P}((j\setminus k)\cap q)$, and $\mathbb{P}(r)$ are pairwise disjoint, Lemma \ref{L;Lemma3.11} implies that $k_m=k_{h\cap k\cap q}k_{(j\setminus k)\cap q}k_r$. By a direct computation, notice that $k\cap g\cap((g\oplus i)\cup m)=((g\cap k)\setminus i)\cup(k\cap g\cap m)$. As $m=(((h\cap k)\cup (j\setminus k))\cap q)\cup r$, notice that $k\cap g\cap m=(h\cap k\cap q)\cup r$. It is clear that $\mathbb{P}((g\cap k)\setminus i)$, $\mathbb{P}(h\cap k\cap q)$, and $\mathbb{P}(r)$ are pairwise disjoint. Notice that $k_{k\cap g\cap ((g\oplus i)\cup m)}=k_{h\cap k\cap q}k_{(g\cap k)\setminus i}k_r$ by Lemma \ref{L;Lemma3.11}. The desired lemma thus follows from a direct computation.
\end{proof}
We are now ready to present some computational results of the elements in $\mathbb{T}$.
\begin{lem}\label{L;Lemma6.12}
Assume that $g, h, i, j, k, \ell\in [0,d]$, $m=(h\cap k)\cup(j\setminus k)$, $q=\widetilde{g\cap i}$, $r\!=\!(g\oplus i)\cup ((j\cup k)\cap q)$, and $s\!=\!|\mathbb{P}((k\cap q)\setminus(h\cap k\cap q))|\!>\!0$. If $p_{gh}^i\neq 0$, $p_{gj}^i\neq 0$, $p_{\ell k}^g\neq 0$, $p\nmid k_hk_k$, and $B_{g, j,i}\in\mathrm{Supp}_\mathbb{B}(D_{g, h, i})$, then $B_{\ell, m(\ell, k, g, j,i), i}\notin\mathrm{Supp}_\mathbb{B}(B_{\ell, k, g}D_{g, h, i})$. In particular, $B_{\ell, k, g}D_{g, h, i}=O$.
\end{lem}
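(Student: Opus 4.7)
The plan is to expand $B_{\ell, k, g}D_{g, h, i}$ using Notation \ref{N;Notation6.5} and Lemma \ref{L;Lemma4.20}, then collect the coefficient in the $\mathbb{B}$-basis of each basis element of the form $B_{\ell, m(\ell, k, g, j, i), i}$ and show it is $\overline{0}$. To begin, $p_{\ell k}^g \neq 0$ and Lemma \ref{L;Lemma4.3} yield $g \leq_2 \ell \odot k \leq_2 \ell \cup k$, hence $g\setminus \ell \leq_2 k$. For every $a$ with $B_{g, a, i} \in \mathrm{Supp}_\mathbb{B}(D_{g, h, i})$, I have $h \leq_2 a \leq_2 g\odot i$ and $p\nmid k_a$; combined with $g\oplus i \leq_2 h$ (from $p_{gh}^i \neq 0$), Lemma \ref{L;Lemma4.4} then gives $p_{ga}^i \neq 0$, and Lemma \ref{L;Lemma4.20} yields $B_{\ell, k, g}B_{g, a, i} = \overline{k_{k\cap g\cap a}}B_{\ell, m(\ell, k, g, a, i), i}$.

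Collecting contributions to $B_{\ell, m(\ell, k, g, j, i), i}$, the sought coefficient is
\[
\sum_{a}(\overline{-1})^{|\mathbb{P}(a)|-|\mathbb{P}(h)|}\overline{k_{i\cap a}}^{-1}\overline{k_{k\cap g\cap a}},
\]
with $a$ ranging over those in the support of $D_{g, h, i}$ satisfying $m(\ell, k, g, a, i) = m(\ell, k, g, j, i)$. Since $B_{g, j, i}\in\mathrm{Supp}_\mathbb{B}(D_{g, h, i})$ also forces $p\nmid k_j$ and $h\leq_2 j\leq_2 g\odot i$, the hypotheses of Lemma \ref{L;Lemma6.9} are met, and that lemma realizes this index set as the disjoint union $\bigsqcup_{\tau=0}^{s}\mathbb{U}_{m, r, \tau}$ with $n_{m, r} = s$. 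Lemma \ref{L;Lemma6.10} then gives $|\mathbb{P}(a)|-|\mathbb{P}(h)| = \tau + n_{h, m}$ for $a \in \mathbb{U}_{m, r, \tau}$, and Lemma \ref{L;Lemma6.8} furnishes $|\mathbb{U}_{m, r, \tau}| = \binom{s}{\tau}$.

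To extract $\overline{k_{i\cap a}}^{-1}\overline{k_{k\cap g\cap a}}$ as a constant in $a$, I would write each $a$ in the collection as $(g\oplus i)\cup a'$ where $a' = a\cap q$ satisfies $h\cap q \leq_2 a' \leq_2 q$, and apply Lemma \ref{L;Lemma6.11} (with its dummy variable taken to be $a'$) to obtain the integer identity $k_{k\cap g\cap a}\cdot k_{i\setminus g}\cdot k_{(j\setminus k)\cap q} = k_{i\cap a}\cdot k_{(g\cap k)\setminus i}$. Cross-multiplying this identity across two members $a, a^{\sharp}$ of the collection and reducing modulo $p$ gives $\overline{k_{k\cap g\cap a}}\cdot\overline{k_{i\cap a^{\sharp}}} = \overline{k_{k\cap g\cap a^{\sharp}}}\cdot\overline{k_{i\cap a}}$; since $p\nmid k_a$ makes $\overline{k_{i\cap a}}$ invertible, $\alpha := \overline{k_{i\cap a}}^{-1}\overline{k_{k\cap g\cap a}}$ is a fixed element of $\F$. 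The coefficient therefore collapses to
\[
\alpha(\overline{-1})^{n_{h, m}}\sum_{\tau=0}^{s}\overline{\binom{s}{\tau}}(\overline{-1})^{\tau} = \alpha(\overline{-1})^{n_{h, m}}\overline{(1-1)^s} = \overline{0}
\]
since $s > 0$, establishing the first assertion.

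The ``in particular'' claim follows because every basis element in $\mathrm{Supp}_\mathbb{B}(B_{\ell, k, g}D_{g, h, i})$ takes the form $B_{\ell, m(\ell, k, g, j', i), i}$ for some $j'$ with $B_{g, j', i}\in\mathrm{Supp}_\mathbb{B}(D_{g, h, i})$ (so $p_{gj'}^i\neq 0$ is automatic), and the first statement applied to each such $j'$ forces every coefficient to vanish. The main obstacle will be the bookkeeping needed to apply Lemma \ref{L;Lemma6.11} cleanly in characteristic $p$: by passing to the integer cross-multiplication identity rather than trying to divide directly, one sidesteps the possible non-invertibility of $\overline{k_{i\setminus g}k_{(j\setminus k)\cap q}}$ and confirms that the ratio is genuinely constant on the collection.
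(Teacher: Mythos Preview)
Your proof is correct and follows essentially the same approach as the paper: both expand $B_{\ell,k,g}D_{g,h,i}$ via Lemma~\ref{L;Lemma4.20}, invoke Lemmas~\ref{L;Lemma6.9} and~\ref{L;Lemma6.10} to parametrize the contributing indices by $\bigsqcup_{\tau=0}^{s}\mathbb{U}_{m,r,\tau}$, use Lemma~\ref{L;Lemma6.11} to see that $\overline{k_{i\cap a}}^{-1}\overline{k_{k\cap g\cap a}}$ is constant on this collection, and then apply the binomial identity $\sum_{\tau=0}^{s}(-1)^{\tau}\binom{s}{\tau}=0$. Your cross-multiplication maneuver is a slightly more explicit way of handling the constancy in characteristic $p$ than the paper's direct appeal to the rational identity~(6.1), but the argument is otherwise identical.
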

\begin{proof}
As $p_{gj}^i\!\neq\! 0$, Lemmas \ref{L;Lemma2.1} and \ref{L;Lemma4.3} imply that $j\leq_2 g\odot i$. As $B_{g, j,i}\!\in\!\mathrm{Supp}_\mathbb{B}(D_{g, h, i})$, notice that $h\leq_2 j\leq_2 g\odot i$  and $p\nmid k_j$ by Notation \ref{N;Notation6.5}. As $p_{\ell k}^g\neq 0$, it is clear to see that $g\setminus \ell\leq_2 k$ by Lemmas \ref{L;Lemma2.1} and \ref{L;Lemma4.3}. According to Lemma \ref{L;Lemma6.9}, the disjoint union of $\mathbb{U}_{m, r, 0}, \mathbb{U}_{m, r, 1},\ldots, \mathbb{U}_{m, r, s}$ is $\{a: h\leq_2 a\leq_2 g\odot i, \ m(\ell, k, g, j, i)=m(\ell, k, g, a, i)\}$. As $p_{gh}^i\neq 0$, Lemmas \ref{L;Lemma2.1} and \ref{L;Lemma4.3} imply that $g\oplus i\leq_2 h$. So $t=(g\oplus i)\cup (t\cap q)$ for any $t\in \{a: h\leq_2 a\leq_2 g\odot i, \ m(\ell, k, g, j, i)=m(\ell, k, g, a, i)\}$. As $s>0$, the combination of Lemmas \ref{L;Lemma6.9}, \ref{L;Lemma6.8}, \ref{L;Lemma6.11}, and the Newton's Binomial Theorem thus implies that
\begin{align}\label{Eq;6.1}
\sum_{u=0}^s\sum_{t\in \mathbb{U}_{m,r, u}}(-1)^{u+n_{h, m}}\frac{k_{k\cap g\cap t}}{k_{i\cap t}}=(-1)^{n_{h,m}}\frac{k_{(g\cap k)\setminus i}}{k_{i\setminus g}k_{(j\setminus k)\cap q}}\sum_{u=0}^s(-1)^u{s\choose u}=0.
\end{align}

As $\{a: h\leq_2 a\leq_2 g\odot i, \ m(\ell, k, g, j, i)=m(\ell, k, g, a, i)\}$ is known to be a disjoint union of $\mathbb{U}_{m, r, 0}, \mathbb{U}_{m, r, 1},\ldots, \mathbb{U}_{m, r, s}$, the combination of Notation \ref{N;Notation6.5}, Lemmas \ref{L;Lemma4.20}, \ref{L;Lemma6.9}, \ref{L;Lemma6.10}, and Theorem \ref{T;Theorem4.22} thus implies that $c_{\ell, m(\ell, k, g, j,i),i}(B_{\ell, k, g}D_{g, h, i})$ is equal to
$$\sum_{u=0}^s\sum_{t\in \mathbb{U}_{m,r,u}}(\overline{-1})^{u+n_{h, m}}\overline{k_{i\cap t}}^{-1}\overline{k_{k\cap g\cap t}}.$$

Therefore $c_{\ell, m(\ell, k, g, j,i),i}(B_{\ell, k, g}D_{g, h, i})\!=\!\overline{0}$ by \eqref{Eq;6.1}. The first statement thus follows. As $B_{g, j,i}$ is chosen from $\mathrm{Supp}_\mathbb{B}(D_{g, h, i})$ arbitrarily, the second statement thus follows from Lemma \ref{L;Lemma4.20} and the first one. The desired lemma thus follows.
\end{proof}
\begin{lem}\label{L;Lemma6.13}
Assume that $g, h, i\!\in\! [0,d]$ and $p\nmid k_hk_i$. Assume that $h\cup i\leq_2\widetilde{g}$. Then
\[ B_{g, i, g}D_{g, h, g}=D_{g, h, g}B_{g, i, g}=\begin{cases} \overline{k_i}D_{g, h, g}, & \text{if}\ i\leq_2 h,\\
O, & \text{otherwise}.
\end{cases}\]
\end{lem}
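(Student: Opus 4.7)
Since both $B_{g,i,g}$ and $D_{g,h,g}$ lie in $E_g^*\mathbb{T}E_g^*$, which is commutative by Lemma \ref{L;Lemma2.10}, it suffices to evaluate $B_{g,i,g}D_{g,h,g}$. My plan is to expand this product using the definition of $D_{g,h,g}$ from Notation \ref{N;Notation6.5} and then apply Lemma \ref{L;Lemma6.1} term-by-term. Observe that $g\odot g=\widetilde{g}$, and since every $m$ in the summation range satisfies $m\leq_2\widetilde{g}\leq_2 g$, the factor $\overline{k_{g\cap m}}^{-1}$ appearing in $D_{g,h,g}$ simplifies to $\overline{k_m}^{-1}$. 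Because $i\cup m\leq_2\widetilde{g}$ (using $h\cup i\leq_2\widetilde{g}$), Lemma \ref{L;Lemma6.1} gives $B_{g,i,g}B_{g,m,g}=\overline{k_{i\cap m}}B_{g,i\cup m,g}$, so the product becomes an explicit sum indexed by $m\in\bigcup_{k}\mathbb{U}_{h,\widetilde{g},k}$.

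For the first case $i\leq_2 h$, the inclusion $i\leq_2 h\leq_2 m$ forces $i\cup m=m$ and $i\cap m=i$ for every $m$ in the summation. Each summand of $B_{g,i,g}D_{g,h,g}$ therefore reproduces the corresponding summand of $D_{g,h,g}$ multiplied by the scalar $\overline{k_i}$, immediately yielding $\overline{k_i}D_{g,h,g}$.

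For the second case $i\not\leq_2 h$, I would collect the coefficient of each basis vector $B_{g,n,g}$, where $n$ ranges over indices with $h\cup i\leq_2 n\leq_2\widetilde{g}$, and show it vanishes. Set $a=i\cap h$ and $b=i\setminus h$; by assumption $b\neq 0$. The equation $m\cup i=n$ with $h\leq_2 m\leq_2\widetilde{g}$ is equivalent to writing $m=(n\setminus i)\cup a\cup c$ for some $c\leq_2 n\cap b=b$. A direct computation gives $k_m=k_{n\setminus i}k_ak_c$, $k_{i\cap m}=k_ak_c$, and $|\mathbb{P}(m)|-|\mathbb{P}(h)|=|\mathbb{P}(n\setminus(i\cup h))|+|\mathbb{P}(c)|$. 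Since $a,c\leq_2 i$ and $p\nmid k_i$, the divisibility condition $p\nmid k_m$ reduces to $p\nmid k_{n\setminus i}$, a condition depending only on $n$. If $p\mid k_{n\setminus i}$ the sum over $m$ is empty; otherwise the coefficient of $B_{g,n,g}$ factors as a constant (independent of $c$) times $\sum_{c\leq_2 b}(\overline{-1})^{|\mathbb{P}(c)|}=\overline{(1-1)^{|\mathbb{P}(b)|}}=\overline{0}$, the last equality holding because $|\mathbb{P}(b)|>0$.

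The main obstacle will be the bookkeeping in the second case: one must verify that $m\mapsto c$ is a bijection between the $m$'s contributing to $B_{g,n,g}$ and the subsets $c\leq_2 b$, and confirm that the divisibility condition $p\nmid k_m$ decouples cleanly so that the inner sum is the full alternating binomial sum rather than a restricted one. The hypothesis $p\nmid k_hk_i$ is precisely what guarantees this decoupling, so that the sum collapses via $(1-1)^{|\mathbb{P}(b)|}=0$.
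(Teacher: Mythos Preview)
Your argument is correct. The first case and the commutativity reduction match the paper exactly. For the second case ($i\not\leq_2 h$), however, you take a genuinely different route: the paper simply invokes Lemma~\ref{L;Lemma6.12} (with $\ell=i=g$, $k=i$, so that $q=\widetilde{g}$ and $s=|\mathbb{P}(i\setminus(h\cap i))|>0$), whereas you give a self-contained computation. Your parametrization $m=(n\setminus i)\cup a\cup c$ with $c\leq_2 b$ and the factorization $\overline{k_{i\cap m}}\,\overline{k_m}^{-1}=\overline{k_{n\setminus i}}^{-1}$ are exactly right, and the vanishing via $\sum_{c\leq_2 b}(\overline{-1})^{|\mathbb{P}(c)|}=\overline{0}$ is the same alternating-sum mechanism that drives \eqref{Eq;6.1} in the proof of Lemma~\ref{L;Lemma6.12}, just in a setting where the combinatorics is much simpler. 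The paper's approach is shorter in context because Lemma~\ref{L;Lemma6.12} is already available (and is needed later for Theorem~\ref{T;Theorem8.9}); your approach has the virtue of being elementary and independent of the chain of Lemmas~\ref{L;Lemma6.6}--\ref{L;Lemma6.11}.
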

\begin{proof}
As $E_g^*\mathbb{T}E_g^*$ is commutative, it is clear that $B_{g, i, g}D_{g, h, g}\!=\!D_{g, h, g}B_{g, i, g}$. If $i\leq_2 h$, then $B_{g, i, g}D_{g, h, g}\!=\!D_{g, h, g}B_{g, i, g}=\overline{k_i}D_{g, h, g}$ by Notation \ref{N;Notation6.5} and Lemma \ref{L;Lemma6.1}. Otherwise, the assumption $h\cup i\leq_2\widetilde{g}$ implies that $p_{gh}^g\neq 0$ and $p_{gi}^g\neq 0$ by Lemmas \ref{L;Lemma4.4} and \ref{L;Lemma2.1}. Moreover, notice that $|\mathbb{P}(i\setminus (h\cap i))|\!>\!0$. Therefore $B_{g, i, g}D_{g, h, g}=D_{g, h, g}B_{g, i, g}=O$
by Lemma \ref{L;Lemma6.12}. The desired lemma thus follows from the above discussion.
\end{proof}
\begin{lem}\label{L;Lemma6.14}
Assume that $g, h, i\in [0,d]$ and $p\nmid k_hk_i$. Assume that $h\cup i\leq_2\widetilde{g}$. Then $D_{g, i, g}D_{g, h, g}=D_{g, h, g}D_{g, i, g}=\delta_{hi}D_{g, h, g}$.
\end{lem}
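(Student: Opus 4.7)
The plan is to prove the identity by expanding $D_{g,i,g}$ in its defining basis and then collapsing the resulting sum using Lemma \ref{L;Lemma6.13}, which already encodes how a single basis element $B_{g,m,g}$ multiplies $D_{g,h,g}$. Since $D_{g,h,g}, D_{g,i,g} \in E_g^*\mathbb{T}E_g^*$ by the last sentence of Notation \ref{N;Notation6.5}, Lemma \ref{L;Lemma2.10} gives the commutativity $D_{g,h,g}D_{g,i,g}=D_{g,i,g}D_{g,h,g}$ for free, so it is enough to evaluate $D_{g,h,g}D_{g,i,g}$.

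Writing $D_{g,i,g}=\sum_{k=0}^{n_{i,\widetilde{g}}}\sum_{m\in\mathbb{U}_{i,\widetilde{g},k}}(\overline{-1})^{k}\,\overline{k_{g\cap m}}^{-1}B_{g,m,g}$, I would substitute and invoke Lemma \ref{L;Lemma6.13} termwise. Every $m$ appearing satisfies $i\leq_2 m\leq_2\widetilde{g}$, so $m\leq_2 g$ and $k_{g\cap m}=k_m$; moreover, Lemma \ref{L;Lemma6.13} forces the only surviving contributions to be those with $m\leq_2 h$. If $i\not\leq_2 h$, no such $m$ exists (since $i\leq_2 m$), giving $D_{g,h,g}D_{g,i,g}=O=\delta_{hi}D_{g,h,g}$. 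If $i\leq_2 h$, the surviving terms are exactly the $m$ with $i\leq_2 m\leq_2 h$ and $p\nmid k_m$, and each contributes $(\overline{-1})^{|\mathbb{P}(m)|-|\mathbb{P}(i)|}\,\overline{k_{g\cap m}}^{-1}\cdot\overline{k_m}=(\overline{-1})^{|\mathbb{P}(m)|-|\mathbb{P}(i)|}$ to the coefficient of $D_{g,h,g}$.

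The key step is then to recognise the remaining sum as a binomial expansion. By Lemma \ref{L;Lemma6.8}, applied with the pair $(i,h)$, $|\mathbb{U}_{i,h,k}|=\binom{n_{i,h}}{k}$, so the overall coefficient of $D_{g,h,g}$ collapses to $\sum_{k=0}^{n_{i,h}}(\overline{-1})^{k}\overline{\binom{n_{i,h}}{k}}=\overline{(1-1)^{n_{i,h}}}$, which is $\overline{1}$ when $n_{i,h}=0$ and $\overline{0}$ when $n_{i,h}>0$. The main (and only delicate) obstacle is to show $n_{i,h}>0$ whenever $i\leq_2 h$ and $i\neq h$. Here the hypothesis $p\nmid k_h$ is essential: Lemma \ref{L;Lemma3.11} gives $k_h=\prod_{a\in\mathbb{P}(h)}(u_a-1)$, so $u_a\not\equiv 1\pmod p$ for every $a\in\mathbb{P}(h)$, and hence Notation \ref{N;Notation6.5} yields $n_{i,h}=|\mathbb{P}(h)\setminus\mathbb{P}(i)|$, which is strictly positive by Lemma \ref{L;Lemma3.1}.

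Combining the three sub-cases ($i\not\leq_2 h$; $i\leq_2 h$ with $i\neq h$; $i=h$) yields exactly $D_{g,h,g}D_{g,i,g}=\delta_{hi}D_{g,h,g}$, and together with the commutativity observation at the start this completes the proof.
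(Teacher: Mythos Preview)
Your proof is correct. Both you and the paper start from the commutativity of $E_g^*\mathbb{T}E_g^*$ and expand one of the two $D$'s via Notation~\ref{N;Notation6.5}, then apply Lemma~\ref{L;Lemma6.13} termwise; the difference is only in how the case $i\leq_2 h$, $i\neq h$ is dispatched. You carry out the binomial computation explicitly: the surviving coefficient of $D_{g,h,g}$ is $\sum_{k=0}^{n_{i,h}}(\overline{-1})^k\overline{\binom{n_{i,h}}{k}}$, and you use $p\nmid k_h$ together with Lemma~\ref{L;Lemma3.11} to see $n_{i,h}=|\mathbb{P}(h)\setminus\mathbb{P}(i)|>0$, so the sum vanishes. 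The paper instead exploits symmetry: having shown the product is nonzero only if $i\leq_2 h$, it swaps the roles of $h$ and $i$ (using commutativity) to get $h\leq_2 i$ as well, forcing $h=i$ by Lemma~\ref{L;Lemma3.1}, and then only the diagonal case $D_{g,h,g}^2=D_{g,h,g}$ needs to be checked. The paper's route is slicker and avoids invoking Lemma~\ref{L;Lemma6.8} or the binomial identity; your route is more direct and makes the orthogonality mechanism completely transparent.
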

\begin{proof}
As $E_g^*\mathbb{T}E_g^*$ is commutative, it is clear that $D_{g, i, g}D_{g, h, g}\!\!=\!\!D_{g, h, g}D_{g, i, g}$. According to
Notation \ref{N;Notation6.5} and Lemma \ref{L;Lemma6.13}, notice that $D_{g, i, g}D_{g, h, g}\!\!=\!\!D_{g, h, g}D_{g, i, g}\!\neq\! O$ only if $i\leq_2 h\leq_2 i$. Lemma \ref{L;Lemma3.1} implies that $h=i$. As $D_{g, h, g}D_{g, h, g}\!=\!D_{g, h, g}$ by Notation \ref{N;Notation6.5} and Lemma \ref{L;Lemma6.13}, the desired lemma thus follows from the above discussion.
\end{proof}
For further discussion, the following notation and another lemma are necessary.
\begin{nota}\label{N;Notation6.15}
\em Assume that $g, h, i, j\!\in\! \mathbb{N}_0\setminus\{0\}$. Let $g\mathrm{M}_h(\F)$ be the direct sum of $g$ copies of $\mathrm{M}_h(\F)$. So $g\mathrm{M}_h(\F)\!\cong\! i\mathrm{M}_j(\F)$ as algebras if and only if $g\!\!=\!\!i$ and $h\!=\!j$.
\end{nota}
\begin{lem}\label{L;Lemma6.16}
Assume that $g\in [0,d]$. Then $E_g^*\mathbb{T}E_g^*/\mathbb{I}_g\!\cong\! 2^{n_{0,\widetilde{g}}}\mathrm{M}_1(\F)$ as algebras and $\mathrm{Rad}(E_g^*\mathbb{T}E_g^*)=\mathbb{I}_g$. The subalgebra $E_g^*\mathbb{T}E_g^*$ of $\mathbb{T}$ is semisimple if and only if $p\nmid k_g$.
\end{lem}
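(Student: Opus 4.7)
The plan is to use the orthogonal idempotents $D_{g,h,g}$ produced in Lemma \ref{L;Lemma6.14} as a primitive idempotent system for the quotient $E_g^*\mathbb{T}E_g^*/\mathbb{I}_g$. Noting that $g\odot g=\widetilde{g}$, Notation \ref{N;Notation6.5} makes $D_{g,h,g}$ available for every $h\leq_2\widetilde{g}$ with $p\nmid k_h$, and Lemma \ref{L;Lemma3.11} implies that the number of such $h$ is exactly $2^{n_{0,\widetilde{g}}}$.

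First I would compute $\dim_\F(E_g^*\mathbb{T}E_g^*/\mathbb{I}_g)$: by Lemma \ref{L;Lemma6.1} and Notation \ref{N;Notation6.3}, this equals $|\{a:a\leq_2\widetilde{g},\ p\nmid k_a\}|$, which Lemma \ref{L;Lemma3.11} identifies with $2^{n_{0,\widetilde{g}}}$. Next I would check that each image $\overline{D_{g,h,g}}$ is nonzero in the quotient: by Notation \ref{N;Notation6.5}, $D_{g,h,g}$ is an $\F$-linear combination of basis elements $B_{g,m,g}$ with $h\leq_2 m\leq_2\widetilde{g}$ and $p\nmid k_m$, none of which belongs to $\mathbb{I}_g$; moreover the $k=0$ summand contributes $\overline{k_h}^{-1}B_{g,h,g}$ with nonzero coefficient, so the surviving linear combination in the quotient is nonzero.

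Combined with the pairwise orthogonality provided by Lemma \ref{L;Lemma6.14}, the $2^{n_{0,\widetilde{g}}}$ images $\overline{D_{g,h,g}}$ are linearly independent idempotents (orthogonal nonzero idempotents always are) and, by the dimension count above, form an $\F$-basis of the quotient. Writing $\overline{E_g^*}=\sum_{h}c_h\overline{D_{g,h,g}}$ over all admissible $h$ and multiplying both sides by any $\overline{D_{g,h,g}}$ forces $c_h=\overline{1}$, so the identity decomposes as $\sum_{h}\overline{D_{g,h,g}}$ into pairwise orthogonal primitive idempotents spanning $1$-dimensional subalgebras; hence $E_g^*\mathbb{T}E_g^*/\mathbb{I}_g\cong 2^{n_{0,\widetilde{g}}}\mathrm{M}_1(\F)$ as algebras.

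For the radical identity, the quotient is semisimple by the isomorphism just established while $\mathbb{I}_g$ is nilpotent by Lemma \ref{L;Lemma6.4}; since nilpotent ideals lie in the Jacobson radical and any ideal with semisimple quotient contains the radical, both inclusions give $\mathrm{Rad}(E_g^*\mathbb{T}E_g^*)=\mathbb{I}_g$. Semisimplicity of $E_g^*\mathbb{T}E_g^*$ is then equivalent to $\mathbb{I}_g=O$, which by Notation \ref{N;Notation6.3} amounts to the nonexistence of $a\leq_2\widetilde{g}$ with $p\mid k_a$, equivalently $p\nmid k_{\widetilde{g}}$ (since $k_a\mid k_{\widetilde{g}}$ for every such $a$ by Lemma \ref{L;Lemma3.11}); and because $u_j-1=1$ for every $j\in\mathbb{P}(g)\setminus\mathbb{P}_2(g)$, Lemma \ref{L;Lemma3.11} gives $k_g=k_{\widetilde{g}}$, so the criterion becomes $p\nmid k_g$. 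The main obstacle is the nonvanishing of $\overline{D_{g,h,g}}$ together with the subsequent linear-independence argument, which hinges on carefully tracking through Notation \ref{N;Notation6.5} that every term of $D_{g,h,g}$ is a basis element $B_{g,m,g}$ with $p\nmid k_m$, hence survives in the quotient.
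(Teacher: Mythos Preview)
Your proposal is correct and follows essentially the same approach as the paper: both arguments exhibit the images of the $D_{g,h,g}$ (for $h\leq_2\widetilde{g}$, $p\nmid k_h$) as a complete set of pairwise orthogonal nonzero idempotents in the quotient via Lemma~\ref{L;Lemma6.14}, match their number against the dimension count $2^{n_{0,\widetilde{g}}}$ coming from Lemma~\ref{L;Lemma6.1} and Notation~\ref{N;Notation6.3}, and then invoke the nilpotency of $\mathbb{I}_g$ from Lemma~\ref{L;Lemma6.4} to identify the radical. Your treatment is slightly more explicit in verifying that $\overline{D_{g,h,g}}\neq 0$ and in spelling out the identity decomposition and the equality $k_g=k_{\widetilde{g}}$, but the underlying strategy is the same.
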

\begin{proof}
Set $\mathbb{U}=\{D_{g, a, g}+\mathbb{I}_g: a\leq_2 \widetilde{g},\ p\nmid k_a\}$ by Notation \ref{N;Notation6.5}. Notice that $O\!+\!\mathbb{I}_g\notin \mathbb{U}$ by Notation \ref{N;Notation6.5} and Theorem \ref{T;Theorem4.22}. So Lemma \ref{L;Lemma6.14} implies that $\mathbb{U}$ is an $\F$-linearly independent subset of the subalgebra $E_g^*\mathbb{T}E_g^*$ of $\mathbb{T}$. So $|\mathbb{U}|\!\!=\!\!2^{n_{0,\widetilde{g}}}$ by Notation \ref{N;Notation6.5} and Lemma \ref{L;Lemma3.11}. By combining Lemmas \ref{L;Lemma6.1}, \ref{L;Lemma3.11}, Notations \ref{N;Notation6.3}, \ref{N;Notation6.5}, Theorem \ref{T;Theorem4.22}, the $\F$-dimension of $E_g^*\mathbb{T}E_g^*/\mathbb{I}_g$ is $2^{n_{0,\widetilde{g}}}$. So $\mathbb{U}$ is an $\F$-basis of $E_g^*\mathbb{T}E_g^*/\mathbb{I}_g$. So first two statements are from Lemmas \ref{L;Lemma6.14} and \ref{L;Lemma6.4}. As $\mathrm{Rad}(E_g^*\mathbb{T}E_g^*)\!=\!\mathbb{I}_g$, Lemma \ref{L;Lemma3.11} and Notation \ref{N;Notation6.3} imply that $\mathbb{I}_g\!\!=\!\!\{O\}$ if and only if $p\nmid k_g$. The desired lemma follows.
\end{proof}
We are now ready to present the main result of this section and an example.
\begin{thm}\label{T;Semisimplicity}
$\mathbb{T}$ is semisimple if and only if $p\nmid k_g$ for any $g\in [0,d]$. In particular, $\mathbb{T}$ is semisimple if and only if its subalgebra $E_g^*\mathbb{T}E_g^*$ is semisimple for any $g\in [0,d]$.
\end{thm}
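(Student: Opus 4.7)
The ``only if'' direction is immediate from Lemma~\ref{L;Lemma2.8}: if $\mathbb{T}$ is semisimple then $p\nmid k_g$ for every $g\in[0,d]$. For the ``if'' direction, the plan is to assume $p\nmid k_g$ for all $g\in[0,d]$ and show that $\mathrm{Rad}(\mathbb{T})=\{O\}$. By Lemma~\ref{L;Lemma6.16}, each corner $E_g^*\mathbb{T}E_g^*$ is semisimple, so Lemma~\ref{L;Lemma2.6} gives $E_g^*\mathrm{Rad}(\mathbb{T})E_g^*=\mathrm{Rad}(E_g^*\mathbb{T}E_g^*)=\{O\}$ for every $g\in[0,d]$. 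I then want to upgrade these ``diagonal'' vanishings to show that every off-diagonal block $E_g^*\mathrm{Rad}(\mathbb{T})E_h^*$ with $g\neq h$ is also zero.

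For this, fix $M\in\mathrm{Rad}(\mathbb{T})$ and $g\neq h$, and set $N:=E_g^*ME_h^*\in\mathrm{Rad}(\mathbb{T})$. By Theorem~\ref{T;Theorem4.22} I write $N=\sum_{g\oplus h\leq_2\beta\leq_2 g\odot h}c_\beta B_{g,\beta,h}$ with $c_\beta\in\F$. For each $\beta_0$ with $g\oplus h\leq_2\beta_0\leq_2 g\odot h$ the element $B_{h,\beta_0,g}$ lies in $\mathbb{T}$, so the two-sided ideal property of $\mathrm{Rad}(\mathbb{T})$ forces $NB_{h,\beta_0,g}\in\mathrm{Rad}(\mathbb{T})\cap E_g^*\mathbb{T}E_g^*=\{O\}$. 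Applying Lemma~\ref{L;Lemma4.20} to each product $B_{g,\beta,h}B_{h,\beta_0,g}$ and then invoking the linear independence of $\{B_{g,\gamma,g}:\gamma\leq_2\widetilde{g}\}$ from Lemma~\ref{L;Lemma6.1} converts this vanishing into a linear system in the unknowns $c_\beta$.

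The main step is to make this system explicit. Parametrize $\beta=(g\oplus h)\cup Y$ and $\beta_0=(g\oplus h)\cup Y_0$ with $Y,Y_0\leq_2\widetilde{g\cap h}$. Using the definitions of Notations~\ref{N;Notation4.2} and \ref{N;Notation4.8} together with $\widetilde{g\cap h}\leq_2 g,h$, a direct check yields the identities
\begin{equation*}
m(g,\beta,h,\beta_0,g)=(\widetilde{g}\setminus h)\cup Y\cup Y_0,\qquad \beta\cap h\cap\beta_0=(h\setminus g)\cup(Y\cap Y_0),
\end{equation*}
so $\overline{k_{\beta\cap h\cap\beta_0}}=\overline{k_{h\setminus g}}\,\overline{k_{Y\cap Y_0}}$ by Lemma~\ref{L;Lemma3.11}. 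Grouping the terms of $NB_{h,\beta_0,g}=O$ by $Z=Y\cup Y_0$ and dividing out the nonzero scalar $\overline{k_{h\setminus g}}$, the vanishing reads
\begin{equation*}
\sum_{Y:\,Y\cup Y_0=Z}c_{(g\oplus h)\cup Y}\,\overline{k_{Y\cap Y_0}}=\overline{0}\qquad\text{for all }Y_0\leq_2 Z\leq_2\widetilde{g\cap h}.
\end{equation*}
Specializing to $Z=Y_0$ collapses this to $\sum_{Y\leq_2 Y_0}c_{(g\oplus h)\cup Y}\,\overline{k_Y}=\overline{0}$ for every $Y_0\leq_2\widetilde{g\cap h}$, and an induction on $|\mathbb{P}(Y_0)|$, using $\overline{k_Y}\neq\overline{0}$ by hypothesis, forces every $c_\beta=\overline{0}$. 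Hence $N=O$, and $\mathrm{Rad}(\mathbb{T})=\{O\}$.

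The ``in particular'' clause follows at once by combining the main equivalence with Lemma~\ref{L;Lemma6.16}, which makes $\mathbb{T}$ being semisimple, the condition $p\nmid k_g$ for every $g\in[0,d]$, and each corner $E_g^*\mathbb{T}E_g^*$ being semisimple pairwise equivalent. The main obstacle is the verification of the two identities for $m(g,\beta,h,\beta_0,g)$ and $\beta\cap h\cap\beta_0$ under the parametrization $\beta=(g\oplus h)\cup Y$; once these are in hand, the resulting system is triangular enough that $p\nmid k_g$ for all $g$ drives every coefficient to zero, but stitching the set-theoretic bookkeeping together cleanly is the technical heart of the argument.
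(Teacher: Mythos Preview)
Your proof is correct and follows the same overall strategy as the paper: use Lemma~\ref{L;Lemma6.16} together with Lemma~\ref{L;Lemma2.6} to kill the diagonal blocks $E_g^*\mathrm{Rad}(\mathbb{T})E_g^*$, then push an off-diagonal block $E_g^*ME_h^*$ into a diagonal corner by multiplying with a suitable $B_{\,\cdot,\cdot,\cdot}$ and appeal to the linear independence of the $B_{g,\gamma,g}$.

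The execution, however, differs. The paper performs a \emph{single} left-multiplication by $B_{i,\,h\oplus i,\,h}$ (with your notation, $B_{h,\,g\oplus h,\,g}$ on the left of $E_g^*ME_h^*$). The key observation there is that the map $\ell_k\mapsto m(i,h\oplus i,h,\ell_k,i)=(\widetilde{i}\setminus h)\cup(\ell_k\cap h\cap\widetilde{i})$ is injective on the support of $E_h^*ME_i^*$, so one term already has a nonzero coefficient $c_{h,\ell_1,i}\,\overline{k_{(h\oplus i)\cap h\cap\ell_1}}$, giving an immediate contradiction with $\mathrm{Rad}(E_i^*\mathbb{T}E_i^*)=\{O\}$. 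Your approach instead varies $\beta_0$ over all of $[g\oplus h,\,g\odot h]$, uses your (correct) identities $m(g,\beta,h,\beta_0,g)=(\widetilde{g}\setminus h)\cup Y\cup Y_0$ and $\beta\cap h\cap\beta_0=(h\setminus g)\cup(Y\cap Y_0)$, and extracts a lower-triangular system $\sum_{Y\leq_2 Y_0}c_{(g\oplus h)\cup Y}\,\overline{k_Y}=\overline{0}$ which you invert by induction on $|\mathbb{P}(Y_0)|$. Both routes work; the paper's buys brevity via the injectivity observation, while yours makes the triangular structure completely explicit and avoids having to argue that distinct $\ell_k$ produce distinct $m$-values.
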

\begin{proof}
Assume that $p\nmid k_g$ for any $g\in [0,d]$. For any $g\in [0,d]$, Lemma \ref{L;Lemma6.16} implies that $\mathrm{Rad}(E_g^*\mathbb{T}E_g^*)\!=\!\{O\}$. Pick $M\!\in\! \mathrm{Rad}(\mathbb{T})$. Assume that $M\!\neq\! O$. Then Lemma \ref{L;Lemma2.6} implies that $E_g^*ME_g^*\!=\!O$ for any $g\in [0,d]$. As $M\neq O$ and \eqref{Eq;3} holds, there must be distinct $h, i\in [0,d]$ such that $E_h^*ME_i^*\!\!\neq\! \!O$. Then there are $j\!\in\! \mathbb{N}_0\setminus \{0\}$ and pairwise distinct $\ell_1, \ell_2, \ldots, \ell_j\!\in\!\! [0,d]$ such that $\mathrm{Supp}_\mathbb{B}(E_h^*ME_i^*)=\{B_{h, \ell_1, i}, B_{h,\ell_2, i},\ldots, B_{h,\ell_j, i}\}$. So $p_{h\ell_k}^i\!\!\neq\!\! 0$ for any $k\in [1, j]$. Hence $\ell_1\cap h\cap \widetilde{i}, \ell_2\cap h\cap \widetilde{i}, \ldots, \ell_j\cap h\cap \widetilde{i}$ are pairwise distinct by Lemmas \ref{L;Lemma2.1} and \ref{L;Lemma4.3}. For any $k\!\in\! [1, j]$, $m(i, h\oplus i, h, \ell_k, i)\!=\!(\widetilde{i}\setminus h)\cup(\ell_k\cap h\cap \widetilde{i})$ by computation. As $B_{i, h\oplus i, h}$ is defined by Lemmas \ref{L;Lemma4.4} and \ref{L;Lemma2.1}, Lemma \ref{L;Lemma4.20} implies that $c_{i, m(i, h\oplus i, h, \ell_1, i), i}(B_{i, h\oplus i, h}E_h^*ME_i^*)\!=\!c_{h, \ell_1, i}(E_h^*ME_i^*)\overline{k_{(h\oplus i)\cap h\cap\ell_1}}\!\!\neq\!\! \overline{0}$. Hence Theorem \ref{T;Theorem4.22} and Lemma \ref{L;Lemma2.6} imply that $O\neq B_{i, h\oplus i, h}E_h^*ME_i^*\in\mathrm{Rad}(E_i^*\mathbb{T}E_i^*)=\{O\}$. Hence $\mathrm{Rad}(\mathbb{T})\!=\!\{O\}$ by this contradiction. So $\mathbb{T}$ is semisimple.
The first statement thus follows from Lemma \ref{L;Lemma2.8}. The second statement thus follows from Lemma \ref{L;Lemma6.16} and the first one. The desired theorem thus follows.
\end{proof}
\begin{eg}\label{E;Example6.17}
\em Assume that $n\!=\!u_1\!=\!2$ and $u_2=3$. Hence $d=3$, $k_0=k_1=1$, and $k_2=k_3=2$ by Lemma \ref{L;Lemma3.11}. According to Theorem \ref{T;Semisimplicity}, notice that $\mathbb{T}$ is semisimple if and only if $p\neq 2$.
\end{eg}
We end this section with a corollary of Theorem \ref{T;Semisimplicity} that may be interesting.
\begin{cor}\label{L;Lemma6.17}
$\mathbb{T}$ is semisimple if and only if its subalgebra $E_d^*\mathbb{T}E_d^*$ is semisimple. In particular, $\mathbb{T}$ is semisimple if and only if $\mathrm{Z}(\mathbb{T})$ is semisimple.
\end{cor}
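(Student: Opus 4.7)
The plan is to translate both equivalences into arithmetic conditions on the valencies $k_g$ via Theorem \ref{T;Semisimplicity} and Lemma \ref{L;Lemma6.16}, and then rule non-semisimplicity in or out by examining the multiplicative structure on the central basis $\{C_a : a \leq_2 \widetilde{d}\}$ supplied by Theorem \ref{T;Center} and Lemma \ref{L;Lemma5.13}. For the first equivalence, Theorem \ref{T;Semisimplicity} reduces the semisimplicity of $\mathbb{T}$ to $p \nmid k_g$ for all $g \in [0,d]$, while Lemma \ref{L;Lemma6.16} reduces the semisimplicity of $E_d^*\mathbb{T}E_d^*$ to $p \nmid k_d$. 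By Lemma \ref{L;Lemma3.11}, $k_g = \prod_{h \in \mathbb{P}(g)}(u_h - 1)$ is always a divisor of $k_d = \prod_{h=1}^{n}(u_h - 1)$, so $p \nmid k_d$ forces $p \nmid k_g$ for every $g$, and the two conditions coincide.

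For the second equivalence I will argue each direction via the same arithmetic condition. In the backward direction, I argue the contrapositive: if $p \mid k_d$, then since factors with $u_h = 2$ contribute $1$ we also have $p \mid k_{\widetilde{d}}$, and Lemma \ref{L;Lemma5.13} gives $C_{\widetilde{d}}^2 = \overline{k_{\widetilde{d}}}\,C_{\widetilde{d}} = O$. Since $C_{\widetilde{d}} \neq O$ by Notation \ref{N;Notation5.4} and $C_a C_{\widetilde{d}} = \overline{k_a}\,C_{\widetilde{d}}$ for every $a \leq_2 \widetilde{d}$ (again by Lemma \ref{L;Lemma5.13}), the subspace $\langle C_{\widetilde{d}} \rangle_\F$ is a nonzero nilpotent two-sided ideal of $\mathrm{Z}(\mathbb{T})$, so $\mathrm{Z}(\mathbb{T})$ is not semisimple. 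In the forward direction, I assume $p \nmid k_a$ for all $a \leq_2 \widetilde{d}$ and rescale by setting $F_a := \overline{k_a}^{-1} C_a$. Lemma \ref{L;Lemma3.11} yields the identity $k_a k_b = k_{a \cap b}\, k_{a \cup b}$ (coordinatewise, $[h \in \mathbb{P}(a)] + [h \in \mathbb{P}(b)] = [h \in \mathbb{P}(a \cap b)] + [h \in \mathbb{P}(a \cup b)]$), and combining this with Lemma \ref{L;Lemma5.13} produces the clean join-semigroup law $F_a F_b = F_{a \cup b}$. Möbius inversion on the Boolean interval $[0,\widetilde{d}]$ then delivers a complete set of orthogonal primitive idempotents $\epsilon_b := \sum_{b \leq_2 c \leq_2 \widetilde{d}}(-1)^{|\mathbb{P}(c)| - |\mathbb{P}(b)|} F_c$ summing to $F_0 = I$, exhibiting $\mathrm{Z}(\mathbb{T}) \cong 2^{n_2}\mathrm{M}_1(\F)$ as algebras and hence semisimple.

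The main obstacle is the forward direction of the second equivalence: the other implications are essentially direct translations through results already established, but deducing semisimplicity of $\mathrm{Z}(\mathbb{T})$ from $p \nmid k_d$ hinges on both the rescaling $F_a := \overline{k_a}^{-1} C_a$ and the verification that the Möbius-inverted $\epsilon_b$'s are genuinely orthogonal primitive idempotents. Everything ultimately rests on the elementary but crucial equality $k_a k_b = k_{a \cap b}\, k_{a \cup b}$, which converts the $C_a$-multiplication of Lemma \ref{L;Lemma5.13} into the clean join-semigroup law on the $F_a$'s and identifies $\mathrm{Z}(\mathbb{T})$ with the semigroup algebra of the join semilattice $\{a : a \leq_2 \widetilde{d}\}$.
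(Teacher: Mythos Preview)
Your argument for the first equivalence is correct and matches the paper's: reduce to the arithmetic condition $p\nmid k_d$ via Theorem~\ref{T;Semisimplicity} and Lemma~\ref{L;Lemma6.16}, and note that $k_g\mid k_d$ for every $g$ by Lemma~\ref{L;Lemma3.11}.

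For the second equivalence your proof is correct but takes a genuinely different route from the paper. The paper simply observes that the $\F$-linear bijection $\mathrm{Z}(\mathbb{T})\to E_d^*\mathbb{T}E_d^*$ sending $C_g\mapsto B_{d,g,d}$ (for $g\leq_2\widetilde{d}$) is an algebra isomorphism, because Lemma~\ref{L;Lemma5.13} and Lemma~\ref{L;Lemma6.1} give identical structure constants $C_gC_h=\overline{k_{g\cap h}}C_{g\cup h}$ and $B_{d,g,d}B_{d,h,d}=\overline{k_{g\cap h}}B_{d,g\cup h,d}$. This reduces the second statement immediately to the first, with no case analysis. Your approach instead analyzes $\mathrm{Z}(\mathbb{T})$ directly: in the non-semisimple case you exhibit the nilpotent ideal $\langle C_{\widetilde{d}}\rangle_\F$, and in the semisimple case you rescale to $F_a=\overline{k_a}^{-1}C_a$, obtain the join-semilattice law $F_aF_b=F_{a\cup b}$ from the identity $k_ak_b=k_{a\cap b}k_{a\cup b}$, and apply M\"obius inversion on the Boolean interval. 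What the paper's approach buys is brevity and a stronger conclusion (the isomorphism $\mathrm{Z}(\mathbb{T})\cong E_d^*\mathbb{T}E_d^*$ holds unconditionally, not just when both are semisimple). What your approach buys is an explicit decomposition $\mathrm{Z}(\mathbb{T})\cong 2^{n_2}\mathrm{M}_1(\F)$ with concrete primitive idempotents $\epsilon_b$, which is informative in its own right and parallels the proof of Lemma~\ref{L;Lemma6.16}.
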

\begin{proof}
As $g\leq_2 d$ for any $g\in [0,d]$, Lemma \ref{L;Lemma3.11} thus implies that $p\nmid k_d$ if and only if $p\nmid k_g$ for any $g\in [0,d]$. The first statement thus follows from Lemma \ref{L;Lemma6.16} and Theorem \ref{T;Semisimplicity}. By Theorem \ref{T;Center} and Lemma \ref{L;Lemma6.1}, there is an $\F$-linear bijection from $\mathrm{Z}(\mathbb{T})$ to $E_d^*\mathbb{T}E_d^*$ that sends $C_g$ to $B_{d, g, d}$ for any $g\in [0,d]$ and $g\leq_2\widetilde{d}$. By Lemmas \ref{L;Lemma5.13} and \ref{L;Lemma6.1}, this $\F$-linear bijection is also an algebra isomorphism. The second statement thus follows from the first one. The desired corollary thus follows.
\end{proof}
\section{Jacobson radicals of Terwilliger $\F$-algebras of factorial schemes}
In this section, we determine $\mathrm{Rad}(\mathbb{T})$ and compute the nilpotency of $\mathrm{Rad}(\mathbb{T})$. For this aim, we recall Notations \ref{N;Notation3.3}, \ref{N;Notation4.2}, \ref{N;Notation4.8}, \ref{N;Notation4.19}, \ref{N;Notation5.6}, \ref{N;Notation6.3} and list two needed lemmas.
\begin{lem}\label{L;Lemma7.1}
Assume that $g, h, i\in [0,d]$ and $B_{g, h, i}\in\mathbb{B}$. Then $B_{g, h, i}^T\!=\!B_{i, h, g}\in\mathbb{B}$.
\end{lem}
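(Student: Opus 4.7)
The plan is to apply transposition directly to the defining sum of $B_{g,h,i}$ given in Notation \ref{N;Notation4.19}. Since $B_{g,h,i}\in\mathbb{B}$, Notation \ref{N;Notation5.6} gives $g\oplus i\leq_2 h\leq_2 g\odot i$, and by Lemmas \ref{L;Lemma4.3} and \ref{L;Lemma4.4} this is equivalent to $p_{gh}^i\neq 0$, so the expression
\[
B_{g,h,i}=\sum_{g\oplus i\leq_2 j\leq_2 h}E_g^*A_j E_i^*
\]
is meaningful. Taking transposes termwise and using \eqref{Eq;1} (recall that $\mathbb{S}$ is symmetric, so $A_j^T=A_j$, and that each $E_j^*$ is a diagonal matrix, hence symmetric) yields
\[
B_{g,h,i}^T=\sum_{g\oplus i\leq_2 j\leq_2 h}(E_g^*A_j E_i^*)^T=\sum_{g\oplus i\leq_2 j\leq_2 h}E_i^*A_j E_g^*.
\]

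Next I would observe, from Notation \ref{N;Notation4.2} together with Lemma \ref{L;Lemma3.1}, that the operations $\oplus$ and $\odot$ on $[0,d]$ are commutative in their two arguments: $g\oplus i=i\oplus g$, $g\cap i=i\cap g$, so $\widetilde{g\cap i}=\widetilde{i\cap g}$ and hence $g\odot i=i\odot g$. Therefore the indexing range $g\oplus i\leq_2 j\leq_2 h$ can be rewritten as $i\oplus g\leq_2 j\leq_2 h$, which is exactly the range appearing in the definition of $B_{i,h,g}$.

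It then remains to verify that $B_{i,h,g}$ is actually an element of $\mathbb{B}$, i.e.\ that $i\oplus g\leq_2 h\leq_2 i\odot g$. But this is immediate from the hypothesis $g\oplus i\leq_2 h\leq_2 g\odot i$ combined with the commutativity of $\oplus$ and $\odot$ just noted. Consequently $B_{g,h,i}^T=B_{i,h,g}\in\mathbb{B}$, completing the proof. No step is genuinely difficult here; the only point requiring a little care is to record explicitly that $\oplus$ and $\odot$ are symmetric in their arguments, so as to be sure the summation range is preserved when $g$ and $i$ are swapped.
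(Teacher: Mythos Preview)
Your proof is correct and follows essentially the same approach as the paper: both transpose the defining sum of $B_{g,h,i}$ termwise using \eqref{Eq;1} and then verify that $B_{i,h,g}$ is well-defined. The only cosmetic difference is that the paper invokes Lemma~\ref{L;Lemma2.1} to pass from $p_{gh}^i\neq 0$ to $p_{ih}^g\neq 0$, whereas you argue via the commutativity of $\oplus$ and $\odot$; these are equivalent observations.
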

\begin{proof}
As $B_{g, h, i}\!\in\!\mathbb{B}$, notice that $p_{gh}^i\!\!\neq\!\! 0$ by Notation \ref{N;Notation4.19}. So $p_{ih}^g\!\!\neq\!\! 0$ by Lemma \ref{L;Lemma2.1}. So $B_{i, h, g}$ is defined. So $B_{g, h, i}^T\!\!=\!\!B_{i, h, g}$ by Notation \ref{N;Notation4.19}. The desired lemma follows.
\end{proof}
\begin{lem}\label{L;Lemma7.2}
$\mathbb{T}$ has a two-sided ideal $\langle\{B_{a, b, c}: a\oplus b\leq_2 c\leq_2 a\odot b,\ p\mid k_b\}\rangle_\F$.
\end{lem}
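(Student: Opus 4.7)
Set $\mathbb{A}=\{B_{a,b,c}:a\oplus b\leq_2 c\leq_2 a\odot b,\ p\mid k_b\}$ and $\mathbb{J}=\langle\mathbb{A}\rangle_\F$. Since $\mathbb{B}$ is an $\F$-basis of $\mathbb{T}$ by Theorem \ref{T;Theorem4.22}, to see that $\mathbb{J}$ is a two-sided ideal of $\mathbb{T}$ it suffices to check that $B_{g,h,i}B_{\ell,j,k}\in\mathbb{J}$ and $B_{\ell,j,k}B_{g,h,i}\in\mathbb{J}$ for arbitrary $B_{g,h,i}\in\mathbb{B}$ and $B_{\ell,j,k}\in\mathbb{A}$. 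The plan is to dispatch the degenerate case $p=0$ first: since $k_b\geq 1$ always, $p=0$ forces $\mathbb{A}=\varnothing$ and $\mathbb{J}=\{O\}$ is trivially a two-sided ideal. From then on I assume $p$ is a prime.

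For the left-ideal property I would invoke Lemma \ref{L;Lemma4.20}: the product $B_{g,h,i}B_{\ell,j,k}$ vanishes unless $\ell=i$, in which case it equals $\overline{k_{h\cap i\cap j}}B_{g,m(g,h,i,j,k),k}$. If $\overline{k_{h\cap i\cap j}}=\overline{0}$ we are done; otherwise the key claim is that $p\mid k_{m(g,h,i,j,k)}$, which deposits the product into $\mathbb{J}$. To establish this I would use Lemma \ref{L;Lemma3.11} to pick $a\in\mathbb{P}(j)\setminus\mathbb{P}(h\cap i\cap j)$ with $p\mid u_a-1$; such $a$ exists because $p\mid k_j$ but $p\nmid k_{h\cap i\cap j}$. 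Then $u_a\geq p+1>2$, so $a\in\mathbb{P}_2(g\cap k)=\mathbb{P}(\widetilde{g\cap k})$ whenever $a\in\mathbb{P}(g\cap k)$.

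Since $a\in\mathbb{P}(j)$ but $a\notin\mathbb{P}(h)\cap\mathbb{P}(i)\cap\mathbb{P}(j)$, either $a\notin\mathbb{P}(i)$ or $a\notin\mathbb{P}(h)$. In the first alternative, $j\leq_2 i\odot k$ together with $a\notin\mathbb{P}(i)$ forces $a\in\mathbb{P}(k)\setminus\mathbb{P}(i)$, and sub-casing on whether $a\in\mathbb{P}(g)$ places $a$ either in $\mathbb{P}(g\oplus k)$ or in $\mathbb{P}((\widetilde{g\cap k})\setminus i)$. In the second alternative, the defining containment $g\oplus i\leq_2 h$ forces $a\in\mathbb{P}(g)\cap\mathbb{P}(i)$, and sub-casing on whether $a\in\mathbb{P}(k)$ places $a$ either in $\mathbb{P}(g\oplus k)$ or in $\mathbb{P}((h\cup j)\cap(\widetilde{g\cap k})\cap i)$. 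In every sub-case $a\in\mathbb{P}(m(g,h,i,j,k))$, so Lemma \ref{L;Lemma3.11} delivers $p\mid k_{m(g,h,i,j,k)}$, as required.

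For the right-ideal property I would use the transpose anti-automorphism of $\mathbb{T}$. Lemma \ref{L;Lemma7.1} gives $\mathbb{B}^T=\mathbb{B}$, and since the middle index $b$ is preserved by transposition we also have $\mathbb{A}^T=\mathbb{A}$, whence $\mathbb{J}^T=\mathbb{J}$. Then $B_{\ell,j,k}B_{g,h,i}=(B_{g,h,i}^TB_{\ell,j,k}^T)^T$ lies in $\mathbb{J}^T=\mathbb{J}$ by the left-ideal property just established, so $\mathbb{J}$ is a two-sided ideal of $\mathbb{T}$. The main obstacle will be the case analysis supporting the key claim: the sub-case $a\in\mathbb{P}(i)\setminus\mathbb{P}(h)$ is the most delicate, because without the defining containment $g\oplus i\leq_2 h$ to force $a\in\mathbb{P}(g)$ a configuration would escape each of the three pieces of $m(g,h,i,j,k)=(g\oplus k)\cup((\widetilde{g\cap k})\setminus i)\cup((h\cup j)\cap(\widetilde{g\cap k})\cap i)$, and the claim would fail.
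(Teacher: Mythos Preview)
Your proposal is correct and follows essentially the same route as the paper: reduce to a single product $B_{g,h,i}B_{i,j,k}$ via Lemma \ref{L;Lemma4.20}, locate a coordinate $a\in\mathbb{P}(j)$ with $u_a\equiv 1\pmod p$ and $a\notin\mathbb{P}(h\cap i\cap j)$, then do a short case analysis to place $a$ in $\mathbb{P}(m(g,h,i,j,k))$, and finish with the transpose via Lemma \ref{L;Lemma7.1}. Your organization of the cases (branching first on $a\notin\mathbb{P}(i)$ versus $a\in\mathbb{P}(i)\setminus\mathbb{P}(h)$) is slightly cleaner than the paper's, but the content is the same.
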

\begin{proof}
The case $\langle\{B_{a, b, c}\!: a\oplus b\leq_2 c\leq_2 a\odot b,\ p\mid k_b\}\rangle_\F\!\!=\!\!\{O\}$ is trivial. For the other case, pick $g, h, i, j, k,\ell\!\in\! [0,d]$, $B_{g ,h, i}\!\!\in\!\! \langle\{B_{a, b, c}\!: a\oplus b\leq_2 c\leq_2 a\odot b,\ p\mid k_b\}\rangle_\F$, and $B_{j,k,\ell}\!\in\! \mathbb{B}$. Notice that $p_{j\ell}^k\!\neq\! 0$, $p_{gi}^h\!\!\neq\!\! 0$, and $p\mid k_h$ by Notation \ref{N;Notation4.19} and Lemma \ref{L;Lemma2.1}. If $g\!\neq\! \ell$, notice that $B_{j, k, \ell}B_{g, h, i}\!\!=\!\!B_{j, k, \ell}E_\ell^*E_g^*B_{g, h, i}\!\!=\!\!O$ by \eqref{Eq;2}. Assume that $g\!=\!\ell$. Then $p_{jg}^k\!\neq\! 0$ and $k\!=\!(g\setminus j)\cup(j\setminus g)\cup((\widetilde{g\cap j})\cap k)$ by Lemma \ref{L;Lemma4.3}. As $p_{gi}^h\!\neq\! 0$, Lemma \ref{L;Lemma4.3} implies that $h\!=\!(g\setminus i)\cup(i\setminus g)\cup((\widetilde{g\cap i})\cap h)$. By a direct computation, notice that $m(j, k, g, h, i)\!=\!(i\setminus j)\cup(j\setminus i)\!\cup\!((\widetilde{i\cap j})\setminus g)\cup((h\cup k)\cap(g\cap(\widetilde{i\cap j})))$. Lemma \ref{L;Lemma4.20} shows that $B_{j ,k ,g}B_{g, h, i}\!=\!\overline{k_{g\cap h\cap k}}B_{j, m(j, k, g, h, i), i}$. As $p\mid k_h$ and Lemma \ref{L;Lemma3.11} holds, there is $m\in\mathbb{P}(g\setminus i)\cup\mathbb{P}(i\setminus g)\cup\mathbb{P}((\widetilde{g\cap i})\cap h)$ such that $u_m\equiv 1\pmod p$. Assume that $m\in\mathbb{P}(g\setminus (i\cup j))$. Then $m\in \mathbb{P}(g\cap h\cap k)$ and $\overline{k_{g\cap h\cap k}}=\overline{0}$ by Lemma \ref{L;Lemma3.11}. Hence $B_{j ,k ,g}B_{g, h, i}=O$. Assume that $m\!\in\!\mathbb{P}((g\cap j)\setminus i)\cup\mathbb{P}((i\cap j)\setminus g)\cup\mathbb{P}(i\setminus(g\cup j))$. As $u_m\equiv 1\pmod p$, notice that $m\in \mathbb{P}((g\cap j)\setminus i)\cup\mathbb{P}((\widetilde{i\cap j})\setminus g)\cup\mathbb{P}(i\setminus(g\cup j))$. So $m\in \mathbb{P}(m(j, k, g, h, i))$ and $p\mid k_{m(j, k, g, h, i)}$ by Lemma \ref{L;Lemma3.11}. The two obtained facts thus imply that $B_{j, m(j, k, g, h, i), i}\in\langle\{B_{a, b, c}: a\oplus b\leq_2 c\leq_2 a\odot b,\ p\mid k_b\}\rangle_\F$. Assume that $m\in \mathbb{P}(((\widetilde{g\cap i})\cap h)\setminus j)\cup\mathbb{P}((\widetilde{g\cap i})\cap h\cap j)$. Then $m\in \mathbb{P}(i\setminus j)\cup\mathbb{P}(g\cap h\cap(\widetilde{i\cap j}))$. So $m\in\mathbb{P}(m(j, k, g, h, i))$ and $p\mid k_{m(j, k, g, h, i)}$ by Lemma \ref{L;Lemma3.11}. The two obtained facts thus imply that $B_{j, m(j, k, g, h, i), i}\in\langle\{B_{a, b, c}: a\oplus b\leq_2 c\leq_2 a\odot b,\ p\mid k_b\}\rangle_\F$. By the above discussion and Lemma \ref{L;Lemma7.1}, $\langle\{B_{a, b, c}: a\oplus b\leq_2 c\leq_2 a\odot b,\ p\mid k_b\}\rangle_\F$ thus contains $B_{j, k,\ell}B_{g, h, i}$ and $B_{g, h, i}B_{j, k, \ell}$. The desired lemma is thus from Theorem \ref{T;Theorem4.22} and the fact that $B_{g, h, i}$ is chosen from $\langle\{B_{a, b, c}: a\oplus b\leq_2 c\leq_2 a\odot b,\ p\mid k_b\}\rangle_\F$ arbitrarily.
\end{proof}
Lemma \ref{L;Lemma7.2} motivates us to introduce the following notation and another lemma.
\begin{nota}\label{N;Notation7.3}
\em The $\F$-linear subspace $\langle\{B_{a, b, c}: a\oplus b\leq_2 c\leq_2 a\odot b,\ p\mid k_b\}\rangle_\F$ of $\mathbb{T}$ is denoted by $\mathbb{I}$. According to Lemma \ref{L;Lemma7.2}, observe that $\mathbb{I}$ is a two-sided ideal of $\mathbb{T}$.
\end{nota}
\begin{lem}\label{L;Lemma7.4}
Assume that $\mathbb{I}\neq\{O\}$. There are $2|\{a: u_a\equiv 1\pmod p\}|$ elements in $\mathbb{I}$ such that a product of all these elements is not the zero matrix.
\end{lem}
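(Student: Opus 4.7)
The plan is to construct an explicit sequence of $2m$ elements of $\mathbb{I}$ whose product, computed via Lemma \ref{L;Lemma4.20}, collapses telescopically to a nonzero basis element of $\mathbb{T}$. Write $m=|\{a:u_a\equiv1\pmod p\}|$. Since $\mathbb{I}\neq\{O\}$, Lemma \ref{L;Lemma3.11} forces $m\geq1$; enumerate $\{a:u_a\equiv1\pmod p\}=\{i_1,i_2,\ldots,i_m\}$, and for each $\ell\in[1,m]$ let $e_\ell\in[0,d]$ be the unique element with $\mathbb{P}(e_\ell)=\{i_\ell\}$. Set $f_\ell=e_1\cup e_2\cup\cdots\cup e_\ell$ (with $f_0=0$), so that $\mathbb{P}(f_\ell)=\{i_1,\ldots,i_\ell\}\subseteq\{a:u_a>2\}$ and hence $\widetilde{f_\ell}=f_\ell$ by Notation \ref{N;Notation3.3}.

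Next, I would introduce the $2m$ elements $X_\ell=B_{f_{\ell-1},\,e_\ell,\,f_\ell}$ and $Y_\ell=B_{f_\ell,\,e_\ell,\,f_{\ell-1}}$ for $\ell\in[1,m]$. Since $\mathbb{P}(e_\ell)\cap\mathbb{P}(f_{\ell-1})=\varnothing$, one checks $f_{\ell-1}\oplus f_\ell=e_\ell$ and $f_{\ell-1}\odot f_\ell=e_\ell\cup\widetilde{f_{\ell-1}}=f_\ell$, so by Lemmas \ref{L;Lemma2.1} and \ref{L;Lemma4.4} each $X_\ell$ and $Y_\ell$ belongs to $\mathbb{B}$; moreover $k_{e_\ell}=u_{i_\ell}-1\equiv 0\pmod p$, so $X_\ell,Y_\ell\in\mathbb{I}$ by Notation \ref{N;Notation7.3}. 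The central claim is the recursive identity
$$Y_\ell\,B_{f_{\ell-1},\,f_{\ell-1},\,f_{\ell-1}}\,X_\ell=B_{f_\ell,\,f_\ell,\,f_\ell}\qquad(\ell\in[1,m]),$$
which, iterated from $B_{f_0,f_0,f_0}=E_0^*=B_{0,0,0}$ and combined with the fact that $X_1=E_0^*X_1$, yields
$$Y_m Y_{m-1}\cdots Y_1 X_1 X_2\cdots X_m=B_{f_m,\,f_m,\,f_m}\neq O$$
by Theorem \ref{T;Theorem4.22}. This exhibits the required $2m$ elements of $\mathbb{I}$.

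The recursive identity itself is the technical heart of the argument. To establish it, I would apply Lemma \ref{L;Lemma4.20} twice: first to $Y_\ell\cdot B_{f_{\ell-1},f_{\ell-1},f_{\ell-1}}$, computing $m(f_\ell,e_\ell,f_{\ell-1},f_{\ell-1},f_{\ell-1})=(f_\ell\oplus f_{\ell-1})\cup(\widetilde{f_{\ell-1}}\setminus f_{\ell-1})\cup((e_\ell\cup f_{\ell-1})\cap\widetilde{f_{\ell-1}}\cap f_{\ell-1})=e_\ell\cup 0\cup f_{\ell-1}=f_\ell$, with coefficient $\overline{k_{e_\ell\cap f_{\ell-1}\cap f_{\ell-1}}}=\overline{k_0}=\overline{1}$, producing $B_{f_\ell,f_\ell,f_{\ell-1}}$; then to $B_{f_\ell,f_\ell,f_{\ell-1}}\cdot X_\ell$, where $m(f_\ell,f_\ell,f_{\ell-1},e_\ell,f_\ell)=0\cup(f_\ell\setminus f_{\ell-1})\cup f_{\ell-1}=f_\ell$ and the coefficient again reduces to $\overline{1}$ via $f_{\ell-1}\cap e_\ell=0$.

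The main obstacle will be disciplined bookkeeping of the operations $\oplus$, $\odot$, $\widetilde{\cdot}$, and $m(\cdot,\cdot,\cdot,\cdot,\cdot)$ inside Lemma \ref{L;Lemma4.20}; every cancellation ultimately reduces to the three facts $f_{\ell-1}\leq_2 f_\ell$, $\widetilde{f_{\ell-1}}=f_{\ell-1}$, and $\mathbb{P}(e_\ell)\cap\mathbb{P}(f_{\ell-1})=\varnothing$, so the constants $\overline{k_{\cdot\cap\cdot\cap\cdot}}$ are always either $\overline{k_0}=\overline{1}$ or $\overline{k_{f_{\ell-1}}}$, and the latter does not arise in the relevant products. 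Once these identifications are made, no divisibility issues about $p$ enter the constants, guaranteeing the product survives in characteristic $p$.
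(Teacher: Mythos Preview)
Your proof is correct and provides a different explicit construction from the paper's. The paper works at the top index $d$: it takes the $2g$ elements $B_{d,h_i,d\oplus h_i}$ and $B_{d\oplus h_i,h_i,d}$ for $i\in[1,g]$, observes via Lemma~\ref{L;Lemma4.20} that each adjacent pair collapses as $B_{d,h_i,d\oplus h_i}B_{d\oplus h_i,h_i,d}=B_{d,h_i,d}$, and then multiplies the resulting $B_{d,h_i,d}$'s (inside the commutative algebra $E_d^*\mathbb{T}E_d^*$) to obtain $B_{d,h_1\cup\cdots\cup h_g,d}\neq O$. Your nested product $Y_m\cdots Y_1X_1\cdots X_m$ instead climbs from $E_0^*$ up to $B_{f_m,f_m,f_m}$. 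Both rely on the same engine---Lemma~\ref{L;Lemma4.20} with disjoint $\mathbb{P}$-supports forcing every coefficient to be $\overline{k_0}=\overline{1}$---so neither is essentially harder; the paper's version has the mild structural advantage that its pairs are independent of one another (the product of the $B_{d,h_i,d}$ is order-insensitive), whereas your telescoping construction requires the specific nesting $Y_m\cdots Y_1X_1\cdots X_m$ for the idempotents $E_{f_\ell}^*$ to match.
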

\begin{proof}
Set $\mathbb{U}\!=\!\{a: u_a\equiv 1\pmod p\}$ and $g=|\mathbb{U}|$. As $\mathbb{I}\neq \{O\}$, notice that $g>0$ by Notation \ref{N;Notation7.3} and Lemma \ref{L;Lemma3.11}. Then there are pairwise distinct $h_1, h_2,\ldots, h_g\in[0,d]$ such that $\mathbb{P}(h_i)\subseteq\mathbb{U}$ and $|\mathbb{P}(h_i)|=1$ for any $i\!\in\![1,g]$. By Lemma \ref{L;Lemma4.4},
$B_{d, h_i, d\oplus h_i}$ and $B_{d\oplus h_i, h_i, d}$ are defined for any $i\in [1,g]$. By Lemma \ref{L;Lemma3.11}, $B_{d, h_i, d\oplus h_i}, B_{d\oplus h_i, h_i, d}\!\in\!\mathbb{I}$ for any $i\in [1,g]$. By \eqref{Eq;2} and Lemma \ref{L;Lemma4.20}, notice that $\prod_{i=1}^gB_{d, h_i, d}\!\!=\!\!B_{d, h_1\cup h_2\cup\cdots\cup h_g, d}\!\neq\! O$ and $B_{d, h_i, d\oplus h_i}B_{d\oplus h_i, h_i, d}=B_{d, h_i, d}$ for any $i\!\!\in\!\![1,g]$. The desired lemma follows as the choices are $B_{d, h_1, d\oplus h_1}, B_{d\oplus h_1, h_1, d}, B_{d, h_2, d\oplus h_2}, B_{d\oplus h_2, h_2, d},\ldots, B_{d, h_g, d\oplus h_g}, B_{d\oplus h_g, h_g, d}$.
\end{proof}
For further discussion, the following lemmas are necessary for determining $\mathrm{Rad}(\mathbb{T})$.
\begin{lem}\label{L;Lemma7.5}
Assume that $g, h, i, j, k\in [0,d]$, $p_{gh}^i\!\neq\!\! 0$, and $p_{ij}^k\!\neq\! 0$. If $p\nmid k_{h\cap i\cap j}$, then $\{a: a\in \mathbb{P}(h\cup j),\ u_a\equiv 1\pmod p\}\subseteq\mathbb{P}(m(g, h, i, j, k))$.
\end{lem}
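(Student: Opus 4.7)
The plan is to fix an arbitrary $a\in\mathbb{P}(h\cup j)$ with $u_a\equiv 1\pmod p$ and show, by a short case analysis, that $a\in\mathbb{P}(m(g,h,i,j,k))$. A preparatory observation I will use throughout is that $u_a\equiv 1\pmod p$ together with $u_a\geq 2$ forces $u_a>2$: when $p=2$ this is because $u_a$ must be odd, and when $p>2$ it is because $u_a\geq p+1$. Applying Lemmas \ref{L;Lemma2.1} and \ref{L;Lemma4.3} to the hypotheses $p_{gh}^i\neq 0$ and $p_{ij}^k\neq 0$, I also record at the outset $g\oplus i\leq_2 h\leq_2 g\odot i$ and $i\oplus k\leq_2 j\leq_2 i\odot k$, which translate on the level of $\mathbb{P}(\cdot)$ to the containments $\mathbb{P}(h)\subseteq\mathbb{P}(g\cup i)$, $\mathbb{P}(j)\subseteq\mathbb{P}(i\cup k)$, $\mathbb{P}(g\oplus i)\subseteq\mathbb{P}(h)$, and $\mathbb{P}(i\oplus k)\subseteq\mathbb{P}(j)$.

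I will then split on whether $a\in\mathbb{P}(g\cap k)$. In the case $a\in\mathbb{P}(g\cap k)$, the observation $u_a>2$ places $a$ in $\mathbb{P}_2(g\cap k)=\mathbb{P}(\widetilde{g\cap k})$ via Notation \ref{N;Notation3.3}. From there, $a$ lies in $\mathbb{P}((\widetilde{g\cap k})\setminus i)$ when $a\notin\mathbb{P}(i)$ and in $\mathbb{P}((h\cup j)\cap(\widetilde{g\cap k})\cap i)$ when $a\in\mathbb{P}(i)$; in either situation the definition of $m$ in Notation \ref{N;Notation4.8} puts $a$ into $\mathbb{P}(m(g,h,i,j,k))$. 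If instead $a\notin\mathbb{P}(g\cap k)$ but $a\in\mathbb{P}(g\cup k)$, then $a\in\mathbb{P}(g\oplus k)\subseteq\mathbb{P}(m(g,h,i,j,k))$ at once.

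The only case requiring genuine work is $a\notin\mathbb{P}(g\cup k)$, and here the plan is to rule it out by deducing $a\in\mathbb{P}(h\cap i\cap j)$; once this is in hand, Lemma \ref{L;Lemma3.11} gives $(u_a-1)\mid k_{h\cap i\cap j}$, so that $u_a\equiv 1\pmod p$ forces $p\mid k_{h\cap i\cap j}$, contradicting the hypothesis. Concretely, if $a\in\mathbb{P}(h)$ then the containment $\mathbb{P}(h)\subseteq\mathbb{P}(g\cup i)$ combined with $a\notin\mathbb{P}(g)$ yields $a\in\mathbb{P}(i)$, after which $\mathbb{P}(i\oplus k)\subseteq\mathbb{P}(j)$ together with $a\notin\mathbb{P}(k)$ yields $a\in\mathbb{P}(j)$, so $a\in\mathbb{P}(h\cap i\cap j)$. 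The case $a\in\mathbb{P}(j)$ is symmetric, passing from $\mathbb{P}(j)\subseteq\mathbb{P}(i\cup k)$ to $\mathbb{P}(i)$ and then from $\mathbb{P}(g\oplus i)\subseteq\mathbb{P}(h)$ to $\mathbb{P}(h)$.

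I do not anticipate any serious obstacle: the whole argument is a combinatorial unwinding of the definitions of $\oplus$, $\odot$, $\widetilde{\cdot}$, and $m(g,h,i,j,k)$, glued to Lemma \ref{L;Lemma3.11}. The one point that deserves attention is the very first arithmetic step, namely that $u_a\equiv 1\pmod p$ together with $u_a\geq 2$ promotes $a$ from $\mathbb{P}(g\cap k)$ into $\mathbb{P}_2(g\cap k)=\mathbb{P}(\widetilde{g\cap k})$, which is precisely what makes the $\widetilde{g\cap k}$ summands of $m(g,h,i,j,k)$ accessible in the first main case.
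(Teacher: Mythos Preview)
Your proof is correct and follows essentially the same approach as the paper: both arguments pick an element $a$ with $u_a\equiv 1\pmod p$ in $\mathbb{P}(h\cup j)$, observe that $u_a>2$, and perform a case analysis showing $a\in\mathbb{P}(m(g,h,i,j,k))$, using the hypothesis $p\nmid k_{h\cap i\cap j}$ via Lemma~\ref{L;Lemma3.11} to exclude the one problematic configuration. Your case split according to the position of $a$ relative to $\mathbb{P}(g)$ and $\mathbb{P}(k)$ is a slightly cleaner organization than the paper's split along the decompositions $h=(g\oplus i)\cup((\widetilde{g\cap i})\cap h)$ and $j=(k\oplus i)\cup((\widetilde{k\cap i})\cap j)$, but the substance is the same.
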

\begin{proof}
Recall that $m(g, h, i, j, k)\!=\!(g\setminus k)\cup (k\setminus g)\cup((\widetilde{g\cap k})\setminus i)\cup((h\cup j)\cap (\widetilde{g\cap k})\cap i)$. As $p_{gh}^i\!\neq\!\! 0$ and $p_{ij}^k\!\neq\! 0$, Lemmas \ref{L;Lemma2.1} and \ref{L;Lemma4.3} imply that $h=(g\setminus i)\cup (i\setminus g)\cup((\widetilde{g\cap i})\cap h)$ and $j=(k\setminus i)\cup (i\setminus k)\cup((\widetilde{k\cap i})\cap j)$. Pick $\ell\in \{a: a\in \mathbb{P}(h\cup j),\ u_a\equiv 1\pmod p\}$. Notice that $u_\ell>2$ as $u_\ell\equiv 1\pmod p$. Assume that $\ell\in \mathbb{P}(g\setminus i)\cup\mathbb{P}(k\setminus i)$. It is clear to see that $\ell\in\mathbb{P}((\widetilde{g\cap k})\setminus i)\cup\mathbb{P}(g\setminus(k\cup i))\cup(k\setminus(g\cup i))$. So $\ell\in\mathbb{P}(m(g, h, i, j, k))$. Assume that $\ell\in \mathbb{P}((\widetilde{g\cap i})\cap h)$. So $\ell\in \mathbb{P}(((\widetilde{g\cap i})\cap h)\setminus k)\cup\mathbb{P}((\widetilde{g\cap i})\cap h\cap k)$, which implies that $\ell\in\mathbb{P}(m(g, h, i, j, k))$. Assume that $\ell\in \mathbb{P}((\widetilde{k\cap i})\cap j)$. It is clear that $\ell\in \mathbb{P}(((\widetilde{k\cap i})\cap j)\setminus g)\cup\mathbb{P}((\widetilde{k\cap i})\cap j\cap g)$. Hence $\ell\in\mathbb{P}(m(g, h, i, j, k))$. As $p\nmid k_{h\cap i\cap j}$, notice that $\{a: a\in \mathbb{P}(h\cup j),\ u_a\equiv 1\pmod p\}\cap \mathbb{P}(h\cap i\cap j)=\varnothing$ by Lemma \ref{L;Lemma3.11}. Assume that $\ell\in\mathbb{P}(i\setminus g)\cup \mathbb{P}(i\setminus k)$. So $\ell\in\mathbb{P}((i\cap k)\setminus g)\cup \mathbb{P}(i\setminus (g\cup k))\cup\mathbb{P}((i\cap g)\setminus k)$. As $p_{gh}^i\!\neq\!\! 0$ and $p_{ij}^k\!\neq\! 0$, notice that $\mathbb{P}(i\setminus (g\cup k))\subseteq\mathbb{P}((h\cap i\cap j)\setminus(g\cup k))$ by Lemmas \ref{L;Lemma2.1} and \ref{L;Lemma4.3}. Hence $\ell\in \mathbb{P}((i\cap k)\setminus g)\cup \mathbb{P}((i\cap g)\setminus k)$. So $\ell\in \mathbb{P}(m(g, h, i, j, k))$. As $\ell$ is chosen from $\{a: a\in \mathbb{P}(h\cup j),\ u_a\equiv 1\pmod p\}$ arbitrarily, the desired containment is thus checked. The desired lemma thus follows.
\end{proof}
\begin{lem}\label{L;Lemma7.6}
Assume that $g, h, i, j, k,\ell, m\in [0,d]$ and $B_{g,h,i}, B_{i,j, k}, B_{k, \ell, m}\in \mathbb{B}$. If there is $q\!\in\! [1,n]$ such that $u_q\equiv 1\pmod p$ and $q\in \mathbb{P}(h\cap j\cap\ell)$, $B_{g,h,i}B_{i,j,k}B_{k,\ell, m}=O$.
\end{lem}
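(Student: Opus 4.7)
The plan is to evaluate $B_{g,h,i}B_{i,j,k}B_{k,\ell,m}$ by two successive applications of Lemma~\ref{L;Lemma4.20}. Since $B_{g,h,i},B_{i,j,k},B_{k,\ell,m}\in\mathbb{B}$, the lemma guarantees that $B_{g,m(g,h,i,j,k),k}$ is defined and gives $B_{g,h,i}B_{i,j,k}=\overline{k_{h\cap i\cap j}}\,B_{g,m(g,h,i,j,k),k}$. Multiplying by $B_{k,\ell,m}$ and invoking Lemma~\ref{L;Lemma4.20} a second time yields
\[
B_{g,h,i}B_{i,j,k}B_{k,\ell,m}=\overline{k_{h\cap i\cap j}}\,\overline{k_{m(g,h,i,j,k)\cap k\cap\ell}}\,B_{g,\,m(g,m(g,h,i,j,k),k,\ell,m),\,m}.
\]
Hence it suffices to show that at least one of the two scalars $\overline{k_{h\cap i\cap j}}$ and $\overline{k_{m(g,h,i,j,k)\cap k\cap\ell}}$ vanishes in $\F$.

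If $p\mid k_{h\cap i\cap j}$, the first scalar is $\overline{0}$ and the product is $O$, so assume $p\nmid k_{h\cap i\cap j}$. Under this assumption I claim $q\notin\mathbb{P}(i)$: indeed, if $q\in\mathbb{P}(i)$ then $q\in\mathbb{P}(h)\cap\mathbb{P}(i)\cap\mathbb{P}(j)=\mathbb{P}(h\cap i\cap j)$, so $u_q-1$ is a factor of $k_{h\cap i\cap j}$ by Lemma~\ref{L;Lemma3.11}, and the hypothesis $u_q\equiv 1\pmod p$ would force $p\mid k_{h\cap i\cap j}$, contradicting the assumption. Therefore $q\in\mathbb{P}(j\setminus i)\subseteq\mathbb{P}(i\oplus j)\subseteq\mathbb{P}(k)$, where the last containment follows from $B_{i,j,k}\in\mathbb{B}$ together with Lemmas~\ref{L;Lemma2.1} and~\ref{L;Lemma4.3}.

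With $q\in\mathbb{P}(k)$ in hand, I next invoke Lemma~\ref{L;Lemma7.5} applied to the tuple $(g,h,i,j,k)$: since $p\nmid k_{h\cap i\cap j}$, the lemma delivers $\{a\in\mathbb{P}(h\cup j):u_a\equiv 1\pmod p\}\subseteq\mathbb{P}(m(g,h,i,j,k))$. Taking $a=q$, which belongs to $\mathbb{P}(h)\subseteq\mathbb{P}(h\cup j)$, gives $q\in\mathbb{P}(m(g,h,i,j,k))$. Combining with $q\in\mathbb{P}(k)$ just established and $q\in\mathbb{P}(\ell)$ from the hypothesis, I conclude $q\in\mathbb{P}(m(g,h,i,j,k)\cap k\cap\ell)$, so Lemma~\ref{L;Lemma3.11} yields $p\mid k_{m(g,h,i,j,k)\cap k\cap\ell}$, making the second scalar $\overline{0}$.

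The main obstacle is the auxiliary claim $q\in\mathbb{P}(k)$: the hypothesis only supplies $q\in\mathbb{P}(j)$, and $B_{i,j,k}\in\mathbb{B}$ merely confines $k$ to the interval $i\oplus j\leq_2 k\leq_2 i\odot j$, so $q\in\mathbb{P}(j)$ alone is not enough. The trick is to exploit the already-assumed non-vanishing $p\nmid k_{h\cap i\cap j}$ to exclude $q\in\mathbb{P}(i)$, thereby pushing $q$ into $\mathbb{P}(j\setminus i)\subseteq\mathbb{P}(i\oplus j)\subseteq\mathbb{P}(k)$, which is automatic from $B_{i,j,k}\in\mathbb{B}$. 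Once $q\in\mathbb{P}(k)$ is secured, Lemmas~\ref{L;Lemma7.5} and~\ref{L;Lemma3.11} finish the argument mechanically.
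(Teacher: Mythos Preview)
Your proof is correct. Both arguments share the key observation that if $p\nmid k_{h\cap i\cap j}$ then $q\notin\mathbb{P}(i)$, whence $q\in\mathbb{P}(j\setminus i)\subseteq\mathbb{P}(i\oplus j)\subseteq\mathbb{P}(k)$. From this point the paper takes a shorter path: once $q\in\mathbb{P}(k)$, one has $q\in\mathbb{P}(j\cap k\cap\ell)$ directly, so $p\mid k_{j\cap k\cap\ell}$ and already $B_{i,j,k}B_{k,\ell,m}=O$ by Lemma~\ref{L;Lemma4.20}, with no need to expand the full triple product or to invoke Lemma~\ref{L;Lemma7.5}. Your route---computing the triple product as a single scalar times $B_{g,\cdot,m}$ and then using Lemma~\ref{L;Lemma7.5} to force $q\in\mathbb{P}(m(g,h,i,j,k))$---is valid but heavier than necessary; Lemma~\ref{L;Lemma7.5} is doing work that the simpler observation $q\in\mathbb{P}(j\cap k\cap\ell)$ already handles.
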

\begin{proof}
If $q\in \mathbb{P}(i)$, notice that $q\in \mathbb{P}(h\cap i\cap j)$ and $p\mid k_{h\cap i\cap j}$ by Lemma \ref{L;Lemma3.11}. Hence $B_{g,h,i}B_{i,j,k}=O$ by \eqref{Eq;2} and Lemma \ref{L;Lemma4.20}. If $q\in \mathbb{P}(k)$, notice that $q\in \mathbb{P}(j\cap k\cap \ell)$ and $p\mid k_{j\cap k\cap \ell}$ by Lemma \ref{L;Lemma3.11}. Hence $B_{i,j,k}B_{k, \ell, m}=O$ by \eqref{Eq;2} and Lemma \ref{L;Lemma4.20}. So there is no loss to assume that $q\notin \mathbb{P}(i\cup k)$. Since $p_{ij}^k\neq 0$ by Notation \ref{N;Notation4.19}, notice that $j\leq_2 i\cup k$ by Lemmas \ref{L;Lemma2.1} and \ref{L;Lemma4.3}. Hence $q\notin\mathbb{P}(j)$ as $q\notin \mathbb{P}(i\cup k)$. This is an contradiction as $q\!\in\!\mathbb{P}(h\cap j\cap\ell)$.
So $q\!\in\! \mathbb{P}(i\cup k)$. The desired lemma thus follows.
\end{proof}
For further discussion, the next notation and an additional lemma are required.
\begin{nota}\label{L;Lemma7.7}
\em Assume that $g\in \mathbb{N}_0\setminus\{0\}$ and $h_1, i_1, j_1, h_2, i_2, j_2, \ldots, h_g, i_g, j_g\in [0,d]$. Assume that $B_{h_k, i_k, j_k}\in \mathbb{B}$ for any $k\in [1,g]$.
If $h_k=j_\ell$ for any $k, \ell\in [1,g]$, recall that the subalgebra $E_{h_1}^*\mathbb{T}E_{h_1}^*$ of $\mathbb{T}$ is commutative and $\prod_{k=1}^gB_{h_k, i_k, j_k}$ is defined. For any $\ell\in [1,g]$, set $\prod_{k\!=\!\ell}^gB_{h_k, i_k, j_k}=B_{h_\ell, i_\ell, j_\ell}B_{h_{\ell+1}, i_{\ell+1}, j_{\ell+1}}\cdots B_{h_g, i_g, j_g}$ for the general case.
\end{nota}
\begin{lem}\label{L;Lemma7.8}
Assume that $g\in \mathbb{N}_0\setminus[0,2]$ and $h_1, i_1, j_1, h_2, i_2, j_2, \ldots, h_g, i_g, j_g\in [0,d]$. Assume that $B_{h_k, i_k, j_k}\!\in\! \mathbb{B}$ for any $k\!\in\! [1,g]$. If there are pairwise distinct $\ell, m, q\!\in\! [1,g]$ such that $\{a: u_a\equiv 1\pmod p\}\cap\mathbb{P}(i_\ell\cap i_m\cap i_q)\neq \varnothing$, then $\prod_{k=1}^gB_{h_k, i_k, j_k}=O$.
\end{lem}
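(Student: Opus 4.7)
The plan is to bracket the $g$-fold product into three consecutive blocks, collapse each block to a single element of $\mathbb{B}$ by iterated applications of Lemma~\ref{L;Lemma4.20}, and then invoke Lemma~\ref{L;Lemma7.6} on the resulting three-factor product with the common bad index $a$ sitting in the middle slot of each factor. Up to relabelling we may assume $\ell<m<q$, and we fix $a\in\{b:u_b\equiv 1\pmod{p}\}\cap\mathbb{P}(i_\ell\cap i_m\cap i_q)$. If $j_k\ne h_{k+1}$ for some $k\in[1,g-1]$, then $B_{h_k,i_k,j_k}B_{h_{k+1},i_{k+1},j_{k+1}}=O$ by \eqref{Eq;2}, so the full product already vanishes; we therefore assume $j_k=h_{k+1}$ for every $k\in[1,g-1]$. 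Set
\[ P_1=\prod_{k=1}^{\ell}B_{h_k,i_k,j_k},\qquad P_2=\prod_{k=\ell+1}^{m}B_{h_k,i_k,j_k},\qquad P_3=\prod_{k=m+1}^{q}B_{h_k,i_k,j_k}, \]
and $P_4=\prod_{k=q+1}^{g}B_{h_k,i_k,j_k}$, with $P_4$ interpreted as the identity when $q=g$. By associativity $\prod_{k=1}^{g}B_{h_k,i_k,j_k}=P_1P_2P_3P_4$, so it suffices to prove $P_1P_2P_3=O$.

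The key step is the following inductive claim on block length $t$: if a product $B_{c_0,b_1,c_1}B_{c_1,b_2,c_2}\cdots B_{c_{t-1},b_t,c_t}$ of elements of $\mathbb{B}$ is nonzero, then it equals a nonzero scalar multiple of a single $B_{c_0,s,c_t}\in\mathbb{B}$ whose second index satisfies
\[ \{a'\in[1,n]:u_{a'}\equiv 1\pmod{p}\}\cap\bigcup_{r=1}^{t}\mathbb{P}(b_r)\subseteq\mathbb{P}(s). \]
The base case $t=1$ is tautological. For the inductive step, write the first $t-1$ factors as $c\,B_{c_0,s',c_{t-1}}$ and invoke Lemma~\ref{L;Lemma4.20} to get $c\,B_{c_0,s',c_{t-1}}\cdot B_{c_{t-1},b_t,c_t}=c\,\overline{k_{s'\cap c_{t-1}\cap b_t}}\,B_{c_0,m(c_0,s',c_{t-1},b_t,c_t),c_t}$; nonvanishing forces $p\nmid k_{s'\cap c_{t-1}\cap b_t}$, and Lemma~\ref{L;Lemma7.5} then guarantees that every prime-sensitive slot in $\mathbb{P}(s')\cup\mathbb{P}(b_t)$ survives inside $\mathbb{P}(m(c_0,s',c_{t-1},b_t,c_t))$. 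Applied to each of $P_1,P_2,P_3$ (each of which is either zero, in which case the conclusion is immediate, or nonzero), this yields $P_i=e_iB_{\alpha_i,s_i,\beta_i}$ with $e_i\in\F\setminus\{\overline{0}\}$ and $a\in\mathbb{P}(s_1)\cap\mathbb{P}(s_2)\cap\mathbb{P}(s_3)$, using $a\in\mathbb{P}(i_\ell)\cap\mathbb{P}(i_m)\cap\mathbb{P}(i_q)$; the reduction $j_k=h_{k+1}$ gives $\beta_1=\alpha_2$ and $\beta_2=\alpha_3$.

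Lemma~\ref{L;Lemma7.6} now applies directly to $B_{\alpha_1,s_1,\beta_1}B_{\alpha_2,s_2,\beta_2}B_{\alpha_3,s_3,\beta_3}$, since $u_a\equiv 1\pmod{p}$ and $a\in\mathbb{P}(s_1\cap s_2\cap s_3)$; thus $P_1P_2P_3=O$, whence $\prod_{k=1}^{g}B_{h_k,i_k,j_k}=O$. The main obstacle is the inductive claim: one must carefully track, through each application of Lemma~\ref{L;Lemma4.20}, that the nonvanishing of the intermediate product supplies the hypothesis $p\nmid k_{s'\cap c_{t-1}\cap b_t}$ needed to feed into Lemma~\ref{L;Lemma7.5}, so that the second index of the collapsed basis element genuinely inherits every prime-sensitive slot from all previously absorbed factors.
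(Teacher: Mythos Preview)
Your proof is correct and follows essentially the same approach as the paper: both arguments collapse consecutive blocks of the product to single basis elements via iterated Lemma~\ref{L;Lemma4.20}, use Lemma~\ref{L;Lemma7.5} to track that the prime-sensitive index $a$ persists in the middle slot of each collapsed factor, and then finish with Lemma~\ref{L;Lemma7.6}. The only cosmetic difference is that the paper brackets as $[1,m-1]\cup\{m\}\cup[m+1,g]$ (leaving the single factor $B_{h_m,i_m,j_m}$ in the middle and working by contradiction), whereas you bracket at $\ell,m,q$ and state the tracking of the prime-sensitive index as an explicit inductive claim; the underlying mechanism is identical.
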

\begin{proof}
Assume that $\prod_{k=1}^gB_{h_k, i_k, j_k}\neq O$. As $\ell, m, q$ are pairwise distinct, there is no loss to assume that $\ell<m<q$. Pick $r\in\{a: u_a\equiv 1\pmod p\}\cap\mathbb{P}(i_\ell\cap i_m\cap i_q)$. By combining \eqref{Eq;2}, Lemmas \ref{L;Lemma4.20}, and \ref{L;Lemma7.5}, there are $s, t\in \F\setminus\{\overline{0}\}$ and $u, v\in [0,d]$ such that $r\in \mathbb{P}(u\cap m\cap v)$, $B_{h_1, u, j_{m-1}}, B_{h_{m+1}, v, j_g}\in \mathbb{B}$, $\prod_{k=1}^{m-1}B_{h_k, i_k, j_k}\!\!=\!\!sB_{h_1, u, j_{m-1}}\neq O$, and $\prod_{k=m+1}^{g}B_{h_k, i_k, j_k}=tB_{h_{m+1}, v, j_g}\neq O$. So $\prod_{k=1}^gB_{h_k, i_k, j_k}=O$ by Lemma \ref{L;Lemma7.6}. It is an obvious contradiction. The desired lemma thus follows.
\end{proof}
The following definition and another combinatorial lemma complete preparation.
\begin{defn}\label{D;Definition7.9}
\em Assume that $g, h\!\in\! \mathbb{N}_0\setminus\{0\}$ and $\mathbb{U}$ denotes a nonempty set. Call $\mathbb{U}$ a $(g,h)$-dense set if, for any set sequence $\mathbb{V}_1, \mathbb{V}_2, \ldots, \mathbb{V}_g$ that satisfies the inequality $\mathbb{U}\cap \mathbb{V}_i\neq\varnothing$ for any $i\in [1, g]$, there is $\mathbb{W}\subseteq [1, g]$ such that $\mathbb{U}\cap (\bigcap_{j\in\mathbb{W}}\mathbb{V}_j)\neq\varnothing$ and $|\mathbb{W}|=h$. Notice that $\mathbb{U}$ is a $(g, h)$-dense set only if $g\geq h$.
\end{defn}
\begin{lem}\label{L;Lemma7.10}
Assume that $g\in\mathbb{N}_0\!\setminus\!\{0\}$. Then every set of cardinality $g$ is always a $(2g\!+\!1, 3)$-dense set.
\end{lem}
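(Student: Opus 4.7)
The plan is to prove Lemma \ref{L;Lemma7.10} by a straightforward pigeonhole argument on a system of selected representatives. Let $\mathbb{U}$ be a set with $|\mathbb{U}|=g$, and suppose $\mathbb{V}_1,\mathbb{V}_2,\ldots,\mathbb{V}_{2g+1}$ are sets with $\mathbb{U}\cap\mathbb{V}_i\neq\varnothing$ for every $i\in [1,2g+1]$. For each such $i$, I would choose some element $u_i\in \mathbb{U}\cap\mathbb{V}_i$, obtaining a sequence of $2g+1$ elements of $\mathbb{U}$.

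Since $|\mathbb{U}|=g$ and we have $2g+1$ choices landing in $\mathbb{U}$, by the Pigeonhole Principle there exists an element $u\in\mathbb{U}$ which is selected by at least $\lceil (2g+1)/g\rceil =3$ distinct indices. Indeed, if every element of $\mathbb{U}$ were chosen at most twice, the total count would be at most $2g<2g+1$, a contradiction. Hence I can find pairwise distinct $i_1,i_2,i_3\in [1,2g+1]$ with $u=u_{i_1}=u_{i_2}=u_{i_3}$.

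Setting $\mathbb{W}=\{i_1,i_2,i_3\}$, we have $|\mathbb{W}|=3$ and $u\in\mathbb{U}\cap\mathbb{V}_{i_1}\cap\mathbb{V}_{i_2}\cap\mathbb{V}_{i_3}=\mathbb{U}\cap\bigl(\bigcap_{j\in\mathbb{W}}\mathbb{V}_j\bigr)$, so this intersection is nonempty. By Definition \ref{D;Definition7.9}, this is exactly what it means for $\mathbb{U}$ to be a $(2g+1,3)$-dense set, which completes the proof.

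There is essentially no obstacle here: the statement is combinatorial and reduces at once to pigeonhole once one chooses a representative $u_i\in\mathbb{U}\cap\mathbb{V}_i$ for each $i$. The only minor point to verify carefully is the arithmetic inequality $2g+1>2g$, ensuring that the average multiplicity strictly exceeds $2$ and hence some element is selected at least three times, regardless of the value of $g\in\mathbb{N}_0\setminus\{0\}$.
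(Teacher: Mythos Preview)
Your proof is correct. The paper takes a different route: it proceeds by induction on $g$, using pigeonhole in the inductive step to find two indices $i,j$ with a common element $k\in\mathbb{U}\cap\mathbb{V}_i\cap\mathbb{V}_j$, then either finding a third index containing $k$ or removing $k$ and applying the inductive hypothesis to the set $\mathbb{U}\setminus\{k\}$ of size $g-1$ and the remaining $2g-1$ sets. Your argument is more direct and more elementary: by selecting a single representative from each intersection and applying pigeonhole once to the resulting sequence of $2g+1$ elements in a $g$-element set, you immediately obtain an element hit at least three times. Both approaches are valid; yours avoids the induction entirely and is arguably cleaner.
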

\begin{proof}
We work by induction on $g$. If $g=1$, it is obvious that every set containing exactly a single element is a $(3,3)$-dense set. The base case is thus checked. Assume that $g>1$ and every set of cardinality $g-1$ is a $(2g-1, 3)$-dense set. Assume that $\mathbb{U}$ is a set of cardinality $g$ and a set sequence $\mathbb{V}_1, \mathbb{V}_2, \ldots, \mathbb{V}_{2g+1}$ satisfies the inequality $\mathbb{U}\cap \mathbb{V}_h\!\!\neq\!\!\varnothing$ for any $h\!\in\! [1, 2g+1]$. As $|\mathbb{U}|\!=\!g$, the Pigeonhole Principle says that there are distinct $i, j\in [1, 2g+1]$ such that $\mathbb{U}\cap \mathbb{V}_i\cap\mathbb{V}_j\!\!\neq\!\!\varnothing$. Pick $k\!\in\!\mathbb{U}\cap \mathbb{V}_i\cap\mathbb{V}_j$. If there is $\ell\in [1,2g+1]\setminus\{i, j\}$ such that $k\in \mathbb{V}_\ell$, $\mathbb{U}\cap\mathbb{V}_i\cap\mathbb{V}_j\cap\mathbb{V}_\ell\!\neq\! \varnothing$. Otherwise, $\mathbb{U}\setminus\{k\}$ is a $(2g-1, 3)$-dense set by the inductive hypothesis. So $(\mathbb{U}\setminus\{k\})\cap \mathbb{V}_{m}\cap\mathbb{V}_{q}\cap\mathbb{V}_{r}\neq\varnothing$ for some pairwise distinct $m, q, r\in [1,2g+1]\setminus \{i, j\}$. So $\mathbb{U}$ is a $(2g+1, 3)$-dense set by the above discussion. The desired lemma follows as $\mathbb{U}$ is chosen arbitrarily.
\end{proof}
\begin{lem}\label{L;Lemma7.11}
The product of any $2|\{a: u_a\equiv 1\pmod p\}|\!+\!1$ elements in $\mathbb{I}$ is the zero matrix. Furthermore, $n(\mathbb{I})=2|\{a: u_a\equiv 1\pmod p\}|+1$ and $\mathbb{I}\subseteq\mathrm{Rad}(\mathbb{T})$.
\end{lem}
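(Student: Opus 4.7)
Set $\mathbb{U}=\{a: u_a\equiv 1\pmod p\}$ and $g=|\mathbb{U}|$. The plan is to pin down $n(\mathbb{I})$ exactly and then derive $\mathbb{I}\subseteq\mathrm{Rad}(\mathbb{T})$ essentially for free. I would first dispose of the trivial case $g=0$: by Lemma \ref{L;Lemma3.11}, the condition $p\mid k_b$ forces $\mathbb{P}(b)\cap\mathbb{U}\neq\varnothing$, so $g=0$ means no $b\in[0,d]$ satisfies $p\mid k_b$, whence $\mathbb{I}=\{O\}$ by Notation \ref{N;Notation7.3}. The product of any single element of $\mathbb{I}$ is then $O$, so $n(\mathbb{I})=1=2g+1$ in this degenerate case.

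For the main case $g>0$, I would take $2g+1$ arbitrary elements of $\mathbb{I}$ and expand each as an $\F$-linear combination of the spanning elements $B_{a,b,c}\in\mathbb{B}$ with $p\mid k_b$ from Notation \ref{N;Notation7.3}. Distributivity then reduces the problem to verifying $\prod_{k=1}^{2g+1}B_{h_k, i_k, j_k}=O$ whenever each $B_{h_k, i_k, j_k}\in\mathbb{B}$ satisfies $p\mid k_{i_k}$. Lemma \ref{L;Lemma3.11} translates the divisibility $p\mid k_{i_k}$ into $\mathbb{U}\cap\mathbb{P}(i_k)\neq\varnothing$ for every $k\in[1,2g+1]$. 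Since $|\mathbb{U}|=g$, Lemma \ref{L;Lemma7.10} supplies pairwise distinct indices $\ell, m, q\in [1,2g+1]$ with $\mathbb{U}\cap\mathbb{P}(i_\ell)\cap\mathbb{P}(i_m)\cap\mathbb{P}(i_q)\neq\varnothing$, and Lemma \ref{L;Lemma7.8} then yields $\prod_{k=1}^{2g+1}B_{h_k, i_k, j_k}=O$, which proves the first assertion.

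The upper bound $n(\mathbb{I})\leq 2g+1$ just established is matched by the lower bound from Lemma \ref{L;Lemma7.4}, which exhibits $2g$ elements of $\mathbb{I}$ whose product is non-zero; hence $n(\mathbb{I})=2g+1$. Since $\mathbb{I}$ is a two-sided ideal of $\mathbb{T}$ by Notation \ref{N;Notation7.3} and is nilpotent by the computation above, the definition of $\mathrm{Rad}(\mathbb{T})$ as the sum of all nilpotent two-sided ideals immediately forces $\mathbb{I}\subseteq\mathrm{Rad}(\mathbb{T})$. The one delicate point I foresee is the distributivity step in the middle paragraph: a priori one might worry that cancellation between summands in the expanded product is required, but the three-index intersection argument above applies uniformly to every choice of basis representatives from $\mathbb{I}$, so each summand vanishes on its own and no cancellation is needed.
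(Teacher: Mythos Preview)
Your proposal is correct and follows essentially the same argument as the paper: reduce to products of basis elements, use Lemma~\ref{L;Lemma3.11} to get $\mathbb{U}\cap\mathbb{P}(i_k)\neq\varnothing$, apply Lemma~\ref{L;Lemma7.10} to find three indices with common intersection, and invoke Lemma~\ref{L;Lemma7.8}; the lower bound comes from Lemma~\ref{L;Lemma7.4}. The only cosmetic difference is that for $g=0$ the paper cites Theorem~\ref{T;Semisimplicity} to conclude $\mathrm{Rad}(\mathbb{T})=\{O\}=\mathbb{I}$, whereas you argue $\mathbb{I}=\{O\}$ directly from Lemma~\ref{L;Lemma3.11}; your route is slightly more self-contained but not substantively different.
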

\begin{proof}
Set $\mathbb{U}\!=\!\{a: u_a\equiv 1\pmod p\}$ and $g=|\mathbb{U}|$. If $g=0$, then the combination of Theorem \ref{T;Semisimplicity}, Lemma \ref{L;Lemma3.11}, Notation \ref{N;Notation7.3} implies that $\mathrm{Rad}(\mathbb{T})\!\!=\!\!\{O\}=\mathbb{I}$. Assume that $g>0$. Set $h\!=\!2g+1$. Pick $B_{i_1, j_1, \ell_1}, B_{i_2, j_2, \ell_2},\ldots, B_{i_h, j_h, \ell_h}\!\in\! \mathbb{I}$. Lemma \ref{L;Lemma3.11} and Notation \ref{N;Notation7.3} imply that $\mathbb{U}\cap \mathbb{P}(j_m)\neq \varnothing$ for any $m\in [1,h]$. As $g\!>\!0$ and Lemma \ref{L;Lemma7.10} holds, there are pairwise distinct $j_q, j_r, j_s$ such that $\mathbb{U}\cap\mathbb{P}(j_q\cap j_r\cap j_s)\neq\varnothing$. Lemma \ref{L;Lemma7.8} thus implies that $\prod_{m=1}^hB_{i_m, j_m, \ell_m}\!\!=\!\!O$. The first statement is thus from Notation \ref{N;Notation7.3}. The desired formula of $n(\mathbb{I})$ thus follows from the first statement and Lemma \ref{L;Lemma7.4}. So $\mathbb{I}$ is a nilpotent two-sided ideal of $\mathbb{T}$. The desired lemma thus follows.
\end{proof}
We are now ready to close this section by presenting the main result of this section.
\begin{thm}\label{T;Jacobson}
Assume that $M\in \mathrm{Rad}(\mathbb{T})$. Then $M\in \mathbb{I}$. In particular, $\mathrm{Rad}(\mathbb{T})\!=\!\mathbb{I}$.
\end{thm}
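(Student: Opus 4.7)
The plan is to establish $\mathrm{Rad}(\mathbb{T})\subseteq\mathbb{I}$; Lemma \ref{L;Lemma7.11} already gives $\mathbb{I}\subseteq\mathrm{Rad}(\mathbb{T})$. Fix $M\in\mathrm{Rad}(\mathbb{T})$ and decompose $M=\sum_{g,h=0}^{d}E_g^*ME_h^*$ via \eqref{Eq;3}. Each summand remains in $\mathrm{Rad}(\mathbb{T})$ since the latter is a two-sided ideal, so I reduce to proving $N:=E_g^*ME_h^*\in\mathbb{I}$ for every pair $g,h$. Using Theorem \ref{T;Theorem4.22} I split $N=N_0+N_1$, where $N_1$ collects the basis summands $B_{g,b,h}$ in the expansion of $N$ with $p\mid k_b$ (hence $N_1\in\mathbb{I}$ by Notation \ref{N;Notation7.3}) and $N_0$ collects the rest; the goal becomes $N_0=O$.

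Assume toward contradiction that $N_0\neq O$. Since $M^T\in\mathrm{Rad}(\mathbb{T})$ (noted just before Lemma \ref{L;Lemma2.8}) and $\mathrm{Rad}(\mathbb{T})$ is a two-sided ideal,
\[
NN^T=E_g^*(ME_h^*M^T)E_g^*\in E_g^*\mathrm{Rad}(\mathbb{T})E_g^*\subseteq\mathrm{Rad}(E_g^*\mathbb{T}E_g^*)=\mathbb{I}_g
\]
by Lemmas \ref{L;Lemma2.6} and \ref{L;Lemma6.16}. The three mixed products $N_0N_1^T$, $N_1N_0^T$, $N_1N_1^T$ all lie in $\mathbb{I}$, so $N_0N_0^T\in\mathbb{I}$; combined with the containment $N_0N_0^T\in E_g^*\mathbb{T}E_g^*$ and the equality $\mathbb{I}\cap E_g^*\mathbb{T}E_g^*=\mathbb{I}_g$ (immediate from Lemma \ref{L;Lemma6.1} and Notations \ref{N;Notation6.3}, \ref{N;Notation7.3}), I conclude $N_0N_0^T\in\mathbb{I}_g$.

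To contradict this, I exhibit a basis element $B_{g,a_0,g}\notin\mathbb{I}_g$ occurring with nonzero coefficient in $N_0N_0^T$. By Lemma \ref{L;Lemma4.4}, each $b$ with $g\oplus h\leq_2 b\leq_2 g\odot h$ is uniquely encoded by $S_b:=\mathbb{P}(b)\cap\mathbb{P}_2(g\cap h)$ via $\mathbb{P}(b)=\mathbb{P}(g\oplus h)\cup S_b$. I choose $b_0$ minimizing $|S_{b_0}|$ among the $b$ with $c_{g,b,h}(N_0)\neq\overline{0}$, and define $a_0$ by $\mathbb{P}(a_0)=(\mathbb{P}_2(g)\setminus\mathbb{P}(h))\cup S_{b_0}$. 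Expanding $N_0N_0^T$ with Lemmas \ref{L;Lemma7.1} and \ref{L;Lemma4.20} and computing $\mathbb{P}(m(g,b,h,c,g))=(\mathbb{P}_2(g)\setminus\mathbb{P}(h))\cup S_b\cup S_c$ from Notation \ref{N;Notation4.8}, the coefficient of $B_{g,a_0,g}$ is a sum over pairs $(b,c)$ in the support of $N_0$ with $S_b\cup S_c=S_{b_0}$. Since $S_b,S_c\subseteq S_{b_0}$, the minimality of $|S_{b_0}|$ forces $S_b=S_c=S_{b_0}$ and hence $b=c=b_0$, leaving the single term $c_{g,b_0,h}(N_0)^2\,\overline{k_{b_0\cap h}}$, nonzero because $\mathbb{P}(b_0\cap h)\subseteq\mathbb{P}(b_0)$ and $p\nmid k_{b_0}$ (Lemma \ref{L;Lemma3.11}). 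As $\mathbb{P}(a_0)\subseteq\mathbb{P}(b_0)$ also gives $p\nmid k_{a_0}$, one has $B_{g,a_0,g}\notin\mathbb{I}_g$ by Notation \ref{N;Notation6.3}, contradicting $N_0N_0^T\in\mathbb{I}_g$.

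The main obstacle is designing the parameter $S_b$ so that a minimality argument isolates exactly one surviving pair in the quadratic expansion of $N_0N_0^T$. Once this parameterization is in place, the product formula of Lemma \ref{L;Lemma4.20}, combined with the explicit evaluation $\mathbb{P}(m(g,b,h,c,g))=(\mathbb{P}_2(g)\setminus\mathbb{P}(h))\cup S_b\cup S_c$, makes the identification of the lone contributing term essentially forced.
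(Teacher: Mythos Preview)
Your proof is correct and complete; the parameterization by $S_b$, the minimality selection, and the verification that $\mathbb{P}(a_0)\subseteq\mathbb{P}(b_0)$ (hence $p\nmid k_{a_0}$) all go through as stated.

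The paper takes a different, slightly shorter route. After isolating the ``good'' part $N_0=E_g^*ME_h^*$ (with all middle indices satisfying $p\nmid k_{i_k}$), it left-multiplies by the single fixed element $B_{h,\,g\oplus h,\,g}$ rather than by $N_0^T$. The point is that for the specific multiplier $g\oplus h$ one computes $m(h,g\oplus h,g,i_k,h)=(\widetilde{h}\setminus g)\cup(i_k\cap\widetilde{g}\cap h)$, and since $i_k=(g\oplus h)\cup(i_k\cap\widetilde{g}\cap h)$ the values $i_k\cap\widetilde{g}\cap h$ are pairwise distinct; hence the map $i_k\mapsto m(h,g\oplus h,g,i_k,h)$ is injective and \emph{no} cancellation can occur. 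Every term of $N_0$ contributes a separate basis element of $E_h^*\mathbb{T}E_h^*$, each with $p\nmid k_m$ because $\mathbb{P}(m)\subseteq\mathbb{P}(i_k)$. This immediately contradicts $B_{h,g\oplus h,g}N_0\in\mathrm{Rad}(E_h^*\mathbb{T}E_h^*)=\mathbb{I}_h$. So the paper avoids your minimality argument entirely by choosing a multiplier that already separates all support elements. Your approach, by contrast, is the more classical ``$NN^T$'' trick one expects in algebras equipped with a transpose, and it has the virtue of not depending on a clever choice of test element; the minimality step is the price paid for that generality.
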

\begin{proof}
Assume that $M\in\mathrm{Rad}(\mathbb{T})\setminus\mathbb{I}$. As $M\notin \mathbb{I}$ and \eqref{Eq;3} holds, the combination of Lemmas \ref{L;Lemma2.6}, \ref{L;Lemma6.16}, Notations \ref{N;Notation6.3}, \ref{N;Notation7.3} implies that there exist distinct $g, h\in [0,d]$ such that $E_g^*ME_h^*\!\notin\mathbb{I}$. As Lemma \ref{L;Lemma7.11} and Theorem \ref{T;Theorem4.22} hold, there is no loss to assume that $\mathrm{Supp}_\mathbb{B}(E_g^*ME_h^*)\!=\!\{B_{g, i_1, h}, B_{g, i_2, h},\ldots, B_{g, i_j, h}\}$, $i_1, i_2,\ldots, i_j$ are pairwise distinct in $[0,d]$, $p_{gi_k}^h\!\neq\! 0$, $p\nmid k_{i_k}$ for any $k\in [1,j]$. So $i_1\cap\widetilde{g}\cap h, i_2\cap\widetilde{g}\cap h, \ldots,i_j\cap\widetilde{g}\cap h$ are pairwise distinct by Lemmas \ref{L;Lemma2.1} and \ref{L;Lemma4.3}. As $m(h, g\oplus h, g, i_k, h)=(\widetilde{h}\setminus g)\cup (i_k\cap\widetilde{g}\cap h)$ for any $k\in [1,j]$ and $B_{h, g\oplus h, g}$ is defined by Lemmas \ref{L;Lemma4.4} and \ref{L;Lemma2.1}, Lemmas \ref{L;Lemma4.20} and \ref{L;Lemma3.11} imply that $c_{h, m(h,g\oplus h, g, i_1, h), h}(B_{h, g\oplus h, g}E_g^*ME_h^*)\!\!=\!\!c_{g, i_1, h}(E_g^*ME_h^*)\overline{k_{(g\oplus h)\cap g\cap i_1}}\!\neq\! \overline{0}$. So $O\neq B_{h, g\oplus h, g}E_g^*ME_h^*\in\mathrm{Rad}(E_h^*\mathbb{T}E_h^*)$ by Lemma \ref{L;Lemma2.6}. The combination of Lemmas \ref{L;Lemma6.16}, \ref{L;Lemma3.11}, Notation \ref{N;Notation6.3}, Theorem \ref{T;Theorem4.22} implies that $B_{h, g\oplus h, g}E_g^*ME_h^*\notin\mathrm{Rad}(E_h^*\mathbb{T}E_h^*)$. This is a contradiction. The desired theorem thus follows from Lemma \ref{L;Lemma7.11}.
\end{proof}
\section{Structures of Terwilliger $\F$-algebras of factorial schemes I}
In this section, we study the algebraic structure of $\mathbb{T}$ by investigating the objects in Notation \ref{N;Notation6.5}. Our aim is to generalize Lemmas \ref{L;Lemma6.13} and \ref{L;Lemma6.14}. For our purpose, we recall Notations \ref{N;Notation3.3}, \ref{N;Notation4.2}, \ref{N;Notation4.8}, \ref{N;Notation4.19}, \ref{N;Notation5.6}, \ref{N;Notation6.5} and first present two required lemmas.
\begin{lem}\label{L;Lemma8.1}
Assume that $g, h, i, j\in [0,d]$. Then $(\widetilde{g\cap j})\setminus(\widetilde{g\cap i})\leq_2 (\widetilde{g\cap j})\cap h$ if $g\setminus h\leq_2 i$.
\end{lem}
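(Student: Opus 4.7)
The plan is to unwind both sides of the claimed $\leq_2$-inequality into a statement about the underlying subsets of $[1,n]$, and then to verify a single pointwise implication. All notational tools needed are already available: Notation \ref{N;Notation3.3} gives the explicit description $\mathbb{P}(\widetilde{k})=\mathbb{P}_2(k)=\{a\in\mathbb{P}(k):u_a>2\}$, Notation \ref{N;Notation4.2} describes $\setminus$, $\cup$, $\cap$ on $[0,d]$ via the corresponding operations on $[1,n]$, and Lemma \ref{L;Lemma3.1} says that equality of these subsets of $[1,n]$ is the same as equality of the corresponding indices.

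First, I will translate the desired conclusion. Using Notations \ref{N;Notation3.3} and \ref{N;Notation4.2}, $(\widetilde{g\cap j})\setminus(\widetilde{g\cap i})\leq_2(\widetilde{g\cap j})\cap h$ is equivalent to the set-inclusion
\[
\mathbb{P}_2(g\cap j)\setminus\mathbb{P}_2(g\cap i)\ \subseteq\ \mathbb{P}_2(g\cap j)\cap \mathbb{P}(h),
\]
which, since every element of the left-hand side already lies in $\mathbb{P}_2(g\cap j)$, reduces to showing that each such element also lies in $\mathbb{P}(h)$.

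Second, I will pick $a\in\mathbb{P}_2(g\cap j)\setminus\mathbb{P}_2(g\cap i)$ and extract two facts about it. From $a\in\mathbb{P}_2(g\cap j)$ I get $a\in\mathbb{P}(g)$ and $u_a>2$. From $a\notin\mathbb{P}_2(g\cap i)$, combined with $a\in\mathbb{P}(g)$ and $u_a>2$, the only remaining possibility is $a\notin\mathbb{P}(i)$; hence $a\in\mathbb{P}(g)\setminus\mathbb{P}(i)$.

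Finally, I will invoke the hypothesis. By Notation \ref{N;Notation4.2} the assumption $g\setminus h\leq_2 i$ reads $\mathbb{P}(g)\setminus\mathbb{P}(h)\subseteq\mathbb{P}(i)$, whose contrapositive states that any $a\in\mathbb{P}(g)$ with $a\notin\mathbb{P}(i)$ must lie in $\mathbb{P}(h)$. Applying this to our $a$ gives $a\in\mathbb{P}(h)$, as required, and the lemma follows. I do not foresee any real obstacle; the only subtle point to be careful about is the use of $u_a>2$ in the second step, which is precisely what lets us move from $a\notin\mathbb{P}(\widetilde{g\cap i})$ to $a\notin\mathbb{P}(i)$, and without which the implication would fail.
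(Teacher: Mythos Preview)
Your proof is correct and follows essentially the same approach as the paper: both arguments boil down to showing that any index in $\mathbb{P}(\widetilde{g\cap j})$ but outside $\mathbb{P}(\widetilde{g\cap i})$ must lie in $\mathbb{P}(h)$, using the hypothesis $g\setminus h\leq_2 i$ together with the $u_a>2$ condition hidden in the tilde. The paper phrases this via a set-algebraic decomposition (splitting off the part outside $h$ and showing it is $0$), whereas you do the equivalent elementwise chase; the logical content is identical.
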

\begin{proof}
Notice that $(\widetilde{g\cap j})\setminus(\widetilde{g\cap i})=(((\widetilde{g\cap j})\cap h)\setminus(\widetilde{g\cap i}))\cup((\widetilde{g\cap j})\setminus((\widetilde{g\cap i})\cup h))$. As $g\setminus h\leq_2 i$, $(\widetilde{g\cap j})\setminus((\widetilde{g\cap i})\cup h)\!=\!((\widetilde{g\cap j})\cap i)\setminus((\widetilde{g\cap i})\cup h)\!=\!((\widetilde{g\cap i})\cap j)\setminus((\widetilde{g\cap i})\cup h)\!=\!0$. The desired lemma thus follows from the above discussion.
\end{proof}
\begin{lem}\label{L;Lemma8.2}
Assume that $g, h, i, j,k \in[0,d]$, $g\setminus h\leq_2 i$, $k=(g\oplus j)\cup ((\widetilde{g\cap j})\cap k)$. Then $(\widetilde{g\cap i})\cap k\!\leq_2\! h$ if and only if $(\widetilde{g\cap i})\setminus h\leq_2 j$ and $k\leq_2(g\oplus j)\cup ((\widetilde{g\cap j})\cap h)$.
\end{lem}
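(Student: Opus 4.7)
The plan is to reduce everything to a set-theoretic statement on the power set of $[1,n]$ via Lemma \ref{L;Lemma3.1}, and then do a direct containment argument using the structural hypothesis on $k$. Writing $P(x)=\mathbb{P}(x)$ and $P_2(x)=\mathbb{P}_2(x)$, the hypothesis $k=(g\oplus j)\cup((\widetilde{g\cap j})\cap k)$ exhibits $P(k)$ as a disjoint union $P(k)=(P(g)\triangle P(j))\sqcup A$ with $A\subseteq P_2(g\cap j)$. The hypothesis $g\setminus h\leq_2 i$ reads $P(g)\setminus P(h)\subseteq P(i)$, and Lemma \ref{L;Lemma8.1} already supplies the useful auxiliary inclusion $P_2(g\cap j)\setminus P_2(g\cap i)\subseteq P(h)$.

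For the forward direction, I would assume $P_2(g\cap i)\cap P(k)\subseteq P(h)$ and first prove $(\widetilde{g\cap i})\setminus h\leq_2 j$: take $a\in P_2(g\cap i)\setminus P(h)$; if $a\notin P(j)$ then $a\in P(g)\setminus P(j)\subseteq P(g)\triangle P(j)\subseteq P(k)$, so $a\in P_2(g\cap i)\cap P(k)\subseteq P(h)$, contradicting $a\notin P(h)$. Next I would prove $k\leq_2(g\oplus j)\cup((\widetilde{g\cap j})\cap h)$: any $a\in P(k)$ is either in $P(g)\triangle P(j)$ (immediate) or in $A\subseteq P_2(g\cap j)$, in which case I split on whether $a\in P_2(g\cap i)$ or not; in the first subcase $a\in P_2(g\cap i)\cap P(k)\subseteq P(h)$, and in the second Lemma \ref{L;Lemma8.1} gives $a\in P(h)$ directly. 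In both situations $a\in P_2(g\cap j)\cap P(h)$ as required.

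For the converse, I would assume both $(\widetilde{g\cap i})\setminus h\leq_2 j$ and $k\leq_2(g\oplus j)\cup((\widetilde{g\cap j})\cap h)$, and take $a\in P_2(g\cap i)\cap P(k)$. By the second hypothesis $a\in(P(g)\triangle P(j))\cup(P_2(g\cap j)\cap P(h))$. If $a\in P_2(g\cap j)\cap P(h)$ we are done. Otherwise $a\in P(g)\triangle P(j)$, and since $a\in P(g)$ this forces $a\in P(g)\setminus P(j)$; but then $a\in P_2(g\cap i)$ together with the first hypothesis (contrapositive form: $P_2(g\cap i)\setminus P(j)\subseteq P(h)$) yields $a\in P(h)$.

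None of the steps is genuinely hard; the main nuisance will just be bookkeeping the disjoint decomposition of $P(k)$ correctly and invoking Lemma \ref{L;Lemma8.1} at the right moment to handle the case $a\in P_2(g\cap j)\setminus P_2(g\cap i)$ cleanly. The whole argument then closes by appealing to Lemma \ref{L;Lemma3.1} to pass back from the subset inclusions to the $\leq_2$ relations in the statement.
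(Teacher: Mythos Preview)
Your proof is correct and follows essentially the same route as the paper's own argument: both directions hinge on the same decomposition of $P(k)$ coming from the structural hypothesis $k=(g\oplus j)\cup((\widetilde{g\cap j})\cap k)$, the same case split on whether an element lies in $P_2(g\cap i)$, and the same invocation of Lemma~\ref{L;Lemma8.1} to handle $P_2(g\cap j)\setminus P_2(g\cap i)$. The only cosmetic difference is that you do explicit element-chasing in the power set of $[1,n]$, whereas the paper phrases the identical steps symbolically in the $\leq_2$ calculus.
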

\begin{proof}
Assume that $(\widetilde{g\cap i})\cap k\!\leq_2\! h$. It is clear that $(\widetilde{g\cap i})\cap k\cap h=(\widetilde{g\cap i})\cap k$. As $(\widetilde{g\cap i})\setminus j=(((\widetilde{g\cap i})\cap h)\setminus j)\cup((\widetilde{g\cap i})\setminus(h\cup j))$ and $k=(g\oplus j)\cup ((\widetilde{g\cap j})\cap k)$, the equality $(\widetilde{g\cap i})\cap k\cap h=(\widetilde{g\cap i})\cap k$ thus implies that $(\widetilde{g\cap i})\setminus h\leq_2 j$. As $g\setminus h\leq_2 i$, notice that $((\widetilde{g\cap j})\cap k)\setminus (\widetilde{g\cap i})\leq_2 ((\widetilde{g\cap j})\cap h)$ by Lemma \ref{L;Lemma8.1}. As $(\widetilde{g\cap i})\cap k\!\leq_2\! h$, notice that $(\widetilde{g\cap j})\cap k\cap((\widetilde{g\cap i})\setminus h)\leq_2((\widetilde{g\cap i})\cap k)\setminus h=0$. This fact implies that $(\widetilde{g\cap j})\cap k\leq_2 (((\widetilde{g\cap j})\cap k)\setminus (\widetilde{g\cap i}))\cup((\widetilde{g\cap j})\cap k\cap(\widetilde{g\cap i})\cap h)\leq_2 (\widetilde{g\cap j})\cap h$. So $k\leq_2(g\oplus j)\cup ((\widetilde{g\cap j})\cap h)$. For the other direction, $k\leq_2(g\oplus j)\cup ((\widetilde{g\cap j})\cap h)$ shows that $(\widetilde{g\cap i})\cap k\leq_2 ((\widetilde{g\cap i})\setminus j)\cup h$ by a direct computation.
As $(\widetilde{g\cap i})\setminus h\leq_2 j$ and $(\widetilde{g\cap i})\setminus j\!=\!(((\widetilde{g\cap i})\cap h)\setminus j)\cup ((\widetilde{g\cap i})\setminus (h\cup j))$, the desired lemma thus follows.
\end{proof}
The following four lemmas continue to investigate the objects in Notation \ref{N;Notation6.5}.
\begin{lem}\label{L;Lemma8.3}
Assume that $g, h, i, j, k\in [0,d]$ and $\ell=(i\oplus j)\cup((\widetilde{i\cap j})\cap h)$. Assume that $p_{gh}^i\neq 0$, $p_{jk}^g\neq 0$, $p\nmid k_hk_k$, and $(\widetilde{g\cap i})\cap k\!\leq_2\! h$. Then $p\nmid k_\ell$ and $n_{h, g\odot i}, n_{\ell, i\odot j}$ are defined. Moreover, $n_{h, g\odot i}=n_{\ell, i\odot j}$.
\end{lem}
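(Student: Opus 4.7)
The plan is to verify that both $n_{h,g\odot i}$ and $n_{\ell,i\odot j}$ are defined in the sense of Notation~\ref{N;Notation6.5}, derive $p\nmid k_\ell$, and then reduce the desired equality to an identity of coordinate sets in $[1,n]$.

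First I would unpack the productivity hypotheses. From $p_{gh}^i\neq 0$, Lemma~\ref{L;Lemma2.1} (using that $\mathbb{S}$ is symmetric) gives $p_{gi}^h\neq 0$, so Lemma~\ref{L;Lemma4.3} yields $g\oplus i\leq_2 h\leq_2 g\odot i$; similarly $p_{jk}^g\neq 0$ yields $g\oplus j\leq_2 k$. In particular $n_{h,g\odot i}$ is defined. Next, the symmetric-difference identity $i\oplus j=(g\oplus i)\oplus (g\oplus j)\leq_2(g\oplus i)\cup(g\oplus j)\leq_2 h\cup k$, together with $(\widetilde{i\cap j})\cap h\leq_2 h$, forces $\mathbb{P}(\ell)\subseteq\mathbb{P}(h)\cup\mathbb{P}(k)$; since $p\nmid k_hk_k$, Lemma~\ref{L;Lemma3.11} delivers $p\nmid k_\ell$. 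Because $i\oplus j\leq_2\ell\leq_2 i\odot j$ by the very definition of $\ell$, Lemma~\ref{L;Lemma4.4} produces $p_{i\ell}^j\neq 0$, so $n_{\ell,i\odot j}$ is defined as well.

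I would then compute both counts. Using the disjointness $\mathbb{P}(g\oplus i)\cap\mathbb{P}(\widetilde{g\cap i})=\varnothing$ and $\mathbb{P}(i\oplus j)\cap\mathbb{P}(\widetilde{i\cap j})=\varnothing$, together with the containments $g\oplus i\leq_2 h$ and $\mathbb{P}(\ell)\cap\mathbb{P}(\widetilde{i\cap j})=\mathbb{P}((\widetilde{i\cap j})\cap h)$, the differences collapse to
\[
\mathbb{P}(g\odot i)\setminus\mathbb{P}(h)=\mathbb{P}_2(g\cap i)\setminus\mathbb{P}(h),\qquad \mathbb{P}(i\odot j)\setminus\mathbb{P}(\ell)=\mathbb{P}_2(i\cap j)\setminus\mathbb{P}(h).
\]
It therefore suffices to prove the set identity $\mathbb{P}_2(g\cap i)\setminus\mathbb{P}(h)=\mathbb{P}_2(i\cap j)\setminus\mathbb{P}(h)$, since once this holds, intersecting both sides with $\{a:u_a\not\equiv 1\pmod p\}$ gives the desired numerical equality.

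For the forward containment, I would pick $a$ in the left-hand set, so $a\in\mathbb{P}(g)\cap\mathbb{P}(i)$, $u_a>2$, and $a\notin\mathbb{P}(h)$; the hypothesis $(\widetilde{g\cap i})\cap k\leq_2 h$ forces $a\notin\mathbb{P}(k)$, and then $g\oplus j\leq_2 k$ together with $a\in\mathbb{P}(g)$ gives $a\in\mathbb{P}(j)$, so $a\in\mathbb{P}_2(i\cap j)$. For the reverse containment, $g\oplus i\leq_2 h$ with $a\in\mathbb{P}(i)\setminus\mathbb{P}(h)$ pushes $a$ into $\mathbb{P}(g)$, and $u_a>2$ is preserved, so $a\in\mathbb{P}_2(g\cap i)$. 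I do not anticipate a serious obstruction; the only subtlety is keeping track of which pair of indices each symmetric difference involves, and recognising that the hypothesis $(\widetilde{g\cap i})\cap k\leq_2 h$ is precisely the bridge that transfers a coordinate from the $(g,i)$-intersection into the $(i,j)$-intersection once one knows from $p_{jk}^g\neq 0$ that such a coordinate cannot lie in $\mathbb{P}(k)$.
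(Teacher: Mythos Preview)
Your argument is correct and follows the same underlying strategy as the paper: reduce both counts to $|\{a\in\mathbb{P}_2(g\cap i)\setminus\mathbb{P}(h):u_a\not\equiv 1\pmod p\}|$ and $|\{a\in\mathbb{P}_2(i\cap j)\setminus\mathbb{P}(h):u_a\not\equiv 1\pmod p\}|$ and then show the two coordinate sets coincide. The packaging differs in two small ways. For $p\nmid k_\ell$, the paper first computes $m(j,k,g,h,i)=\ell$ and then applies Lemma~\ref{L;Lemma6.8}, whereas you bound $\ell\leq_2 h\cup k$ directly via $i\oplus j=(g\oplus i)\oplus(g\oplus j)$ and invoke Lemma~\ref{L;Lemma3.11}; your route is shorter here, but the paper's identification $\ell=m(j,k,g,h,i)$ is reused in the proof of Theorem~\ref{T;Theorem8.9}, so it is not wasted work. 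For the set equality, the paper invokes Lemma~\ref{L;Lemma8.2} (which encodes exactly the implication $(\widetilde{g\cap i})\cap k\leq_2 h\Rightarrow(\widetilde{g\cap i})\setminus h\leq_2 j$), while you argue elementwise; these are the same argument, with the paper factoring the step through a separate lemma for later reuse.
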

\begin{proof}
As $(\widetilde{g\cap i})\cap k\!\leq_2\! h$, notice that $(k\cup h)\cap g\cap(\widetilde{i\cap j})\!=\!h\cap g\cap(\widetilde{i\cap j})$. As $p_{gh}^i\neq 0$, $i\setminus g\leq_2 g\oplus i\leq_2 h\leq_2 g\odot i$ by Lemmas \ref{L;Lemma2.1} and \ref{L;Lemma4.3}. Hence $(\widetilde{i\cap j})\setminus g=((\widetilde{i\cap j})\cap h)\setminus g$. Hence $m(j,k,g,h,i)=(i\oplus j)\cup((\widetilde{i\cap j})\setminus g)\cup((k\cup h)\cap g\cap(\widetilde{i\cap j}))=\ell$ by a direct computation. As $p_{jk}^g\neq 0$, $k=(g\oplus j)\cup ((\widetilde{g\cap j})\cap k)$ by Lemmas \ref{L;Lemma2.1} and \ref{L;Lemma4.3}. Lemma \ref{L;Lemma6.8} thus implies that $p\nmid k_\ell$. Set $\mathbb{U}\!=\!\{a: a\in \mathbb{P}(g\odot i)\setminus \mathbb{P}(h),\ u_a\not\equiv 1\pmod p\}$. Put $\mathbb{V}\!=\!\{a: a\in \mathbb{P}(i\odot j)\setminus \mathbb{P}(\ell),\ u_a\not\equiv 1\pmod p\}$. As $p\nmid k_hk_\ell$, notice that both $n_{h, g\odot i}$ and $n_{\ell, i\odot j}$ are defined by Notation \ref{N;Notation6.5}. Moreover, notice that $|\mathbb{U}|=n_{h, g\odot i}$ and $|\mathbb{V}|=n_{\ell, i\odot j}$. As $p_{gh}^i\neq 0$, $h=(g\oplus i)\cup((\widetilde{g\cap i})\cap h)$, $g\setminus h\leq_2 i$, and $i\setminus h\leq_2 g$ by Lemmas \ref{L;Lemma2.1} and \ref{L;Lemma4.3}. If $m\in \mathbb{U}$, notice that $m\!\in\!\mathbb{P}((\widetilde{g\cap i})\setminus h)$. Therefore Lemma \ref{L;Lemma8.2} implies that $m\in\mathbb{P}((\widetilde{i\cap j})\setminus h)$ and $m\in \mathbb{V}$. If $m\!\in\! \mathbb{V}$, notice that $m\!\in\!\mathbb{P}((\widetilde{i\cap j})\setminus h)$. As $i\setminus h\leq_2 g$, Lemma \ref{L;Lemma8.2} also implies that $m\!\in\!\mathbb{P}((\widetilde{g\cap i})\setminus h)$ and $m\in\mathbb{U}$. So $\mathbb{U}=\mathbb{V}$, which implies that $n_{h, g\odot i}=|\mathbb{U}|=|\mathbb{V}|=n_{\ell, i\odot j}$. The desired lemma thus follows.
\end{proof}
The conclusion of Lemma \ref{L;Lemma8.3} allows us to formulate the remaining three lemmas.
\begin{lem}\label{L;Lemma8.4}
Assume that $g, h, i, j, k\in [0,d]$ and $\ell=(i\oplus j)\cup((\widetilde{i\cap j})\cap h)$. Assume that $p_{gh}^i\neq 0$, $p_{jk}^g\neq 0$, $p\nmid k_hk_k$, and $(\widetilde{g\cap i})\cap k\!\leq_2\! h$. Assume that $m\in [0, n_{h, g\odot i}]$ and $q\!\in\!\mathbb{U}_{h, g\odot i, m}$. Then $m(j, k, g, q, i)\!\!=\!\!(i\oplus j)\cup ((\widetilde{i\cap j})\cap q)$. Furthermore, if $r\in\mathbb{U}_{h, g\odot i, m}$ and $q\neq r$, then $(i\oplus j)\cup((\widetilde{i\cap j})\cap q)\neq (i\oplus j)\cup((\widetilde{i\cap j})\cap r)$.
\end{lem}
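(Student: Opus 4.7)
The plan is to unfold $m(j,k,g,q,i)$ directly from Notation \ref{N;Notation4.8} and simplify the resulting expression using the structural constraints that the hypotheses force on $h$, $k$, and $q$. First I would record the following consequences of Lemmas \ref{L;Lemma2.1} and \ref{L;Lemma4.3}: $h=(g\oplus i)\cup((\widetilde{g\cap i})\cap h)$ from $p_{gh}^i\neq 0$; $k=(g\oplus j)\cup((\widetilde{g\cap j})\cap k)$ from $p_{jk}^g\neq 0$; and, since $q\in\mathbb{U}_{h,g\odot i,m}$ gives $h\leq_2 q\leq_2 g\odot i$ and hence $p_{gq}^i\neq 0$ by Lemma \ref{L;Lemma4.4}, also $q=(g\oplus i)\cup((\widetilde{g\cap i})\cap q)$. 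The decisive set-theoretic input is that $\mathbb{P}_2(a\cap b)=\mathbb{P}_2(a)\cap\mathbb{P}_2(b)$, so Lemma \ref{L;Lemma3.1} yields the identities $(\widetilde{g\cap i})\cap(\widetilde{i\cap j})=\widetilde{g\cap i\cap j}=(\widetilde{g\cap j})\cap(\widetilde{i\cap j})$, $(\widetilde{g\cap i})\setminus j=\widetilde{(g\cap i)\setminus j}$, and the disjoint decomposition $\widetilde{g\cap i}=\widetilde{g\cap i\cap j}\cup\widetilde{(g\cap i)\setminus j}$.

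Next I would extract the crucial consequence of the hypothesis $(\widetilde{g\cap i})\cap k\leq_2 h$. Expanding $k$ via its formula above and applying the identities gives
\[
(\widetilde{g\cap i})\cap k=\widetilde{(g\cap i)\setminus j}\,\cup\,((\widetilde{g\cap i\cap j})\cap k),
\]
and since $\widetilde{(g\cap i)\setminus j}\leq_2 g\cap i$ is disjoint from $g\oplus i$, the containment forces $\widetilde{(g\cap i)\setminus j}\leq_2 h$, hence $\widetilde{(g\cap i)\setminus j}\leq_2 q$ as well. This is the essential extra piece of information: the $\widetilde{(g\cap i)\setminus j}$-component of every $q\in\mathbb{U}_{h,g\odot i,m}$ is forced to equal $\widetilde{(g\cap i)\setminus j}$ itself.

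For the first statement I would unfold
\[
m(j,k,g,q,i)=(i\oplus j)\cup((\widetilde{i\cap j})\setminus g)\cup((q\cup k)\cap(\widetilde{i\cap j})\cap g).
\]
Intersecting the decompositions of $q$ and $k$ with $(\widetilde{i\cap j})\cap g$ and invoking the identities collapses the last summand to $(\widetilde{g\cap i\cap j})\cap(q\cup k)$; then $(\widetilde{g\cap i\cap j})\cap k\leq_2(\widetilde{g\cap i})\cap k\leq_2 h\leq_2 q$ further collapses it to $(\widetilde{g\cap i\cap j})\cap q$. Expanding $(\widetilde{i\cap j})\cap q$ the same way via $q=(g\oplus i)\cup((\widetilde{g\cap i})\cap q)$ yields exactly $((\widetilde{i\cap j})\setminus g)\cup((\widetilde{g\cap i\cap j})\cap q)$, so $m(j,k,g,q,i)=(i\oplus j)\cup((\widetilde{i\cap j})\cap q)$ as claimed.

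For the second statement I would show the map $q\mapsto(\widetilde{i\cap j})\cap q$ is injective on $\mathbb{U}_{h,g\odot i,m}$. Since $\widetilde{i\cap j}\leq_2 i\cap j$ is disjoint from $i\oplus j$, equality of $(i\oplus j)\cup((\widetilde{i\cap j})\cap q)$ with its $r$-analogue forces $(\widetilde{i\cap j})\cap q=(\widetilde{i\cap j})\cap r$, and the decomposition $(\widetilde{i\cap j})\cap q=((\widetilde{i\cap j})\setminus g)\cup((\widetilde{g\cap i\cap j})\cap q)$ gives $(\widetilde{g\cap i\cap j})\cap q=(\widetilde{g\cap i\cap j})\cap r$. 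Combining this with the forced equality $\widetilde{(g\cap i)\setminus j}\cap q=\widetilde{(g\cap i)\setminus j}=\widetilde{(g\cap i)\setminus j}\cap r$ from the second paragraph and the disjoint decomposition of $\widetilde{g\cap i}$ yields $(\widetilde{g\cap i})\cap q=(\widetilde{g\cap i})\cap r$, whence $q=(g\oplus i)\cup((\widetilde{g\cap i})\cap q)=r$. The main obstacle is careful bookkeeping: splitting $\widetilde{g\cap i}$ into its $j$-meeting and $j$-avoiding pieces so that the hypothesis $(\widetilde{g\cap i})\cap k\leq_2 h$ can be read as pinning down the $\widetilde{(g\cap i)\setminus j}$-part of $q$, which is precisely the component the map $q\mapsto(\widetilde{i\cap j})\cap q$ cannot otherwise detect.
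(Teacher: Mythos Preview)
Your proof is correct and follows essentially the same route as the paper's: both unfold $m(j,k,g,q,i)$, identify $(\widetilde{i\cap j})\cap g=\widetilde{g\cap i\cap j}$, and use $(\widetilde{g\cap i})\cap k\leq_2 h\leq_2 q$ to absorb the $k$-contribution. For the injectivity, the paper splits $(\widetilde{g\cap i})\cap q$ along $h$ and invokes Lemma~\ref{L;Lemma8.2} to obtain $(\widetilde{g\cap i})\setminus h\leq_2 j$, whereas you split along $j$ and extract the equivalent fact $\widetilde{(g\cap i)\setminus j}\leq_2 h$ directly from the expansion of $(\widetilde{g\cap i})\cap k$; these are the same observation in complementary form, so the difference is organizational rather than substantive.
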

\begin{proof}
As $p_{gh}^i\neq 0$, Lemmas \ref{L;Lemma2.1} and \ref{L;Lemma4.3} thus imply that $h\!=\!(g\oplus i)\cup((\widetilde{g\cap i})\cap h)$. Notice that $\mathbb{U}_{h, g\odot i, m}$ is defined by Lemma \ref{L;Lemma8.3}. Notice that $g\oplus i\leq_2 h\leq_2 q\leq_2 g\odot i$ as $q\in \mathbb{U}_{h, g\odot i, m}$. In particular, notice that $i\setminus g\!\leq_2\! q$ and $(\widetilde{i\cap j})\setminus g=((\widetilde{i\cap j})\cap q)\setminus g$. As $(\widetilde{g\cap i})\cap k\!\leq_2\! h$, $(k\cup q)\cap g\cap(\widetilde{i\cap j})=(k\cup q)\cap(\widetilde{g\cap i})\cap j=g\cap(\widetilde{i\cap j})\cap q$. So $m(j, k, g, q, i)=(i\oplus j)\cup((\widetilde{i\cap j})\setminus g)\cup((k\cup q)\cap g\cap (\widetilde{i\cap j}))=(i\oplus j)\cup ((\widetilde{i\cap j})\cap q)$. The first statement thus follows. For the second statement, the inequality $p_{gh}^i\!\neq\! 0$ shows that $g\setminus h\leq_2 i$ and $i\setminus h\leq_2 g$ by Lemmas \ref{L;Lemma2.1} and \ref{L;Lemma4.3}. As $r\in \mathbb{U}_{h, g\odot i, m}$, notice that $g\oplus i\leq_2 h\leq_2 r\leq_2 g\odot i$. It is obvious to see that $q=(g\oplus i)\cup((\widetilde{g\cap i})\cap q)$ and $r\!=\!(g\oplus i)\cup((\widetilde{g\cap i})\cap r)$. As $q\neq r$, notice that $(\widetilde{g\cap i})\cap q\neq(\widetilde{g\cap i})\cap r$. Furthermore, notice that $(\widetilde{g\cap i})\cap h\cap q=(\widetilde{g\cap i})\cap h\cap r$ and $(\widetilde{i\cap j})\cap h\cap q=(\widetilde{i\cap j})\cap h\cap r$.
As $p_{jk}^g\neq 0$, notice that $k=(g\oplus j)\cup ((\widetilde{g\cap j})\cap k)$ by Lemmas \ref{L;Lemma2.1} and \ref{L;Lemma4.3}. By Lemma \ref{L;Lemma8.2} and the fact $i\setminus h\leq_2 g$, $((\widetilde{i\cap j})\cap q)\setminus h=((\widetilde{g\cap i})\cap q)\setminus h\neq((\widetilde{g\cap i})\cap r)\setminus h=((\widetilde{i\cap j})\cap r)\setminus h$. So $(i\oplus j)\cup((\widetilde{i\cap j})\cap q)\!\!\neq\!\!(i\oplus j)\cup((\widetilde{i\cap j})\cap r)$. The desired lemma thus follows.
\end{proof}
\begin{lem}\label{L;Lemma8.5}
Assume that $g, h, i, j, k\in [0,d]$ and $\ell=(i\oplus j)\cup((\widetilde{i\cap j})\cap h)$. Assume that $p_{gh}^i\neq 0$, $p_{jk}^g\neq 0$, $p\nmid k_hk_k$, and $(\widetilde{g\cap i})\cap k\!\leq_2\! h$. Assume that $m\in [0, n_{h, g\odot i}]$. Then $\{(i\oplus j)\cup ((\widetilde{i\cap j})\cap a): a\in\mathbb{U}_{h, g\odot i, m}\}\subseteq\mathbb{U}_{\ell, i\odot j, m}$.
\end{lem}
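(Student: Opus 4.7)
The plan is to fix $a\in\mathbb{U}_{h,g\odot i,m}$, set $b=(i\oplus j)\cup((\widetilde{i\cap j})\cap a)$, and verify the three defining conditions of $\mathbb{U}_{\ell,i\odot j,m}$ for $b$, namely $\ell\leq_2 b\leq_2 i\odot j$, $p\nmid k_b$, and $|\mathbb{P}(b)|-|\mathbb{P}(\ell)|=m$. Observe first that by Lemma~\ref{L;Lemma8.3} the set $\mathbb{U}_{\ell,i\odot j,m}$ is defined, since $n_{h,g\odot i}=n_{\ell,i\odot j}$.

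For the containment $\ell\leq_2 b\leq_2 i\odot j$, I would use $h\leq_2 a$ (from $a\in\mathbb{U}_{h,g\odot i,m}$) to get $(\widetilde{i\cap j})\cap h\leq_2(\widetilde{i\cap j})\cap a$, whence $\ell\leq_2 b$; and $b\leq_2(i\oplus j)\cup(\widetilde{i\cap j})=i\odot j$ is immediate. For $p\nmid k_b$, Lemma~\ref{L;Lemma8.4} identifies $b=m(j,k,g,a,i)$, and then Lemma~\ref{L;Lemma6.8} applied with the substitution $(g,h,i,j,k)\to(j,k,g,a,i)$ gives $p\nmid k_b$, using $j\oplus g\leq_2 k$ (from $p_{jk}^g\neq 0$ and Lemmas~\ref{L;Lemma2.1}, \ref{L;Lemma4.3}), $i\oplus g\leq_2 a$ (from $g\oplus i\leq_2 h\leq_2 a$), and $p\nmid k_kk_a$.

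For the cardinality count, I would exploit the disjoint decomposition $\mathbb{P}(i\odot j)=\mathbb{P}(i\oplus j)\sqcup\mathbb{P}(\widetilde{i\cap j})$. Together with $h\leq_2 a$, a direct computation gives
\[|\mathbb{P}(b)|-|\mathbb{P}(\ell)|=|\mathbb{P}(\widetilde{i\cap j})\cap(\mathbb{P}(a)\setminus\mathbb{P}(h))|.\]
The key combinatorial input, which is the main obstacle, is then the containment $\mathbb{P}(a)\setminus\mathbb{P}(h)\subseteq\mathbb{P}(\widetilde{i\cap j})$. I would extract this from the proof of Lemma~\ref{L;Lemma8.3}: since $a\leq_2 g\odot i$, $p\nmid k_a$, and $h=(g\oplus i)\cup((\widetilde{g\cap i})\cap h)$, each $r\in\mathbb{P}(a)\setminus\mathbb{P}(h)$ lies in the set $\mathbb{U}=\{c:c\in\mathbb{P}(\widetilde{g\cap i})\setminus\mathbb{P}(h),\ u_c\not\equiv 1\pmod p\}$ appearing in Lemma~\ref{L;Lemma8.3}'s proof. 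The identity $\mathbb{U}=\mathbb{V}$ established there (built on Lemmas~\ref{L;Lemma8.1} and \ref{L;Lemma8.2} and the relations $g\setminus h\leq_2 i$, $i\setminus h\leq_2 g$ coming from $p_{gh}^i\neq 0$) places $\mathbb{U}$ inside $\mathbb{P}(\widetilde{i\cap j})\setminus\mathbb{P}(h)\subseteq\mathbb{P}(\widetilde{i\cap j})$, giving the required containment. Consequently $|\mathbb{P}(b)|-|\mathbb{P}(\ell)|=|\mathbb{P}(a)\setminus\mathbb{P}(h)|=|\mathbb{P}(a)|-|\mathbb{P}(h)|=m$.

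Combining the three verifications shows $b\in\mathbb{U}_{\ell,i\odot j,m}$, and since $a$ was an arbitrary element of $\mathbb{U}_{h,g\odot i,m}$ and Lemma~\ref{L;Lemma8.4} also guarantees distinctness of the resulting $b$'s (which is not needed here but is consistent), we obtain the asserted inclusion of sets.
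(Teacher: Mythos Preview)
Your proposal is correct and follows the same overall skeleton as the paper: pick an arbitrary $a\in\mathbb{U}_{h,g\odot i,m}$, set $b=(i\oplus j)\cup((\widetilde{i\cap j})\cap a)$, and verify the three membership conditions for $\mathbb{U}_{\ell,i\odot j,m}$.

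The implementations differ slightly. For $p\nmid k_b$, you route through Lemma~\ref{L;Lemma8.4} (identifying $b=m(j,k,g,a,i)$) and then Lemma~\ref{L;Lemma6.8}; the paper instead notes $p\nmid k_\ell$ from Lemma~\ref{L;Lemma8.3}, deduces $p\nmid k_{i\oplus j}$ via Lemma~\ref{L;Lemma3.11}, and concludes $p\nmid k_b$ directly from Lemma~\ref{L;Lemma3.11} since $\mathbb{P}(b)\subseteq\mathbb{P}(i\oplus j)\cup\mathbb{P}(a)$. Both are valid; the paper's is marginally more self-contained. For the cardinality count, you invoke the identity $\mathbb{U}=\mathbb{V}$ established \emph{inside} the proof of Lemma~\ref{L;Lemma8.3} to get $\mathbb{P}(a)\setminus\mathbb{P}(h)\subseteq\mathbb{P}(\widetilde{i\cap j})$; the paper re-derives the same fact in place via Lemma~\ref{L;Lemma8.2}, showing $(\widetilde{g\cap i})\cap(a\setminus h)=(\widetilde{i\cap j})\cap(a\setminus h)$. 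This is the same content packaged differently; stylistically it would be cleaner to cite Lemma~\ref{L;Lemma8.2} directly rather than a step internal to another proof, but mathematically there is no gap.
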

\begin{proof}
Set $\mathbb{U}\!\!=\!\!\{(i\oplus j)\cup ((\widetilde{i\cap j})\cap a): a\in\mathbb{U}_{h, g\odot i, m}\}$. As $p_{gh}^i\neq 0$, $g\setminus h\leq_2 i$, $i\setminus h\leq_2 g$, and $h=(g\oplus i)\cup((\widetilde{g\cap i})\cap h)$ by Lemmas \ref{L;Lemma2.1} and \ref{L;Lemma4.3}. Hence $\mathbb{U}\neq\varnothing$ by Lemma \ref{L;Lemma8.3}. Pick $q\!\in\!\mathbb{U}_{h, g\odot i, m}$. Set $r\!=\!(i\oplus j)\cup ((\widetilde{i\cap j})\cap q)\!\in\!\mathbb{U}$. Notice that $p\nmid k_\ell$ by Lemma \ref{L;Lemma8.3}. Hence $p\nmid k_{i\oplus j}$ by Lemma \ref{L;Lemma3.11}. As $q\in\mathbb{U}_{h, g\odot i, m}$, notice that $p\nmid k_r$ by Lemma \ref{L;Lemma3.11}. Moreover, $h\leq_2 q\leq_2 g\odot i$, $q=(g\oplus i)\cup((\widetilde{g\cap i})\cap q)$, and $|\mathbb{P}(q)|-|\mathbb{P}(h)|=m$. As $p_{jk}^g\neq 0$, notice that $k=(g\oplus j)\cup ((\widetilde{g\cap j})\cap k)$ by Lemmas \ref{L;Lemma2.1} and \ref{L;Lemma4.3}.
Lemma \ref{L;Lemma8.2} thus implies that $m=|\mathbb{P}(q)|-|\mathbb{P}(h)|=|\mathbb{P}((\widetilde{g\cap i})\cap(q\setminus h))|=|\mathbb{P}((\widetilde{i\cap j})\cap(q\setminus h))|$.
So $|\mathbb{P}(r)|\!-\!|\mathbb{P}(\ell)|\!\!=\!\!|\mathbb{P}((\widetilde{i\cap j})\cap(q\setminus h))|\!\!=\!\!m$. As $h\leq_2 q$, notice that $\ell\leq_2 r\leq_2 i\odot j$. Hence $r\in\mathbb{U}_{\ell, i\odot j, m}$. The desired lemma follows as $q$ is chosen from $\mathbb{U}_{h, g\odot i, m}$ arbitrarily.
\end{proof}
\begin{lem}\label{L;Lemma8.6}
Assume that $g, h, i, j, k\in [0,d]$ and $\ell=(i\oplus j)\cup((\widetilde{i\cap j})\cap h)$. Assume that $p_{gh}^i\neq 0$, $p_{jk}^g\neq 0$, $p\nmid k_hk_k$, and $(\widetilde{g\cap i})\cap k\!\leq_2\! h$. Assume that $m\in [0, n_{h, g\odot i}]$. Then $\{m(j,k,g, a, i): a\in\mathbb{U}_{h, g\odot i, m}\}\!=\!\{(i\oplus j)\cup ((\widetilde{i\cap j})\cap a): a\in\mathbb{U}_{h, g\odot i, m}\}\!=\!\mathbb{U}_{\ell, i\odot j, m}$.
\end{lem}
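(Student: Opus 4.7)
The plan is to assemble Lemma 8.6 as a direct consequence of Lemmas 8.3, 8.4, 8.5 and 6.8, since each piece of the required double equality has already been prepared in the preceding lemmas under exactly the same hypotheses.

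First I would handle the leftmost equality. By the first statement of Lemma \ref{L;Lemma8.4}, for every $q\in\mathbb{U}_{h, g\odot i, m}$ we have $m(j,k,g,q,i)=(i\oplus j)\cup((\widetilde{i\cap j})\cap q)$. Applying this pointwise to every element of $\mathbb{U}_{h, g\odot i, m}$ gives
\[
\{m(j,k,g,a,i): a\in\mathbb{U}_{h, g\odot i, m}\}=\{(i\oplus j)\cup((\widetilde{i\cap j})\cap a): a\in\mathbb{U}_{h, g\odot i, m}\},
\]
so the first equality needs no further argument.

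Next I would identify the map $\phi: a\mapsto (i\oplus j)\cup((\widetilde{i\cap j})\cap a)$ defined on $\mathbb{U}_{h, g\odot i, m}$. By Lemma \ref{L;Lemma8.5}, the image of $\phi$ is contained in $\mathbb{U}_{\ell, i\odot j, m}$, giving one inclusion of the remaining equality. By the second statement of Lemma \ref{L;Lemma8.4}, if $q,r\in\mathbb{U}_{h, g\odot i, m}$ with $q\ne r$, then $\phi(q)\ne \phi(r)$, so $\phi$ is injective on $\mathbb{U}_{h, g\odot i, m}$. Therefore $|\phi(\mathbb{U}_{h, g\odot i, m})|=|\mathbb{U}_{h, g\odot i, m}|$.

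The reverse inclusion is obtained by a cardinality comparison. The hypotheses $p\nmid k_h$ and $p_{gh}^i\neq 0$ together with $h\leq_2 g\odot i$ allow us to invoke Lemma \ref{L;Lemma6.8}, yielding $|\mathbb{U}_{h, g\odot i, m}|=\binom{n_{h, g\odot i}}{m}$. Likewise, Lemma \ref{L;Lemma8.3} ensures $p\nmid k_\ell$ and $\ell\leq_2 i\odot j$, so Lemma \ref{L;Lemma6.8} also yields $|\mathbb{U}_{\ell, i\odot j, m}|=\binom{n_{\ell, i\odot j}}{m}$. Finally, Lemma \ref{L;Lemma8.3} gives the critical equality $n_{h, g\odot i}=n_{\ell, i\odot j}$, so these binomial coefficients coincide. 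Hence $\phi(\mathbb{U}_{h, g\odot i, m})$ is a subset of $\mathbb{U}_{\ell, i\odot j, m}$ of equal finite cardinality, forcing equality of the two sets and completing the proof. There is no real obstacle here; the genuine combinatorial work has all been carried out in Lemmas \ref{L;Lemma8.3}, \ref{L;Lemma8.4}, and \ref{L;Lemma8.5}, and this result is essentially a packaging step.
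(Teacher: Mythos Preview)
Your proposal is correct and follows essentially the same approach as the paper's proof: both use Lemma \ref{L;Lemma8.4} to obtain the first equality and the injectivity of $a\mapsto (i\oplus j)\cup((\widetilde{i\cap j})\cap a)$, Lemma \ref{L;Lemma8.5} for the inclusion into $\mathbb{U}_{\ell, i\odot j, m}$, and then Lemmas \ref{L;Lemma6.8} and \ref{L;Lemma8.3} to match cardinalities via the equality $n_{h, g\odot i}=n_{\ell, i\odot j}$.
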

\begin{proof}
Set $\mathbb{U}=\{(i\oplus j)\cup ((\widetilde{i\cap j})\cap a): a\in\mathbb{U}_{h, g\odot i, m}\}$. As $p_{gh}^i\neq 0$, Lemmas \ref{L;Lemma2.1} and \ref{L;Lemma4.3} imply that $h=(g\oplus i)\cup((\widetilde{g\cap i})\cap h)$. Notice that $\mathbb{U}\neq\varnothing$ by Lemma \ref{L;Lemma8.3}. Lemma \ref{L;Lemma8.4} thus implies that $|\mathbb{U}|=|\mathbb{U}_{h, g\odot i, m}|$. Notice that $|\mathbb{U}_{h, g\odot i, m}|=|\mathbb{U}_{\ell, i\odot j, m}|$ by Lemmas \ref{L;Lemma6.8} and \ref{L;Lemma8.3}. The desired lemma thus follows from Lemmas \ref{L;Lemma8.4} and \ref{L;Lemma8.5}.
\end{proof}
The next lemmas give two equalities that relate to the valencies of elements in $\mathbb{S}$.
\begin{lem}\label{L;Lemma8.7}
Assume that $g, h, i, j, k, \ell\!\in\![0,d]$. Assume that $i\setminus g\leq_2 h$, $g\oplus j\leq_2 k$, $\ell\!=\!(g\oplus i)\cup((\widetilde{g\cap i})\cap \ell)$, and $(\widetilde{g\cap i})\cap k\leq_2 h\leq_2 \ell$. Then $$\frac{k_{k\cap g\cap \ell}}{k_{(\widetilde{g\cap i})\cap\ell}}=\frac{k_{(g\cap k)\setminus i}k_{h\cap k\cap (\widetilde{i\cap j}) }}{k_{(\widetilde{i\cap j})\cap \ell}}.$$
\end{lem}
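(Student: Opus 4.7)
The strategy is to prove the equality via Lemma~\ref{L;Lemma3.11}: since $k_a$ depends only on $\mathbb{P}(a)$ and is multiplicative along disjoint supports, the identity reduces to a set-theoretic statement about the $\mathbb{P}$-supports of the six subscripts involved. Write $B=\widetilde{g\cap i}$ and $C=\widetilde{i\cap j}$ for brevity, so $\mathbb{P}(B)=\mathbb{P}_2(g\cap i)$ and $\mathbb{P}(C)=\mathbb{P}_2(i\cap j)$.

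The first reduction handles the numerator $k\cap g\cap \ell$ on the left-hand side. Using the hypothesis $\ell=(g\oplus i)\cup(B\cap\ell)$, together with the disjointness $\mathbb{P}(g\oplus i)\cap\mathbb{P}(B)=\varnothing$ (which follows from $B\leq_2 g\cap i$), one gets the disjoint-support decomposition $k\cap g\cap\ell=((g\cap k)\setminus i)\cup(k\cap B\cap\ell)$. Moreover, the hypothesis $B\cap k\leq_2 h\leq_2\ell$ gives $\mathbb{P}(B\cap k)\subseteq\mathbb{P}(h)\subseteq\mathbb{P}(\ell)$, so the three sets $k\cap B\cap\ell$, $B\cap k$, and $h\cap k\cap B$ all have the same $\mathbb{P}$-support and are therefore equal by Lemma~\ref{L;Lemma3.1}. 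Combining these observations with Lemma~\ref{L;Lemma3.11} gives $k_{k\cap g\cap\ell}=k_{(g\cap k)\setminus i}\cdot k_{h\cap k\cap B}$, so the desired equality reduces to
\[
k_{h\cap k\cap B}\cdot k_{C\cap\ell}=k_{h\cap k\cap C}\cdot k_{B\cap\ell}.
\]

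To finish, I will establish the key containments $B\setminus C\leq_2 h\cap k\cap\ell$ and $C\setminus B\leq_2 h\cap k\cap\ell$. For $b\in\mathbb{P}(B\setminus C)$, one has $b\in\mathbb{P}(g)\cap\mathbb{P}(i)$ with $b\notin\mathbb{P}(j)$, hence $b\in\mathbb{P}(g\oplus j)\subseteq\mathbb{P}(k)$ by $g\oplus j\leq_2 k$; then $b\in\mathbb{P}(B\cap k)\subseteq\mathbb{P}(h)\subseteq\mathbb{P}(\ell)$ by the other two hypotheses. For $b\in\mathbb{P}(C\setminus B)$, one has $b\in\mathbb{P}(i)\setminus\mathbb{P}(g)\subseteq\mathbb{P}(h)$ by $i\setminus g\leq_2 h$, and $b\in\mathbb{P}(g\oplus j)\subseteq\mathbb{P}(k)$ again, with $b\in\mathbb{P}(\ell)$ by $h\leq_2\ell$. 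Using these containments, each of $h\cap k\cap B$, $h\cap k\cap C$, $B\cap\ell$, and $C\cap\ell$ splits as a disjoint-support union of a $B\cap C$-part with, respectively, $B\setminus C$ or $C\setminus B$, and Lemma~\ref{L;Lemma3.11} yields
\[
k_{h\cap k\cap B}\cdot k_{C\cap\ell}=k_{h\cap k\cap B\cap C}\,k_{B\setminus C}\,k_{B\cap C\cap\ell}\,k_{C\setminus B}=k_{h\cap k\cap C}\cdot k_{B\cap\ell},
\]
as required. The main obstacle is the containment step: it is here that all four hypotheses must be combined simultaneously, and the argument is genuinely asymmetric in $B$ and $C$ (for $B\setminus C$ one routes through $(\widetilde{g\cap i})\cap k\leq_2 h$, while for $C\setminus B$ one must use $i\setminus g\leq_2 h$); the remainder of the proof is routine multiplicativity of valencies.
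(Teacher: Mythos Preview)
Your proof is correct. Both your argument and the paper's begin the same way, decomposing $k\cap g\cap\ell=((g\cap k)\setminus i)\cup(k\cap B\cap\ell)$ with $B=\widetilde{g\cap i}$, but the routes then diverge. The paper proceeds by splitting each of the four remaining terms along $j$ (respectively along $g$): it factors $k_{k\cap\ell\cap B}=k_{(\ell\cap B)\setminus j}\,k_{h\cap k\cap B\cap j}$, $k_{B\cap\ell}=k_{(\ell\cap B)\setminus j}\,k_{g\cap\ell\cap C}$, $k_{h\cap k\cap C}=k_{C\setminus g}\,k_{h\cap k\cap B\cap j}$, and $k_{C\cap\ell}=k_{C\setminus g}\,k_{g\cap\ell\cap C}$, then matches up the common factors $k_{(\ell\cap B)\setminus j}$, $k_{h\cap k\cap B\cap j}$, $k_{C\setminus g}$, $k_{g\cap\ell\cap C}$ pairwise. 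Your approach is more structural: after observing directly that $k\cap B\cap\ell=h\cap k\cap B$ (from $B\cap k\leq_2 h\leq_2\ell$), you reduce to the symmetric-looking identity $k_{h\cap k\cap B}\,k_{C\cap\ell}=k_{h\cap k\cap C}\,k_{B\cap\ell}$ and then prove the single key containment $B\oplus C\leq_2 h\cap k\cap\ell$, from which all four factors split uniformly as $(\text{common }B\cap C\text{-part})\times k_{B\setminus C}$ or $k_{C\setminus B}$. Your route is shorter and exposes the $B\leftrightarrow C$ symmetry more clearly; the paper's route keeps all terms anchored to the original indices $g,i,j$ and so is more explicit about where each hypothesis enters, at the cost of more bookkeeping.
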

\begin{proof}
As $\ell\!=\!(g\oplus i)\cup((\widetilde{g\cap i})\cap \ell)$, $k\cap g\cap \ell=((g\cap k)\setminus i)\cup((k\cap \ell)\cap (\widetilde{g\cap i}))$ by a direct computation. Lemma \ref{L;Lemma3.11} thus implies that $k_{k\cap g\cap \ell}=k_{(g\cap k)\setminus i}k_{k\cap \ell\cap (\widetilde{g\cap i})}$. As $g\setminus j\leq_2 g\oplus j\leq_2 k$ and $(\widetilde{g\cap i})\cap k\leq_2 h\leq_2 \ell$, $(k\cap\ell\cap(\widetilde{g\cap i}))\setminus j=(\ell\cap(\widetilde{g\cap i}))\setminus j$ and $k\cap\ell\cap(\widetilde{g\cap i})\cap j=h\cap k\cap (\widetilde{g\cap i})\cap j$. So $k_{k\cap\ell\cap(\widetilde{g\cap i})}=k_{(\ell\cap(\widetilde{g\cap i}))\setminus j}k_{h\cap k\cap (\widetilde{g\cap i})\cap j}$ by Lemma \ref{L;Lemma3.11}. Notice that $(\widetilde{g\cap i})\cap\ell=((\ell\cap(\widetilde{g\cap i}))\setminus j)\cup(g\cap\ell\cap(\widetilde{i\cap j}))$ by a direct computation. Lemma \ref{L;Lemma3.11} thus implies that $k_{(\widetilde{g\cap i})\cap\ell}=k_{(\ell\cap(\widetilde{g\cap i}))\setminus j}k_{g\cap\ell\cap(\widetilde{i\cap j})}$. Notice that $(h\cap k\cap (\widetilde{i\cap j}))\setminus g=(\widetilde{i\cap j})\setminus g$ as $j\setminus g\leq_2 g\oplus j\leq_2k$ and $i\setminus g\leq_2 h$. Moreover, notice that $h\cap k\cap (\widetilde{i\cap j})=((\widetilde{i\cap j})\setminus g)\cup(h\cap k\cap (\widetilde{g\cap i})\cap j)$. Lemma \ref{L;Lemma3.11} thus implies that $k_{h\cap k\cap (\widetilde{i\cap j})}=k_{(\widetilde{i\cap j})\setminus g}k_{h\cap k\cap (\widetilde{g\cap i})\cap j}$. As $i\setminus g\leq_2\ell$, $((\widetilde{i\cap j})\cap \ell)\setminus g=(\widetilde{i\cap j})\setminus g$. As $(\widetilde{i\cap j})\cap \ell=((\widetilde{i\cap j})\setminus g)\cup(g\cap\ell\cap(\widetilde{i\cap j}))$ by a direct computation, Lemma \ref{L;Lemma3.11} thus implies that $k_{(\widetilde{i\cap j})\cap \ell}=k_{(\widetilde{i\cap j})\setminus g}k_{g\cap\ell\cap(\widetilde{i\cap j})}$. The desired lemma thus follows from combining all displayed equalities of the valencies of elements in $\mathbb{S}$.
\end{proof}
\begin{lem}\label{L;Lemma8.8}
Assume that $g, h, i,j, k, \ell\in [0,d]$ and $m\!=\!(i\oplus j)\cup((\widetilde{i\cap j})\cap\ell)$. Assume that $i\setminus g\leq_2 h$, $g\oplus j\leq_2 k$, $\ell\!=\!(g\oplus i)\cup((\widetilde{g\cap i})\cap \ell)$, and $(\widetilde{g\cap i})\cap k\leq_2 h\leq_2 \ell$. Then $$\frac{k_{k\cap g\cap \ell}}{k_{i\cap\ell}}=\frac{k_{g\cap k}}{k_{i\cap m}}.$$
\end{lem}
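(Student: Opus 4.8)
The plan is to reduce the claimed equality, via Lemma~\ref{L;Lemma8.7}, to a valency identity not involving $\ell$, and then to prove that identity by repeatedly splitting index sets and applying Lemma~\ref{L;Lemma3.11}.

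First I would rewrite the two denominators. Since $\ell=(g\oplus i)\cup((\widetilde{g\cap i})\cap \ell)$ and $\widetilde{g\cap i}\leq_2 g\cap i$, a direct computation gives $i\cap\ell=(i\setminus g)\cup((\widetilde{g\cap i})\cap\ell)$ with $(i\setminus g)\cap((\widetilde{g\cap i})\cap\ell)=0$, so $k_{i\cap\ell}=k_{i\setminus g}k_{(\widetilde{g\cap i})\cap\ell}$ by Lemma~\ref{L;Lemma3.11}. Similarly, from $m=(i\oplus j)\cup((\widetilde{i\cap j})\cap \ell)$ and $\widetilde{i\cap j}\leq_2 i\cap j$ one gets $i\cap m=(i\setminus j)\cup((\widetilde{i\cap j})\cap\ell)$ with $(i\setminus j)\cap((\widetilde{i\cap j})\cap\ell)=0$, hence $k_{i\cap m}=k_{i\setminus j}k_{(\widetilde{i\cap j})\cap\ell}$. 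Because every valency is a positive integer, the asserted equality is equivalent to
\[
k_{k\cap g\cap \ell}\,k_{i\setminus j}\,k_{(\widetilde{i\cap j})\cap\ell}=k_{g\cap k}\,k_{i\setminus g}\,k_{(\widetilde{g\cap i})\cap\ell}.
\]
The hypotheses here coincide with those of Lemma~\ref{L;Lemma8.7}, so I may substitute $k_{k\cap g\cap\ell}\,k_{(\widetilde{i\cap j})\cap\ell}=k_{(g\cap k)\setminus i}\,k_{h\cap k\cap(\widetilde{i\cap j})}\,k_{(\widetilde{g\cap i})\cap\ell}$ into the left side and cancel $k_{(\widetilde{g\cap i})\cap\ell}$. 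It then remains to prove the $\ell$-free identity
\[
k_{(g\cap k)\setminus i}\,k_{h\cap k\cap(\widetilde{i\cap j})}\,k_{i\setminus j}=k_{g\cap k}\,k_{i\setminus g}.
\]

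To establish this I would peel off common factors using Lemma~\ref{L;Lemma3.11}. Writing $g\cap k=((g\cap k)\setminus i)\cup(g\cap i\cap k)$ as a disjoint union and cancelling $k_{(g\cap k)\setminus i}$, then using $i\setminus j=(i\setminus(g\cup j))\cup((g\cap i)\setminus j)$ and $i\setminus g=(i\setminus(g\cup j))\cup((i\cap j)\setminus g)$ and cancelling $k_{i\setminus(g\cup j)}$, the identity reduces to $k_{h\cap k\cap(\widetilde{i\cap j})}\,k_{(g\cap i)\setminus j}=k_{g\cap i\cap k}\,k_{(i\cap j)\setminus g}$. The hypothesis $g\oplus j\leq_2 k$ yields $g\setminus j\leq_2 k$ and $j\setminus g\leq_2 k$, hence $(g\cap i)\setminus j=(g\cap i\cap k)\setminus j$ and $(i\cap j)\setminus g=(i\cap j\cap k)\setminus g$; splitting $g\cap i\cap k$ along $j$ and then $i\cap j\cap k$ along $g$ and cancelling, and using that index positions $a$ with $u_a=2$ contribute the factor $1$ so that $k_{i\cap j\cap k}=k_{(\widetilde{i\cap j})\cap k}$ by Notation~\ref{N;Notation3.3} and Lemma~\ref{L;Lemma3.11}, the identity collapses to $k_{h\cap k\cap(\widetilde{i\cap j})}=k_{(\widetilde{i\cap j})\cap k}$, that is, to the inclusion $(\widetilde{i\cap j})\cap k\leq_2 h$.

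The only genuinely non-mechanical step is this last inclusion, which I would verify by a short case analysis. Pick $a\in\mathbb{P}(\widetilde{i\cap j})\cap\mathbb{P}(k)$, so $a\in\mathbb{P}(i)\cap\mathbb{P}(j)\cap\mathbb{P}(k)$ and $u_a>2$. If $a\in\mathbb{P}(g)$, then $a\in\mathbb{P}_2(g\cap i)=\mathbb{P}(\widetilde{g\cap i})$, whence $a\in\mathbb{P}((\widetilde{g\cap i})\cap k)\subseteq\mathbb{P}(h)$ by the hypothesis $(\widetilde{g\cap i})\cap k\leq_2 h$; if $a\notin\mathbb{P}(g)$, then $a\in\mathbb{P}(i\setminus g)\subseteq\mathbb{P}(h)$ by the hypothesis $i\setminus g\leq_2 h$. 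In either case $a\in\mathbb{P}(h)$, so $(\widetilde{i\cap j})\cap k\leq_2 h$, and the desired lemma follows. Throughout the middle paragraph the only care needed is to keep each set difference and intersection genuinely disjoint before invoking Lemma~\ref{L;Lemma3.11}; every other manipulation is a routine computation in the Boolean algebra on $[1,n]$ justified by Lemma~\ref{L;Lemma3.1}.
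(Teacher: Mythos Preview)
Your proof is correct and follows essentially the same approach as the paper: both rewrite the denominators $k_{i\cap\ell}$ and $k_{i\cap m}$, invoke Lemma~\ref{L;Lemma8.7} to eliminate $\ell$, and then verify the residual identity $k_{(g\cap k)\setminus i}\,k_{h\cap k\cap(\widetilde{i\cap j})}\,k_{i\setminus j}=k_{g\cap k}\,k_{i\setminus g}$ by Boolean decompositions and Lemma~\ref{L;Lemma3.11}. Your final reduction to the single inclusion $(\widetilde{i\cap j})\cap k\leq_2 h$, proved by the two-case split on $a\in\mathbb{P}(g)$, is a slightly more streamlined packaging of exactly the same two observations the paper uses, namely $(h\cap k\cap(\widetilde{i\cap j}))\setminus g=(\widetilde{i\cap j})\setminus g$ and $h\cap k\cap g\cap(\widetilde{i\cap j})=g\cap k\cap(\widetilde{i\cap j})$.
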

\begin{proof}
As $\ell\!=\!(g\oplus i)\cup((\widetilde{g\cap i})\cap \ell)$ and $m\!=\!(i\oplus j)\cup((\widetilde{i\cap j})\cap\ell)$, a direct computation shows that $i\cap \ell=(i\setminus g)\cup ((\widetilde{g\cap i})\cap\ell)$ and $i\cap m=(i\setminus j)\cup((\widetilde{i\cap j})\cap\ell)$. Lemma \ref{L;Lemma3.11} thus implies that $k_{i\cap\ell}=k_{i\setminus g}k_{(\widetilde{g\cap i})\cap\ell}$ and $k_{i\cap m}\!=\!k_{i\setminus j}k_{(\widetilde{i\cap j})\cap\ell}$. According to Lemma \ref{L;Lemma8.7}, it suffices to check that $k_{i\setminus j}k_{(g\cap k)\setminus i}k_{h\cap k\cap (\widetilde{i\cap j})}=k_{i\setminus g}k_{g\cap k}$. According to Lemma \ref{L;Lemma3.11} and Notation \ref{N;Notation3.3}, it is not very difficult to notice that $k_q=k_{\widetilde{q}}$ for any $q\in [0,d]$.

As $g\setminus j\leq_2 g\oplus j\leq_2 k$, notice that $i\setminus j=((g\cap k\cap i)\setminus j)\cup(i\setminus(g\cup j))$. Lemma \ref{L;Lemma3.11} thus implies that $k_{i\setminus j}=k_{(g\cap k\cap i)\setminus j}k_{i\setminus(g\cup j)}$. As $i\setminus g\leq_2 h$ and $j\setminus g\leq_2 g\oplus j\leq_2 k$, notice that $(h\cap k\cap (\widetilde{i\cap j}))\setminus g=(\widetilde{i\cap j})\setminus g$. As $(\widetilde{g\cap i})\cap k\leq_2 h$, it is obvious that $h\cap k\cap g\cap(\widetilde{i\cap j})=h\cap k\cap (\widetilde{g\cap i})\cap j=g\cap k\cap(\widetilde{i\cap j})$. These equalities thus imply that $h\cap k\cap (\widetilde{i\cap j})=((\widetilde{i\cap j})\setminus g)\cup (g\cap k\cap(\widetilde{i\cap j}))$. Hence Lemma \ref{L;Lemma3.11} implies that $k_{h\cap k\cap (\widetilde{i\cap j})}=k_{(\widetilde{i\cap j})\setminus g}k_{g\cap k\cap(\widetilde{i\cap j})}=k_{(i\cap j)\setminus g}k_{g\cap k\cap i\cap j}$. The above discussion thus implies that $k_{i\setminus j}k_{(g\cap k)\setminus i}k_{h\cap k\cap (\widetilde{i\cap j})}\!=\!k_{(g\cap k\cap i)\setminus j}k_{i\setminus(g\cup j)}k_{(g\cap k)\setminus i}k_{(i\cap j)\setminus g}k_{g\cap k\cap i\cap j}$. It is obvious that $\mathbb{P}((g\cap k\cap i)\setminus j)$, $\mathbb{P}((g\cap k)\setminus i)$, and $\mathbb{P}(g\cap k\cap i\cap j)$ are pairwise disjoint. Notice that $g\cap k=((g\cap k\cap i)\setminus j)\cup ((g\cap k)\setminus i)\cup(g\cap k\cap i\cap j)$. Furthermore, notice that  $i\setminus g=((i\cap j)\setminus g)\cup(i\setminus(g\cup j))$. Hence $k_{i\setminus j}k_{(g\cap k)\setminus i}k_{h\cap k\cap (\widetilde{i\cap j})}=k_{i\setminus g}k_{g\cap k}$ by Lemma \ref{L;Lemma3.11}. The desired lemma thus follows.
\end{proof}
We end this section by presenting the generalizations of Lemmas \ref{L;Lemma6.13} and \ref{L;Lemma6.14}.
\begin{thm}\label{T;Theorem8.9}
Assume that $g, h, i,j, k,\ell\!\in\! [0,d]$ and $m\!=\!(i\oplus j)\cup((\widetilde{i\cap j})\cap h)$. Assume that $p_{gh}^i\!\neq\! 0$, $p_{jk}^\ell\!\neq\! 0$, and $p\nmid k_hk_k$. Then $B_{j, k, \ell}D_{g, h, i}\!\neq\! O$ if and only if $g=\ell$ and $(\widetilde{g\cap i})\cap k\leq_2 h$. Moreover, if $g=\ell$ and $(\widetilde{g\cap i})\cap k\leq_2 h$, then $D_{j, m, i}$ is defined and $B_{j, k, g}D_{g, h, i}=\overline{k_{g\cap k}}D_{j, m, i}$.
\end{thm}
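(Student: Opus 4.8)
The plan is to expand $D_{g,h,i}$ via Notation \ref{N;Notation6.5}, multiply it on the left by $B_{j,k,\ell}$ one basis term at a time using Lemma \ref{L;Lemma4.20}, and then recognise the resulting alternating sum. First I would note that $D_{g,h,i}=E_g^{*}D_{g,h,i}E_i^{*}$ and $B_{j,k,\ell}=E_j^{*}B_{j,k,\ell}E_\ell^{*}$, so \eqref{Eq;2} gives $B_{j,k,\ell}D_{g,h,i}=B_{j,k,\ell}E_\ell^{*}E_g^{*}D_{g,h,i}$, which is $O$ unless $g=\ell$. From now on assume $g=\ell$; then $p_{jk}^{g}\neq0$, and each $B_{g,a,i}$ occurring in $D_{g,h,i}$ has $p_{ga}^{i}\neq0$ by Notation \ref{N;Notation6.5} and Lemma \ref{L;Lemma4.4}, so Lemma \ref{L;Lemma4.20} yields $B_{j,k,g}B_{g,a,i}=\overline{k_{k\cap g\cap a}}\,B_{j,m(j,k,g,a,i),i}$ and hence
\begin{align*}
B_{j,k,g}D_{g,h,i}=\sum_{t=0}^{n_{h,g\odot i}}\ \sum_{a\in\mathbb{U}_{h,g\odot i,t}}(\overline{-1})^{t}\,\overline{k_{i\cap a}}^{-1}\,\overline{k_{k\cap g\cap a}}\,B_{j,m(j,k,g,a,i),i}.
\end{align*}

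Next I would split on $s=|\mathbb{P}((k\cap(\widetilde{g\cap i}))\setminus(h\cap k\cap(\widetilde{g\cap i})))|$, observing that $s=0$ exactly when $(\widetilde{g\cap i})\cap k\leq_2 h$. If $s>0$, then, since $B_{g,h,i}\in\mathrm{Supp}_\mathbb{B}(D_{g,h,i})$ (it occurs with coefficient $\overline{k_{i\cap h}}^{-1}\neq\overline0$, as $h\in\mathbb{U}_{h,g\odot i,0}$), Lemma \ref{L;Lemma6.12} applies with its parameter ``$\ell$'' taken to be $j$ and its parameter ``$j$'' taken to be $h$, and gives $B_{j,k,g}D_{g,h,i}=O$. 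Combined with the case $g\neq\ell$ already disposed of, this proves the ``only if'' direction: $B_{j,k,\ell}D_{g,h,i}\neq O$ forces $g=\ell$ and $(\widetilde{g\cap i})\cap k\leq_2 h$.

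It then remains to treat the case $g=\ell$ and $(\widetilde{g\cap i})\cap k\leq_2 h$, in which the hypotheses of Lemmas \ref{L;Lemma8.3}--\ref{L;Lemma8.8} all hold with their ``$g,h,i,j,k$'' equal to ours and their ``$\ell$'' equal to our $m$. Lemma \ref{L;Lemma8.3} gives $p\nmid k_m$ and $n_{m,i\odot j}=n_{h,g\odot i}$; together with $p_{jm}^{i}\neq0$ (from Lemmas \ref{L;Lemma2.1} and \ref{L;Lemma4.4}, since $\mathbb{P}(m)\setminus\mathbb{P}(i\oplus j)\subseteq\mathbb{P}_2(i\cap j)$) this shows $D_{j,m,i}$ is defined, and it is nonzero by Notation \ref{N;Notation6.5} and Theorem \ref{T;Theorem4.22}. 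By Lemma \ref{L;Lemma8.6} the assignment $a\mapsto m(j,k,g,a,i)=(i\oplus j)\cup((\widetilde{i\cap j})\cap a)$ carries $\mathbb{U}_{h,g\odot i,t}$ onto $\mathbb{U}_{m,i\odot j,t}$ for each $t$, and by Lemma \ref{L;Lemma8.4} it does so bijectively. Writing $b=m(j,k,g,a,i)$, I would invoke Lemma \ref{L;Lemma8.8} (its hypotheses $i\setminus g\leq_2 h$, $g\oplus j\leq_2 k$, $a=(g\oplus i)\cup((\widetilde{g\cap i})\cap a)$, $(\widetilde{g\cap i})\cap k\leq_2 h\leq_2 a$ hold because $p_{gh}^{i}\neq0$, $p_{jk}^{g}\neq0$ and $h\leq_2 a\leq_2 g\odot i$) and reduce its valency identity modulo $p$ to obtain $\overline{k_{i\cap a}}^{-1}\overline{k_{k\cap g\cap a}}=\overline{k_{g\cap k}}\,\overline{k_{i\cap b}}^{-1}$; substituting this into the displayed sum and re-indexing by $b$ collapses it to $\overline{k_{g\cap k}}\,D_{j,m,i}$. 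Since $g\cap k\leq_2 k$ and $p\nmid k_k$, Lemma \ref{L;Lemma3.11} gives $p\nmid k_{g\cap k}$, so $\overline{k_{g\cap k}}\neq\overline0$ and $\overline{k_{g\cap k}}D_{j,m,i}\neq O$; this finishes both the formula and the ``if'' direction.

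I expect the real difficulty to be concentrated in that last paragraph: verifying that $s=0$ is precisely the complement of the Lemma \ref{L;Lemma6.12} regime, that $a\mapsto b$ is a level-preserving bijection from $\mathbb{U}_{h,g\odot i,t}$ onto $\mathbb{U}_{m,i\odot j,t}$ (which is exactly what Lemmas \ref{L;Lemma8.4} and \ref{L;Lemma8.6} are built to supply), and that the scalar weights transform correctly under it (Lemma \ref{L;Lemma8.8}), all while keeping track of which valencies are prime to $p$. In other words, once the combinatorial Lemmas \ref{L;Lemma8.1}--\ref{L;Lemma8.8} are in hand the theorem is an assembly; the only thing that needs genuine care is the bookkeeping of which parameter plays which role across those lemmas.
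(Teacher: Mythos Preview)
Your proposal is correct and follows essentially the same route as the paper: dispose of $g\neq\ell$ via \eqref{Eq;2}, invoke Lemma~\ref{L;Lemma6.12} for the vanishing when $(\widetilde{g\cap i})\cap k\not\leq_2 h$, and in the remaining case expand $D_{g,h,i}$, multiply termwise by Lemma~\ref{L;Lemma4.20}, then use Lemmas~\ref{L;Lemma8.3}, \ref{L;Lemma8.4}, \ref{L;Lemma8.6}, \ref{L;Lemma8.8} to reindex the sum as $\overline{k_{g\cap k}}\,D_{j,m,i}$. Your parameter bookkeeping is accurate, and your explicit check that $p\nmid k_{g\cap k}$ (hence $\overline{k_{g\cap k}}D_{j,m,i}\neq O$) makes the ``if'' direction cleaner than the paper's terse ``thus follows''.
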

\begin{proof}
If $g\!\neq\! \ell$, then $B_{j, k, \ell}D_{g, h, i}\!\!=\!\!B_{j, k, \ell}E_\ell^*E_g^*D_{g, h, i}\!=\!O$ by \eqref{Eq;2}. If $g=\ell$, then Lemma \ref{L;Lemma6.12} implies that
$B_{j, k, g}D_{g, h, i}\neq O$ only if $(\widetilde{g\cap i})\cap k\leq_2 h$. For the other direction, assume that $g\!=\!\ell$ and $(\widetilde{g\cap i})\cap k\leq_2 h$. Notice that $p_{jm}^i\!\neq\! 0$ by Lemmas \ref{L;Lemma4.4} and \ref{L;Lemma2.1}. Moreover, notice that $p\nmid k_m$ by Lemma \ref{L;Lemma8.3}. Therefore $D_{j, m, i}$ is defined by Notation \ref{N;Notation6.5}. As $p_{gh}^i\neq 0$ and $p_{jk}^g\neq 0$, Lemmas \ref{L;Lemma2.1} and \ref{L;Lemma4.3} thus imply that $i\setminus g\leq_2 g\oplus i\leq_2h$ and $g\oplus j\leq_2 k$.
Set $q=n_{h, g\odot i}=n_{\ell, i\odot j}$ by Lemma \ref{L;Lemma8.3}. If $r\in [0,q]$ and $s\in \mathbb{U}_{h, g\odot i, r}$, notice that $g\oplus i\leq_2 h\leq_2 s\leq_2 g\odot i$ and
$s=(g\oplus i)\cup((\widetilde{g\cap i})\cap s)$. By combining Lemmas \ref{L;Lemma4.20}, \ref{L;Lemma8.4}, \ref{L;Lemma8.6}, \ref{L;Lemma8.8}, and Notation \ref{N;Notation6.5}, the following computation holds:
\begin{align*}
B_{j, k, g}D_{g, h,i}=&\sum_{r=0}^q\sum_{s\in \mathbb{U}_{h, g\odot i, r}}(\overline{-1})^r\overline{k_{i\cap s}}^{-1}B_{j, k, g}B_{g, s, i}\\
=&\sum_{r=0}^q\sum_{s\in \mathbb{U}_{h, g\odot i, r}}(\overline{-1})^r\overline{k_{i\cap s}}^{-1}\overline{k_{k\cap g\cap s}}B_{j, m(j, k, g, s, i), i}\\
=&\overline{k_{g\cap k}}\sum_{r=0}^q\sum_{s\in \mathbb{U}_{m, i\odot j, r}}(\overline{-1})^r\overline{k_{i\cap s}}^{-1}B_{j,s,i}=\overline{k_{g\cap k}}D_{j,m,i}.
\end{align*}
The desired theorem thus follows from the above discussion and computation.
\end{proof}
\begin{thm}\label{T;Theorem8.10}
Assume that $g, h, i, j, k, \ell\in [0,d]$ and $m=(i\oplus j)\cup((\widetilde{i\cap j})\cap h)$. Assume that $p_{gh}^i\!\neq\! 0$, $p_{jk}^\ell\!\neq\!0$, and $p\nmid k_hk_k$. Then $D_{j, k, \ell}D_{g, h, i}\!\neq\! O$ if and only if $g=\ell$, $(\widetilde{g\cap i})\setminus h\leq_2 j$, and $k=(g\oplus j)\cup((\widetilde{g\cap j})\cap h)$. Moreover, if $g=\ell$, $(\widetilde{g\cap i})\setminus h\leq_2 j$, and $k=(g\oplus j)\cup((\widetilde{g\cap j})\cap h)$, then $D_{j, m, i}$ is defined and $D_{j, k, g}D_{g, h, i}=D_{j, m, i}$.
\end{thm}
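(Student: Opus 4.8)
The plan is to expand the left factor into its defining sum of basis elements and apply Theorem~\ref{T;Theorem8.9} termwise. If $g\neq\ell$, then $D_{j,k,\ell}D_{g,h,i}=D_{j,k,\ell}E_\ell^*E_g^*D_{g,h,i}=O$ by \eqref{Eq;2}, and the three asserted conditions cannot all hold; so from now on assume $g=\ell$. By Notation~\ref{N;Notation6.5},
\[
D_{j,k,g}=\sum_{s}(\overline{-1})^{|\mathbb{P}(s)|-|\mathbb{P}(k)|}\,\overline{k_{g\cap s}}^{\,-1}B_{j,s,g},
\]
the sum ranging over all $s\in[0,d]$ with $k\leq_2 s\leq_2 j\odot g$ and $p\nmid k_s$. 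For each such $s$ the element $B_{j,s,g}$ lies in $\mathbb{B}$, so $p_{js}^g\neq 0$, while $p\nmid k_hk_s$; hence Theorem~\ref{T;Theorem8.9} applies to $B_{j,s,g}D_{g,h,i}$ and gives $B_{j,s,g}D_{g,h,i}=\overline{k_{g\cap s}}D_{j,m,i}$ when $(\widetilde{g\cap i})\cap s\leq_2 h$ and $O$ otherwise, with the same index $m$ in every instance (as $m$ depends only on $g,h,i,j$). Since $g\cap s\leq_2 s$ forces $p\nmid k_{g\cap s}$ by Lemma~\ref{L;Lemma3.11}, multiplying by $\overline{k_{g\cap s}}^{\,-1}$ and summing yields
\[
D_{j,k,g}D_{g,h,i}=\Big(\sum_{s\in\mathcal{S}}(\overline{-1})^{|\mathbb{P}(s)|-|\mathbb{P}(k)|}\Big)D_{j,m,i},\qquad
\mathcal{S}:=\{s: k\leq_2 s\leq_2 j\odot g,\ p\nmid k_s,\ (\widetilde{g\cap i})\cap s\leq_2 h\},
\]
whenever $\mathcal{S}\neq\varnothing$ (in which case $D_{j,m,i}$ is defined by Theorem~\ref{T;Theorem8.9}), and $D_{j,k,g}D_{g,h,i}=O$ when $\mathcal{S}=\varnothing$.

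Next I would analyse $\mathcal{S}$ combinatorially. As $k\leq_2 s$ implies $(\widetilde{g\cap i})\cap k\leq_2(\widetilde{g\cap i})\cap s$, we have $\mathcal{S}\neq\varnothing$ if and only if $k\in\mathcal{S}$, i.e.\ if and only if $(\widetilde{g\cap i})\cap k\leq_2 h$. Using $p_{jk}^g\neq 0$ with Lemmas~\ref{L;Lemma2.1} and~\ref{L;Lemma4.3} to write $\mathbb{P}(k)=\mathbb{P}(g\oplus j)\sqcup(\mathbb{P}(k)\cap P)$ with $P:=\mathbb{P}_2(g\cap j)$, and noting $\mathbb{P}(j\odot g)=\mathbb{P}(g\oplus j)\sqcup P$, each $s$ with $k\leq_2 s\leq_2 j\odot g$ corresponds bijectively to $T:=\mathbb{P}(s)\setminus\mathbb{P}(g\oplus j)\subseteq P$ with $\mathbb{P}(k)\cap P\subseteq T$; here $p\nmid k_s$ amounts (Lemma~\ref{L;Lemma3.11}) to $T\subseteq P'$, where $P':=\{a\in P: u_a\not\equiv 1\pmod p\}$, and, granting $(\widetilde{g\cap i})\cap k\leq_2 h$, the condition $(\widetilde{g\cap i})\cap s\leq_2 h$ amounts to $T\cap(\mathbb{P}(\widetilde{g\cap i})\setminus\mathbb{P}(h))=\varnothing$. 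Thus $\mathcal{S}$ is in bijection with the set of $T$ satisfying $\mathbb{P}(k)\cap P\subseteq T\subseteq Q$, where $Q:=P'\setminus(\mathbb{P}(\widetilde{g\cap i})\setminus\mathbb{P}(h))$ (and $\mathbb{P}(k)\cap P\subseteq Q$, using $(\widetilde{g\cap i})\cap k\leq_2 h$ and $p\nmid k_k$), while the sign attached to $s$ is $(\overline{-1})^{|T|-|\mathbb{P}(k)\cap P|}$. By the Newton's Binomial Theorem the displayed sum over $\mathcal{S}$ equals $\sum_{r\geq 0}\binom{|Q\setminus(\mathbb{P}(k)\cap P)|}{r}(\overline{-1})^r$, which is $\overline{1}$ if $Q=\mathbb{P}(k)\cap P$ and $\overline{0}$ otherwise.

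It then remains to show that, in the case $(\widetilde{g\cap i})\cap k\leq_2 h$, the equality $Q=\mathbb{P}(k)\cap P$ holds precisely when $k=(g\oplus j)\cup((\widetilde{g\cap j})\cap h)$, and that this equality forces $(\widetilde{g\cap i})\setminus h\leq_2 j$ as well. For this I would bring in $p_{gh}^i\neq 0$ to write $\mathbb{P}(h)=\mathbb{P}(g\oplus i)\sqcup(\mathbb{P}(h)\cap\mathbb{P}_2(g\cap i))$, observe that for $a\in P$ one has $a\in\mathbb{P}(\widetilde{g\cap i})$ iff $a\in\mathbb{P}(i)$ and that $a\in P\setminus\mathbb{P}(i)$ forces $a\in\mathbb{P}(g\oplus i)\subseteq\mathbb{P}(h)$, and use $p\nmid k_hk_k$ with Lemma~\ref{L;Lemma3.11} to see $\mathbb{P}(k)\cap P\subseteq P'$ and $P\cap\mathbb{P}(h)\subseteq P'$. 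From $(\widetilde{g\cap i})\cap k\leq_2 h$ one gets $\mathbb{P}(k)\cap P\subseteq P\cap\mathbb{P}(h)$ unconditionally, while a short case check on the elements of $P'$ shows that $Q\subseteq\mathbb{P}(k)$ is equivalent to $P\cap\mathbb{P}(h)\subseteq\mathbb{P}(k)\cap P$; combining these gives $Q=\mathbb{P}(k)\cap P$ iff $\mathbb{P}(k)\cap P=P\cap\mathbb{P}(h)$, which (by the structure of $\mathbb{P}(k)$) is exactly $k=(g\oplus j)\cup((\widetilde{g\cap j})\cap h)$. Finally, still using $(\widetilde{g\cap i})\cap k\leq_2 h$ and $p_{jk}^g\neq 0$, one checks directly that $\mathbb{P}(k)\cap P=P\cap\mathbb{P}(h)$ implies $(\widetilde{g\cap i})\setminus h\leq_2 j$, and conversely that $(\widetilde{g\cap i})\setminus h\leq_2 j$ together with $k=(g\oplus j)\cup((\widetilde{g\cap j})\cap h)$ implies $(\widetilde{g\cap i})\cap k\leq_2 h$ (hence $\mathcal{S}\neq\varnothing$). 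Assembling these facts with the displayed product formula proves both the nonvanishing criterion and the value $D_{j,k,g}D_{g,h,i}=D_{j,m,i}$. The main obstacle is this last combinatorial step: it requires handling the descriptions of $\mathbb{P}(h)$ and $\mathbb{P}(k)$ coming from $p_{gh}^i\neq 0$ and $p_{jk}^g\neq 0$ simultaneously, and it is precisely the divisibility hypotheses $p\nmid k_h$ and $p\nmid k_k$ that make the two halves of the equivalence fit together.
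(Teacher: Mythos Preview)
Your proposal is correct and follows essentially the same route as the paper's proof: expand $D_{j,k,g}$ into its defining sum, apply Theorem~\ref{T;Theorem8.9} to each $B_{j,s,g}D_{g,h,i}$, and evaluate the resulting alternating sum of signs by the binomial theorem. The only organizational difference is that the paper packages your ``last combinatorial step'' (the equivalence between $(\widetilde{g\cap i})\cap k\leq_2 h$ together with $k=(g\oplus j)\cup((\widetilde{g\cap j})\cap h)$ on one side, and $(\widetilde{g\cap i})\setminus h\leq_2 j$ together with the same formula for $k$ on the other) into Lemma~\ref{L;Lemma8.2}, which it invokes directly rather than reproving in place; your set-by-set analysis of $P$, $P'$ and $Q$ amounts to a hands-on verification of that lemma, and in fact your case check shows $Q=P\cap\mathbb{P}(h)$ outright, which streamlines the conclusion.
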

\begin{proof}
Pick $B_{j, q, \ell}\in \mathrm{Supp}_\mathbb{B}(D_{j, k,\ell})$. As $p_{gh}^i\neq 0$ and $p_{jk}^\ell\!\neq\!0$, notice that $g\setminus h\leq_2 i$ and $j\oplus \ell\leq_2 k\leq_2 q\leq_2 j\odot\ell$ by Lemmas \ref{L;Lemma2.1} and \ref{L;Lemma4.3}. Assume that $g=\ell$, $(\widetilde{g\cap i})\setminus h\leq_2 j$, and $k=(g\oplus j)\cup((\widetilde{g\cap j})\cap h)$. So
$q=(g\oplus j)\cup((\widetilde{g\cap j})\cap q)$. So Lemma \ref{L;Lemma8.2} shows that $(\widetilde{g\cap i})\cap q\leq_2 h$ if and only if $q\!=\!k$. Hence $D_{j, k, g}D_{g, h, i}\!=\!D_{j, m, i}\neq O$ as $B_{j, q, g}$ is chosen from $\mathrm{Supp}_\mathbb{B}(D_{j, k,g})$ arbitrarily and Notation \ref{N;Notation6.5}, Theorem \ref{T;Theorem8.9} hold. For the other direction, assume that $D_{j, k, \ell}D_{g, h, i}\!\neq\! O$. Notation \ref{N;Notation6.5} and Theorem \ref{T;Theorem8.9} thus imply that $g\!=\!\ell$ and $(\widetilde{g\cap i})\cap k\leq_2(\widetilde{g\cap i})\cap r\!\leq_2\! h$ for some $B_{j, r, g}\!\in\!\mathrm{Supp}_\mathbb{B}(D_{j, k,g})$. Set $s=(g\oplus j)\cup((\widetilde{g\cap j})\cap h)$. As $k\!=\!(g\oplus j)\cup ((\widetilde{g\cap j})\cap k)$, Lemma \ref{L;Lemma8.2} thus shows that $(\widetilde{g\cap i})\setminus h\leq_2 j$ and $k\leq_2 s$. As $p\nmid k_hk_k$, $p\nmid k_s$ by Lemma \ref{L;Lemma3.11}. Assume that $k\neq s$. So $n_{k,s}>0$ by Notation \ref{N;Notation6.5}. Moreover, $\mathbb{U}_{k, s, 0}, \mathbb{U}_{k, s, 1},\ldots, \mathbb{U}_{k, s, n_{k,s}}$ are nonempty and pairwise disjoint by Notation \ref{N;Notation6.5}. Lemma \ref{L;Lemma8.2} says that $(\widetilde{g\cap i})\cap q\leq_2 h$ if and only if $k\leq_2 q\leq_2 s$. By combining Notation \ref{N;Notation6.5}, Lemma \ref{L;Lemma6.8}, Theorem \ref{T;Theorem8.9}, the Newton's Binomial Theorem, notice that $O\neq D_{j, k, g}D_{g, h, i}\!=\!(\sum_{t=0}^{n_{k,s}}\sum_{u\in \mathbb{U}_{k, s, t}}(\overline{-1})^t)D_{j,m,i}=O$. This is a contradiction. Therefore $k=s$. The desired theorem thus follows.
\end{proof}
\section{Structures of Terwilliger $\F$-algebras of factorial schemes II}
In this section, we get the algebraic structure of the semisimple algebra $\mathbb{T}/\mathrm{Rad}(\mathbb{T})$. We recall Notations \ref{N;Notation3.3}, \ref{N;Notation4.2}, \ref{N;Notation4.8}, \ref{N;Notation4.19}, \ref{N;Notation5.6}, \ref{N;Notation6.5}, \ref{N;Notation6.15}, \ref{N;Notation7.3} and present three lemmas.
\begin{lem}\label{L;Lemma9.1}
$\mathbb{T}$ has an $\F$-linearly independent subset $\{D_{a, b, c}\!\!: a\oplus b\!\leq_2\!\! c\!\!\leq_2\! a\odot b,\ p\!\nmid\! k_b\}$.
\end{lem}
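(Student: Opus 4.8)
The plan is to realize the set $\{D_{a,b,c}: a\oplus b\leq_2 c\leq_2 a\odot b,\ p\nmid k_b\}$ as the image of a subset of the $\F$-basis $\mathbb{B}$ of Notation \ref{N;Notation5.6} under a block-triangular change of coordinates, so that its $\F$-linear independence is forced by Theorem \ref{T;Theorem4.22}. Recall from Notation \ref{N;Notation6.5} that when $a\oplus b\leq_2 c\leq_2 a\odot b$ and $p\nmid k_b$ the matrix $D_{a,b,c}$ is defined and $D_{a,b,c}=\sum_{k}\sum_{m\in\mathbb{U}_{b,\,a\odot c,\,k}}(\overline{-1})^{k}\overline{k_{c\cap m}}^{-1}B_{a,m,c}$, a combination of basis vectors $B_{a,m,c}$ with $b\leq_2 m\leq_2 a\odot c$ and $p\nmid k_m$. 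Two observations make the triangular structure work: first, each such $m$ satisfies $a\oplus c\leq_2 b\leq_2 m\leq_2 a\odot c$ (since $p_{ab}^c\neq 0$ gives $p_{ac}^b\neq 0$ by Lemma \ref{L;Lemma2.1}, hence $a\oplus c\leq_2 b$ by Lemma \ref{L;Lemma4.3}), so $p_{am}^c\neq 0$ by Lemmas \ref{L;Lemma4.4} and \ref{L;Lemma2.1}, whence $B_{a,m,c}\in\mathbb{B}$ and $D_{a,m,c}$ is again a member of our set; second, $\mathbb{P}(c\cap m)\subseteq\mathbb{P}(m)$ together with $p\nmid k_m$ gives $p\nmid k_{c\cap m}$ by Lemma \ref{L;Lemma3.11}, so the scalars $\overline{k_{c\cap m}}^{-1}$ are genuine units of $\F$.

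Concretely, I would first reduce to a single ``block'' using \eqref{Eq;2}: since $D_{a,b,c}$ is supported on the matrices $E_a^*A_jE_c^*$, one has $E_g^*D_{a,b,c}E_i^*=\delta_{ga}\delta_{ic}D_{a,b,c}$, so applying $E_g^*(-)E_i^*$ to a hypothetical relation $\sum c_{a,b,c}D_{a,b,c}=O$ isolates, for each fixed $(g,i)$, the sub-relation $\sum_{b}c_{g,b,i}D_{g,b,i}=O$ with $b$ running over $\{b: g\oplus i\leq_2 b\leq_2 g\odot i,\ p\nmid k_b\}$. Then I would order this finite index set so that $|\mathbb{P}(b)|$ is non-increasing. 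By Notation \ref{N;Notation6.5} and Lemma \ref{L;Lemma3.1}, $\mathbb{U}_{b,\,g\odot i,\,0}=\{b\}$, so in the expansion of $D_{g,b,i}$ the vector $B_{g,b,i}$ occurs with the invertible coefficient $\overline{k_{b\cap i}}^{-1}$, while every other $B_{g,m,i}$ appearing has $|\mathbb{P}(m)|>|\mathbb{P}(b)|$ and $m$ still in the index set. Hence the transition matrix expressing $(D_{g,b,i})_b$ in terms of $(B_{g,b,i})_b$ is triangular with invertible diagonal; since $\{B_{g,b,i}: g\oplus i\leq_2 b\leq_2 g\odot i\}$ is a subset of $\mathbb{B}$ and thus $\F$-linearly independent by Theorem \ref{T;Theorem4.22}, so is $\{D_{g,b,i}\}$. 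Therefore $c_{g,b,i}=\overline 0$ for all $b$, and assembling over all pairs $(g,i)$ completes the argument.

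The main obstacle is the bookkeeping of the final step: verifying that $B_{g,b,i}$ is the unique $|\mathbb{P}(\cdot)|$-minimal term of $D_{g,b,i}$, that its coefficient is a unit, and that every remaining term $B_{g,m,i}$ is indexed by an element $m$ lying strictly above $b$ for $\leq_2$ and still belonging to the same index set, so that the triangular system is genuinely closed. All of this is purely combinatorial and follows directly from the conditions packaged into Notation \ref{N;Notation6.5} together with Lemmas \ref{L;Lemma3.1} and \ref{L;Lemma3.11}; in particular the heavier multiplication formulas of Sections 7 and 8 (Theorems \ref{T;Theorem8.9} and \ref{T;Theorem8.10}) are not required. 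This also re-proves, and extends off the diagonal, the linear-independence claim used within the proof of Lemma \ref{L;Lemma6.16}.
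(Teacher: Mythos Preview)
Your proposal is correct and takes essentially the same approach as the paper: both reduce to blocks indexed by pairs $(g,i)$ via the idempotents $E_g^*,E_i^*$ and then exploit that $B_{g,b,i}$ is the $\leq_2$-minimal term of $D_{g,b,i}$ with invertible coefficient, giving a (block-)triangular passage between the $D$'s and the basis $\mathbb{B}$. The only cosmetic difference is that the paper runs this as a minimal-element contradiction whereas you spell out the triangular transition matrix explicitly.
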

\begin{proof}
Set $\mathbb{U}=\{D_{a, b, c}: a\oplus b\leq_2 c\leq_2 a\odot b,\ p\nmid k_b\}$. By Lemma \ref{L;Lemma4.4} and Notation \ref{N;Notation6.5}, notice that $M\!\neq\! O$ for any $M\in \mathbb{U}$. For any $D_{g, h, i}, D_{j, k, \ell}\!\in\!\mathbb{U}$, \eqref{Eq;2} and Theorem \ref{T;Theorem4.22} can imply that $D_{g, h, i}=D_{j, k, \ell}$ if and only if $g=j$, $h=k$, $i=\ell$. Assume that $\sum_{M\in\mathbb{U}}c_MM=O$ and $c_M\in \F$ for any $M\in \mathbb{U}$. Assume that $N\in \mathbb{U}$ and $c_N\neq \overline{0}$. So \eqref{Eq;2} and \eqref{Eq;3} can imply that $N=INI=E_m^*NE_q^*$ for some $m, q\in [0,d]$. Therefore $\mathbb{V}=\{M: M\in \mathbb{U},\ c_M\neq \overline{0},\ E_m^*ME_q^*\neq O\}\neq\varnothing$. Therefore there are $r\in\mathbb{N}_0\setminus\{0\}$ and $s_1, s_2,\ldots, s_r\in [0,d]$ such that the numbers $s_1, s_2,\ldots, s_r$ are pairwise distinct and $\mathbb{V}=\{D_{m, s_1, q}, D_{m, s_2, q}, \ldots, D_{m, s_r, q}\}$. If $r=1$, notice that $c_NN=O$ and $c_N=\overline{0}$ by \eqref{Eq;2}. This is a contradiction. Hence $r>1$. By Lemma \ref{L;Lemma3.1}, there is no loss to assume that $s_1$ is a minimal element of $\{s_1, s_2, \ldots, s_r\}$ with respect to $\leq_2$. By the choices of $s_1, s_2, \ldots, s_r$, notice that $D_{m, s_1, q}$ is an $\F$-linear combination of the elements in $\{D_{m, s_2, q}, D_{m, s_3, q},\ldots, D_{m, s_r, q}\}$. This is a contradiction by combining the choices of $s_1, s_2, \ldots, s_r$, Notation \ref{N;Notation6.5}, and Theorem \ref{T;Theorem4.22}. Therefore $c_M=\overline{0}$ for any $M\in\mathbb{U}$. The desired lemma thus follows.
\end{proof}
\begin{lem}\label{L;Lemma9.2}
$\mathbb{T}/\mathrm{Rad}(\mathbb{T})$ has an $\F$-basis $\{D_{a, b, c}+\mathrm{Rad}(\mathbb{T}): a\oplus b\leq_2 c\leq_2 a\odot b,\ p\nmid k_b\}$ whose cardinality is $|\{(a, b, c):a\oplus b\leq_2 c\leq_2 a\odot b,\ p\nmid k_b\}|$.
\end{lem}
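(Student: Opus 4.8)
The plan is to reduce everything to the explicit $\F$-basis $\mathbb{B}$ of $\mathbb{T}$ from Theorem~\ref{T;Theorem4.22}, the identification $\mathrm{Rad}(\mathbb{T})=\mathbb{I}$ from Theorem~\ref{T;Jacobson}, and the linear independence statement in Lemma~\ref{L;Lemma9.1}. First I would split $\mathbb{B}$ into the two disjoint pieces $\mathbb{B}_{0}=\{B_{a,b,c}\in\mathbb{B}: p\mid k_{b}\}$ and $\mathbb{B}_{1}=\{B_{a,b,c}\in\mathbb{B}: p\nmid k_{b}\}$. By Notation~\ref{N;Notation7.3} and Theorem~\ref{T;Jacobson}, $\mathrm{Rad}(\mathbb{T})=\mathbb{I}=\langle\mathbb{B}_{0}\rangle_{\F}$, and since $\mathbb{B}_{0}\subseteq\mathbb{B}$ is $\F$-linearly independent it is in fact an $\F$-basis of $\mathrm{Rad}(\mathbb{T})$. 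Hence $\dim_{\F}\mathbb{T}/\mathrm{Rad}(\mathbb{T})=\dim_{\F}\mathbb{T}-\dim_{\F}\mathrm{Rad}(\mathbb{T})=|\mathbb{B}_{1}|=|\{(a,b,c): a\oplus b\leq_{2}c\leq_{2}a\odot b,\ p\nmid k_{b}\}|$, which is already the claimed cardinality. So the remaining content of the lemma is that the proposed set of cosets is $\F$-linearly independent in $\mathbb{T}/\mathrm{Rad}(\mathbb{T})$.

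For that, the key point is a support observation. By the very definition of $D_{g,h,i}$ in Notation~\ref{N;Notation6.5} as $\sum_{k}\sum_{m\in\mathbb{U}_{h,g\odot i,k}}(\overline{-1})^{k}\overline{k_{i\cap m}}^{-1}B_{g,m,i}$, and because every $m$ occurring in $\mathbb{U}_{h,g\odot i,k}$ satisfies $p\nmid k_{m}$, each $D_{a,b,c}$ with $a\oplus b\leq_{2}c\leq_{2}a\odot b$ and $p\nmid k_{b}$ lies in $\langle\mathbb{B}_{1}\rangle_{\F}$. Consequently, if $\sum_{(a,b,c)}c_{a,b,c}D_{a,b,c}$ lies in $\mathrm{Rad}(\mathbb{T})=\langle\mathbb{B}_{0}\rangle_{\F}$, then this element simultaneously lies in $\langle\mathbb{B}_{1}\rangle_{\F}$ and in $\langle\mathbb{B}_{0}\rangle_{\F}$; since $\mathbb{B}=\mathbb{B}_{0}\sqcup\mathbb{B}_{1}$ is an $\F$-basis of $\mathbb{T}$, the element must be $O$, and Lemma~\ref{L;Lemma9.1} then forces $c_{a,b,c}=\overline{0}$ for all $(a,b,c)$. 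Thus $\{D_{a,b,c}+\mathrm{Rad}(\mathbb{T}): a\oplus b\leq_{2}c\leq_{2}a\odot b,\ p\nmid k_{b}\}$ is $\F$-linearly independent, and it remains to check it has the right cardinality: the elements $D_{a,b,c}$ are pairwise distinct by \eqref{Eq;2} and Theorem~\ref{T;Theorem4.22} (as in the proof of Lemma~\ref{L;Lemma9.1}, $D_{g,h,i}=D_{j,k,\ell}$ forces $(g,h,i)=(j,k,\ell)$), and they stay distinct modulo $\mathrm{Rad}(\mathbb{T})$ by the linear independence just shown. Comparing with the dimension count of the first paragraph, a linearly independent subset of the correct size is a basis.

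I do not expect a real obstacle: the argument is essentially a direct-sum bookkeeping layered on top of Theorems~\ref{T;Theorem4.22} and~\ref{T;Jacobson} and Lemma~\ref{L;Lemma9.1}. The one spot that warrants care is keeping the index conventions straight — the ``middle'' index $b$ of $D_{a,b,c}$ carrying the hypothesis $p\nmid k_{b}$ is exactly the index that propagates to the middle index $m$ of the basis vectors $B_{a,m,c}$ appearing in the expansion of $D_{a,b,c}$ — so that the support of each such $D_{a,b,c}$ really does land inside $\mathbb{B}_{1}$ and misses the spanning set $\mathbb{B}_{0}$ of $\mathrm{Rad}(\mathbb{T})$ entirely.
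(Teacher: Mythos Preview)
Your argument is correct and follows essentially the same approach as the paper: the paper shows that $\{D_{a,b,c}:p\nmid k_b\}\cup\{B_{a,b,c}:p\mid k_b\}$ is an $\F$-basis of $\mathbb{T}$ (using exactly your support observation from Notation~\ref{N;Notation6.5}, Theorem~\ref{T;Theorem4.22}, and Lemma~\ref{L;Lemma9.1}) and then passes to the quotient via Theorem~\ref{T;Jacobson} and Notation~\ref{N;Notation7.3}. Your version simply unpacks the same direct-sum bookkeeping more explicitly rather than first naming the combined basis of $\mathbb{T}$.
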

\begin{proof}
Set $\mathbb{U}\!=\!\{D_{a, b, c}: a\oplus b\leq_2 c\leq_2 a\odot b,\ p\nmid k_b\}\cup\{B_{a, b, c}: a\oplus b\leq_2 c\leq_2 a\odot b,\ p\mid k_b\}$. The combination of Notation \ref{N;Notation6.5}, Theorem \ref{T;Theorem4.22}, and Lemma \ref{L;Lemma9.1} implies that $\mathbb{T}$ has an $\F$-basis $\mathbb{U}$. The desired lemma follows from Theorem \ref{T;Jacobson} and Notation \ref{N;Notation7.3}.
\end{proof}
\begin{lem}\label{L;Lemma9.3}
Assume that $g, h, i, j, k, \ell\in [0,d]$. If $p_{gh}^i\neq 0$, $p_{jk}^g\neq 0$, $p_{g\ell}^i\neq 0$, and $p\nmid k_hk_kk_\ell$, then $D_{j, k, g}D_{g, h, i}=D_{j,\ell, i}$ only if $(\widetilde{g\cap i})\setminus h=(\widetilde{g\cap j})\setminus k=(\widetilde{i\cap j})\setminus \ell$.
\end{lem}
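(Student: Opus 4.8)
The plan is to feed the hypothesis into Theorem \ref{T;Theorem8.10} and then translate the resulting combinatorial data across the correspondence $\nu$ of Lemma \ref{L;Lemma3.1}. First I would observe that, as it stands, $D_{j,\ell,i}$ is a defined object of Notation \ref{N;Notation6.5}, so $p_{j\ell}^i\neq 0$; since in addition $p\nmid k_\ell$, the coefficient of $B_{j,\ell,i}$ in $D_{j,\ell,i}$ is a nonzero scalar, whence $D_{j,\ell,i}\neq O$ by Theorem \ref{T;Theorem4.22} and therefore $D_{j,k,g}D_{g,h,i}\neq O$. The hypotheses $p_{gh}^i\neq 0$, $p_{jk}^g\neq 0$ and $p\nmid k_hk_k$ let me apply Theorem \ref{T;Theorem8.10} with its distinguished parameter taken to be the present $g$: its nonvanishing criterion forces $(\widetilde{g\cap i})\setminus h\leq_2 j$ and $k=(g\oplus j)\cup((\widetilde{g\cap j})\cap h)$, while its product formula gives $D_{j,k,g}D_{g,h,i}=D_{j,m,i}$ with $m=(i\oplus j)\cup((\widetilde{i\cap j})\cap h)$. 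Comparing with the hypothesis, $D_{j,m,i}=D_{j,\ell,i}$, and since both members lie in the $\F$-linearly independent family of Lemma \ref{L;Lemma9.1}, the injectivity of its indexing gives $m=\ell$, that is, $\ell=(i\oplus j)\cup((\widetilde{i\cap j})\cap h)$.

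It then remains to verify the three equalities, which I would do by passing to index sets. Write $P=\mathbb{P}(h)$. From $p_{gh}^i\neq 0$, Lemma \ref{L;Lemma2.1} gives $p_{gi}^h\neq 0$, hence $g\oplus i\leq_2 h$ by Lemma \ref{L;Lemma4.3}, so $\mathbb{P}(g)$ and $\mathbb{P}(i)$ agree on $[1,n]\setminus P$. Using this together with the displayed description of $k$ and the equality $\ell=(i\oplus j)\cup((\widetilde{i\cap j})\cap h)$, and recalling that $\widetilde{a}$ has index set $\mathbb{P}_2(a)$ (Notation \ref{N;Notation3.3}) while $(\widetilde{g\cap j})\cap(g\oplus j)=0$ and $(\widetilde{i\cap j})\cap(i\oplus j)=0$, a direct computation with the operations of Notation \ref{N;Notation4.2} gives $\mathbb{P}((\widetilde{g\cap i})\setminus h)=\mathbb{P}_2(g\cap i)\setminus P=\mathbb{P}_2(i)\setminus P$, $\mathbb{P}((\widetilde{g\cap j})\setminus k)=\mathbb{P}_2(g\cap j)\setminus P=\mathbb{P}_2(i\cap j)\setminus P$, and $\mathbb{P}((\widetilde{i\cap j})\setminus \ell)=\mathbb{P}_2(i\cap j)\setminus P$, the collapses ``$\mathbb{P}_2(g\cap i)\setminus P=\mathbb{P}_2(i)\setminus P$'' and ``$\mathbb{P}_2(g\cap j)\setminus P=\mathbb{P}_2(i\cap j)\setminus P$'' being exactly the agreement of $\mathbb{P}(g)$ and $\mathbb{P}(i)$ off $P$. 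Finally the condition $(\widetilde{g\cap i})\setminus h\leq_2 j$ produced by Theorem \ref{T;Theorem8.10} reads $\mathbb{P}_2(g\cap i)\setminus P\subseteq\mathbb{P}(j)$, i.e. $\mathbb{P}_2(i)\setminus P\subseteq\mathbb{P}(j)$, which upgrades $\mathbb{P}_2(i)\setminus P$ to $\mathbb{P}_2(i\cap j)\setminus P$. Hence $\mathbb{P}((\widetilde{g\cap i})\setminus h)=\mathbb{P}((\widetilde{g\cap j})\setminus k)=\mathbb{P}((\widetilde{i\cap j})\setminus \ell)=\mathbb{P}_2(i\cap j)\setminus P$, and Lemma \ref{L;Lemma3.1} delivers the claimed equality of elements of $[0,d]$.

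I expect the only genuine work to be the index-set bookkeeping of the second paragraph: one must keep straight the interplay of $\widetilde{\cdot}$, $\cap$, $\cup$, $\setminus$, in particular that removing $(\widetilde{g\cap j})\cap h$ from $\widetilde{g\cap j}$ has the same effect as removing $h$, and that $\mathbb{P}_2(g\cap i)\setminus P$ and $\mathbb{P}_2(g\cap j)\setminus P$ collapse onto $\mathbb{P}_2(i)\setminus P$ and $\mathbb{P}_2(i\cap j)\setminus P$ once $\mathbb{P}(g)$ and $\mathbb{P}(i)$ are known to coincide off $P$. No idea beyond the machinery already developed in Sections 8 and 9 is required; the hypothesis $p_{g\ell}^i\neq 0$ is not otherwise needed in the argument and is carried along only to match the situation in which the lemma is later applied.
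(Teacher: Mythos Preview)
Your proposal is correct and follows essentially the same route as the paper: invoke Theorem \ref{T;Theorem8.10} to obtain $(\widetilde{g\cap i})\setminus h\leq_2 j$, $k=(g\oplus j)\cup((\widetilde{g\cap j})\cap h)$, and $\ell=(i\oplus j)\cup((\widetilde{i\cap j})\cap h)$, then reduce the three quantities to a common form. The only cosmetic difference is that the paper closes with the cyclic chain $(\widetilde{g\cap i})\setminus h\leq_2(\widetilde{g\cap j})\setminus h\leq_2(\widetilde{i\cap j})\setminus h\leq_2(\widetilde{g\cap i})\setminus h$ (using $g\setminus h\leq_2 i$, $i\setminus h\leq_2 g$, and $(\widetilde{g\cap i})\setminus h\leq_2 j$), whereas you compute each index set directly as $\mathbb{P}_2(i\cap j)\setminus\mathbb{P}(h)$; both arguments rest on the same observation that $\mathbb{P}(g)$ and $\mathbb{P}(i)$ coincide off $\mathbb{P}(h)$. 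Your remark that the hypothesis $p_{g\ell}^i\neq 0$ plays no role is accurate---indeed the argument needs $p_{j\ell}^i\neq 0$ (so that $D_{j,\ell,i}$ is defined), which you correctly extract from the equation itself.
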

\begin{proof}
As $D_{j, k, g}D_{g, h, i}=D_{j,\ell, i}\neq O$, Theorem \ref{T;Theorem8.10} says that $k=(g\oplus j)\cup((\widetilde{g\cap j})\cap h)$, $\ell=(i\oplus j)\cup((\widetilde{i\cap j})\cap h)$, and $(\widetilde{g\cap i})\setminus h\leq_2 j$. Notice that $(\widetilde{g\cap j})\setminus k=(\widetilde{g\cap j})\setminus h$ and $(\widetilde{i\cap j})\setminus \ell\!=\!(\widetilde{i\cap j})\setminus h$ by a direct computation. As $p_{gh}^i\neq 0$, Lemmas \ref{L;Lemma2.1} and \ref{L;Lemma4.3} thus imply that $g\setminus h\leq_2 i$ and $i\setminus h\leq_2 g$. As $(\widetilde{g\cap i})\setminus h\leq_2 j$, it is obvious that $(\widetilde{g\cap i})\setminus h\leq_2(\widetilde{g\cap j})\setminus h\leq_2 (\widetilde{i\cap j})\setminus h\leq_2(\widetilde{g\cap i})\setminus h$. Lemma \ref{L;Lemma3.1} thus implies that $(\widetilde{g\cap i})\setminus h=(\widetilde{g\cap j})\setminus k=(\widetilde{i\cap j})\setminus \ell$. The desired lemma thus follows.
\end{proof}
Lemma \ref{L;Lemma9.3} motivates us to introduce the following notation and another lemma.
\begin{nota}\label{N;Notation9.4}
\em Set $\mathbb{D}\!=\!\{(a, b, c): a\oplus b\leq_2 c\leq_2 a\odot b,\ p\nmid k_b\}$. Notice that $\mathbb{D}\neq\varnothing$ as $(0,0,0)\in \mathbb{D}$. Lemma \ref{L;Lemma9.1} thus implies that $D_{g, h, i}$ is defined for any $(g, h, i)\in \mathbb{D}$. If $(g, h, i), (j, k, \ell)\in \mathbb{D}$, write $(g, h, i)\approx(j, k, \ell)$ if and only if $(\widetilde{g\cap i})\setminus h=(\widetilde{j\cap \ell})\setminus k$. So $\approx$ is an equivalence relation on $\mathbb{D}$. There is $n_\approx\!\in\! \mathbb{N}_0\setminus\{0\}$ such that $\mathbb{D}_1, \mathbb{D}_2,\ldots, \mathbb{D}_{n_\approx}$ are exactly all equivalence classes of $\mathbb{D}$ with respect to $\approx$. Assume that $m\in [1, n_\approx]$.
Define $\mathbb{D}(m)=\{a: (a, b, a)\in \mathbb{D}_m\}$ and $\mathbb{I}(m)=\langle\{D_{a, b, c}+\mathrm{Rad}(\mathbb{T}): (a, b, c)\in \mathbb{D}_m\}\rangle_\F$.
\end{nota}
\begin{lem}\label{L;Lemma9.5}
Assume that $g\in [1, n_\approx]$. Then $\mathbb{I}(g)$ is a two-sided ideal of $\mathbb{T}/\mathrm{Rad}(\mathbb{T})$. Moreover, $\mathbb{T}/\mathrm{Rad}(\mathbb{T})=\bigoplus_{h=1}^{n_\approx}\mathbb{I}(h)$ as $\F$-linear spaces.
\end{lem}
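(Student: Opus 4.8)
The plan is to derive both assertions directly from the $\F$-basis description in Lemma~\ref{L;Lemma9.2} together with the multiplication rule in Theorem~\ref{T;Theorem8.10} (and the invariant computation carried out in Lemma~\ref{L;Lemma9.3}). First I would dispose of the $\F$-linear space decomposition: by Lemma~\ref{L;Lemma9.2} the set $\{D_{a,b,c}+\mathrm{Rad}(\mathbb{T}):(a,b,c)\in\mathbb{D}\}$ is an $\F$-basis of $\mathbb{T}/\mathrm{Rad}(\mathbb{T})$, and by Notation~\ref{N;Notation9.4} the equivalence classes $\mathbb{D}_1,\mathbb{D}_2,\ldots,\mathbb{D}_{n_\approx}$ partition $\mathbb{D}$; since $\mathbb{I}(h)=\langle\{D_{a,b,c}+\mathrm{Rad}(\mathbb{T}):(a,b,c)\in\mathbb{D}_h\}\rangle_\F$ by Notation~\ref{N;Notation9.4}, it follows at once that $\mathbb{T}/\mathrm{Rad}(\mathbb{T})=\bigoplus_{h=1}^{n_\approx}\mathbb{I}(h)$ as $\F$-linear spaces.

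For the ideal property I would fix $g\in[1,n_\approx]$. Since $\{D_{a,b,c}+\mathrm{Rad}(\mathbb{T}):(a,b,c)\in\mathbb{D}\}$ spans $\mathbb{T}/\mathrm{Rad}(\mathbb{T})$ and $\{D_{a,b,c}+\mathrm{Rad}(\mathbb{T}):(a,b,c)\in\mathbb{D}_g\}$ spans $\mathbb{I}(g)$, by $\F$-linearity it suffices to show, for every $(j,h,i)\in\mathbb{D}_g$ and every $(a,b,c)\in\mathbb{D}$, that both $(D_{a,b,c}+\mathrm{Rad}(\mathbb{T}))(D_{j,h,i}+\mathrm{Rad}(\mathbb{T}))$ and $(D_{j,h,i}+\mathrm{Rad}(\mathbb{T}))(D_{a,b,c}+\mathrm{Rad}(\mathbb{T}))$ lie in $\mathbb{I}(g)$. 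Because $(a,b,c),(j,h,i)\in\mathbb{D}$ we have $p_{ab}^c\neq0$, $p_{jh}^i\neq0$, $p\nmid k_b$, $p\nmid k_h$, so the hypotheses of Theorem~\ref{T;Theorem8.10} are available for the corresponding products in $\mathbb{T}$. If the product in $\mathbb{T}$ is $O$ it trivially lies in $\mathbb{I}(g)$, so one may assume it is nonzero. For the left product, Theorem~\ref{T;Theorem8.10} forces $c=j$ and gives $D_{a,b,j}D_{j,h,i}=D_{a,m,i}$ with $m=(i\oplus a)\cup((\widetilde{i\cap a})\cap h)$ and $(a,m,i)\in\mathbb{D}$; a short computation with $\widetilde{\,\cdot\,}$, $\setminus$, $\oplus$, $\cap$ (as in the proof of Lemma~\ref{L;Lemma9.3}) shows $(\widetilde{a\cap i})\setminus m=(\widetilde{a\cap i})\setminus h=(\widetilde{j\cap i})\setminus h$, using that $\mathbb{P}(\widetilde{a\cap i})$ is disjoint from $\mathbb{P}(i\oplus a)$, that $p_{jh}^i\neq0$ forces $i\setminus j\leq_2 h$, and the condition $(\widetilde{j\cap i})\setminus h\leq_2 a$ supplied by Theorem~\ref{T;Theorem8.10}; hence $(a,m,i)\approx(j,h,i)$, so $(a,m,i)\in\mathbb{D}_g$. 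The right product is symmetric: Theorem~\ref{T;Theorem8.10} forces $a=i$ and gives $D_{j,h,i}D_{i,b,c}=D_{j,m',c}$ with $m'=(c\oplus j)\cup((\widetilde{c\cap j})\cap b)$ and $(j,m',c)\in\mathbb{D}$, and the analogous computation (using $p_{ib}^c\neq0$ and the condition $h=(i\oplus j)\cup((\widetilde{i\cap j})\cap b)$ from Theorem~\ref{T;Theorem8.10}) yields $(\widetilde{j\cap c})\setminus m'=(\widetilde{j\cap i})\setminus h$, so $(j,m',c)\in\mathbb{D}_g$ as well. Thus $\mathbb{I}(g)$ is a two-sided ideal of $\mathbb{T}/\mathrm{Rad}(\mathbb{T})$.

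I expect the $\F$-linear space decomposition and the reduction to products of basis elements to be entirely routine; the only delicate point is the index bookkeeping when matching the six indices of Theorem~\ref{T;Theorem8.10} to the left factor versus the right factor, together with the small Boolean identity verifying that the output triple is $\approx$-equivalent to $(j,h,i)$. Once that computation is in place for both one-sided multiplications, combining it with the linear decomposition proves the lemma; it also shows, though it is not asserted here, that $\mathbb{I}(g)\mathbb{I}(h)=\{O\}$ for $g\neq h$, so the decomposition is in fact a decomposition into two-sided ideals.
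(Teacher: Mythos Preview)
Your proposal is correct and follows essentially the same route as the paper: the paper's proof also combines Lemma~\ref{L;Lemma9.2}, Notation~\ref{N;Notation9.4}, Theorem~\ref{T;Theorem8.10}, and Lemma~\ref{L;Lemma9.3}, with the partition of $\mathbb{D}$ giving the $\F$-linear decomposition and the multiplication rule plus the $\approx$-invariant computation giving the ideal property. The only difference is presentational: you carry out the Boolean verification that the output triple is $\approx$-equivalent to $(j,h,i)$ inline and separately for left and right multiplication, whereas the paper packages this as a citation to Lemma~\ref{L;Lemma9.3}, which shows in one stroke that all three triples involved in a nonzero product $D_{j,k,g}D_{g,h,i}=D_{j,\ell,i}$ share the same $\approx$-invariant.
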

\begin{proof}
As $\{\mathbb{D}_1, \mathbb{D}_2,\ldots, \mathbb{D}_{n_\approx}\}$ forms a partition of $\mathbb{D}$, the desired lemma thus follows from combining Theorem \ref{T;Theorem8.10}, Lemmas \ref{L;Lemma9.2}, \ref{L;Lemma9.3}, and Notation \ref{N;Notation9.4}.
\end{proof}
For further discussion, the following three combinatorial lemmas are necessary.
\begin{lem}\label{L;Lemma9.6}
Assume that $g\in [1, n_\approx]$ and $h, i, j, k\in [0,d]$. If $(h, i, j), (h, k, j)\in\mathbb{D}_g$, then $i=k$.
\end{lem}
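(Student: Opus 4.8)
The plan is to prove $\mathbb{P}(i)=\mathbb{P}(k)$ and then conclude $i=k$ by Lemma \ref{L;Lemma3.1}. Since $(h,i,j)$ and $(h,k,j)$ lie in the same equivalence class $\mathbb{D}_g$, Notation \ref{N;Notation9.4} tells us that both triples lie in $\mathbb{D}$, so $h\oplus i\leq_2 j\leq_2 h\odot i$ and $h\oplus k\leq_2 j\leq_2 h\odot k$, and that $(h,i,j)\approx(h,k,j)$, i.e. $(\widetilde{h\cap j})\setminus i=(\widetilde{h\cap j})\setminus k$. Throughout I would argue at the level of the subsets $\mathbb{P}(\cdot)$ of $[1,n]$, using Lemma \ref{L;Lemma3.1} to pass back to $[0,d]$ and Lemma \ref{L;Lemma3.11} whenever a valency has to be translated into a condition on the $u_a$.

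First I would unwind the relation $\approx$. Recall $\mathbb{P}(\widetilde{h\cap j})=\mathbb{P}_2(h\cap j)$ consists of the indices of $\mathbb{P}(h)\cap\mathbb{P}(j)$ with $u_a>2$. Since $h\setminus i\leq_2 h\oplus i\leq_2 j$ by Notation \ref{N;Notation4.2}, we have $\mathbb{P}(h)\setminus\mathbb{P}(i)\subseteq\mathbb{P}(j)$, and a short set computation then gives $\mathbb{P}((\widetilde{h\cap j})\setminus i)=(\mathbb{P}(h)\setminus\mathbb{P}(i))\cap\{a:u_a>2\}=\mathbb{P}_2(h\setminus i)$, and likewise $\mathbb{P}((\widetilde{h\cap j})\setminus k)=\mathbb{P}_2(h\setminus k)$. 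Hence the hypothesis $\approx$ says exactly that $\mathbb{P}(h)\setminus\mathbb{P}(i)$ and $\mathbb{P}(h)\setminus\mathbb{P}(k)$ agree on every index $a\in\mathbb{P}(h)$ with $u_a>2$.

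Next I would exploit $h\oplus i\leq_2 j\leq_2 h\odot i=(h\oplus i)\cup(\widetilde{h\cap i})$. As $\mathbb{P}(h\oplus i)$ and $\mathbb{P}(\widetilde{h\cap i})$ are disjoint and $\mathbb{P}(\widetilde{h\cap i})\subseteq\mathbb{P}(h)\cap\mathbb{P}(i)$, it follows that $\mathbb{P}(h\oplus i)\subseteq\mathbb{P}(j)$ and $\mathbb{P}(j)\setminus\mathbb{P}(h\oplus i)\subseteq\mathbb{P}(\widetilde{h\cap i})$, and the same for $k$. Comparing membership in $\mathbb{P}(i)$ and in $\mathbb{P}(k)$ index by index, this yields: (i) for $a\notin\mathbb{P}(h)$, the set $\widetilde{h\cap i}$ contributes nothing, so $a\in\mathbb{P}(j)\iff a\in\mathbb{P}(i)$, and symmetrically $a\in\mathbb{P}(j)\iff a\in\mathbb{P}(k)$; (ii) for $a\in\mathbb{P}(h)$ with $u_a=2$, such $a$ cannot lie in $\mathbb{P}(\widetilde{h\cap i})$, hence $a\in\mathbb{P}(j)\iff a\in\mathbb{P}(h\oplus i)\iff a\notin\mathbb{P}(i)$, and symmetrically with $k$; (iii) for $a\in\mathbb{P}(h)$ with $u_a>2$, this is precisely the content extracted from $\approx$ in the previous paragraph. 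Cases (i)--(iii) exhaust $[1,n]$ and together force $\mathbb{P}(i)=\mathbb{P}(k)$, so $i=k$ by Lemma \ref{L;Lemma3.1}.

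The set-theoretic bookkeeping in the middle two paragraphs is routine. The point I expect to be the genuine obstacle --- or at least the one idea the proof hinges on --- is recognizing the division of labour in the last paragraph: the relation $\approx$ only constrains the indices of $\mathbb{P}(h)$ with $u_a>2$, while the ``missing'' indices, namely those of $\mathbb{P}(h)$ with $u_a=2$, are pinned down instead by $j$ itself via the upper bound $j\leq_2 h\odot i$ (together with $j\leq_2 h\odot k$). Once one notices that these two sources of information are exactly complementary, the conclusion is immediate, and in particular the hypotheses $p\nmid k_i$ and $p\nmid k_k$ coming from the definition of $\mathbb{D}$ are not needed.
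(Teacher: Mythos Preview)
Your argument is correct. You partition $[1,n]$ into the three cases $a\notin\mathbb{P}(h)$, $a\in\mathbb{P}(h)$ with $u_a=2$, and $a\in\mathbb{P}(h)$ with $u_a>2$, and in each case you correctly extract that membership in $\mathbb{P}(i)$ and in $\mathbb{P}(k)$ agree. The set identities you quote (in particular $\mathbb{P}((\widetilde{h\cap j})\setminus i)=\mathbb{P}_2(h\setminus i)$, using $h\setminus i\leq_2 j$) are all valid.

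The paper, however, proceeds by a single trick that avoids the case split entirely. From $(h,i,j)\in\mathbb{D}$ one has $p_{hi}^j\neq 0$ (Lemma~\ref{L;Lemma4.4}), and then Lemma~\ref{L;Lemma2.1} flips this to $p_{hj}^i\neq 0$; Lemma~\ref{L;Lemma4.3} then gives $h\oplus j\leq_2 i\leq_2 h\odot j$, i.e.\ $i=(h\oplus j)\cup\big((\widetilde{h\cap j})\cap i\big)$, and likewise for $k$. Now the relation $(\widetilde{h\cap j})\setminus i=(\widetilde{h\cap j})\setminus k$ is exactly the statement that the second summands agree, so $i=k$ immediately. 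In effect, the paper swaps the roles of $i$ and $j$ so that the equivalence relation $\approx$ controls \emph{all} of $i$ (beyond the forced piece $h\oplus j$) rather than only the $u_a>2$ part of $\mathbb{P}(h)$; this is why no separate argument is needed for the indices with $u_a=2$ or for those outside $\mathbb{P}(h)$. Your approach is more hands-on and perhaps more transparent at the level of individual coordinates, but the paper's swap is the cleaner route and explains why the lemma falls out in two lines.
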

\begin{proof}
As $(h, i, j), (h, k, j)\in\mathbb{D}_g$, Lemmas \ref{L;Lemma4.4} and \ref{L;Lemma2.1} imply that $p_{hj}^i\neq 0$ and $p_{hj}^k\neq 0$. So $i=(h\oplus j)\cup((\widetilde{h\cap j})\cap i)$ and $k=(h\oplus j)\cup((\widetilde{h\cap j})\cap k)$ by Lemma \ref{L;Lemma4.3}. Notice that $(\widetilde{h\cap j})\setminus i=(\widetilde{h\cap j})\setminus k$ and $(\widetilde{h\cap j})\cap i=(\widetilde{h\cap j})\cap k$. The desired lemma follows.
\end{proof}
\begin{lem}\label{L;Lemma9.7}
Assume that $g\in [1, n_\approx]$ and $h, i, j\in [0,d]$. If $(h, i, j)\in\mathbb{D}_g$,  then there exist $k, \ell\in [0,d]$ such that $(h, k, h), (j, \ell, j)\in \mathbb{D}_g$. In particular, $h, j\in \mathbb{D}(g)$.
\end{lem}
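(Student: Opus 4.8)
The plan is to exhibit $k$ and $\ell$ explicitly through their $2$-supports, naming them via Lemma \ref{L;Lemma3.1} and controlling their valencies via Lemma \ref{L;Lemma3.11}. Write $v$ for the common value $(\widetilde{a\cap c})\setminus b$ taken over the triples $(a,b,c)\in\mathbb{D}_g$ (Notation \ref{N;Notation9.4}); since $(h,i,j)\in\mathbb{D}_g$, one has $v=(\widetilde{h\cap j})\setminus i$, so by Notation \ref{N;Notation3.3} $\mathbb{P}(v)=\mathbb{P}_2(h\cap j)\setminus\mathbb{P}(i)$, which lies in $\mathbb{P}_2(h)\cap\mathbb{P}_2(j)=\mathbb{P}(\widetilde h)\cap\mathbb{P}(\widetilde j)$. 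The first preliminary step I would record is that $(h,i,j)\in\mathbb{D}$ forces $p_{hj}^i\neq 0$ by Lemmas \ref{L;Lemma4.4}, \ref{L;Lemma2.1} and the symmetry of $\mathbb{S}$, hence $h\oplus j\leq_2 i$ by Lemma \ref{L;Lemma4.3}; this containment is what drives the valency estimate below.

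Next I would define $k$ by $\mathbb{P}(k)=\mathbb{P}(\widetilde h)\setminus\mathbb{P}(v)$ and $\ell$ by $\mathbb{P}(\ell)=\mathbb{P}(\widetilde j)\setminus\mathbb{P}(v)$, both legitimate by Lemma \ref{L;Lemma3.1}, and then check $(h,k,h)\in\mathbb{D}_g$. Since $\mathbb{P}(k)\subseteq\mathbb{P}_2(h)\subseteq\mathbb{P}(h)$, every index of $\mathbb{P}(k)$ has parameter exceeding $2$, so $h\cap k=k$ and $\widetilde{h\cap k}=k$ by Notation \ref{N;Notation3.3}; consequently $\mathbb{P}(h\oplus k)=\mathbb{P}(h)\setminus\mathbb{P}(k)$ and $\mathbb{P}(h\odot k)=(\mathbb{P}(h)\setminus\mathbb{P}(k))\cup\mathbb{P}(k)=\mathbb{P}(h)$, which gives $h\oplus k\leq_2 h\leq_2 h\odot k$ immediately. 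The substantive point is $p\nmid k_k$: by Lemma \ref{L;Lemma3.11} it suffices that $u_a\not\equiv 1\pmod p$ for every $a\in\mathbb{P}(k)=\mathbb{P}_2(h)\setminus\mathbb{P}(v)$. I would argue that any such $a$ must lie in $\mathbb{P}(i)$: if not, then $a\notin\mathbb{P}_2(h\cap j)$ (because $\mathbb{P}(v)=\mathbb{P}_2(h\cap j)\setminus\mathbb{P}(i)$ and $a\notin\mathbb{P}(v)$), which together with $a\in\mathbb{P}_2(h)$ forces $a\notin\mathbb{P}(j)$, so $a\in\mathbb{P}(h)\setminus\mathbb{P}(j)\subseteq\mathbb{P}(h\oplus j)\subseteq\mathbb{P}(i)$, a contradiction. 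With $a\in\mathbb{P}(i)$ and $p\nmid k_i$, Lemma \ref{L;Lemma3.11} yields $u_a\not\equiv 1\pmod p$, so $p\nmid k_k$ and $(h,k,h)\in\mathbb{D}$. Finally $\mathbb{P}\big((\widetilde{h\cap h})\setminus k\big)=\mathbb{P}(\widetilde h)\setminus\mathbb{P}(k)=\mathbb{P}_2(h)\cap\mathbb{P}(v)=\mathbb{P}(v)$ since $\mathbb{P}(v)\subseteq\mathbb{P}_2(h)$, so $(\widetilde{h\cap h})\setminus k=v$ by Lemma \ref{L;Lemma3.1}; that is $(h,k,h)\approx(h,i,j)$, whence $(h,k,h)\in\mathbb{D}_g$ and $h\in\mathbb{D}(g)$.

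The treatment of $\ell$ is identical with the roles of $h$ and $j$ interchanged, using $\mathbb{P}(v)\subseteq\mathbb{P}_2(j)$ in place of $\mathbb{P}(v)\subseteq\mathbb{P}_2(h)$ and $j\oplus h\leq_2 i$ (the same containment) to conclude that every $a\in\mathbb{P}(\ell)=\mathbb{P}_2(j)\setminus\mathbb{P}(v)$ lies in $\mathbb{P}(i)$; this gives $(j,\ell,j)\in\mathbb{D}_g$ and $j\in\mathbb{D}(g)$. The one genuine obstacle is the verification $p\nmid k_k$ (and $p\nmid k_\ell$): the whole point is that deleting $\mathbb{P}(v)$ from $\mathbb{P}_2(h)$ leaves only indices already contained in $\mathbb{P}(i)$, and this is exactly where the auxiliary fact $h\oplus j\leq_2 i$ — extracted from Lemmas \ref{L;Lemma2.1}, \ref{L;Lemma4.4} and the symmetry of $\mathbb{S}$ — is indispensable; everything else is routine bookkeeping with $2$-supports via Lemma \ref{L;Lemma3.1}.
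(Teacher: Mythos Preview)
Your proof is correct and essentially the same as the paper's: your $k$ with $\mathbb{P}(k)=\mathbb{P}(\widetilde h)\setminus\mathbb{P}(v)$ coincides with the paper's choice $k=\widetilde{h\cap i}$ (your argument that every $a\in\mathbb{P}(k)$ lies in $\mathbb{P}(i)$ is precisely this identification), and likewise your $\ell$ equals the paper's $\widetilde{i\cap j}$. The paper's formulation is a touch more economical since writing $k=\widetilde{h\cap i}$ makes $\mathbb{P}(k)\subseteq\mathbb{P}(i)$ and hence $p\nmid k_k$ immediate, whereas you recover this inclusion as a consequence.
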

\begin{proof}
Set $k=\widetilde{h\cap i}$ and $\ell=\widetilde{i\cap j}$. As $(h, i, j)\in\mathbb{D}_g$, notice that $p_{hi}^j\neq 0$ and $p\nmid k_i$ by Lemma \ref{L;Lemma4.4}. Hence $h\setminus i\leq_2 j$ and $j\setminus i\leq_2 h$ by Lemmas \ref{L;Lemma2.1} and \ref{L;Lemma4.3}. Moreover, $p\nmid k_kk_\ell$ by Lemma \ref{L;Lemma3.11}. As $k\leq_2 \widetilde{h}$ and
$\ell\leq_2 \widetilde{j}$, The combination of Lemmas \ref{L;Lemma4.4}, \ref{L;Lemma2.1}, and \ref{L;Lemma4.3} thus implies that $(h, k, h), (j, \ell, j)\in \mathbb{D}$. Then $\widetilde{h}\setminus k=\widetilde{h}\setminus i$ and $\widetilde{j}\setminus \ell=\widetilde{j}\setminus i$ by a direct computation. Hence $\widetilde{h}\setminus k=\widetilde{h}\setminus i=(\widetilde{h\cap j})\setminus i=\widetilde{j}\setminus i=\widetilde{j}\setminus \ell$, which implies that $(h, k, h), (j, \ell, j)\in \mathbb{D}_g$.
So $h, j\in\mathbb{D}(g)$. The desired lemma follows.
\end{proof}
\begin{lem}\label{L;Lemma9.8}
Assume that $g\in [1, n_\approx]$ and $h, i, j, k\in [0,d]$. If $(h, i, h), (j, k, j)\in\mathbb{D}_g$, then there exists $\ell\in [0,d]$ such that $(h, \ell, j)\in\mathbb{D}_g$.
\end{lem}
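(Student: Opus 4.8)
The plan is to exhibit the required $\ell$ explicitly through its $2$-adic support and then to verify the two conditions defining membership in $\mathbb{D}_g$. First I would read off the consequences of the hypotheses. By Notation~\ref{N;Notation9.4}, the memberships $(h,i,h),(j,k,j)\in\mathbb{D}$ give $p\nmid k_i$ and $p\nmid k_k$, and, since both triples lie in the single $\approx$-class $\mathbb{D}_g$, the defining relation $(h,i,h)\approx(j,k,j)$ reads $\widetilde{h}\setminus i=\widetilde{j}\setminus k$. Put $\mathbb{R}=\mathbb{P}(\widetilde{h}\setminus i)$; then $\mathbb{R}=\mathbb{P}(\widetilde{h})\setminus\mathbb{P}(i)=\mathbb{P}(\widetilde{j})\setminus\mathbb{P}(k)$ by Notation~\ref{N;Notation4.2} and Lemma~\ref{L;Lemma3.1}, so $\mathbb{R}\subseteq\mathbb{P}(\widetilde{h})\cap\mathbb{P}(\widetilde{j})=\mathbb{P}_2(h)\cap\mathbb{P}_2(j)=\mathbb{P}(\widetilde{h\cap j})$. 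Recalling that $\mathbb{P}(h\odot j)=\mathbb{P}(h\oplus j)\cup\mathbb{P}(\widetilde{h\cap j})$ is a disjoint union (Notation~\ref{N;Notation4.2} and Lemma~\ref{L;Lemma4.1}), I would let $\ell\in[0,d]$ be the unique element with $\mathbb{P}(\ell)=\mathbb{P}(h\oplus j)\cup(\mathbb{P}(\widetilde{h\cap j})\setminus\mathbb{R})$, which exists by Lemma~\ref{L;Lemma3.1}.

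Next I would show $(h,\ell,j)\in\mathbb{D}$. From the defining formula, $\mathbb{P}(h\oplus j)\subseteq\mathbb{P}(\ell)\subseteq\mathbb{P}(h\oplus j)\cup\mathbb{P}(\widetilde{h\cap j})=\mathbb{P}(h\odot j)$, so $p_{hj}^{\ell}\neq0$ by Lemma~\ref{L;Lemma4.4}, hence $p_{h\ell}^{j}\neq0$ by Lemma~\ref{L;Lemma2.1} and the symmetry of $\mathbb{S}$, hence $h\oplus\ell\leq_2 j\leq_2 h\odot\ell$ by Lemma~\ref{L;Lemma4.3}. It then remains to see $p\nmid k_{\ell}$, i.e.\ by Lemma~\ref{L;Lemma3.11} that no $a\in\mathbb{P}(\ell)$ has $u_a\equiv1\pmod p$. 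This is the delicate step: if $u_a\equiv1\pmod p$ then $u_a>2$, so an index $a$ of this kind lying in $\mathbb{P}(h)$ must lie in $\mathbb{P}_2(h)=\mathbb{P}(\widetilde{h})$ and cannot lie in $\mathbb{P}(i)$ (else $p\mid k_i$ by Lemma~\ref{L;Lemma3.11}), forcing $a\in\mathbb{R}$; by the same reasoning any such $a$ in $\mathbb{P}(j)$ also lies in $\mathbb{R}$. But $\mathbb{P}(\ell)\subseteq\mathbb{P}(h)\cup\mathbb{P}(j)$, while $\mathbb{R}\subseteq\mathbb{P}(h)\cap\mathbb{P}(j)$ is disjoint from $\mathbb{P}(h\oplus j)$ and has been removed from $\mathbb{P}(\widetilde{h\cap j})$ in the definition of $\mathbb{P}(\ell)$; so $\mathbb{P}(\ell)$ contains no such $a$, giving $p\nmid k_{\ell}$ and therefore $(h,\ell,j)\in\mathbb{D}$.

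Finally I would verify $(h,\ell,j)\approx(h,i,h)$. Using disjointness of $\mathbb{P}(h\oplus j)$ and $\mathbb{P}(\widetilde{h\cap j})$ together with $\mathbb{R}\subseteq\mathbb{P}(\widetilde{h\cap j})$, one gets $\mathbb{P}(\widetilde{h\cap j}\setminus\ell)=\mathbb{P}(\widetilde{h\cap j})\setminus\mathbb{P}(\ell)=\mathbb{P}(\widetilde{h\cap j})\setminus(\mathbb{P}(\widetilde{h\cap j})\setminus\mathbb{R})=\mathbb{R}=\mathbb{P}(\widetilde{h}\setminus i)$, so $\widetilde{h\cap j}\setminus\ell=\widetilde{h}\setminus i$ by Lemma~\ref{L;Lemma3.1}, which is exactly the assertion $(h,\ell,j)\approx(h,i,h)$ of Notation~\ref{N;Notation9.4}. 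Since $\approx$ is an equivalence relation and $(h,i,h)\in\mathbb{D}_g$, the triple $(h,\ell,j)$ lies in $\mathbb{D}_g$, which is the claim. The main obstacle is precisely the verification $p\nmid k_{\ell}$: one has to notice that every index capable of forcing $p$-divisibility of a valency is pushed into $\mathbb{R}$ by the two hypotheses $p\nmid k_i$, $p\nmid k_k$ together with the identity $\widetilde{h}\setminus i=\widetilde{j}\setminus k$, and that by construction $\mathbb{P}(\ell)$ avoids all of these; the remaining manipulations are routine bookkeeping with $2$-adic supports.
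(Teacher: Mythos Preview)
Your proof is correct and takes essentially the same approach as the paper: your $\ell$ with $\mathbb{P}(\ell)=\mathbb{P}(h\oplus j)\cup(\mathbb{P}(\widetilde{h\cap j})\setminus\mathbb{R})$ coincides with the paper's $\ell=(h\oplus j)\cup((\widetilde{h\cap j})\cap k)$, since $\mathbb{P}(\widetilde{h\cap j})\subseteq\mathbb{P}(\widetilde{j})$ forces $\mathbb{P}(\widetilde{h\cap j})\setminus\mathbb{R}=\mathbb{P}(\widetilde{h\cap j})\cap\mathbb{P}(k)$. The verification of $p\nmid k_\ell$ and of $(\widetilde{h\cap j})\setminus\ell=\widetilde{h}\setminus i$ is the same computation, organized slightly differently.
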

\begin{proof}
As $(h, i, h), (j, k, j)\in\mathbb{D}_g$, notice that $\widetilde{h}\setminus i=\widetilde{j}\setminus k$ and $p\nmid k_ik_k$. Hence there is not $m\in\mathbb{P}(h\setminus(i\cup j))\cup\mathbb{P}(j\setminus(h\cup k))$ such that $u_m>2$ and $u_m\equiv 1\pmod p$. As $h\setminus j=((h\cap i)\setminus j)\cup(h\setminus(i\cup j))$ and $j\setminus h=((j\cap k)\setminus h)\cup(j\setminus(h\cup k))$. Lemma \ref{L;Lemma3.11} thus implies that $p\nmid k_{h\oplus j}$. Set $\ell=(h\oplus j)\cup((\widetilde{h\cap j})\cap k)$. The combination of Lemmas \ref{L;Lemma4.4}, \ref{L;Lemma2.1}, and \ref{L;Lemma4.3} thus implies that $h\oplus \ell\leq_2 j\leq_2 h\odot\ell$. Moreover, $p\nmid k_\ell$ by Lemma \ref{L;Lemma3.11}. Hence $(h, \ell, j)\in\mathbb{D}$. Notice that $(\widetilde{h\cap j})\setminus \ell=(\widetilde{h\cap j})\setminus k$. Notice that $(\widetilde{h\cap j})\setminus k\leq_2\widetilde{j}\setminus k\leq_2 (\widetilde{h\cap j})\setminus(i\cup k)\leq_2(\widetilde{h\cap j})\setminus k$. Lemma \ref{L;Lemma3.1} thus implies that $(\widetilde{h\cap j})\setminus \ell=(\widetilde{h\cap j})\setminus k=\widetilde{j}\setminus k$. Hence $(h, \ell, j)\in\mathbb{D}_g$. The desired lemma follows.
\end{proof}
Lemmas \ref{L;Lemma9.6}, \ref{L;Lemma9.7}, \ref{L;Lemma9.8} motivate us to present the following lemma and a notation.
\begin{lem}\label{L;Lemma9.9}
Assume that $g\in[1, n_\approx]$. Then the cartesian product $\mathbb{D}(g)\times \mathbb{D}(g)\neq\varnothing$. Furthermore, the map that sends $(h, i, j)$ to $(h, j)$ for any $(h, i, j)\!\in\!\mathbb{D}_g$ is a bijection from $\mathbb{D}_g$ to $\mathbb{D}(g)\times \mathbb{D}(g)$. In particular, the $\F$-dimension of $\mathbb{I}(g)$ equals $|\mathbb{D}(g)|^2$.
\end{lem}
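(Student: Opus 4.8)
The plan is to establish the three assertions of Lemma~\ref{L;Lemma9.9} in sequence, building on Lemmas~\ref{L;Lemma9.6}, \ref{L;Lemma9.7}, and \ref{L;Lemma9.8}. First I would observe that $\mathbb{D}_g\neq\varnothing$ by Notation~\ref{N;Notation9.4}, so there is some $(h,i,j)\in\mathbb{D}_g$; Lemma~\ref{L;Lemma9.7} then yields $h,j\in\mathbb{D}(g)$, hence $\mathbb{D}(g)\neq\varnothing$ and so $\mathbb{D}(g)\times\mathbb{D}(g)\neq\varnothing$. Next I would define the map $\varphi\colon\mathbb{D}_g\to\mathbb{D}(g)\times\mathbb{D}(g)$ by $\varphi(h,i,j)=(h,j)$; this is well-defined precisely by Lemma~\ref{L;Lemma9.7}, which guarantees both coordinates land in $\mathbb{D}(g)$.

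For injectivity of $\varphi$, suppose $\varphi(h,i,j)=\varphi(h,k,j)$, i.e.\ $(h,i,j),(h,k,j)\in\mathbb{D}_g$ share the same first and last coordinates. Lemma~\ref{L;Lemma9.6} immediately forces $i=k$, so $\varphi$ is injective. For surjectivity, take an arbitrary pair $(h,j)\in\mathbb{D}(g)\times\mathbb{D}(g)$. By definition of $\mathbb{D}(g)$ there exist $i,k\in[0,d]$ with $(h,i,h),(j,k,j)\in\mathbb{D}_g$. Lemma~\ref{L;Lemma9.8} then produces $\ell\in[0,d]$ with $(h,\ell,j)\in\mathbb{D}_g$, and $\varphi(h,\ell,j)=(h,j)$. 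Thus $\varphi$ is a bijection, establishing the second assertion.

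Finally, for the dimension count: by Notation~\ref{N;Notation9.4}, $\mathbb{I}(g)$ is spanned by $\{D_{a,b,c}+\mathrm{Rad}(\mathbb{T}): (a,b,c)\in\mathbb{D}_g\}$, and by Lemma~\ref{L;Lemma9.2} the cosets $D_{a,b,c}+\mathrm{Rad}(\mathbb{T})$ for $(a,b,c)$ ranging over all of $\mathbb{D}$ form an $\F$-basis of $\mathbb{T}/\mathrm{Rad}(\mathbb{T})$; in particular the ones indexed by $\mathbb{D}_g\subseteq\mathbb{D}$ are $\F$-linearly independent. Hence the $\F$-dimension of $\mathbb{I}(g)$ equals $|\mathbb{D}_g|$, which by the bijection $\varphi$ equals $|\mathbb{D}(g)\times\mathbb{D}(g)|=|\mathbb{D}(g)|^2$. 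The desired lemma thus follows.

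I do not anticipate a serious obstacle here, as all the substantive combinatorial work has already been isolated into Lemmas~\ref{L;Lemma9.6}--\ref{L;Lemma9.8}; the only point requiring a little care is confirming that $\varphi$ is genuinely well-defined (both coordinates in $\mathbb{D}(g)$, not merely in $[0,d]$), which is exactly the content of the ``in particular'' clause of Lemma~\ref{L;Lemma9.7}, and that the relevant subset of the basis from Lemma~\ref{L;Lemma9.2} is the one indexed by $\mathbb{D}_g$ rather than by the full set of index triples appearing in $\mathbb{I}(g)$'s spanning set.
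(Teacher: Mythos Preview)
Your proposal is correct and follows essentially the same approach as the paper's proof: the paper also invokes Lemma~\ref{L;Lemma9.7} for well-definedness and nonemptiness, Lemmas~\ref{L;Lemma9.6}--\ref{L;Lemma9.8} for bijectivity, and Lemma~\ref{L;Lemma9.2} together with Notation~\ref{N;Notation9.4} for the dimension count. Your version is simply a more explicit unpacking of the same argument.
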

\begin{proof}
As $\mathbb{D}_g\neq\varnothing$, notice that $\mathbb{D}(g)\times \mathbb{D}(g)\neq\varnothing$ by Lemma \ref{L;Lemma9.7}. Hence the mentioned map is defined. Moreover, notice that this mentioned map is a bijection by combining Lemmas \ref{L;Lemma9.6}, \ref{L;Lemma9.7}, \ref{L;Lemma9.8}. In particular, $|\mathbb{D}_g|=|\mathbb{D}(g)|^2$. The desired lemma thus follows as
$\mathbb{I}(g)=\langle\{D_{a, b, c}+\mathrm{Rad}(\mathbb{T}): (a, b, c)\in \mathbb{D}_g\}\rangle_\F$ and Lemma \ref{L;Lemma9.2} holds.
\end{proof}
\begin{nota}\label{N;Notation9.10}
\em Assume that $g\in[1, n_\approx]$. By combining Lemmas \ref{L;Lemma9.9}, \ref{L;Lemma9.2}, Notation \ref{N;Notation9.4}, there is a unique $D_{h, i, j}+\mathrm{Rad}(\mathbb{T})\in \{D_{a, b, c}+\mathrm{Rad}(\mathbb{T}): (a, b, c)\in \mathbb{D}_g\}$ for any $h, j\in \mathbb{D}(g)$. Denote this unique element by $D_{h, j}(g)$. In particular, notice that $\mathbb{I}(g)$ has an $\F$-basis $\{D_{a,b}(g): a, b\in\mathbb{D}(g)\}$ by combining Lemmas \ref{L;Lemma9.9}, \ref{L;Lemma9.2}, Notation \ref{N;Notation9.4}.
\end{nota}
The next lemma displays a computational result of the objects in Notation \ref{N;Notation9.10}.
\begin{lem}\label{L;Lemma9.11}
Assume that $g\in [1,n_\approx]$ and $h, i, j, k\in [0,d]$. If $h, i, j, k\in \mathbb{D}(g)$, then $D_{h, i}(g)D_{j, k}(g)=\delta_{ij}D_{h, k}(g)$.
\end{lem}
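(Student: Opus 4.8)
The plan is to reduce the statement to Theorem~\ref{T;Theorem8.10}, using the matrix-unit bookkeeping from Notations~\ref{N;Notation9.4} and~\ref{N;Notation9.10}. By Notation~\ref{N;Notation9.10} I would write $D_{h,i}(g)=D_{h,a,i}+\mathrm{Rad}(\mathbb{T})$ and $D_{j,k}(g)=D_{j,b,k}+\mathrm{Rad}(\mathbb{T})$ for the unique triples $(h,a,i),(j,b,k)\in\mathbb{D}_g$ produced there. If $i\neq j$, then $D_{h,a,i}\in E_h^*\mathbb{T}E_i^*$ and $D_{j,b,k}\in E_j^*\mathbb{T}E_k^*$ by Notation~\ref{N;Notation6.5}, so \eqref{Eq;2} forces $D_{h,a,i}D_{j,b,k}=O$; hence $D_{h,i}(g)D_{j,k}(g)$ is the zero element of $\mathbb{T}/\mathrm{Rad}(\mathbb{T})$, which is exactly $\delta_{ij}D_{h,k}(g)$. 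The remaining case is $i=j$.

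For $i=j$, the first task is to check the hypotheses of Theorem~\ref{T;Theorem8.10} for the product $D_{h,a,i}D_{i,b,k}$, with the variables $(g,h,i,j,k,\ell)$ of that theorem taken to be $(i,b,k,h,a,i)$. The structure-constant hypotheses $p_{ib}^k\neq0$, $p_{ha}^i\neq0$, $p\nmid k_ak_b$ are immediate from $(h,a,i),(i,b,k)\in\mathbb{D}_g\subseteq\mathbb{D}$ via Lemmas~\ref{L;Lemma4.4} and~\ref{L;Lemma2.1}. The two nonvanishing conditions become $(\widetilde{i\cap k})\setminus b\leq_2 h$ and $a=(i\oplus h)\cup((\widetilde{i\cap h})\cap b)$. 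Let $\rho\in[0,d]$ be the number with $\mathbb{P}(\rho)=\mathbb{P}(\widetilde{h\cap i})\setminus\mathbb{P}(a)=\mathbb{P}(\widetilde{i\cap k})\setminus\mathbb{P}(b)$; it exists because $(h,a,i)\approx(i,b,k)$ in the sense of Notation~\ref{N;Notation9.4}. Since $\mathbb{P}(\rho)\subseteq\mathbb{P}(\widetilde{h\cap i})=\mathbb{P}_2(h\cap i)\subseteq\mathbb{P}(h)$, the first condition holds. For the second, I would first record that $a=(h\oplus i)\cup((\widetilde{h\cap i})\cap a)$ by Lemmas~\ref{L;Lemma2.1} and~\ref{L;Lemma4.3} applied to $p_{hi}^a\neq0$, so the condition reduces to the set inclusion $\mathbb{P}_2(h\cap i)\setminus\mathbb{P}(\rho)\subseteq\mathbb{P}(b)$; this is verified elementwise, splitting an index $\ell$ into the cases $\ell\in\mathbb{P}(k)$ and $\ell\notin\mathbb{P}(k)$ and using $b=(i\oplus k)\cup((\widetilde{i\cap k})\cap b)$ and $i\leq_2 k\odot b$, both consequences of $p_{ib}^k\neq0$.

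With the hypotheses in place, Theorem~\ref{T;Theorem8.10} yields that $D_{h,m,k}$ is defined, where $m=(k\oplus h)\cup((\widetilde{k\cap h})\cap b)$, and that $D_{h,a,i}D_{i,b,k}=D_{h,m,k}$; in particular $(h,m,k)\in\mathbb{D}$. It then remains to identify the $\approx$-class of $(h,m,k)$. A direct computation gives $\mathbb{P}\bigl((\widetilde{h\cap k})\setminus m\bigr)=\mathbb{P}_2(h\cap k)\setminus\mathbb{P}(b)$, and a double-inclusion argument---using $b=(i\oplus k)\cup((\widetilde{i\cap k})\cap b)$ together with $\mathbb{P}(\rho)\subseteq\mathbb{P}(h)\cap\mathbb{P}(i)\cap\mathbb{P}(k)$---shows this set equals $\mathbb{P}(\rho)$. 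Hence $(h,m,k)\approx(h,a,i)$, so $(h,m,k)\in\mathbb{D}_g$, and the uniqueness in Notation~\ref{N;Notation9.10} (via Lemma~\ref{L;Lemma9.9}) identifies $D_{h,m,k}+\mathrm{Rad}(\mathbb{T})$ with $D_{h,k}(g)$. Therefore $D_{h,i}(g)D_{i,k}(g)=D_{h,m,k}+\mathrm{Rad}(\mathbb{T})=D_{h,k}(g)=\delta_{ij}D_{h,k}(g)$, completing the argument.

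The step I expect to be the main obstacle is precisely the two set-theoretic verifications above, namely $a=(i\oplus h)\cup((\widetilde{i\cap h})\cap b)$ and $\mathbb{P}((\widetilde{h\cap k})\setminus m)=\mathbb{P}(\rho)$. These encode the fact that two composable triples lying in one $\approx$-class compose consistently, and carrying them out requires careful elementwise tracking through the operations on $[0,d]$ from Notations~\ref{N;Notation3.3} and~\ref{N;Notation4.2}, combined with repeated use of the structural identities $a=(h\oplus i)\cup((\widetilde{h\cap i})\cap a)$ and $b=(i\oplus k)\cup((\widetilde{i\cap k})\cap b)$ coming from Lemmas~\ref{L;Lemma2.1}, \ref{L;Lemma4.3}, and~\ref{L;Lemma4.4}.
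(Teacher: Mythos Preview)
Your proposal is correct and follows essentially the same route as the paper: handle $i\neq j$ by orthogonality, then for $i=j$ verify the two nonvanishing conditions of Theorem~\ref{T;Theorem8.10}, apply that theorem, and finally identify the class of the resulting triple. The only cosmetic differences are that the paper obtains the identity $a=(h\oplus i)\cup((\widetilde{h\cap i})\cap b)$ a bit more slickly by first showing $\widetilde{i}\setminus a=(\widetilde{h\cap i})\setminus a=(\widetilde{i\cap k})\setminus b=\widetilde{i}\setminus b$ (using $i\setminus a\leq_2 h$ and $i\setminus b\leq_2 k$) and then taking complements in $\widetilde{h\cap i}$, and that the paper invokes Lemma~\ref{L;Lemma9.3} rather than a direct computation to place the product triple in $\mathbb{D}_g$, then Lemma~\ref{L;Lemma9.6} for uniqueness; your direct elementwise verifications accomplish the same thing.
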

\begin{proof}
As $h, i, j, k\in \mathbb{D}(g)$ and Notation \ref{N;Notation9.10} holds, there exist $\ell, m, q\in [0,d]$ such that $D_{h, i}(g)=D_{h, \ell, i}+\mathrm{Rad}(\mathbb{T})$, $D_{j, k}(g)=D_{j, m, k}+\mathrm{Rad}(\mathbb{T})$, $D_{h, k}(g)=D_{h, q, k}+\mathrm{Rad}(\mathbb{T})$, $(h, \ell, i), (j, m, k), (h, q, k)\in\mathbb{D}_g$. If $i\neq j$, $D_{h,i}(g)D_{j, k}(g)\!=\!D_{h,i}(g)E_i^*E_j^*D_{j, k}(g)\!=\!O$ by \eqref{Eq;2}. Assume that $i=j$. As $(h, \ell, i), (i, m, k)\in \mathbb{D}(g)$, $(\widetilde{i\cap k})\setminus m=(\widetilde{h\cap i})\setminus \ell$. So $(\widetilde{i\cap k})\setminus m\leq_2 h$. As $(h, \ell, i), (i, m, k)\in \mathbb{D}(g)$, Lemmas \ref{L;Lemma4.4} and \ref{L;Lemma2.1} thus imply that $p_{i\ell}^h\neq 0$ and $p_{im}^k\neq0$. Lemmas \ref{L;Lemma2.1} and \ref{L;Lemma4.3} thus imply that $i\setminus \ell\leq_2 h$, $i\setminus m\leq_2 k$, and $\ell=(h\oplus i)\cup((\widetilde{h\cap i})\cap \ell)$. Notice that $\widetilde{i}\setminus \ell=(\widetilde{h\cap i})\setminus \ell=(\widetilde{i\cap k})\setminus m=\widetilde{i}\setminus m$. So  $(\widetilde{h\cap i})\cap \ell=(\widetilde{h\cap i})\cap m$. So $\ell=(h\oplus i)\cup((\widetilde{h\cap i})\cap m)$. Theorem \ref{T;Theorem8.10} thus implies that $( D_{h, \ell, i}+\mathrm{Rad}(\mathbb{T}))(D_{i, m, k}+\mathrm{Rad}(\mathbb{T}))\neq O+\mathrm{Rad}(\mathbb{T})$. and $D_{h, \ell, i}D_{i, m, k}=D_{h, r, k}$ for some $r\in [0,d]$. Notice that $(h, r, k)\in \mathbb{D}_g$ by combining Notation \ref{N;Notation6.5}, Lemmas \ref{L;Lemma4.3}, and \ref{L;Lemma9.3}. Therefore $q=r$ by Lemma \ref{L;Lemma9.6}. The desired lemma thus follows.
\end{proof}
The following lemma lets us get the algebraic structure of $\mathbb{I}(g)$ for any $g\in [1, n_\approx]$.
\begin{lem}\label{L;Lemma9.12}
Assume that $g\in [1, n_\approx]$. Then $\mathbb{I}(g)\cong\mathrm{M}_{|\mathbb{D}(g)|}(\F)$ as algebras.
\end{lem}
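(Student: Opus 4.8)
The plan is to produce an explicit algebra isomorphism of $\mathbb{I}(g)$ onto $\mathrm{M}_{\mathbb{D}(g)}(\F)$ by observing that the elements $D_{a,b}(g)$ behave exactly like matrix units. By Notation \ref{N;Notation9.10} together with Lemma \ref{L;Lemma9.9}, the set $\{D_{a,b}(g): a, b\in\mathbb{D}(g)\}$ is an $\F$-basis of $\mathbb{I}(g)$, so $\dim_\F\mathbb{I}(g)=|\mathbb{D}(g)|^2=\dim_\F\mathrm{M}_{\mathbb{D}(g)}(\F)$; also $\mathbb{D}(g)\neq\varnothing$ since $\mathbb{D}(g)\times\mathbb{D}(g)\neq\varnothing$ by Lemma \ref{L;Lemma9.9}. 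First I would note that $\mathbb{I}(g)$ is an $\F$-algebra in the sense of this paper: Lemma \ref{L;Lemma9.11} shows it is closed under multiplication, and that $\sum_{a\in\mathbb{D}(g)}D_{a,a}(g)$ is a two-sided identity for every $D_{b,c}(g)$, hence for all of $\mathbb{I}(g)$. Thus $\mathbb{I}(g)$ is a unital algebra whose identity is $\sum_{a\in\mathbb{D}(g)}D_{a,a}(g)$ (which need not coincide with the identity of $\mathbb{T}/\mathrm{Rad}(\mathbb{T})$, even though $\mathbb{I}(g)$ is a two-sided ideal of $\mathbb{T}/\mathrm{Rad}(\mathbb{T})$ by Lemma \ref{L;Lemma9.5}).

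Next I would define the $\F$-linear map $\varphi\colon\mathbb{I}(g)\to\mathrm{M}_{\mathbb{D}(g)}(\F)$ on the basis by sending $D_{a,b}(g)$ to the matrix unit $\mathrm{E}_{a,b}$ whose $(a,b)$-entry is $\overline{1}$ and all of whose other entries are $\overline{0}$. Since $\varphi$ carries the $\F$-basis $\{D_{a,b}(g)\}$ bijectively onto the standard basis $\{\mathrm{E}_{a,b}\}$ of $\mathrm{M}_{\mathbb{D}(g)}(\F)$, it is automatically an $\F$-linear isomorphism. Multiplicativity of $\varphi$ is then precisely a restatement of Lemma \ref{L;Lemma9.11}: for $h,i,j,k\in\mathbb{D}(g)$ one has $\varphi(D_{h,i}(g)D_{j,k}(g))=\varphi(\delta_{ij}D_{h,k}(g))=\delta_{ij}\mathrm{E}_{h,k}=\mathrm{E}_{h,i}\mathrm{E}_{j,k}=\varphi(D_{h,i}(g))\varphi(D_{j,k}(g))$, and the general case follows by $\F$-bilinearity. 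Finally, $\varphi$ maps the identity $\sum_{a}D_{a,a}(g)$ of $\mathbb{I}(g)$ to $\sum_{a}\mathrm{E}_{a,a}$, the identity matrix, so $\varphi$ is a unital algebra isomorphism. Composing with the algebra isomorphism $\mathrm{M}_{\mathbb{D}(g)}(\F)\cong\mathrm{M}_{|\mathbb{D}(g)|}(\F)$ recorded in Section 2 gives $\mathbb{I}(g)\cong\mathrm{M}_{|\mathbb{D}(g)|}(\F)$, as desired.

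I do not expect a genuine obstacle at this stage: all the real work has already been carried out, namely the combinatorial analysis of the equivalence classes $\mathbb{D}_g$ in Lemmas \ref{L;Lemma9.6}–\ref{L;Lemma9.9} (which makes $D_{a,b}(g)$ well defined and produces the basis of $\mathbb{I}(g)$) and the matrix-unit multiplication rule of Lemma \ref{L;Lemma9.11}. The only point that needs a moment of care is the bookkeeping around identities described above — identifying the correct unit of the algebra $\mathbb{I}(g)$ — after which the isomorphism is immediate.
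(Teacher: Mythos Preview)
Your proposal is correct and follows essentially the same approach as the paper: define the $\F$-linear bijection $\mathbb{I}(g)\to\mathrm{M}_{\mathbb{D}(g)}(\F)$ sending $D_{a,b}(g)$ to the matrix unit $E_{a,b}$, and check multiplicativity directly from Lemma~\ref{L;Lemma9.11}. Your additional remark identifying $\sum_{a\in\mathbb{D}(g)}D_{a,a}(g)$ as the unit of $\mathbb{I}(g)$ is a welcome bit of care that the paper leaves implicit.
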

\begin{proof}
It suffices to check that $\mathbb{I}(g)\cong\mathrm{M}_{\mathbb{D}(g)}(\F)$ as algebras. For any $h, i\in\mathbb{D}(g)$, let $E_{h,i}$ be the $\{\overline{0}, \overline{1}\}$-matrix in $\mathrm{M}_{\mathbb{D}(g)}(\F)$ whose unique nonzero entry is the $(h, i)$-entry. Hence $E_{h,i}E_{j, k}=\delta_{ij}E_{h, k}$ for any $h, i, j, k\in \mathbb{D}(g)$. By Notation \ref{N;Notation9.10}, there exists an obvious $\F$-linear bijection from $\mathbb{I}(g)$ to $\mathrm{M}_{\mathbb{D}(g)}(\F)$ that sends $D_{h,i}(g)$ to $E_{h,i}$ for any $h, j\in\mathbb{D}(g)$. Notice that this $\F$-linear bijection is also an algebra isomorphism by Notation \ref{N;Notation9.10} and Lemma \ref{L;Lemma9.11}. The desired lemma thus follows.
\end{proof}
We conclude the whole paper by the main result of this section and an example.
\begin{thm}\label{T;Structure}
$\mathbb{T}/\mathrm{Rad}(\mathbb{T})\!\cong\! \bigoplus_{g=1}^{n_\approx}\mathrm{M}_{|\mathbb{D}(g)|}(\F)$ as algebras. Moreover, the number of all isomorphic classes of irreducible $\mathbb{T}$-modules equals $n_\approx$. In particular, the number of all isomorphic classes of irreducible $\mathbb{T}$-modules is independent of the choice of $\F$.
\end{thm}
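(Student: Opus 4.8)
The plan is to assemble Theorem~\ref{T;Structure} from the structural pieces already established in Sections~8 and 9. First I would invoke Lemma~\ref{L;Lemma9.5}, which gives the decomposition $\mathbb{T}/\mathrm{Rad}(\mathbb{T})=\bigoplus_{g=1}^{n_\approx}\mathbb{I}(g)$ as $\F$-linear spaces, together with the fact that each $\mathbb{I}(g)$ is a two-sided ideal. Then, by Lemma~\ref{L;Lemma9.12}, each summand satisfies $\mathbb{I}(g)\cong\mathrm{M}_{|\mathbb{D}(g)|}(\F)$ as algebras. Since $\mathbb{I}(g)\mathbb{I}(h)\subseteq\mathbb{I}(g)\cap\mathbb{I}(h)=\{O+\mathrm{Rad}(\mathbb{T})\}$ for $g\neq h$ (the equivalence classes $\mathbb{D}_1,\dots,\mathbb{D}_{n_\approx}$ being disjoint, and using Theorem~\ref{T;Theorem8.10} and Lemma~\ref{L;Lemma9.3} to see products of basis elements from different classes vanish modulo $\mathrm{Rad}(\mathbb{T})$), the linear direct sum is in fact an algebra direct sum. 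This yields $\mathbb{T}/\mathrm{Rad}(\mathbb{T})\cong\bigoplus_{g=1}^{n_\approx}\mathrm{M}_{|\mathbb{D}(g)|}(\F)$ as algebras, proving the first statement.

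Next I would handle the count of irreducible modules. The algebra $\bigoplus_{g=1}^{n_\approx}\mathrm{M}_{|\mathbb{D}(g)|}(\F)$ is a direct sum of $n_\approx$ minimal two-sided ideals (each full matrix algebra over the field $\F$ is simple), so $\mathbb{T}/\mathrm{Rad}(\mathbb{T})$ is a direct sum of exactly $n_\approx$ minimal two-sided ideals. Applying Lemma~\ref{L;Lemma2.7} with $g=n_\approx$, the number of isomorphism classes of irreducible $\mathbb{T}$-modules equals $n_\approx$. This proves the second statement.

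For the final clause, I would observe that $n_\approx$ was defined purely combinatorially in Notation~\ref{N;Notation9.4}: it is the number of equivalence classes of the set $\mathbb{D}=\{(a,b,c): a\oplus b\leq_2 c\leq_2 a\odot b,\ p\nmid k_b\}$ under the relation $(g,h,i)\approx(j,k,\ell)\iff(\widetilde{g\cap i})\setminus h=(\widetilde{j\cap\ell})\setminus k$. Although $\mathbb{D}$ depends on $p=\Char\F$ through the condition $p\nmid k_b$, it depends on $\F$ only through $p$; and the equivalence relation itself is defined in terms of the scheme parameters alone. Hence $n_\approx$ depends only on $p$ and the parameters $u_1,\dots,u_n$, not on $\F$ itself, giving independence of the choice of $\F$.

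The main obstacle I anticipate is the careful verification that the linear decomposition in Lemma~\ref{L;Lemma9.5} upgrades to an algebra decomposition, i.e.\ that cross-terms $\mathbb{I}(g)\mathbb{I}(h)$ with $g\neq h$ vanish in $\mathbb{T}/\mathrm{Rad}(\mathbb{T})$. This requires tracing through Theorem~\ref{T;Theorem8.10}: a product $D_{j,k,\ell}D_{g,h,i}$ is nonzero only under the stringent conditions $\ell=g$, $(\widetilde{g\cap i})\setminus h\leq_2 j$, and $k=(g\oplus j)\cup((\widetilde{g\cap j})\cap h)$, in which case the output is $D_{j,m,i}$ with $m=(i\oplus j)\cup((\widetilde{i\cap j})\cap h)$; then Lemma~\ref{L;Lemma9.3} shows $(\widetilde{g\cap i})\setminus h=(\widetilde{i\cap j})\setminus m$, so the output lies in the same $\approx$-class as the inputs. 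Everything else is bookkeeping. Since Lemma~\ref{L;Lemma9.5} already asserts $\mathbb{I}(g)$ is a two-sided ideal (which presupposes exactly this closure computation), the splicing is essentially immediate, but I would state it explicitly for completeness.

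\begin{proof}
By Lemma~\ref{L;Lemma9.5}, $\mathbb{I}(g)$ is a two-sided ideal of $\mathbb{T}/\mathrm{Rad}(\mathbb{T})$ for any $g\in[1,n_\approx]$ and $\mathbb{T}/\mathrm{Rad}(\mathbb{T})=\bigoplus_{h=1}^{n_\approx}\mathbb{I}(h)$ as $\F$-linear spaces. For any distinct $g, h\in [1, n_\approx]$, notice that $\mathbb{I}(g)\mathbb{I}(h)\subseteq\mathbb{I}(g)\cap\mathbb{I}(h)=\{O+\mathrm{Rad}(\mathbb{T})\}$ as $\mathbb{D}_g\cap\mathbb{D}_h=\varnothing$ and the combination of Theorem \ref{T;Theorem8.10}, Lemmas \ref{L;Lemma9.3}, \ref{L;Lemma9.2}, Notation \ref{N;Notation9.4} holds. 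Hence $\mathbb{T}/\mathrm{Rad}(\mathbb{T})=\bigoplus_{h=1}^{n_\approx}\mathbb{I}(h)$ as algebras. The first statement thus follows from Lemma \ref{L;Lemma9.12}.

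As $\mathbb{T}/\mathrm{Rad}(\mathbb{T})\cong\bigoplus_{g=1}^{n_\approx}\mathrm{M}_{|\mathbb{D}(g)|}(\F)$ as algebras and each $\mathrm{M}_{|\mathbb{D}(g)|}(\F)$ is a minimal two-sided ideal of this direct sum, notice that $\mathbb{T}/\mathrm{Rad}(\mathbb{T})$ is a direct sum of $n_\approx$ minimal two-sided ideals of $\mathbb{T}/\mathrm{Rad}(\mathbb{T})$. Lemma \ref{L;Lemma2.7} thus implies that the number of all isomorphic classes of irreducible $\mathbb{T}$-modules equals $n_\approx$. The second statement thus follows. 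As $n_\approx$ is the number of all equivalence classes of $\mathbb{D}=\{(a,b,c): a\oplus b\leq_2 c\leq_2 a\odot b,\ p\nmid k_b\}$ with respect to $\approx$ by Notation \ref{N;Notation9.4}, notice that $n_\approx$ only depends on $p$ and the parameters $u_1, u_2,\ldots, u_n$ of $\mathbb{S}$ by Notation \ref{N;Notation9.4} and Lemma \ref{L;Lemma3.11}. Hence $n_\approx$ is independent of the choice of $\F$. The third statement thus follows. The desired theorem thus follows.
\end{proof}
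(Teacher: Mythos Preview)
Your proof is correct and follows essentially the same approach as the paper, which simply writes ``The desired theorem follows from combining Lemmas \ref{L;Lemma9.5}, \ref{L;Lemma9.12}, and \ref{L;Lemma2.7}.'' You have merely made explicit the step that the linear decomposition of Lemma~\ref{L;Lemma9.5} is an algebra decomposition (immediate since the $\mathbb{I}(g)$ are two-sided ideals with trivial pairwise intersection) and added a remark on the combinatorial nature of $n_\approx$ for the final clause, neither of which the paper spells out.
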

\begin{proof}
The desired theorem follows from combining Lemmas \ref{L;Lemma9.5}, \ref{L;Lemma9.12}, and \ref{L;Lemma2.7}.
\end{proof}
\begin{eg}\label{E;Example9.13}
\em Assume that $p=n=u_1=2$ and $u_2=3$. Then $d=3$, $k_0=k_1=1$, and $k_2\!=\!k_3\!=\!2$ by Lemma \ref{L;Lemma3.11}. By combining Theorem \ref{T;Jacobson}, Notation \ref{N;Notation7.3}, Example \ref{E;Example4.23}, $\mathrm{Rad}(\mathbb{T})$ is an $\F$-linear space spanned by $B_{0,2,2}$, $B_{0,3,3}$, $B_{1,2,3}$, $B_{1,3,2}$, $B_{2,2,0}$, $B_{2,2,2}$, $B_{2,3,1}$, $B_{2,3,3}$, $B_{3, 2,1}$, $B_{3,2,3}$, $B_{3,3,0}$, $B_{3,3,2}$. Observe that $\mathbb{D}$ contains precisely $(0, 0, 0)$, $(0,1,1)$, $(1,0,1)$, $(1,1,0)$, $(2,0,2)$, $(2,1,3)$, $(3,0,3)$, $(3,1,2)$. Therefore $n_{\approx}=2$ and the two equivalence classes with respect to $\approx$ are $\{(0, 0, 0), (0,1,1), (1,0,1), (1,1,0)\}$ and $\{(2, 0, 2), (2,1,3), (3,0,3), (3,1,2)\}$. It is also obvious that $|\mathbb{D}(1)|=|\mathbb{D}(2)|=2$. According to Theorem \ref{T;Structure}, it is obvious that $\mathbb{T}/\mathrm{Rad}(\mathbb{T})\cong 2\mathrm{M}_2(\F)$ as algebras.
\end{eg}
\subsection*{Acknowledgements}
This present research was supported by the National Natural Science Foundation of China (Youth Program, No. 12301017) and Anhui Provincial Natural Science Foundation (Youth Program, No. \!2308085QA01). The author would like to gratefully thank Professor Tatsuro Ito for constant encouragements. He also gratefully thank Professor Gang Chen for some comments on this present research.

%\begin{thm}\label{T;Dimension}
%\end{thm}
%\begin{thm}\label{T;Center}
%\end{thm}
%\begin{thm}\label{T;Semisimplicity}
%\end{thm}
%\begin{thm}\label{T;Jacobson}
%\end{thm}

%\subsection*{Disclosure statement} There are no relevant financial or nonfinancial interests to report.
%\subsection*{Acknowledgements}
%The author thanks a referee for his or her helpful comments. The present work is supported by the Mathematical Center in Akademgorodok under Agreement No. 075-15-2019-1613 with the Ministry of Science and Higher Education of the Russian Federation.

\end{document}